\newtheorem{theorem}{Theorem}[section]
\newtheorem{remark}{Remark}[section]
\newtheorem{lemma}[theorem]{Lemma}
\newtheorem{pro}{Proposition}[section]
\newtheorem{cor}[theorem]{Corollary}
\renewcommand{\div}{ {\rm div }  }
\newcommand{\na}{\nabla }
\newcommand{\pa}{\partial}
\renewcommand{\r}{\mathbb{R}}
\newcommand{\dis}{\displaystyle}
\newcommand{\ia}{\int_0^T}
\newcommand{\bt}{{\hat\theta}}
\newcommand{\bl}{\begin{lemma}}
\newcommand{\el}{\end{lemma}}
\newcommand{\et}{\end{theorem}}
\newcommand{\ga}{\gamma}
\newcommand{\curl}{{\rm curl} }
\newcommand{\te}{\theta}
\newcommand{\al}{\alpha}
\newcommand{\de}{\delta}
\newcommand{\ve}{\varepsilon}
\newcommand{\la}{\label}
\newcommand{\p}{p(\rho)  }
\newcommand{\ka}{\kappa}
\newcommand{\bn}{\begin{eqnarray}}
\newcommand{\en}{\end{eqnarray}}
\newcommand{\bnn}{\begin{eqnarray*}}
\newcommand{\enn}{\end{eqnarray*}}
\newcommand{\bnnn}{\begin{eqnarray*}}
\newcommand{\ennn}{\end{eqnarray*}}
\newcommand{\ba}{\begin{aligned}}
\newcommand{\ea}{\end{aligned}}
\newcommand{\be}{\begin{equation}}
\newcommand{\ee}{\end{equation}}
\def\O{\Omega}
\def\p{\partial}
\def\norm[#1]#2{\|#2\|_{#1}}
\newcommand{\ep}{\varepsilon}
\newcommand{\n}{\rho}
\newcommand{\si}{\sigma}
\def\la{\label}
\def\na{\nabla}
\def\on{\hat\rho}
\def\tn{1}
\def\xl{\left}
\def\xr{\right}
\title{
Global Classical Solutions to the  Full Compressible Navier-Stokes System   in $3$D Exterior Domains}
\author{Jiaxu L{\small I}$^{a}$, Jing L{\small I}$^{a,b,c}$, Boqiang L{\small \"U}$^{b} $  \thanks{email: jiaxvlee@gmail.com (J.X. Li), ajingli@gmail.com (J. Li), lvbq86@163.com (B.Q.   L\"u) } \\
{\normalsize a.  School of Mathematical Sciences,}\\
{\normalsize  University of Chinese Academy of Sciences, Beijing 100049, P. R. China;}\\
{\normalsize b. Department of Mathematics }\\ {\normalsize \& Institute of Mathematics and Interdisciplinary Sciences,}\\ {\normalsize  Nanchang University, Nanchang 330031, P. R. China;}
 \\
{\normalsize c. Institute of Applied Mathematics, AMSS,} \\ {\normalsize \&   Hua Loo-Keng Key Laboratory of Mathematics,}\\
{\normalsize  Chinese Academy of Sciences,    Beijing 100190, P. R. China}}
\date{}
\begin{document}
\maketitle

\begin{abstract}
The full compressible Navier-Stokes system (FNS) describing the motion of a viscous, compressible, heat-conductive, and Newtonian polytropic fluid in a three-dimensional (3D) exterior domain is studied.
For the initial-boundary-value problem with the slip boundary conditions on the velocity and the Neumann one on the temperature, it is shown that there exists a  unique global    classical solutions with the initial data which are of small energy but possibly large oscillations. In particular, both the density and temperature are allowed to vanish initially. This is the first result about classical solutions of FNS system in 3D exterior domain.
\end{abstract}

\textbf{Keywords}:  full compressible Navier-Stokes  equations;  global classical  solutions; exterior domain; Navier-slip boundary condition; vacuum; large oscillations.

\section{Introduction}

The motion of a compressible viscous, heat-conductive, and Newtonian polytropic fluid occupying a spatial domain $\Omega \subset \mathbb{R}^{3}$ is governed by the following full compressible Navier-Stokes system:
\be \la{a0}
\begin{cases}
 	\n_t+{\rm div} (\n u)=0,\\
 	(\n u)_t+{\rm div}(\n u\otimes u)+\na P=\div \mathbb{S},\\
 	(\n E)_t+\div (\n Eu+Pu)=\div (\kappa \na \te)+\div (\mathbb{S}u),
\end{cases}\ee
where   $\mathbb{S}$ and $E$ are respectively the viscous stress tensor and the total energy given by
$$\mathbb{S}=2\mu\mathfrak{D}(u) +\lambda \div  u \mathbb{I}_3,~~E=e+\frac{1}{2}|u|^2,$$
with $\mathfrak{D}(u)  = (\nabla u + (\nabla u)^{\rm tr})/2$ and $\mathbb{I}_3$  denoting the deformation tensor and the $3\times3$ identity matrix respectively.
Here, $t\ge 0$ is time, $x\in \Omega$ is the spatial coordinate, and $\n$, $u=\left(u^1,u^2,u^3\right)^{\rm tr},$ $e$, $P,$ and $\te $ represent respectively the fluid density, velocity, specific internal energy, pressure, and  absolute temperature.
The viscosity coefficients $\mu$ and $\lambda$  are constants satisfying the physical restrictions:
\be\la{h3} \mu>0,\quad 2 \mu + 3\lambda\ge 0.\ee
The heat-conductivity coefficient $\ka$ is a positive constant.
We consider the ideal  polytropic fluids so that $P$ and $e$ are given by the state equations:
\be \la{pg}
P(\n,e)=(\ga-1)\n e=R\n \te, \quad e=\frac{R\theta}{\ga-1},
\ee
where $\ga>1$  is the adiabatic constant  and $ R$ is a positive constant.

Note that for the classical solutions, the system \eqref{a0} can be rewritten as
\be \la{a1}
\begin{cases}
	\n_t+{\rm div} (\n u)=0,\\
	\n (u_t+  u\cdot \na u )=\mu\Delta u+(\mu+\lambda)\na({\rm div}u)-\na P,\\
	\frac{R}{\ga-1}\n ( \te_t+u\cdot\na \te)=\ka\Delta\te-P\div u+\lambda (\div u)^2+2\mu |\mathfrak{D}(u)|^2.
\end{cases}
\ee
Let $\Omega=\r^3-\bar{D}$ be the exterior of a simply connected bounded domain $D\subset\r^3$ with smooth boundary $\partial\Omega,$   and we consider the system \eqref{a1}  subjected  to the given initial data
\be \la{h2}
(\rho,\n u, \n\te)(x,{t=0})=(\rho_0,\n_0u_0,\n_0\te_0)(x), \quad x\in \Omega,
\ee
and the slip boundary conditions
\be \la{h1}
u\cdot n=0,~~\mbox{curl} u\times n=0,~~\na \theta\cdot n=0,~~~~\text{on}~\p\Omega \times(0,T),
\ee
with the far field behavior
\be \la{ch2} (\n,u,\te)(x,t)\rightarrow (1,0,1),\,\,\text{as}\,\, |x|\rightarrow\infty, \ee
where $n =(n^1, n^2, n^3)^{\rm{tr}}$ is the unit outward normal vector on $\partial\Omega$. Indeed,  choosing a positive real number $d$  such that $\bar{D}\subset B_d$, one can extend the unit outer normal $n$ to $\Omega$ such that \be\la{ljq10} n\in C^3(\bar{\Omega}),\quad n\equiv 0  \mbox{ on }\r^3\setminus  B_{2d}.\ee

The  well-posedness of solutions to the compressible Navier-Stokes systems has been studied intensively by many authors.
 The local existence and uniqueness of classical solutions without vacuum is  established in \cite{se1,Na,Tani}, and   the local  strong solutions with initial vacuum are   derived  by \cite{cho1,K2,sal,choe1,liliang}. The first breakthrough in solving global well-posedness of Cauchy problem came in the work of Matsumura-Nishida \cite{M1}, who obtained the global classical solutions with initial data close to a non-vacuum equilibrium in some Sobolev space $H^s.$ Later, Hoff \cite{Hof1,Hof2} established the global weak solutions with strictly positive initial density and temperature for discontinuous initial data.
When the initial vacuum is allowed, the issue on  the existence of solutions becomes much more complicated.
The major breakthrough on barotropic case is due to Lions \cite{L1} (see also  Feireisl \cite{feireisl1}), where he obtained the global existence of weak solutions, defined as  finite energy solutions, if the adiabatic exponent $\gamma$ is sufficiently large.
Recently,  Li-Xin \cite{2dlx} and Huang-Li-Xin \cite{hulx} obtained the global classical solutions to the 2D and 3D Cauchy problem respectively with the initial data which are of small energy but possibly large oscillations.
More recently, for initial-boundary-value problem (IBVP) with Navier-slip boundary condition,
Cai-Li \cite{C-L} and Cai-Li-L\"u \cite{C-L-L} obtained the  global classical solutions with initial vacuum in 3D bounded domains and 3D exterior domains respectively, provided that the initial energy is suitably small.

Compared with the barotropic flows, it is more difficult and complicated to study the global well-posedness of solutions to full compressible Navier-Stokes system (\ref{a0}) with initial vacuum, where some additional difficulties arise, such as the degeneracy of both momentum and temperature equations,  and the strong coupling between the velocity and temperature,  etc. Feireisl \cite{feireisl,feireisl1} proved the global existence of variational weak solutions with large data in the case of real gases.
Later, Bresch-Desjardins \cite{bd} obtained global stability of weak solutions to (\ref{a0}) with  viscosity coefficients depending on the density.
Xin \cite{xin98} (see also \cite{xin13}) proved the smooth or strong solutions will blow up in finite time if the initial data have an isolated mass group, no matter how small the initial data are.
Recently, Huang-Li \cite{H-L} established the global existence and uniqueness for the classical solutions to the 3D Cauchy problem with interior vacuum provided the initial energy is small enough.  Later,  Wen-Zhu  \cite{W-C} considered the case of vanishing far field conditions under the assumption that the initial mass is sufficiently small or both viscosity and heat-conductivity coefficients are large enough. More recently, the global existence of classical solutions  which are of small energy but possibly large oscillations was established in Li-L\"u-Wang \cite{L-L-W} for  the IBVP  with Navier-slip boundary condition  in general 3D bounded domains.
 This paper aims to study the global  classical solutions to full compressible Navier-Stokes equations in 3D exterior domains.

Before stating the main results, we first introduce the notations and conventions used throughout this paper. We denote
\bnn\int fdx\triangleq\int_{\Omega}fdx.\enn
For $1\le p\le \infty $ and integer $k\ge 0,$  we adopt the following notations for some Sobolev spaces as
follows:
\be\ba\notag \begin {cases}
L^p=L^p(\Omega),\quad W^{k,p}=W^{k,p}(\Omega),\quad H^k=W^{k,2},\\
 D^{k,p}=\{\left.f\in L^1_{loc}(\O)\right|\na^{k}f \in L^p(\O)\},\quad D^k=D^{k,2}\\
 H_s^2= \left.\left\{f\in H^2 \right | f\cdot n=0,\,\curl f\times n=0 \rm \,\,{on}\,\, \p\O\right\}.\end{cases}\ea\ee
The initial energy  $C_0$  is defined as follows:
\be\la{e}\ba  C_0\triangleq &\frac{1}{2}\int\n_0
|u_0|^2dx+R \int  \left( \n_0\log {\n_0}-\n_0+1 \right)dx\\
&+\frac{R}{\ga-1}\int  \n_0\left(\te_0- \log {\te_0} -1
\right) dx .\ea\ee
Let $D_t$ and $G$ be the material derivative and  effective viscous flux respectively, defined by
\be \la{hj1} D_t f \triangleq \dot f\triangleq f_t+u\cdot\nabla f,\quad G\triangleq(2\mu + \lambda)\div u -  R(\rho\te-1). \ee

The  main result in this paper is stated  as follows:

\begin{theorem}\la{th1}
Let $\Omega=\r^3-\bar{D}$ be the exterior of a simply connected bounded domain $D\subset\r^3$  and its boundary $\partial\Omega$ is smooth.
For  given numbers $M>0$ (not necessarily
small), $q\in (3,6),$
  $\on> 2,$ and $\bt>1,$
suppose that the initial data $(\n_0,u_0,\te_0)$ satisfies
\be
\la{co3} \n_0-1\in H^2\cap W^{2,q}, \quad  u_0 \in H^2_s, \quad
\te_0-1\in H^1,\quad \na \te_0\cdot n|_{\p\O}=0 , \ee
\be \la{co4} 0\le\inf\rho_0\le\sup\rho_0<\hat{\rho},\quad 0  \le\inf\te_0\le\sup\te_0\le \bt, \quad \|\na u_0\|_{L^2} \le M,
\ee
   and the compatibility condition
\be
\la{co2}-\mu \Delta u_0-(\mu+\lambda)\na\div u_0+R\na (\n_0\te_0)
=\sqrt{\n_0} g \ee
with  $g\in L^2. $ Then there exists a positive constant $\ve$
depending only
 on $\mu,$ $\lambda,$ $\ka,$ $ R,$ $ \ga,$  $\on,$ $\bt$, and $M$ such that if
 \be
 \la{co14} C_0\le\ve,
   \ee  the   problem  (\ref{a1})--(\ref{ch2})
admits a unique global classical solution $(\rho,u,\te)$ in
   $\Omega\times(0,\infty)$ satisfying
  \be\la{h8}
  0\le\rho(x,t)\le 2\hat{\rho},\quad \te(x,t)\ge 0,\quad x\in \Omega,\,~~ t\ge 0,
  \ee
 and \be
   \la{h9}\begin{cases}
   \rho-1\in C([0,T];H^2\cap W^{2,q}),\\
   u \in C([0,T];H^1)\cap C((0,T];D^2), \ \te-1   \in  C((0,T];H^2),\\
u\in  L^\infty(0,T;H^2)\cap L^\infty(\tau,T;H^3\cap W^{3,q}),\  \te-1\in   L^\infty(\tau,T;H^4), \\
  (u_t,\te_t)\in
L^{\infty}(\tau,T;H^2)\cap H^1(\tau,T;H^1), \end{cases} \ee for any $0<\tau<T<\infty.$
Moreover, the following large-time behavior holds:
 \be\la{h11}
  \lim_{t\rightarrow \infty}\left( \|\n(\cdot,t)-1\|_{L^p} +\|\na u(\cdot,t)\|_{L^r }+\|\na \te(\cdot,t)\|_{L^r }\right)=0,  \ee
  for any $p\in  (2 ,\infty)$ and  $r\in [2,6)$.
\end{theorem}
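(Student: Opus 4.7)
The plan is to combine a domain-expansion procedure with time-uniform a priori estimates, echoing the strategy used for the full system on bounded domains and for the barotropic problem on exterior domains referenced above. I would first approximate $\Omega$ by $\Omega_R=\Omega\cap B_R$ with $R\gg d$, imposing slip boundary conditions on both $\p\Omega$ and $\p B_R$ together with the Neumann condition on $\theta$, and smooth the initial data to strictly positive $(\re,\ue,\tet)$ while preserving the compatibility condition \eqref{co2}. Local classical existence on each $\Omega_R$ is standard; the central task is then to derive a priori estimates whose constants depend only on $(\Omega,\mu,\lambda,\kappa,R,\gamma,\on,\bt,M)$ and not on $R$ or $\eta$, and which close on themselves through a bootstrap: assuming the postulated bounds hold with an enlarged constant, recover them with the original constant, so that the local solution extends to all $T<\infty$.

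The low-order estimates should proceed in the familiar order. The basic energy identity (in its entropy-like form) yields $\sup_t(\|\sqrt\rho u\|_{L^2}^2+\|\rho-1\|_{L^2}^2+\|\sqrt\rho(\theta-1)\|_{L^2}^2)+\int_0^T(\|\na u\|_{L^2}^2+\|\na\theta\|_{L^2}^2)\le CC_0$. Testing the momentum equation with $\dot u$ and invoking the slip boundary identities together with the elliptic relation $\Delta G=\div(\rho\dot u)$ for the effective viscous flux gives control of $\|\na u\|_{L^2}$ and $\int_0^T\|\sqrt\rho\dot u\|_{L^2}^2$. $L^p$-estimates for $G$ and for $\omega=\curl u$ then produce a logarithmic bound on $\|\div u\|_{L^\infty}$, and Zlotnik's inequality along particle trajectories upgrades this to $\rho\le 2\on$ when $C_0$ is sufficiently small. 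The delicate piece is a uniform $L^\infty$ bound $\theta\le C\bt$: I would obtain it by $L^p$-iteration on the temperature equation in \eqref{a1}, decomposing the troublesome term $R\rho\theta\,\div u$ through $G$ so that its bad part is absorbed against the dissipation $\lambda(\div u)^2+2\mu|\mathfrak D(u)|^2$. Higher-order bounds on $\dot u,\,\dot\theta,\,\ddot u$ then follow by time-differentiation of \eqref{a1}; the resulting boundary corrections are managed with the compactly supported $C^3$ extension of the outward normal in \eqref{ljq10}, which is unaffected by the artificial cut-off at $\p B_R$.

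The main obstacle I anticipate is twofold. First, uniformity in $R$ demands compensating for the absence of a Poincaré inequality for $u$: this is done by exploiting the exterior-domain embedding $D^{1,2}(\Omega)\hookrightarrow L^6(\Omega)$ and by systematically working with $\rho-1$ and $\theta-1$ so that integration by parts never generates uncontrollable boundary terms at infinity or on $\p B_R$. Second, the $L^\infty$ temperature bound is where the strong coupling, vacuum degeneracy, and slip boundary conspire: the effective-viscous-flux decomposition, the smallness of $\int_0^T\|\sqrt\rho\dot u\|_{L^2}^2$ inherited from $C_0$, and the $L^p$ bounds on $G$ must be combined carefully, because any loss of exponent propagates through the iteration and destroys the smallness needed to close the bootstrap. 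With these estimates in place, a continuation argument based on a Serrin-type blow-up criterion adapted to the slip boundary conditions closes the proof of \eqref{h8}--\eqref{h9}, and the large-time decay \eqref{h11} follows from the time-integrability of the dissipation together with the uniform higher-norm bounds, via the standard lemma that a nonnegative integrable function whose time-derivative is controlled must tend to zero.
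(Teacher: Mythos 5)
Your proposal has two genuine gaps that would prevent the argument from closing, together with one legitimate (if heavier) alternative route.

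The first and more serious issue concerns the basic energy estimate. You assert that the entropy-form energy identity yields
\begin{equation*}
\sup_{t}\int\Bigl(\rho|u|^2+(\rho-1)^2+\rho(\theta-1)^2\Bigr)dx+\int_0^T\bigl(\|\nabla u\|_{L^2}^2+\|\nabla\theta\|_{L^2}^2\bigr)\,dt\le CC_0.
\end{equation*}
This is precisely the estimate the paper says is \emph{unavailable} under the slip boundary conditions \eqref{h1}. When you multiply \eqref{a11} by $u$ and $\eqref{a1}_3$ by $1-\theta^{-1}$ and integrate by parts, the identity $\int 2|\mathfrak D(u)|^2=\int(|\curl u|^2+2(\div u)^2)$ picks up boundary contributions that do not vanish on $\partial\Omega$; the residual term $-\mu\int(|\curl u|^2+2(\div u)^2-2|\mathfrak D(u)|^2)\,dx$ has no favorable sign, and the best one can say is $E'(t)\le 2\mu\|\nabla u\|_{L^2}^2$ (see \eqref{la2.7}). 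The time-integral of $\|\nabla u\|_{L^2}^2$ is thus not free; it has to be supplied by the a priori assumption $A_2(T)\le 2C_0^{1/4}$, which is why the paper only obtains the ``weaker'' estimate $E(t)\le CC_0^{1/4}$ and must run a nontrivial bootstrap among $A_1,A_2,A_3$. If you carry the erroneous $CC_0$-bound forward, the later smallness arguments would be circular, because the very quantity you need to bound is being fed back in as if it were already controlled.

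The second gap is the proposed uniform pointwise bound $\theta\le C\hat\theta$. Under the hypotheses of the theorem both $\rho_0$ and $\theta_0$ may vanish, and the data are allowed to have large oscillations; there is no reason to expect a time-independent $L^\infty$ bound on the temperature, and the paper never proves (or needs) one. An $L^p$-iteration on the temperature equation would at best propagate a bound for a short time before it is swamped by the vacuum degeneracy $\rho\dot\theta$ and the quadratic source $|\nabla u|^2$, because the latter is only controlled in $L^2_tL^2_x$ through the delicate $A_2$-estimate that you have not yet established. What the paper uses instead are time-integrated bounds: $\int_0^{\sigma(T)}\|\theta\|_{L^\infty}\,dt\le C$, $\int_{\sigma(T)}^T\|\theta-1\|_{L^\infty}^2\,dt\le CC_0^{1/8}$, and the weighted control $\sigma^2\|\nabla\theta\|_{L^2}^2$ in $A_3$. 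These are what feed the Grönwall-type argument of Lemma \ref{le1} for the density upper bound. Replacing them by a pointwise $\theta$-bound is not a simplification that can be justified here.

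On the choice of approximation scheme: the domain truncation $\Omega_R=\Omega\cap B_R$ with an artificial slip boundary on $\partial B_R$ is a legitimate alternative, but the paper avoids it entirely. Lemma \ref{th0} provides local classical existence directly on the exterior domain for strictly positive initial density, so the only regularization needed is on the data: one replaces $(\rho_0,u_0,\theta_0)$ by $(\rho_0^{m,\eta},u_0^{m,\eta},\theta_0^{m,\eta})$, smooth and bounded away from vacuum, preserving the slip and Neumann boundary conditions and the compatibility condition \eqref{co2}, and then lets $m\to\infty$ and $\eta\to0$ after establishing $\eta$- and $m$-uniform a priori bounds. Your route would in addition require showing that the artificial boundary contributions at $\partial B_R$ vanish uniformly as $R\to\infty$, which is extra bookkeeping you would do well to avoid given that the uniform constants in the estimates must not depend on $R$. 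If you do persist with truncation, the $C^3$ extension of the normal in \eqref{ljq10} vanishes outside $B_{2d}$, so it indeed does not interact with $\partial B_R$; but you still need the a priori estimates on $\Omega_R$ to be $R$-independent, which again hinges on not having the false basic energy estimate.
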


Next, as a direct application of \eqref{h11}, the following Corollary \ref{th3} gives the  large-time behavior of the gradient of density  when the vacuum   appears  initially. The proof is similar to that of     \cite[Theorem 1.2]{hulx} (see also \cite[Corollary 1.3]{H-L}).

\begin{cor} \la{th3}
In addition to the conditions  of Theorem \ref{th1}, assume further
that there exists some point $x_0\in \Omega$ such that $\rho
_0(x_0)=0.$ Then  the unique global    classical solution
$(\rho,u,\te)$ to the   problem (\ref{a1})--(\ref{ch2})
obtained in Theorem \ref{th1} has to blow up as $t\rightarrow
\infty,$ in the sense that for any $r>3,$
\be\label{decay}\lim\limits_{t\rightarrow \infty}\|\nabla \rho(\cdot,t)
\|_{L^r }=\infty.\ee
\end{cor}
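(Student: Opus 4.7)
The plan is to argue by contradiction following the strategy of \cite[Theorem 1.2]{hulx} and \cite[Corollary 1.3]{H-L}, and the main work is to make the argument compatible with the exterior domain geometry.

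First I would establish that the vacuum is transported along particle trajectories. Since the solution enjoys the regularity in \eqref{h9}, $u$ is Lipschitz in space on any time interval $[0,T]$ and tangential to $\partial\Omega$ by \eqref{h1}. Hence the ODE $\dot X(t)=u(X(t),t)$, $X(0)=x_0,$ has a unique solution $X(\cdot;x_0)\in \Omega$ for all $t\ge 0$. Using $\eqref{a1}_1$ in the form $\dot\rho=-\rho\,\div u$ along this characteristic gives
\[ \rho(X(t;x_0),t)=\rho_0(x_0)\exp\Big(-\int_0^t\div u(X(s;x_0),s)\,ds\Big)=0, \qquad t\ge 0. \]

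Next, suppose for contradiction that \eqref{decay} fails, so that there exist $r>3$, a constant $M_1$, and a sequence $t_n\uparrow\infty$ with $\|\na\rho(\cdot,t_n)\|_{L^r}\le M_1.$ By \eqref{h11}, $\|\rho(\cdot,t_n)-1\|_{L^r}\to 0$, so in particular $\rho(\cdot,t_n)-1$ is bounded in $W^{1,r}(\Omega)$. Because $\partial\Omega$ is smooth, $\Omega$ is a $W^{1,r}$-extension domain, and Morrey's embedding yields a bound
\[ |\rho(x,t_n)-\rho(y,t_n)|\le C M_1 |x-y|^{\alpha},\qquad \alpha:=1-3/r>0, \]
uniform in $n$ and valid for all $x,y\in\bar\Omega$. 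In particular $\rho(\cdot,t_n)$ is equi-Hölder on $\bar\Omega$, with the far-field condition $\rho\to 1$ transmitted to all sufficiently far points.

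Now set $y_n:=X(t_n;x_0)$, so $\rho(y_n,t_n)=0.$ The Hölder bound gives a radius $r_0>0$ (depending only on $CM_1$) such that $\rho(x,t_n)\le 1/2$ for every $x\in B_{r_0}(y_n)\cap\Omega.$ Since $\partial\Omega$ is smooth, there is a constant $c_0>0$ independent of $n$ with $|B_{r_0}(y_n)\cap\Omega|\ge c_0 r_0^3$ (either a full ball when $y_n$ is in the interior, or a uniform fraction of the ball via an interior cone condition when $y_n$ lies near $\partial\Omega$). Pick any $p\in(2,\infty)$; then
\[ \|\rho(\cdot,t_n)-1\|_{L^p}^p\ge \int_{B_{r_0}(y_n)\cap\Omega}\left|1-\rho(x,t_n)\right|^p dx\ge 2^{-p}c_0 r_0^3>0, \]
which contradicts \eqref{h11}. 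Hence \eqref{decay} holds. The main subtlety, and the only place where the exterior geometry enters, is the last step: one must rule out the possibility that $y_n$ escapes to infinity (in which case the far-field condition would force $\rho(y_n,t_n)\to 1$, incompatible with $\rho(y_n,t_n)=0$) and handle the case where $y_n$ approaches $\partial\Omega$ by using a uniform interior cone at $\partial\Omega$; both obstacles are handled by the uniform Hölder bound and the smoothness of $\partial\Omega$.
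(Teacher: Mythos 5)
Your proof is essentially correct, but it takes a more laborious route than the one the paper has in mind (the paper simply cites \cite[Theorem 1.2]{hulx} and \cite[Corollary 1.3]{H-L}). The standard argument is shorter: after establishing that the vacuum is transported along particle paths (so that $\inf_{x\in\Omega}\rho(x,t)=0$ and hence $\|\rho(\cdot,t)-1\|_{L^{\infty}}\ge 1$ for every $t\ge 0$), one applies the interpolation inequality \eqref{g2} of Lemma \ref{11} directly to $g=\rho-1$ with $q=r>3$ and any fixed $p\in(2,\infty)$, giving
$\|\rho(\cdot,t_n)-1\|_{L^\infty}\le C\|\rho(\cdot,t_n)-1\|_{L^p}^{a}\|\nabla\rho(\cdot,t_n)\|_{L^{r}}^{b}$
with $a,b\in(0,1)$, $a+b=1$. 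If $\|\nabla\rho(\cdot,t_n)\|_{L^r}\le M_1$ along a sequence $t_n\to\infty$, then \eqref{h11} forces the right-hand side to vanish, contradicting the pointwise lower bound. This makes no use of Morrey's embedding or of a measure-of-a-ball argument, and it automatically handles the possibility that the vacuum point drifts to infinity or approaches $\partial\Omega$.

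Your alternative via Morrey's embedding plus the uniform interior-cone bound on $|B_{r_0}(y_n)\cap\Omega|$ is valid — it reaches the same contradiction at the level of $L^p$ rather than $L^\infty$ — but it introduces unnecessary machinery, and two of your side remarks should be tightened. First, saying that $u$ is Lipschitz in space on $[0,T]$ is slightly too strong at $t=0$ (the initial data only gives $u_0\in H^2$); what the a priori bounds actually provide is $\nabla u\in L^{3/2}(0,T;L^\infty)$ (see \eqref{qq1}), which is what guarantees that characteristics exist and that $\int_0^t\div u(X(s),s)\,ds$ is finite, so the vacuum-persistence identity makes sense. Second, the parenthetical argument that "the far-field condition would force $\rho(y_n,t_n)\to 1$" if $y_n\to\infty$ is not an argument as stated: \eqref{ch2} is a pointwise condition at each fixed time and gives no uniform-in-$t$ rate, so it cannot directly preclude $y_n$ escaping with $\rho(y_n,t_n)=0$. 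Fortunately your measure argument (and, a fortiori, the interpolation argument) already handles that case with no extra work, so the exterior geometry is in fact not a real obstruction here, contrary to what your last sentence suggests.
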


A few remarks are in order:

\begin{remark}
One can deduce from (\ref{h9}) and the Sobolev imbedding  theorem that for any $0<\tau<T<\infty,$
\be \la{hk1}
(\n-1,\, \na\n, u) \in C(\overline{\Omega} \times[0,T] ),\quad   \te-1 \in C(\overline{\Omega} \times(0,T] ),\ee
and
\be \ba
(\na u,\,\na^2 u) \in  C( [\tau,T];L^2 )\cap L^\infty(\tau,T;  W^{1,q})\hookrightarrow  C(\overline{\Omega} \times[\tau,T] ),\\
(\na \te,\,\na^2 \te) \in  C( [\tau,T];L^2 )\cap L^\infty(\tau,T;  H^2)\hookrightarrow  C(\overline{\Omega} \times[\tau,T] ),
\ea \ee
which together with  (\ref{a1}), (\ref{h9}),  and (\ref{hk1}) gives
\be  \la{hk3} (\n_t,\,u_t,\,\te_t)\in   C(\overline{\Omega}\times[\tau,T] ).\ee
Therefore, the solution $(\n,u,\te)$ obtained in Theorem \ref{th1} is a classical one to  the   problem (\ref{a1})--(\ref{ch2}) in $\Omega\times (0,\infty).$
\end{remark}

\begin{remark} In  \cite{C-L-L}, Cai-Li-L\"u studied
 the global existence result of the  barotropic flows in  the exterior domains.  For the full compressible Navier-Stokes system, our Theorem \ref{th1}   is the first result concerning the global existence of classical solutions with  vacuum to problem (\ref{a1})--(\ref{ch2}) in  the exterior domain. Moreover, although it's energy is small, the oscillations could be arbitrarily large. 
 \end{remark}

\begin{remark} Similar to Li-L\"u-Wang \cite{L-L-W} where they consider   the IBVP in 3D bounded domains,  we only need the compatibility condition on the velocity (\ref{co2}). More precisely,  there is no need to suppose  the following compatibility condition on the temperature
	\be\la{co1} \ka\Delta \te_0+\frac{\mu}{2}|\na u_0+(\na u_0)^{\rm tr}|^2+\lambda (\div u_0)^2=\sqrt{\n_0}g_1, \quad g_1 \in L^2,\ee
which is required in \cite{choe1, H-L}.
	\end{remark}

We now comment on the analysis of this paper.  Our strategy is first extend the local classical solutions without vacuum (see Lemma \ref{th0}) globally in all time provided that the initial energy is suitably small (see Proposition \ref{pro2}), then let the lower bound of the initial density tend to zero to obtain the global classical solutions with vacuum. To  do so, one needs to  establish global a priori estimates, which are  independent of the lower bound of the density,  on smooth solutions to the   problem (\ref{a1})--(\ref{ch2}) in suitable higher norms.
 It turns out that the key issue in this paper is to derive both
  the time-independent upper bound for the density and the
   time-dependent higher norm estimates of the smooth solution $(\rho, u,\theta)$.

Compared to    the Cauchy problem in 3D whole space  (\cite{H-L})   and the IBVP in 3D bounded domains (\cite{L-L-W}), we need to handle the difficulties on both the unbounded domains and the boundary estimates.
More precisely, for the standard energy $E(t)$ defined by
\be \label{enet}E(t) \triangleq \int  \left( \frac{1}{2}\n |u|^2+R(1+\n\log
\n-\n)+\frac{R}{\ga-1}\n(\te-\log \te-1)\right)dx, \ee the following  basic energy equality (or inequality) on $E(t)$
\be\ba\la{11a}
 E(t)+\int_0^T\int \left( \frac{\lambda(\div u)^2+2\mu |\mathfrak{D}(u)|^2}{{\te}}+\ka \frac{|\na \te|^2}{\te^2} \right)dxdt\le CC_0,
\ea\ee
which plays a crucial role in  the whole analysis of \cite{H-L}, is invalid  due to the slip boundary condition, see \eqref{la2.7}. Motivated by the ideas in \cite{L-L-W}, we can obtain the following ``weaker" basic energy estimate (see also \eqref{a2.8}):
\be\la{weaken} E(t) \le CC_0^{1/4},\ee
under a priori assumption on $A_2(T)$ (see \eqref{3.q2}). However, the ``weaker" basic energy estimate \eqref{weaken} will  bring us some essential difficulties in controlling all the a priori estimates (see Proposition \ref{pr1}).

Then,  following the similar arguments in \cite{L-L-W} concerning the nonlinear coupling of $\te$ and $u$ (see \eqref{a2.225}--\eqref{a2.23}),  we can establish the energy-like estimate on $A_2(T)$ which includes the key bounds on the $L^2_tL^2_x$-norm of $(\na u, \na \te)$, provided the initial energy is small (see Lemma \ref{le3}). Indeed, we will first  adopt the approach due to Hoff \cite{Hof1} (see also Huang-Li \cite{H-L})  to  close the a priori estimates  $A_3(T)$ concerning the bounds on   the  $L^\infty_tL^2_x$-norm of $(\na u, \na \te)$ and some elementary estimates on $(\dot u, \dot \te)$.  To proceed, we adopt some  ideas in  Cai-Li-L\"u \cite{C-L-L} to handle the arguments on $(\nabla u,~\div u,~\curl u)$ and the boundary integrals as follows:
\begin{itemize}
  \item On the one hand,   we establish some necessary inequalities related to $\nabla u$, $\div u$, and $\curl u$ in  the exterior domains, which are important to estimate   $\nabla u$ by means of $\div u$ and $\curl u$, see Lemmas \ref{crle1}-\ref{crle5} in Section 2.
  \item On the other hand, we introduce a smooth `cut-off' function defined on a ball containing $\r^3-\Omega$, which not only enables us to eliminate the first derivative in the boundary integral above by using the method in Cai-Li \cite{C-L} based on the divergence theorem and the fact $u=u^\perp\times n$ (see \eqref{eee1}) on the boundary, but also reduces the boundary integral to the integral on the ball. For example, one can see the detailed calculations in \eqref{bz3} for the following boundary integral
      $$\int_{\partial\Omega} G (u\cdot \na) u\cdot\na n\cdot u dS.$$
\end{itemize}

Thus, with the lower estimates at hand, one can derive   the crucial time-independent upper bound of the density (see Lemma \ref{le7}) and then obtain the higher-order estimates (see Section 4)
just under the compatibility condition on the velocity. Note  that all the a priori estimates are independent of the lower bound of the initial density, thus after a standard approximation procedure, we can obtain the global existence of classical solutions with vacuum.

The rest of the paper is organized as follows: In Section 2, we collect some basic facts and inequalities which will be used later. Section 3 is devoted to deriving the lower-order a priori estimates on classical solutions which are needed to extend the local solutions to all time. The higher-order estimates are established in Section 4.
Finally, with all a priori estimates at hand, the main result Theorem \ref{th1} is proved in Section 5.

\section{Preliminaries}\la{se2}


First, the following well-known local existence theory with strictly positive initial density, can be shown by the standard contraction mapping argument as in \cite{choe1,Tani,M1}.

\begin{lemma}   \la{th0} Let $\O$ be as in Theorem \ref{th1}. Assume  that
 $(\n_0,u_0,\te_0)$ satisfies \be \la{2.1}
\begin{cases}
(\n_0-1,u_0,\te_0-1)\in H^3, \quad \inf\limits_{x\in\Omega}\n_0(x) >0, \quad \inf\limits_{x\in\Omega}\te_0(x)> 0,\\
u_0\cdot n=0,~~\curl u_0\times n=0,~~ \na \te_0\cdot n =0,~~~~\text{on}~\p\Omega.
\end{cases}\ee
Then there exist  a small time
$0<T_0<1$ and a unique classical solution $(\rho , u,\te )$ to the problem  (\ref{a1})--(\ref{ch2}) on $\Omega\times(0,T_0]$ satisfying
\be\la{mn6}
  \inf\limits_{(x,t)\in\Omega\times (0,T_0]}\n(x,t)\ge \frac{1}{2}
 \inf\limits_{x\in\Omega}\n_0(x), \ee and
 \be\la{mn5}
 \begin{cases}
 ( \rho-1,u,\te-1) \in C([0,T_0];H^3),\quad
 \n_t\in C([0,T_0];H^2),\\   (u_t,\te_t)\in C([0,T_0];H^1),
 \quad (u,\te-1)\in L^2(0,T_0;H^4).\end{cases}\ee
 \end{lemma}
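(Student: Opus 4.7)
The plan is to construct $(\rho,u,\te)$ by a Picard iteration on linearized problems, following \cite{choe1,Tani,M1} and adapting the scheme to the exterior geometry together with the slip/Neumann boundary data. Starting from $(\rho^0,u^0,\te^0)=(\rho_0,u_0,\te_0)$ extended trivially in time, define $(\rho^{k+1},u^{k+1},\te^{k+1})$ by first solving the linear transport equation $\rho^{k+1}_t+\div(\rho^{k+1}u^k)=0$ with data $\rho_0$ via characteristics, which stay in $\ol\O$ because $u^k\cdot n=0$ on $\p\O$ and which give the pointwise lower bound $\rho^{k+1}\ge e^{-T_0\|\div u^k\|_{L^\infty_{t,x}}}\inf\rho_0>0$; next solving the linear Lam\'e-type momentum equation
\[
\rho^{k+1}(u^{k+1}_t+u^k\cdot\na u^{k+1})=\mu\dl u^{k+1}+(\mu+\lambda)\na\div u^{k+1}-R\na(\rho^{k+1}\te^k)
\]
subject to $u^{k+1}\cdot n=0$, $\curl u^{k+1}\times n=0$; and finally the linear temperature equation
\[
\frac{R}{\ga-1}\rho^{k+1}(\te^{k+1}_t+u^k\cdot\na\te^{k+1})=\ka\dl\te^{k+1}-R\rho^{k+1}\te^k\div u^{k+1}+\lambda(\div u^{k+1})^2+2\mu|\mathfrak{D}(u^{k+1})|^2
\]
with $\na\te^{k+1}\cdot n=0$. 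A parabolic minimum principle applied to the temperature equation keeps $\te^{k+1}$ strictly positive on a short enough interval, so the positivity hypotheses propagate through the iteration.

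The next step is to derive $k$-independent bounds on a small interval $[0,T_0]$ in the Banach space whose norm encodes \eqref{mn5}. The transport step yields $\|\rho^{k+1}-1\|_{L^\infty(0,T;H^3)}+\|\rho^{k+1}_t\|_{L^\infty(0,T;H^2)}$ controlled by $\|\rho_0-1\|_{H^3}\exp(C\int_0^T\|\na u^k\|_{H^2}\,ds)$. For the momentum and temperature steps, standard $L^2$ energy estimates together with time differentiation, tangential localization, and elliptic $H^m$-regularity for the stationary Lam\'e operator on $\O$ under the slip boundary conditions (resp.\ for the Laplacian under Neumann data) yield the full $H^3$ bounds, including those on $(u_t^{k+1},\te_t^{k+1})$ through the evolution equations. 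Here one relies on the auxiliary inequalities controlling $\na u$ by $\div u$ and $\curl u$ (plus lower-order terms) on an exterior domain for vector fields obeying $u\cdot n=0$; these are the ones to be developed in Section \ref{se2} (Lemmas \ref{crle1}--\ref{crle5}). Choosing $T_0\in(0,1)$ small in terms of $\|(\rho_0-1,u_0,\te_0-1)\|_{H^3}$, $\inf\rho_0$, $\inf\te_0$, and $\O$, and arguing inductively, the iterates are confined to a fixed ball, and the lower bound \eqref{mn6} is inherited in the limit.

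Contraction in a weaker norm is then standard: the differences $(\rho^{k+1}-\rho^k,u^{k+1}-u^k,\te^{k+1}-\te^k)$ satisfy equations whose source terms are linear in the previous differences with coefficients bounded uniformly in $k$, so energy estimates in $L^\infty(0,T_0;L^2)\cap L^2(0,T_0;H^1)$ together with Gronwall produce a Cauchy sequence in this weak topology. Interpolation with the uniform $H^3$ bound upgrades convergence to the class \eqref{mn5}; the boundary conditions pass to the limit by trace continuity, and uniqueness follows from the same stability estimate applied to two putative solutions. The principal technical obstacle is the elliptic theory for the Lam\'e operator under $u\cdot n=0$, $\curl u\times n=0$ on the unbounded $\O$: these are neither Dirichlet nor classical stress-free data, and one must exploit $\dl u=\na\div u-\curl\curl u$ to decouple the boundary traces of $\div u$ and $\curl u$, following \cite{C-L,C-L-L}. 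The far-field condition $(\rho,u,\te)\to(1,0,1)$ and the cut-off of the outer normal in \eqref{ljq10} take care of behavior at infinity. No higher-order compatibility on the data is required at this stage because strict positivity of $\rho_0$ trivializes any $\sqrt{\rho_0}$-type weight that arises in the Cho-Kim framework.
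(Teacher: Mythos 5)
Your proposal is correct and takes essentially the approach the paper intends: the paper gives no proof for Lemma \ref{th0}, deferring to ``the standard contraction mapping argument as in \cite{choe1,Tani,M1},'' and your Picard iteration on the linearized transport/Lam\'e/heat system, with the density lower bound from characteristics, the temperature lower bound from a short-time comparison argument, uniform $H^3$ bounds via elliptic regularity based on $\Delta u=\na\div u-\na\times\curl u$ under the slip/Neumann data, and contraction in $L^\infty L^2\cap L^2H^1$, is exactly that scheme adapted to the exterior-domain boundary setup. Two minor caveats worth keeping in mind if you flesh this out: passing from boundedness in $L^\infty(0,T_0;H^3)$ plus strong $L^2$-convergence to genuine $C([0,T_0];H^3)$ is not pure interpolation (one needs weak-$*$ continuity plus norm continuity via the equations), and the $L^2(0,T_0;H^4)$ statement in (\ref{mn5}) requires one extra round of parabolic smoothing beyond the $H^3$ bounds you describe.
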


 \begin{remark} Applying the same arguments as in \cite[Lemma 2.1]{H-L}, one can deduce that the classical solution $(\rho , u,\te )$  obtained in Lemma \ref{th0} satisfies
\be \la{mn1}\begin{cases} (t u_{t},  t \te_{t}) \in L^2( 0,T_0;H^3)  ,\quad (t u_{tt},  t\te_{tt}) \in L^2(
0,T_0;H^1), \\  (t^2u_{tt},  t^2\te_{tt}) \in L^2( 0,T_0;H^2), \quad
 (t^2u_{ttt},  t^2\te_{ttt}) \in L^2(0,T_0;L^2).
\end{cases}\ee
Moreover, for any    $(x,t)\in \Omega\times [0,T_0],$ the following estimate holds:
   \be\la{mn2}
\te(x,t)\ge
\inf\limits_{x\in\Omega}\te_0(x)\exp\left\{-(\ga-1)\int_0^{T_0}
 \|\div u\|_{L^\infty}dt\right\}.\ee
\end{remark}

The following well-known Gagliardo-Nirenberg-Sobolev-type inequality (see \cite{fcpm}) will be used later frequently.

\begin{lemma}
 \la{11} Assume that $\Omega$ is the exterior of a simply connected domain $D$ in $\r^3$. For $r\in [2,6]$, $p\in(1,\infty)$, and $ q\in
(3,\infty),$ there exist  positive
 constants $C$ which may depend  on $r,\,p,\, q$ such that for $f\in H^1(\Omega)$,
 $g\in L^p(\Omega)\cap D^{1,q}(\Omega)$,  and $\varphi,\,\psi\in H^2(\Omega)$,
 \be
\la{g1}\|f\|_{L^r} \le C \| f\|_{L^2}^{{(6-r)}/{2r}}  \|\na f\|_{L^2}^{{(3r-6)}/{2r}},\ee
\be \la{g2}\|g\|_{C\left(\overline{\Omega}\right)}
       \le C \|g\|_{L^p}^{p(q-3)/(3q+p(q-3))}\|\na g\|_{L^q}^{3q/(3q+p(q-3))},\ee
\be\la{hs} \|\varphi\psi\|_{H^2}\le C \|\varphi\|_{H^2}\|\psi\|_{H^2}.\ee
\end{lemma}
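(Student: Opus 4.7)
The plan is to reduce each of the three estimates to its classical analogue on $\r^3$ by means of a bounded extension operator. Since $\p\Omega$ is smooth and $\r^3\setminus\Omega$ is bounded, Stein's extension theorem furnishes a single linear operator $E$ with $E:W^{k,p}(\Omega)\to W^{k,p}(\r^3)$ bounded for every nonnegative integer $k$ and every $p\in(1,\infty)$, satisfying $(Ef)|_\Omega=f$. In each case the strategy is to apply the known whole-space inequality to the extended function and then restrict back to $\Omega$.

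For \eqref{g1} I would first treat the two endpoints $r=2$ (trivial) and $r=6$, where the task is the Sobolev embedding $\|f\|_{L^6(\Omega)}\le C\|\na f\|_{L^2(\Omega)}$ with \emph{no} $L^2$-norm on the right. A direct use of Stein's $E$ only controls $\|\na(Ef)\|_{L^2(\r^3)}$ by the full $H^1$-norm of $f$, so to preserve the homogeneous seminorm I would instead use a reflection-type extension near $\p\Omega$ (after flattening the boundary with local charts and a partition of unity) patched to the identity far from the boundary. This yields an extension whose gradient on $\r^3$ is bounded by $\|\na f\|_{L^2(\Omega)}$ up to a constant, after which the classical embedding $\dot H^1(\r^3)\hookrightarrow L^6(\r^3)$ gives the case $r=6$. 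Hölder's interpolation with the pair $(2,6)$ then produces \eqref{g1} for every $r\in[2,6]$.

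For \eqref{g2} I would apply the Morrey inequality $W^{1,q}(\r^3)\hookrightarrow C^{0,1-3/q}(\r^3)$, valid for $q>3$, to $Eg$ and interpolate the resulting $L^\infty$ bound against $\|g\|_{L^p}$; the exponent $\al=p(q-3)/(3q+p(q-3))$ is exactly the one fixed by requiring that both sides scale consistently under $g(x)\mapsto g(\lambda x)$. For the algebra property \eqref{hs}, I would expand $\na^j(\varphi\psi)$ for $0\le j\le 2$ by the Leibniz rule and bound each summand by Hölder together with the Sobolev embeddings $H^2(\Omega)\hookrightarrow L^\infty(\Omega)$ and $H^1(\Omega)\hookrightarrow L^4(\Omega)$ (both obtained via $E$); the only nontrivial balance is the cross term $\na\varphi\cdot\na\psi$, controlled by $\|\na\varphi\|_{L^4}\|\na\psi\|_{L^4}\le C\|\varphi\|_{H^2}\|\psi\|_{H^2}$. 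The only genuine obstacle throughout the lemma is the homogeneous extension required at the endpoint $r=6$; everything else is essentially bookkeeping given the classical whole-space results.
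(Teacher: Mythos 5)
The paper does not prove this lemma; it simply cites Crispo--Maremonti \cite{fcpm}, so there is no internal proof to compare against. Your proposal identifies the right strategy and, more importantly, the genuine obstruction — the homogeneous endpoint — but the fix you sketch does not yet close the argument.

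For \eqref{g1} at $r=6$, i.e.\ $\|f\|_{L^6(\Omega)}\le C\|\na f\|_{L^2(\Omega)}$ with no $\|f\|_{L^2}$ term on the right, you propose a reflection-type extension near $\p\Omega$ ``patched to the identity far from the boundary,'' claiming $\|\na(Ef)\|_{L^2(\r^3)}\le C\|\na f\|_{L^2(\Omega)}$. This does not follow as written: the reflection is only defined in a collar of $\p\Omega$ inside $D$, so it must be cut off by some $\phi$, and the derivative of the cutoff produces a term $\|(\na\phi)\,\mathcal{R}f\|_{L^2}\le C\|f\|_{L^2(\text{collar})}$, which is exactly the lower-order quantity you were trying to avoid. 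The missing, non-cosmetic ingredient is a Poincar\'e inequality on the (fixed, bounded) collar: define the extension inside $D$ as $\phi\,\mathcal{R}f+(1-\phi)\,c_f$ with $c_f$ the collar mean of $f$, and use $\|f-c_f\|_{L^2(\text{collar})}\le C\|\na f\|_{L^2(\Omega)}$ to absorb the cutoff term. (A Hardy-type inequality in the radial variable serves the same purpose.) Without naming such a step, your bound on the extended gradient — and hence the $r=6$ case of \eqref{g1} — is not established.

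For \eqref{g2} there is a smaller but real gap: $g\in L^p\cap D^{1,q}$ need not lie in $L^q$, so the Morrey embedding $W^{1,q}(\r^3)\hookrightarrow C^{0,1-3/q}(\r^3)$ cannot be applied to $Eg$ as you assert. What one needs is the scale-invariant Gagliardo--Nirenberg form $\|F\|_{L^\infty(\r^3)}\le C\|F\|_{L^p}^{\alpha}\|\na F\|_{L^q}^{1-\alpha}$, obtained from the local Morrey oscillation estimate on a ball $B_r$ together with $|F_{B_r}|\le Cr^{-3/p}\|F\|_{L^p}$ and an optimization over $r$; it is this optimization, not the scaling consistency check, that actually produces the exponent. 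The extension step here also has to respect the two inhomogeneous norms separately, which again requires care beyond quoting Stein. Your treatment of \eqref{hs} via Leibniz together with $H^2\hookrightarrow L^\infty$ and $H^1\hookrightarrow L^4$ is correct.
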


Then, the following inequality is an consequence of \eqref{g1}, which will play an important role in our analysis.
\begin{lemma}
	\la{lll} Let the function $g(x)$ defined in $\Omega$ be non-negative and satisfy $g-1\in L^2(\Omega)$. Then there exist a universal positive constant C such that for $s\in[1,2]$ and any open set $\Sigma\in\Omega$, the following estimate holds
	\be\la{eee8}\int_\Sigma|f|^sdx\leq C\int_\Sigma g|f|^sdx+C\|g-1\|_{L^2(\Omega)}^{(6-s)/3}\|\na f\|_{L^2(\Omega)}^s\ee
	for all $f\in\{D^1(\Omega)~\big|~g|f|^s\in L^1(\Sigma)\}$.
\end{lemma}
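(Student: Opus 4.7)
The plan is a simple level-set decomposition of $\Sigma$ combined with one H\"older application and the 3D Sobolev inequality. First I would set $\Sigma_1:=\Sigma\cap\{g>1/2\}$ and $\Sigma_2:=\Sigma\cap\{g\le 1/2\}$. On $\Sigma_1$ the pointwise bound $1\le 2g$ gives immediately
\[
\int_{\Sigma_1}|f|^s\,dx\le 2\int_\Sigma g|f|^s\,dx,
\]
which is absorbed into the first term on the right-hand side of \eqref{eee8}. On $\Sigma_2$ one has $|g-1|=1-g\ge 1/2$, so setting $\alpha:=(6-s)/3\in[4/3,5/3]$ for $s\in[1,2]$ yields $1\le (2|g-1|)^\alpha$ pointwise, whence
\[
\int_{\Sigma_2}|f|^s\,dx\le C\int_\Omega |g-1|^{\alpha}|f|^s\,dx.
\]

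Next I would apply H\"older's inequality on $\Omega$ with the conjugate exponents $p:=6/(6-s)$ and $q:=6/s$, checking $1/p+1/q=1$. These are tuned precisely so that $\alpha p=2$ and $sq=6$, producing
\[
\int_\Omega |g-1|^{\alpha}|f|^s\,dx\le \|g-1\|_{L^2(\Omega)}^{(6-s)/3}\|f\|_{L^6(\Omega)}^s.
\]
The 3D Sobolev inequality \eqref{g1} with $r=6$ then gives $\|f\|_{L^6(\Omega)}\le C\|\nabla f\|_{L^2(\Omega)}$, and assembling the three estimates delivers \eqref{eee8}.

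The main observation driving the proof is simply the threshold $1/2$: on the set where $g$ is ``small'', $|g-1|$ is bounded below by a constant, so any non-negative power of $|g-1|$ dominates $1$ there. This gives the freedom to fix the H\"older exponent so that the $(g-1)$-factor lands in $L^2$ and the $f$-factor lands in $L^6$, and the target exponent $(6-s)/3$ in the statement is exactly what is forced by this matching. The only mild subtlety is the Sobolev step, since $D^1(\Omega)$ is defined only through $\nabla f\in L^2$ with $f\in L^1_{\rm loc}$; however, in a 3D exterior domain, functions of this class (after normalizing the value at infinity, which is the regime in which the lemma is applied throughout the paper) satisfy $\|f\|_{L^6}\le C\|\nabla f\|_{L^2}$, so no real difficulty arises.
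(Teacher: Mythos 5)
Your proof is correct, and it takes a genuinely different route from the paper's. The paper does not split the domain: it uses the pointwise bound $1\le g+|g-1|$ on all of $\Sigma$, then estimates $\int_\Sigma|g-1||f|^s\,dx$ by a three-factor H\"older with exponents producing $\|g-1\|_{L^2}\|f\|_{L^s(\Sigma)}^{s(3-s)/(6-s)}\|f\|_{L^6(\Omega)}^{3s/(6-s)}$, and finally applies Young's inequality to split off $\frac12\int_\Sigma|f|^s\,dx$ and absorb it into the left-hand side, which is where the exponent $(6-s)/3$ on $\|g-1\|_{L^2}$ emerges. Your argument avoids the absorption step entirely: by partitioning $\Sigma$ along the level set $\{g=1/2\}$, the pointwise lower bound $|g-1|\ge 1/2$ on $\Sigma_2$ lets you raise $|g-1|$ to the power $(6-s)/3$ for free, after which a single two-exponent H\"older (with $6/(6-s)$ and $6/s$) lands $g-1$ in $L^2$ and $f$ in $L^6$ in one step. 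This is somewhat cleaner — no need for the three-way H\"older or Young — at the small cost of the level-set observation. Both proofs rely on the same final Sobolev input $\|f\|_{L^6(\Omega)}\le C\|\nabla f\|_{L^2(\Omega)}$, which is \eqref{g1} with $r=6$; your remark about normalization at infinity is consistent with how the paper itself invokes \eqref{g1}.
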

\begin{proof}
	In fact, take $r=6$ in \eqref{g1}, we have
	\begin{equation*}\ba
		\int_\Sigma|f|^sdx\leq &\int_\Sigma g|f|^sdx+\int_\Sigma|g-1||f|^sdx\\
		\leq &\int_\Sigma g|f|^sdx+\|g-1\|_{L^2(\Omega)}\|f\|_{L^s(\Sigma)}^{s(3-s)/(6-s)}\|f\|_{L^6(\Omega)}^{3s/(6-s)}\\
		\leq &\int_\Sigma g|f|^sdx+\frac{1}{2}\int_\Sigma|f|^sdx+C\|g-1\|_{L^2(\Omega)}^{(6-s)/3}\|\na f\|_{L^2(\Omega)}^s,
			\ea\end{equation*}
	which implies \eqref{eee8} directly. The proof of Lemma \ref{lll} is completed.
\end{proof}

Considering the Neumann boundary value problem
\be\la{cxtj1}\begin{cases}
	-\Delta v=\div f\,\, &\text{in}~ \Omega, \\
	\frac{\partial v}{\partial n}=-f\cdot n\,&\text{on}~\partial\Omega,\\
	\nabla v\rightarrow0,\,\,&\text{as}\,\,|x|\rightarrow\infty,
\end{cases} \ee
where $v=(v^{1},v^{2},v^{3})^{\text{tr}}$ and $f=(f^{1},f^{2},f^{3})^{\text{tr}}$, it's easy to find that the problem \eqref{cxtj1} is equivalent to
$$\int\nabla v\cdot\nabla\varphi dx=\int f\cdot\nabla\varphi dx,\,\,\forall\varphi\in C_0^{\infty}(\Omega).$$
Thanks to \cite[Lemma 5.6]{ANIS}, we have the following conclusion.
\begin{lemma}   \la{zhle}
	Considering the system \eqref{cxtj1}, for $q\in(1,\infty),$ one has
	
	(1) If $f\in L^q$, then there exists a unique (modulo constants) solution $v\in D^{1,q}$ such that
	$$\|\nabla v\|_{L^q}\leq C(q,\O)\|f\|_{L^q}.$$
	
	(2) If $f\in W^{k,q}$ with $k\geq 1$, it holds that $\na v\in W^{k,q}$ and
	$$\|\nabla v\|_{W^{k,q}}\leq C(k,q,\O) \|f\|_{W^{k,q}}.$$	
	In particular, if $f\cdot n=0$ on $\partial\Omega$, it holds
	$$\|\nabla^2v\|_{L^q}\leq C(q,\O) \|\div f\|_{L^q}.$$	
\end{lemma}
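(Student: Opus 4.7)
\textbf{Proof proposal for Lemma \ref{zhle}.}
The plan is to reduce the exterior Neumann problem \eqref{cxtj1} to a combination of the corresponding Neumann problem on a bounded smooth domain and a whole-space problem, and then invoke classical Calder\'on-Zygmund / Agmon-Douglis-Nirenberg theory together with the cited result \cite[Lemma 5.6]{ANIS}. In the first step I would fix a large ball $B_R$ with $\bar D\subset B_R$ and a cutoff $\chi\in C_c^\infty(\mathbb{R}^3)$ with $\chi\equiv 1$ on $B_{R}$ and $\chi\equiv 0$ outside $B_{2R}$, so that $\chi\equiv 1$ near $\partial\Omega$. For $q=2$ I would first obtain existence and the basic estimate by Lax-Milgram on the homogeneous Sobolev space $D^{1,2}(\Omega)$ quotiented by constants, noting that $\varphi\mapsto\int f\cdot\nabla\varphi\,dx$ is continuous on $D^{1,2}(\Omega)$ whenever $f\in L^2(\Omega)$.

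For general $q\in(1,\infty)$ I would decompose $v=\chi v+(1-\chi)v$. The function $(1-\chi)v$, extended by zero across $\partial\Omega$, satisfies a whole-space Poisson equation $-\Delta((1-\chi)v)=\div((1-\chi)f)+h$ in $\mathbb{R}^3$, where $h$ collects commutators of $\chi$ with $\Delta$ and $\div$ and is supported in the annulus $B_{2R}\setminus B_{R}$, hence controllable by lower-order terms. The standard $L^q$ Calder\'on-Zygmund estimate for the Newtonian potential then yields $\|\nabla^2((1-\chi)v)\|_{L^q}\le C(q)\|f\|_{L^q}+\mbox{l.o.t.}$ The function $\chi v$ solves a Neumann problem on the bounded smooth domain $\Omega\cap B_{2R}$ with right-hand side involving $f$ and commutator terms, for which the Agmon-Douglis-Nirenberg $L^q$-theory furnishes the desired bound. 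Combining the two and absorbing the lower-order pieces by a standard compactness/Poincar\'e argument (modulo constants) gives part (1) for all $q\in(1,\infty)$; uniqueness modulo constants follows from the $q=2$ case via Liouville-type reasoning for harmonic functions in $D^{1,q}(\Omega)$.

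For part (2), once $\nabla v\in L^q$ is established, I would bootstrap: differentiating the equation tangentially near $\partial\Omega$ and using the $W^{k,q}$-theory for the Neumann problem on $\Omega\cap B_{2R}$, together with the interior regularity $\nabla v\in W_{loc}^{k,q}$ inherited from $-\Delta v=\div f$, yields $\|\nabla v\|_{W^{k,q}}\le C(k,q,\Omega)\|f\|_{W^{k,q}}$ by induction on $k$. Finally, when $f\cdot n=0$ on $\partial\Omega$, the Neumann boundary datum in \eqref{cxtj1} is homogeneous, so $v$ satisfies $-\Delta v=g$ with $g=\div f\in L^q$ and $\partial v/\partial n=0$ on $\partial\Omega$; the classical $W^{2,q}$-estimate for the homogeneous Neumann Laplacian on exterior domains (again obtained via the same cutoff decomposition, this time using $W^{2,q}$ regularity of the Neumann Laplacian on the bounded piece and Calder\'on-Zygmund for the Poisson equation on the whole space) gives $\|\nabla^2 v\|_{L^q}\le C(q,\Omega)\|\div f\|_{L^q}$.

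The main obstacle is the interplay between the unbounded domain and the $L^q$-framework: one must ensure that the commutator/lower-order terms produced by the cutoff can be absorbed, and that the decay condition $\nabla v\to 0$ at infinity together with the quotient-by-constants uniquely fixes the solution. Both issues are handled precisely as in \cite[Lemma 5.6]{ANIS}, which is the reason the authors only cite that reference; my proposal is essentially to verify that their setting (smooth simply connected $D$, smooth $\partial\Omega$, arbitrary $q\in(1,\infty)$) fits within the hypotheses of that lemma.
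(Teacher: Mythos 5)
The paper itself gives no proof of Lemma \ref{zhle}: the statement is presented as an immediate consequence of \cite[Lemma 5.6]{ANIS}, with no argument supplied. Your proposal correctly recognizes this and, instead of comparing with a nonexistent in-paper proof, sketches the standard argument underlying that cited result (cutoff decomposition into a bounded-domain Neumann problem near $\partial\Omega$ handled by Agmon--Douglis--Nirenberg theory, plus a whole-space Poisson problem handled by Calder\'on--Zygmund, followed by bootstrapping for part (2) and the observation that $f\cdot n|_{\p\O}=0$ makes the boundary datum homogeneous so that $\|\na^2 v\|_{L^q}\le C\|\div f\|_{L^q}$). This is the right outline, and it is sound.

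One small imprecision worth fixing: in your treatment of $(1-\chi)v$ for part (1), the Calder\'on--Zygmund estimate you want is the first-order one, $\|\na w\|_{L^q(\r^3)}\le C\|g\|_{L^q(\r^3)}$ for $-\Delta w=\div g$ in $\r^3$, which is what matches the claimed bound $\|\na v\|_{L^q}\le C\|f\|_{L^q}$; the second-order bound $\|\na^2((1-\chi)v)\|_{L^q}$ that you write down is what you need later, when proving part (2) and the final $\|\na^2 v\|_{L^q}\le C\|\div f\|_{L^q}$ statement, not for part (1). Also note that the equation in \eqref{cxtj1} should be read componentwise (in the paper's application $v$ is actually the scalar effective viscous flux $G$ and $f=\rho\dot u$), but this does not affect the argument. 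Finally, the uniqueness-modulo-constants claim for general $q\in(1,\infty)$ on an exterior domain does require some care (Liouville for $D^{1,q}$ harmonic functions with homogeneous Neumann data and $\na v\to 0$ at infinity), which is exactly the content of the cited lemma; deferring to it there, as you do, is appropriate.
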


 The $L^p$-estimate of $\na v$ for $v$ satisfying the boundary condition $v\cdot n=0$ or $v\times n=0$ on $\partial\Omega$, is shown in the following Lemmas \ref{crle1}, \ref{crle2}, and  \ref{crle5}, whose proof can be found in \cite[Theorem 3.2]{vww}, \cite[Theorem 5.1]{lhm}, and \cite[Lemma 2.9]{C-L-L}, respectively.

\begin{lemma}  \cite[Theorem 3.2]{vww} \la{crle1}
Let $\Omega=\r^3-\bar{D}$ be the exterior of a simply connected bounded domain $D\subset\r^3$  with $C^{1,1}$ boundary.
For $v\in D^{1,q}$ with $v\cdot n=0$ on $\partial\Omega$, it holds that
	\be\la{ljq01}\|\nabla v\|_{L^{q}}\leq C(q,D)(\|\div v\|_{L^{q}}+\|\curl v\|_{L^{q}})~~~\,\,\,for\, any\,\, 1<q<3,\ee
	and
	$$\|\nabla v\|_{L^q}\leq C(q,D)(\|\div v\|_{L^q}+\|\curl v\|_{L^q}+\|\nabla v\|_{L^2})~~~\,\,\,for\, any\,\, 3\leq q<+\infty.$$
\end{lemma}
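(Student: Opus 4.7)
The plan is to use a Helmholtz decomposition adapted to the boundary condition $v\cdot n=0$, reducing the estimate to two standard elliptic bounds.

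First, I would solve the Neumann problem $-\Delta\phi=-\div v$ in $\Omega$ with $\partial\phi/\partial n=0$ on $\partial\Omega$ and $\nabla\phi\to 0$ at infinity. Since $v\cdot n=0$, this fits Lemma~\ref{zhle} with $f=-v$ (so that $f\cdot n=0$ on $\partial\Omega$), yielding $\|\nabla^2\phi\|_{L^q}\le C\|\div v\|_{L^q}$. Setting $w:=v-\nabla\phi$, one checks $\div w=0$, $\curl w=\curl v$, and $w\cdot n=0$ on $\partial\Omega$, so the problem reduces to bounding $\|\nabla w\|_{L^q}$ by $\|\curl v\|_{L^q}$ (plus $\|\nabla v\|_{L^2}$ when $q\ge 3$).

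Second, since $\Omega$ is the exterior of a \emph{simply connected} bounded domain, $w$ admits a vector potential $w=\curl\psi$ with $\div\psi=0$ and $\psi\times n=0$ on $\partial\Omega$, decaying at infinity. The identity $-\Delta\psi=\curl\curl\psi-\nabla\div\psi=\curl w=\curl v$ is then an Agmon--Douglis--Nirenberg elliptic system for $\psi$. I would localize via a smooth partition of unity $1=\chi_1+\chi_2$ with $\chi_1$ supported in a neighborhood of $\partial\Omega$: on the support of $\chi_2$, whole-space Calder\'on--Zygmund estimates applied to the zero-extension of $\chi_2\psi$ give the bound; near $\partial\Omega$, flattening $\partial\Omega$ via local charts and reflecting across the flat side reduces $\chi_1\psi$ to a half-space Laplacian problem whose $L^q$ theory is classical. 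Assembling these pieces with $\nabla w=\nabla\curl\psi$ and absorbing lower-order terms through standard interpolation yields the desired estimate.

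The dichotomy between $1<q<3$ and $q\ge 3$ is intrinsic to exterior domains in $\r^3$ and reflects the borderline behavior of the underlying Riesz/Biot--Savart operators at $q=3$: for $q<3$ the Hardy--Sobolev inequality supplies enough decay on $v$ that the map $v\mapsto(\div v,\curl v,v\cdot n|_{\partial\Omega})$ is injective on $D^{1,q}(\Omega)$ and the estimate closes directly; for $q\ge 3$ the reconstruction acquires a finite-dimensional kernel of slowly decaying harmonic fields that cannot be captured by the $L^q$ data alone, and $\|\nabla v\|_{L^2}$ is the natural scale-invariant quantity that controls the projection onto this kernel through compact Sobolev embedding on a bounded neighborhood of $\partial\Omega$. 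I expect this last kernel analysis, together with tracking the uniform $q$-dependence of the exterior-domain vector potential across the whole range $1<q<\infty$, to be the main obstacle; in practice the cleanest route is to invoke the classical vector-potential theory (\`a la von Wahl or Kozono--Yanagisawa) rather than reprove it from scratch.
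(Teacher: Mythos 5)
The paper does not prove Lemma~\ref{crle1}: it is quoted verbatim as \cite[Theorem 3.2]{vww} (von Wahl), and the sentence preceding it explicitly delegates the proof to that reference. So there is no ``paper's own proof'' to compare against; what one can assess is whether your plan would actually establish the result.

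Your outline (Helmholtz decomposition by a Neumann problem, vector potential for the divergence-free remainder, ADN/Calder\'on--Zygmund estimates for the vector Laplacian) is the standard strategy in this corner of the literature, and nothing in it is wrongheaded. But two issues keep it from being a proof. First, and most seriously, the $L^q$-theory for the exterior-domain vector potential with the boundary condition $\psi\times n=0$, uniformly in $q\in(1,\infty)$ and including the decay analysis at infinity that produces the $q<3$ versus $q\ge3$ dichotomy, \emph{is} von Wahl's Theorem 3.2. You acknowledge this yourself when you conclude that ``the cleanest route is to invoke the classical vector-potential theory (\`a la von Wahl or Kozono--Yanagisawa),'' which makes the argument circular: the heavy lifting in steps 2--3 is exactly the cited result. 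Second, there is a regularity mismatch in step 1: the hypothesis is only $v\in D^{1,q}$, so $v$ itself need not lie in $L^q$ (for $1<q<3$ the homogeneous Sobolev embedding gives $v\in L^{3q/(3-q)}$, not $L^q$; for $q\ge 3$ there is no Lebesgue control at all). Thus you cannot simply put $f=-v$ into Lemma~\ref{zhle}, whose ``in particular'' clause requires $f\in W^{1,q}$; you would need to reformulate the Neumann solve in the homogeneous scale, or approximate, or derive the Calder\'on--Zygmund bound $\|\nabla^2\phi\|_{L^q}\le C\|\div v\|_{L^q}$ directly without passing through $\|v\|_{L^q}$. Finally, the heuristic you give for the dichotomy (a finite-dimensional kernel of slowly decaying harmonic fields controlled by $\|\nabla v\|_{L^2}$) is suggestive but not substantiated; the actual mechanism in exterior domains is the failure of the homogeneous Sobolev embedding $D^{1,q}\hookrightarrow L^{q^*}$ for $q\ge 3$, which breaks the decay that a self-contained $L^q$ reconstruction would require, and turning your sketch into a rigorous kernel/Fredholm argument would need the same exterior-domain elliptic machinery you are trying to avoid invoking.
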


\begin{lemma} \cite[Theorem 5.1]{lhm}  \la{crle2}
	Let $\Omega$ be given in Lemma \ref{crle1}, for any $v\in W^{1,q}\,\,(1<q<+\infty)$ with $v\times n=0$ on $\partial\Omega$, it holds that
	$$\|\nabla v\|_{L^q}\leq C(q,D)(\|v\|_{L^q}+\|\div v\|_{L^q}+\|\curl v\|_{L^q}).$$
\end{lemma}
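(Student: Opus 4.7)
The plan is to prove this as a Gaffney-type inequality by combining a partition of unity with interior Calder\'on-Zygmund bounds and a near-boundary flattening-and-reflection argument. First I would fix $R>0$ with $\bar D\subset B_R$ and choose smooth cutoffs $\chi_1,\chi_2$ with $\chi_1+\chi_2=1$, $\text{supp}\,\chi_1\subset B_{3R}$, and $\chi_2\equiv 1$ outside $B_{2R}$, then split $v=\chi_1 v+\chi_2 v$ and estimate the two pieces separately.

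For the exterior piece $w_2=\chi_2 v$, extension by zero into $\bar D$ gives a field on $\r^3$ to which I would apply the identity $-\Delta w_2=\curl\curl w_2-\na\div w_2$ together with the $L^q$-boundedness of Riesz transforms for $1<q<\infty$, obtaining
$$\|\na w_2\|_{L^q(\r^3)}\le C(\|\div w_2\|_{L^q}+\|\curl w_2\|_{L^q}).$$
Expanding $\div w_2=\chi_2\div v+\na\chi_2\cdot v$ and $\curl w_2=\chi_2\curl v+\na\chi_2\times v$, and noting that $\na\chi_2$ is supported in the annulus $B_{3R}\setminus B_{2R}$, yields control of $\|\na w_2\|_{L^q}$ by $\|\div v\|_{L^q}+\|\curl v\|_{L^q}+\|v\|_{L^q}$.

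For the boundary piece $w_1=\chi_1 v$, I would cover $\p\Omega$ by finitely many smooth boundary charts, flattening $\p\Omega$ in each chart to $\{y_3=0\}$ by a $C^{1,1}$ diffeomorphism. After this flattening the condition $v\times n=0$ becomes the vanishing of the two tangential components $v_1=v_2=0$ on $\{y_3=0\}$, up to lower-order corrections from the Jacobian. I then reflect $v_1,v_2$ oddly and $v_3$ evenly across $\{y_3=0\}$; the resulting field lies in $W^{1,q}(\r^3)$, and its divergence and curl agree on the upper half-space with those of the original $v$ up to zeroth-order terms generated by the metric. Applying the whole-space Riesz-transform bound to the reflected field, transforming back, and summing over charts gives a local estimate bounding $\|\na w_1\|_{L^q}$ by $\|\div v\|_{L^q}+\|\curl v\|_{L^q}+\|v\|_{L^q}$.

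Adding the two local bounds yields the claimed estimate. The main technical obstacle is the boundary step: one must verify that flattening $\p\Omega$ and the subsequent reflection generate only zeroth-order commutators with $\div$ and $\curl$, so that the correction terms genuinely absorb into $\|v\|_{L^q}$. This is a careful but routine computation based on how the first-order operators $\div$ and $\curl$ transform under $C^{1,1}$ diffeomorphisms of the ambient space.
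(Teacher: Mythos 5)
The paper cites this result from Louati--Meslameni--Razafison without proof, so there is no internal argument to compare against; your partition-of-unity plus flattening-and-reflection scheme is a genuinely independent, classical Gaffney-type route, and the far-field Riesz-transform step for $\chi_2 v$ is correct as stated.

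The boundary step, however, has a gap that prevents the argument from closing. You do not specify the transport rule for $v$ under the flattening diffeomorphism $\Phi$, and with componentwise transport $\tilde v = v\circ\Phi^{-1}$ the condition $v\times n=0$ does \emph{not} become $\tilde v_1=\tilde v_2=0$ on $\{y_3=0\}$, even approximately: the transported normal is $n\circ\Phi^{-1}$, not $e_3$, so the tangential traces of $\tilde v$ are genuinely nonzero, the odd reflection of $\tilde v_1,\tilde v_2$ has a jump across $\{y_3=0\}$, and the reflected field is not in $W^{1,q}(\r^3)$. The correct transport for $v\times n=0$ is the $1$-form pullback $\tilde v=(D\Phi)^{-\mathrm{tr}}\,(v\circ\Phi^{-1})$, under which the tangential trace vanishes exactly. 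But then your claim that the divergence and curl of the transported field ``agree with those of the original up to zeroth-order terms'' is false for the divergence. The exterior derivative commutes with pullback, so $\curl\tilde v$ is a bounded matrix function times $\curl v\circ\Phi^{-1}$ modulo zeroth-order terms, and is controlled by $\|\curl v\|_{L^q}+\|v\|_{L^q}$. A direct computation, however, gives $\div\tilde v=\mathrm{tr}\bigl(AA^{\mathrm{tr}}\na v\bigr)\circ\Phi^{-1}+\text{(zeroth order)}$ with $A=D\Phi^{-1}$, which involves the \emph{full} gradient $\na v$, not merely $\div v$. Feeding this into the whole-space Riesz estimate and transforming back yields only the vacuous bound $\|\na v\|_{L^q}\le C(\|\na v\|_{L^q}+\|\curl v\|_{L^q}+\|v\|_{L^q})$. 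The missing ingredient is the standard freezing-of-coefficients argument: choose the chart so that $D\Phi=I$ at a fixed boundary point, making $AA^{\mathrm{tr}}-I$ small on a sufficiently small neighborhood; absorb the resulting small first-order term into the left-hand side; and then sum the local estimates over a finite cover of $\partial\Omega$. Without this step the near-boundary estimate does not close.
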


\begin{lemma}  \cite[Lemma 2.9]{C-L-L} \la{crle5}
Assume  $\Omega=\r^3-\bar{D}$  is the same as in Theorem \ref{th1}.
For any $q\in[2,4] $,    there exists some positive constant $C=C(q,D)$  such that for  every $v\in \{ D^{1,2}(\O)| v(x)\rightarrow 0  \mbox{ as } |x|\rightarrow\infty  \}$, it holds
\be\la{eee0}\ba\|v\|_{L^q(\partial\Omega)}\leq C\|\na v\|_{L^2(\Omega)}.\ea\ee
Moreover, for $p\in[2,6] $ and $k\geq 1,$  if $v\in \{D^{k+1,p}\cap D^{1,2}| v(x)\rightarrow 0  \mbox{ as } |x|\rightarrow\infty  \}$  with $v\cdot n|_{\partial\Omega}=0$ or $v\times n|_{\partial\Omega}=0$, then there exists some constant $C=C(p,k,D)$ such that
	\be\la{uwkq}\ba\|\nabla v\|_{W^{k,p}}\leq C(\|\div v\|_{W^{k,p}}+\|\curl v\|_{W^{k,p}}+\|\nabla v\|_{L^2}).\ea\ee
\end{lemma}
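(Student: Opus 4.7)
\textbf{Plan for the boundary estimate \eqref{eee0}.} Since $\partial\Omega$ is bounded and smooth, I localize near the boundary. Fix a cutoff $\zeta\in C_c^\infty(\r^3)$ with $\zeta\equiv 1$ on a neighborhood of $\bar D$ and $\mathrm{supp}\,\zeta\subset B_{2d}$, so $\zeta v$ is supported in the bounded set $\Omega\cap B_{2d}$ and coincides with $v$ on $\partial\Omega$. The classical trace theorem for Lipschitz domains combined with the Sobolev embedding $H^{1/2}(\partial\Omega)\hookrightarrow L^4(\partial\Omega)$ in 3D gives $\|\zeta v\|_{L^q(\partial\Omega)}\le C\|\zeta v\|_{H^1(\Omega\cap B_{2d})}$ for $q\in[2,4]$. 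Because $v(x)\to0$ as $|x|\to\infty$ and $\nabla v\in L^2(\Omega)$, the exterior-domain Sobolev inequality yields $\|v\|_{L^6(\Omega)}\le C\|\nabla v\|_{L^2(\Omega)}$, and then H\"older on the bounded set $B_{2d}$ controls $\|\zeta v\|_{L^2}\le C\|v\|_{L^6}\le C\|\nabla v\|_{L^2}$. A similar bound on $\|\nabla(\zeta v)\|_{L^2}$ finishes the estimate.

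\textbf{Plan for the higher-order div-curl estimate \eqref{uwkq}.} I would induct on $k\ge1$. For the base case, I combine Lemmas \ref{crle1} and \ref{crle2}: applied to $v$ itself they give $\|\nabla v\|_{L^p}\le C(\|\div v\|_{L^p}+\|\curl v\|_{L^p}+\|\nabla v\|_{L^2})$, where the $\|\nabla v\|_{L^2}$ term absorbs both the $\|\nabla v\|_{L^2}$ correction from \eqref{ljq01} when $p\ge3$ and the $\|v\|_{L^p}$ term from Lemma \ref{crle2} (the latter via a cutoff and the exterior Sobolev embedding $\|v\|_{L^6(\Omega)}\le C\|\nabla v\|_{L^2}$, with the remaining norm on the bounded support handled by $L^p$-interpolation between $L^2$ and $L^6$). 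To pass from the estimate on $\nabla v$ to one on $\nabla^2 v$ and then iterate, I differentiate along a tangential frame on $\partial\Omega$ extended smoothly into $\Omega$ using \eqref{ljq10}; the commutators with $\div$ and $\curl$ produce only lower-order terms involving $\nabla n\in C^2(\bar\Omega)$, and the boundary condition $v\cdot n=0$ (resp.\ $v\times n=0$) is preserved by tangential differentiation up to curvature terms that are also lower order. Normal derivatives of $\nabla v$ are then recovered from the equations $\div v$, $\curl v$, and the already-controlled tangential derivatives.

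\textbf{Main obstacle.} The delicate step is the inductive handling of the boundary conditions for higher $k$, because neither $v\cdot n=0$ nor $v\times n=0$ is preserved by arbitrary partial derivatives of $v$. I would deal with this through a partition of unity subordinate to a finite cover of $\partial\Omega$ by coordinate patches in which the boundary is flattened; in each flattened chart the div-curl system reduces to a half-space problem with either a Dirichlet-type or a Neumann-type component, for which the $W^{k,p}$ estimates are standard. All curvature commutator terms generated by the flattening are of order $\le k$ in $v$ and are absorbed into the right-hand side using the already-established $k'<k$ case together with the global control $\|\nabla v\|_{L^2}$; the decay hypothesis at infinity is precisely what allows $\|\nabla v\|_{L^2}$, rather than some $\|v\|_{L^p}$ norm, to dominate all these leftover lower-order contributions.
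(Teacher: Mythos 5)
The paper does not prove this lemma; it cites it verbatim from \cite[Lemma 2.9]{C-L-L}, so there is no in-paper argument to match against. I can only assess your proposal on its own merits.

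Your plan for the trace inequality \eqref{eee0} is sound and standard: localize by a cutoff $\zeta$ supported in $B_{2d}$, use the $H^1 \to H^{1/2}(\partial\Omega)\hookrightarrow L^4(\partial\Omega)$ trace/embedding chain (and then H\"older for $q<4$), and close by $\|\zeta v\|_{H^1}\le C\|\nabla v\|_{L^2}$ via the exterior Sobolev inequality $\|v\|_{L^6}\le C\|\nabla v\|_{L^2}$ applied on the bounded set $\mathrm{supp}\,\zeta$. This part is correct.

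For \eqref{uwkq} there is a genuine imprecision at the very first step. You claim that applying Lemma \ref{crle2} ``to $v$ itself'' gives $\|\nabla v\|_{L^p}\le C(\|\div v\|_{L^p}+\|\curl v\|_{L^p}+\|\nabla v\|_{L^2})$, absorbing its $\|v\|_{L^p}$ term by ``a cutoff and the exterior Sobolev embedding.'' But Lemma \ref{crle2} as stated hands you the \emph{global} norm $\|v\|_{L^p(\Omega)}$, and in an exterior domain $\|v\|_{L^p(\Omega)}$ for $p<6$ is not dominated by $\|\nabla v\|_{L^2(\Omega)}$ (take $v\sim|x|^{-s}$ with $s\in(1/2,3/p)$: then $\nabla v\in L^2$ but $v\notin L^p$). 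A cutoff applied \emph{after} invoking the lemma does not remedy this. The correct route is to cut off \emph{before} invoking any div-curl estimate: near $\partial\Omega$ apply a bounded-domain div-curl estimate to $\zeta v$, where every $v$-term lives on a bounded set and is indeed controlled by $\|v\|_{L^6}\lesssim\|\nabla v\|_{L^2}$; away from $\partial\Omega$ use the boundary-free Riesz-transform identity for $(1-\zeta)v$ extended by zero, where no $v$-term appears at all and the cutoff commutators are again supported on a bounded annulus. Your ``partition of unity'' remark later gestures at this, but the base-case sentence as written would not survive scrutiny. A secondary issue: you call the $L^p$ gradient bound the ``base case,'' but the lemma's base case is $k=1$, i.e.\ $\|\nabla v\|_{W^{1,p}}$; the second-order piece is exactly where the tangential-flattening argument must be executed and is not a corollary of Lemmas \ref{crle1}--\ref{crle2}. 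The overall inductive architecture (flatten, tangential differentiation, normal-derivative recovery from $\div$, $\curl$, inductive absorption of curvature commutators, using $\|\nabla v\|_{L^2}$ only for the boundedly-supported lower-order remainders) is the standard route and is plausible, but it is sketched rather than carried out.
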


Now, we will give some a priori estimates for $G$, $\curl u$, and $\nabla u$, which will be used frequently later.

\begin{lemma} \la{le4}
  Assume  $\Omega=\r^3-\bar{D}$  is the same as in Theorem \ref{th1}. Let $(\rho,u,\te)$ be a smooth solution of
   (\ref{a1})--(\ref{ch2}).
    Then for any $p\in [2,6],$ there exists a generic positive
   constant $C$ depending only on $p$, $\mu,$   $\lambda,$ $R$,  and $D$ such that
   \be\la{h19}\|\na G\|_{L^p} \le C\|\rho\dot{u}\|_{L^p},\ee
   \be\la{h191}  \|{\nabla \curl u}\|_{L^p}
   \le C (\|\rho\dot{u}\|_{L^p} + \|\rho\dot{u}\|_{L^2} + \|\na {u}\|_{L^2}),\ee
   \be  \la{h20}\|G\|_{L^p}
   \le C \|\rho\dot{u}\|_{L^2}^{(3p-6)/(2p) }
   \left(\|{\nabla u}\|_{L^2}
   +  \|\rho\te-1\|_{L^2}\right)^{(6-p)/(2p)},\ee
   \be  \la{h21} \|\curl u\|_{L^p}
   \le C \|\rho\dot{u}\|_{L^2}^{(3p-6)/(2p) }
   \|{\nabla u}\|_{L^2}
   ^{(6-p)/(2p)} + C\|{\nabla u}\|_{L^2}.\ee
  Moreover, it holds that
  \be\ba\la{h17} \|\na u\|_{L^p}\le& C \left( \|\rho\dot{u}\|_{L^2}+\|\rho\te-1\|_{L^6}\right)^{(3p-6)/(2p) }  \|{\nabla
  	u}\|_{L^2}^{(6-p)/(2p)} + C\|{\nabla	u}\|_{L^2}.
  \ea\ee
\end{lemma}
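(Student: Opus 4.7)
The plan is to rewrite the momentum equation $(\ref{a1})_2$ using the identity $\Delta u = \nabla\div u - \nabla\times\curl u$, which yields the key algebraic relation
\begin{equation*}
\nabla G - \mu\,\nabla\times\curl u = \rho\dot u.
\end{equation*}
Taking the divergence and the curl of this identity gives $\Delta G = \div(\rho\dot u)$ and $\mu\Delta(\curl u) = -\curl(\rho\dot u)$, respectively; these two consequences, combined with the slip boundary conditions, will underpin every estimate in the lemma.

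For \eqref{h19}, I would first observe that $\curl u\times n = 0$ on $\partial\Omega$ forces $(\nabla\times\curl u)\cdot n = 0$ on $\partial\Omega$ by Stokes' theorem applied to any small disk in $\partial\Omega$ (since $\curl u$ is parallel to $n$ there while the circulation element $d\ell$ is perpendicular to $n$). Dotting the displayed identity with $n$ therefore gives the boundary condition $\partial G/\partial n = \rho\dot u\cdot n$ on $\partial\Omega$, so $G$ solves the Neumann problem \eqref{cxtj1} with source $f = -\rho\dot u$ (the decay $G\to 0$ at infinity follows from \eqref{ch2}). Lemma \ref{zhle}(1) then yields \eqref{h19} at once.

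To prove \eqref{h191}, I would apply Lemma \ref{crle2} to $v = \curl u$, which satisfies $v\times n = 0$ on $\partial\Omega$, obtaining $\|\nabla\curl u\|_{L^p}\le C(\|\curl u\|_{L^p} + \|\curl\curl u\|_{L^p})$ since $\div\curl u = 0$. From $\mu\,\curl\curl u = \nabla G - \rho\dot u$ and \eqref{h19}, the middle term is bounded by $C\|\rho\dot u\|_{L^p}$. The remaining term $\|\curl u\|_{L^p}$ is handled in two steps: the case $p=2$ gives $\|\nabla\curl u\|_{L^2}\le C(\|\nabla u\|_{L^2} + \|\rho\dot u\|_{L^2})$ via the trivial bound $\|\curl u\|_{L^2}\le\|\nabla u\|_{L^2}$, and for $p\in(2,6]$ a Sobolev embedding from Lemma \ref{11} produces $\|\curl u\|_{L^p}\le C(\|\curl u\|_{L^2} + \|\nabla\curl u\|_{L^2})\le C(\|\nabla u\|_{L^2} + \|\rho\dot u\|_{L^2})$ by bootstrapping the $p=2$ case.

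Finally, \eqref{h20}, \eqref{h21}, and \eqref{h17} are obtained by Gagliardo--Nirenberg interpolation \eqref{g1}: applying it to $G$ gives $\|G\|_{L^p}\le C\|G\|_{L^2}^{(6-p)/(2p)}\|\nabla G\|_{L^2}^{(3p-6)/(2p)}$, and combining with $\|G\|_{L^2}\le C(\|\nabla u\|_{L^2} + \|\rho\theta-1\|_{L^2})$ from the definition of $G$ and with \eqref{h19} yields \eqref{h20}; the analogous step for $\curl u$, together with $\|\curl u\|_{L^2}\le\|\nabla u\|_{L^2}$ and \eqref{h191}, yields \eqref{h21}. For \eqref{h17}, Lemma \ref{crle1} applied to $u$ (which has $u\cdot n = 0$ on $\partial\Omega$) reads $\|\nabla u\|_{L^p}\le C(\|\div u\|_{L^p} + \|\curl u\|_{L^p} + \|\nabla u\|_{L^2})$ for $p\in[2,6]$; substituting $\div u = (G + R(\rho\theta-1))/(2\mu+\lambda)$, inserting \eqref{h20} and \eqref{h21}, and interpolating $\|\rho\theta-1\|_{L^p}$ between $L^2$ and $L^6$ assembles \eqref{h17}. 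The main obstacle, and the only point requiring genuine care, is the derivation of the Neumann boundary condition $\partial G/\partial n = \rho\dot u\cdot n$; once that identity is in hand the remainder is algebra and interpolation.
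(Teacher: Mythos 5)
Your approach to \eqref{h19}--\eqref{h21} is correct and essentially matches the paper's: the paper also identifies $G$ as a weak solution of the Neumann problem \eqref{cxtj1} (it writes the weak form directly rather than deriving $\partial G/\partial n=\rho\dot u\cdot n$ pointwise, but the content is the same, and your observation that $\curl u\times n=0$ on $\partial\Omega$ forces $(\nabla\times\curl u)\cdot n=0$ there is exactly what makes the Neumann datum come out right). The bootstrap for $\nabla\curl u$ via Lemma~\ref{crle2} and Gagliardo--Nirenberg also reproduces the paper's argument.

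However, your proposed derivation of \eqref{h17} has a genuine gap. You suggest applying Lemma~\ref{crle1} directly at exponent $p$, substituting $\div u=(G+R(\rho\theta-1))/(2\mu+\lambda)$, inserting \eqref{h20}, and interpolating $\|\rho\theta-1\|_{L^p}$ between $L^2$ and $L^6$. That route produces two terms the target inequality \eqref{h17} does not contain: from \eqref{h20} you get $\|\rho\dot u\|_{L^2}^{(3p-6)/(2p)}\|\rho\theta-1\|_{L^2}^{(6-p)/(2p)}$, and from the interpolation of $\|\rho\theta-1\|_{L^p}$ you get $\|\rho\theta-1\|_{L^2}^{(6-p)/(2p)}\|\rho\theta-1\|_{L^6}^{(3p-6)/(2p)}$. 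Since $\Omega$ is unbounded, $\|\rho\theta-1\|_{L^2}$ is \emph{not} controlled by $\|\rho\theta-1\|_{L^6}$ or by $\|\nabla u\|_{L^2}$, so these terms cannot be absorbed into the right-hand side of \eqref{h17}, whose only appearances of $\rho\theta-1$ are in $L^6$. The paper avoids this by never invoking \eqref{h20} for $p<6$: it interpolates $\|\nabla u\|_{L^p}\le C\|\nabla u\|_{L^2}^{(6-p)/(2p)}\|\nabla u\|_{L^6}^{(3p-6)/(2p)}$ and then bounds $\|\nabla u\|_{L^6}$ via Lemma~\ref{crle1} at $q=6$, where \eqref{h20} degenerates to $\|G\|_{L^6}\le C\|\rho\dot u\|_{L^2}$ with no $\|\rho\theta-1\|_{L^2}$ factor. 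This keeps the second factor in the final product as a pure power of $\|\nabla u\|_{L^2}$ and yields exactly \eqref{h17} after a Young-type split. You should replace the final paragraph of your argument with this interpolation-on-$\nabla u$ step.
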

\begin{proof}
	By $(\ref{a1})_2 $, it is easy to find that $G$ satisfies
	$$\int\nabla G\cdot\nabla\varphi dx=\int\rho\dot{u}\cdot\nabla\varphi dx,\,\,\forall\varphi\in C_0^{\infty}(\Omega).$$
	Consequently, by Lemma \ref{zhle}, it holds that for $q\in(1,\infty)$,
	\be\la{x266}\ba
	\|\nabla G\|_{L^q}\leq C\|\rho\dot{u}\|_{L^q},
	\ea\ee
	and for any integer $k\geq 1$,
	\be\la{x2666}\ba
	\|\nabla G\|_{W^{k,q}}\leq C\|\rho\dot{u}\|_{W^{k,q}}.
	\ea\ee
	
	On the other hand, one can rewrite $(\ref{a1})_2 $    as follows
	\be\la{a111}\ba\n \dot u&=\na G- \mu \na \times \curl u.
\ea\ee
	Notice that $\curl u\times n|_{\partial\Omega}=0$  and $\div(\nabla\times\curl u)=0,$ we deduce from Lemmas \ref{crle2}--\ref{crle5} and \eqref{x266} that
	\be\la{x267}\ba
	\|\nabla\curl u\|_{L^q}&\leq C(\|\nabla\times\curl u\|_{L^q}+\|\curl u\|_{L^q})\\
	&\leq C(\|\rho\dot{u}\|_{L^q}+\|\curl u\|_{L^q}),
	\ea\ee
	and for any integer $k\geq 1$,
	\be\la{x268}\ba
	\|\nabla\curl u\|_{W^{k,p}}&\leq C(\|\nabla\times\curl u\|_{W^{k,p}}+\|\nabla\curl u\|_{L^2})\\
	&\leq C(\|\rho\dot{u}\|_{W^{k,p}}+\|\rho\dot{u}\|_{L^2}+\|\nabla u\|_{L^2}).
	\ea\ee
	Therefore, by Gagliardo-Nirenberg's inequality and \eqref{x267}, one gets for $p\in[2,6]$,
	\bnn\ba
	\|\nabla\curl u\|_{L^p}&\le C(\|\rho\dot{u}\|_{L^p}+\|\curl u\|_{L^p} ) \\
	&\le C(\|\rho\dot{u}\|_{L^p}+\|\nabla\curl u\|_{L^2}+\|\curl u\|_{L^2}) \\
	&\le C(\|\rho\dot{u}\|_{L^p}+\|\rho\dot{u}\|_{L^2}+\|\curl u\|_{L^2}) \\
	&\le C(\|\rho\dot{u}\|_{L^p}+\|\rho\dot{u}\|_{L^2}+\|\nabla u\|_{L^2}),
	\ea\enn
	which gives \eqref{h191}.
	
	Furthermore, it follows from   \eqref{g1} and \eqref{h19} that for $p\in[2,6]$,
	\be\la{x2610}\ba
	\|G\|_{L^p}&\leq C\|G\|_{L^2}^{(6-p)/2p}\|\nabla G\|_{L^2}^{(3p-6)/2p}\\
	&\leq C\|\rho\dot{u}\|_{L^2}^{(3p-6)/2p}(\|\nabla u\|_{L^2}+\|\n\te-1\|_{L^2})^{(6-p)/2p}.
	\ea\ee
	Similarly, it has
	\be\la{x2612}\ba
	\|\curl u\|_{L^p}&\leq C\|\curl u\|_{L^2}^{(6-p)/(2p)}\|\nabla\curl u\|_{L^2}^{(3p-6)/(2p)}\\
	&\leq C(\|\rho\dot{u}\|_{L^2}+\|\nabla u\|_{L^2})^{(3p-6)/(2p)}\|\nabla u\|_{L^2}^{(6-p)/(2p)}\\
	&\leq C\|\rho\dot{u}\|_{L^2}^{(3p-6)/(2p)}\|\nabla u\|_{L^2}^{(6-p)/(2p)}+C\|\nabla u\|_{L^2}.
	\ea\ee
	
	Finally, by virtue of Lemma \ref{crle1}, \eqref{g1}, \eqref{h19}, and \eqref{h21}, it indicates that
	\bnn\ba
	\|\nabla u\|_{L^p}&\le C\|\nabla u\|_{L^2}^{(6-p)/(2p)}\|\nabla u\|_{L^6}^{(3p-6)/(2p)} \\
	&\le C\|\nabla u\|_{L^2}^{(6-p)/(2p)}(\|\rho\dot{u}\|_{L^2}+\|\n\te-1\|_{L^6})^{(3p-6)/(2p)}+C\|\nabla u\|_{L^2},
	\ea\enn where in the second inequality we have used
	\bnn\ba
	\|\nabla u\|_{L^6}  &\le C (\|\div u\|_{L^6}+\|\curl u\|_{L^6}+\|\nabla u\|_{L^2}) \\
	&\le C (\|G\|_{L^6}+\|\curl u\|_{L^6}+\|\n\te-1\|_{L^6}+\|\nabla u\|_{L^2})  \\
	&\le C (\|\rho\dot{u}\|_{L^2}+\|\n\te-1\|_{L^6} + \|\nabla u\|_{L^2}),
	\ea\enn due to   \eqref{x2610}--\eqref{x2612}  with $p=6.$
	The proof of Lemma \ref{le4} is finished.
\end{proof}

Next, we give the following estimate  on $\na \dot u$ with $u \cdot n|_{\p \O}=0$.

\begin{lemma}\la{uup1}
	Let  $\Omega=\r^3-\bar{D}$  is the same as in Theorem \ref{th1}.
	Assume that $u$ is smooth enough and $u \cdot n|_{\p \O}=0$,
	then there exists a generic positive constant $C=C(D)$   such that
	\be\la{tb11}\ba
	\|\nabla\dot{u}\|_{L^2}\le C(\|\div \dot{u}\|_{L^2}+\|\curl \dot{u}\|_{L^2}+\|\nabla u\|_{L^4}^2+\|\nabla u\|_{L^2}^2).
	\ea\ee
\end{lemma}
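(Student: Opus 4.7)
\textbf{Proof plan for Lemma \ref{uup1}.} The natural idea is to apply Lemma \ref{crle1} (with $q=2$, so we sit in the regime $1<q<3$) to the vector field $\dot u$. The catch is that $\dot u$ does \emph{not} satisfy $\dot u\cdot n=0$ on $\partial\Omega$: differentiating the identity $u\cdot n=0$ on $\partial\Omega$ in time gives $u_t\cdot n=0$, but differentiating it along the tangential field $u$ (which is tangential precisely because $u\cdot n=0$) yields
\[
(u\cdot\nabla u)\cdot n=-u^{i}u^{j}\pa_{i}n^{j}\quad\text{on }\pa\Omega,
\]
so that $\dot u\cdot n=-u^{i}u^{j}\pa_{i}n^{j}$ on $\pa\Omega$. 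The plan is therefore to introduce a compactly supported correction that cancels this trace.

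Using the $C^{3}$-extension of $n$ from \eqref{ljq10}, which is supported in $\ol{B_{2d}}$, I define
\[
w\triangleq \dot u+\bigl(u^{i}u^{j}\pa_{i}n^{j}\bigr)\,n.
\]
On $\pa\Omega$, $|n|=1$, so $w\cdot n=0$ there; moreover $w-\dot u$ is supported in $B_{2d}$, so $w$ inherits the appropriate decay/regularity of $\dot u$. Lemma \ref{crle1} with $q=2$ then gives
\[
\|\na w\|_{L^{2}}\le C\bigl(\|\div w\|_{L^{2}}+\|\curl w\|_{L^{2}}\bigr).
\]
Writing $\dot u=w-\varphi n$ with $\varphi\triangleq u^{i}u^{j}\pa_{i}n^{j}$ and using this inequality, it remains to bound $\|\na(\varphi n)\|_{L^{2}}$, $\|\div(\varphi n)\|_{L^{2}}$, and $\|\curl(\varphi n)\|_{L^{2}}$ by the right-hand side of \eqref{tb11}, since both $\div\dot u$ and $\curl\dot u$ will absorb the corresponding corrections from $\varphi n$.

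The correction terms involve products of the form $u\na u$ and $u^{2}$ against the bounded, compactly supported quantities $n,\na n,\na^{2}n$. Thus, schematically,
\[
\|\na(\varphi n)\|_{L^{2}}+\|\div(\varphi n)\|_{L^{2}}+\|\curl(\varphi n)\|_{L^{2}}
\le C\bigl(\|u\na u\|_{L^{2}(B_{2d})}+\|u\|_{L^{4}(B_{2d})}^{2}\bigr).
\]
By Hölder, $\|u\na u\|_{L^{2}(B_{2d})}\le \|u\|_{L^{4}(B_{2d})}\|\na u\|_{L^{4}}$. To eliminate the $\|u\|_{L^{4}(B_{2d})}$ factor, I invoke the Sobolev embedding on $\mathbb{R}^{3}$ together with the far-field decay $u\to 0$ as $|x|\to\infty$ (as in \eqref{eee0}) to deduce $\|u\|_{L^{4}(B_{2d})}\le C\|u\|_{L^{6}(\Omega)}\le C\|\na u\|_{L^{2}}$. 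A final application of Young's inequality $\|\na u\|_{L^{2}}\|\na u\|_{L^{4}}\le \tfrac{1}{2}(\|\na u\|_{L^{2}}^{2}+\|\na u\|_{L^{4}}^{2})$ produces exactly the bound
\[
\|\na(\varphi n)\|_{L^{2}}+\|\div(\varphi n)\|_{L^{2}}+\|\curl(\varphi n)\|_{L^{2}}\le C\bigl(\|\na u\|_{L^{4}}^{2}+\|\na u\|_{L^{2}}^{2}\bigr),
\]
which combined with the $w$-inequality above yields \eqref{tb11}.

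The only nontrivial step is the first one, i.e. identifying the boundary value of $\dot u\cdot n$ and the correct algebraic form of the correction; once the extension of $n$ is used to localize the error inside $B_{2d}$, everything else is Hölder/Sobolev/Young on a bounded region. I do not anticipate any further subtlety.
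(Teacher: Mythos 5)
Your proposal is correct and takes essentially the same route as the paper's proof: identify the normal trace $\dot u\cdot n = -u\cdot\nabla n\cdot u$ on $\partial\Omega$, subtract a compactly supported corrector built from the extended normal field of \eqref{ljq10} so the difference has vanishing normal trace, invoke \eqref{ljq01} with $q=2$, and close via H\"older, the Sobolev bound $\|u\|_{L^6}\le C\|\na u\|_{L^2}$, and Young. The only difference is cosmetic: the paper's corrector is $(u\cdot\na n)\times u^\perp$ with $u^\perp=-u\times n$, which agrees with your $(u\cdot\na n\cdot u)\,n$ on $\partial\Omega$ and is estimated identically on $B_{2d}$.
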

\begin{proof}
	Denote $u^{\perp}\triangleq-u\times n$,
	it follows from the boundary condition (\ref{h1}) that
	\be\la{pzw1} u\cdot\nabla u\cdot n=-u\cdot\nabla n\cdot u ~~\quad \mbox{on}~\p \O\ee
	and
	\begin{align}\la{eee1}
		u=u^{\perp} \times n~~~~~\text{on}\, \p \O,
	\end{align}
     which gives
     $$(\dot{u}-(u\cdot\nabla n)\times u^{\perp})\cdot n=0 ~~\quad \mbox{on}~\p \O.$$
     This combined with (\ref{ljq01}) yields
     \be\la{tb111} \ba\|\nabla\dot{u}\|_{L^2}
     &\leq C(\|\div \dot{u}\|_{L^2}+\|\curl \dot{u}\|_{L^2}+\|\nabla[(u\cdot\nabla n)\times u^\perp]\|_{L^2})\\&\leq C(\|\div \dot{u}\|_{L^2}+\|\curl \dot{u}\|_{L^2}+\|\nabla u\|_{L^2}^2+\|\nabla u\|_{L^4}^2),
     \ea\ee where in the second inequality we have used
     \bnn \ba
     \|\nabla[(u\cdot\nabla n)\times u^\perp]\|_{L^2(\Omega)}
     &=\|\nabla[(u\cdot\nabla n)\times u^\perp]\|_{L^2(B_{2d})}\\
     &\leq C(d)(\||u||\nabla u|\|_{L^2(B_{2d})}+\|u\|_{L^4(B_{2d})}^2)\\
     &\le C(d)(\|\nabla u\|_{L^4(B_{2d})}^2+\|u\|_{L^6(B_{2d})}^2)\\
     &\le C(\|\nabla u\|_{L^4}^2+\|u\|_{L^6}^2)\\
     &\le C(\|\nabla u\|_{L^4}^2+\|\nabla u\|_{L^2}^2),
     \ea\enn
     due to  \eqref{ljq10}, \eqref{g1}, and H\"{o}lder's inequality.  The proof of Lemma \ref{uup1} is finished.
\end{proof}

The following Gr\"{o}nwall-type inequality will be used to get the uniform (in time) upper bound of the density $\n$, whose proof is similar to \cite[Lemma 2.5]{H-L}.
\begin{lemma} \la{le1}
	Let the function $y\in W^{1,1}(0,T)$ satisfy
	\be \la{2.32}
	y'(t)+\al y(t)\le  g(t)\mbox{  on  } [0,T] ,\quad y(0)=y_0,
	\ee
	where $\al$ is a positive constant and  $ g \in L^p(0,T_1)\cap L^q(T_1,T)$  for some $p,\,q\ge 1, $  $T_1\in [0,T].$ Then it has
	\be \la{2.34}
	\sup_{0\le t\le T} y(t) \le |y_0| + (1+\al^{-1}) \left(\|g\|_{L^p(0,T_1)} + \|g\|_{L^q(T_1,T)}\right).
	\ee
\end{lemma}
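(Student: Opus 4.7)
The strategy is the classical integrating-factor approach, followed by a split-interval H\"older estimate to accommodate the two different integrability exponents of $g$. First I would multiply the inequality $y'(t)+\alpha y(t)\le g(t)$ by the factor $e^{\alpha t}$, rewrite the left side as $(e^{\alpha t}y(t))'$, and integrate from $0$ to $t$. Rearranging yields the Duhamel-type pointwise bound
\begin{equation*}
y(t)\le |y_0|\,e^{-\alpha t}+\int_0^t e^{-\alpha(t-s)}|g(s)|\,ds,
\end{equation*}
which reduces the problem to estimating the convolution on the right.

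Next, for $t\in[T_1,T]$ I would split the $ds$-integral at $T_1$ and apply H\"older's inequality on each piece with the conjugate pairs $(p,p')$ and $(q,q')$; the case $t\in[0,T_1]$ is strictly easier and needs only the first pair. A direct computation of the weight integrals gives
\begin{equation*}
\left(\int_0^{T_1} e^{-\alpha p'(t-s)}\,ds\right)^{1/p'}\le (\alpha p')^{-1/p'}e^{-\alpha(t-T_1)}\le (\alpha p')^{-1/p'},
\end{equation*}
and similarly $(\alpha q')^{-1/q'}$ bounds the weight against $\|g\|_{L^q(T_1,T)}$, with the convention $1/\infty=0$ handling the endpoint cases $p=1$ or $q=1$ directly. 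The small bookkeeping point to be careful with is that for $s\in[0,T_1]$ and $t\ge T_1$ one must use $e^{-\alpha(t-s)}\le e^{-\alpha(T_1-s)}$ so that the $L^{p'}$-norm of the exponential weight is taken on $[0,T_1]$ and pairs correctly with $\|g\|_{L^p(0,T_1)}$.

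It remains to replace the constants $(\alpha p')^{-1/p'}$ and $(\alpha q')^{-1/q'}$ by the single bound $1+\alpha^{-1}$ appearing in \eqref{2.34}. This is purely algebraic: since the function $x\mapsto x^{-1/x}$ on $[1,\infty]$ is bounded by $1$ (it equals $1$ at the endpoints and attains its minimum $e^{-1/e}$ at $x=e$), one has $(p')^{-1/p'}\le 1$ uniformly; and splitting into the cases $\alpha\ge1$ and $\alpha<1$ gives $\alpha^{-1/p'}\le\max(1,\alpha^{-1})\le 1+\alpha^{-1}$. The same holds for $q'$. Combining these with the Duhamel bound and taking the supremum over $t\in[0,T]$ yields \eqref{2.34}. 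I do not anticipate any real obstacle in this proof; the only genuinely nontrivial ingredient is the elementary estimate $(\alpha p')^{-1/p'}\le 1+\alpha^{-1}$, which is what makes a single $\alpha$-dependent constant work uniformly in $p,q\in[1,\infty]$.
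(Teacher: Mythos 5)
Your proof is correct, and the integrating-factor plus split-interval H\"older argument is exactly the standard route; the paper itself only says the proof is ``similar to [Lemma 2.5]{H-L}'', which is the same integrating-factor Duhamel computation. Your algebraic step isolating $(p')^{-1/p'}\le 1$ and $\alpha^{-1/p'}\le\max(1,\alpha^{-1})\le 1+\alpha^{-1}$, together with the convention for $p'=\infty$, is precisely what makes the single constant $1+\alpha^{-1}$ work uniformly across $p,q\in[1,\infty]$, so the argument is complete.
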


Finally, in order to estimate $\|\nabla u\|_{L^\infty}$ for the further higher order estimates, we need the following Beale-Kato-Majda-type inequality, which was first proved in \cite{bkm,kato} when $\div u\equiv 0,$
whose detailed proof is similar to that of the case of slip boundary condition  in \cite[Lemma 2.7]{C-L} (see also \cite{h101,h1x}).
\begin{lemma}\la{le9}
	Let  $\Omega=\r^3-\bar{D}$  is the same as in Theorem \ref{th1}. For $3<q<\infty$, assume that $u\in \{ f\in L^1_{loc}|\na f\in L^2(\O)\cap D^{1,q}(\O) \,and \,f\cdot n=0, \curl f\times n=0 \text{ on } \partial \Omega \}$,  then there is a constant  $C=C(q)$ such that
\bnn\ba
\|\na u\|_{L^\infty}\le C\left(\|{\rm div}u\|_{L^\infty}+\|\curl u\|_{L^\infty} \right)\ln(e+\|\na^2u\|_{L^q})+C\|\na u\|_{L^2} +C.
\ea\enn
\end{lemma}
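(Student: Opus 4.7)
The plan is to adapt the Beale--Kato--Majda strategy to the exterior domain with slip boundary conditions, closely mirroring the argument in \cite[Lemma 2.7]{C-L}. The proof rests on three ingredients: a logarithmic Sobolev interpolation, a BMO bound for $\nabla u$ in terms of $\div u$ and $\curl u$, and an absorption step to obtain the precise form of the inequality.

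First, I would apply a Kozono--Taniuchi type inequality,
\begin{equation*}
\|\nabla u\|_{L^\infty}\le C\bigl(1+\|\nabla u\|_{\mathrm{BMO}(\Omega)}\log(e+\|\nabla u\|_{W^{1,q}})\bigr),
\end{equation*}
whose validity follows from the hypothesis $q>3$ and the embedding $W^{1,q}\hookrightarrow L^\infty$. By Gagliardo--Nirenberg interpolation, $\|\nabla u\|_{L^q}$ is bounded by a power-combination of $\|\nabla u\|_{L^2}$ and $\|\nabla^2 u\|_{L^q}$, so the logarithm simplifies to $\log(e+\|\nabla^2 u\|_{L^q})$ modulo a harmless additive $\log(e+\|\nabla u\|_{L^2})$ that can be absorbed at the end.

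Second, I would establish the key div--curl BMO estimate
\begin{equation*}
\|\nabla u\|_{\mathrm{BMO}(\Omega)}\le C\bigl(\|\div u\|_{L^\infty}+\|\curl u\|_{L^\infty}+\|\nabla u\|_{L^2}\bigr).
\end{equation*}
Using the cutoff structure from \eqref{ljq10}, I would split $u=\chi u+(1-\chi)u$ with $\chi\in C_c^\infty(B_{2d})$. Extending $(1-\chi)u$ by zero to $\mathbb{R}^3$, its gradient can be recovered from $\div$ and $\curl$ via the Biot--Savart representation and Riesz transforms; the classical $L^\infty\to \mathrm{BMO}$ mapping property of Calder\'on--Zygmund operators then yields the estimate, with lower-order contributions from $\na\chi$ controlled by $\|\na u\|_{L^2}$. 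For the boundary-supported piece $\chi u$, a partition of unity and local flattening of $\partial\Omega$ reduce matters to a half-space problem; the slip conditions $u\cdot n=0$ and $\curl u\times n=0$ are precisely the conditions preserved under an appropriate combination of odd/even reflection in the normal component, so one can extend the flattened $u$ across the boundary while retaining $L^\infty$ bounds on $\div$ and $\curl$, again reducing to whole-space Calder\'on--Zygmund theory.

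Third, combining these two steps gives
\begin{equation*}
\|\nabla u\|_{L^\infty}\le C\bigl(\|\div u\|_{L^\infty}+\|\curl u\|_{L^\infty}+\|\nabla u\|_{L^2}\bigr)\log\bigl(e+\|\nabla^2 u\|_{L^q}\bigr)+C,
\end{equation*}
and the principal remaining obstacle is to eliminate the mixed term $\|\nabla u\|_{L^2}\log(e+\|\nabla^2 u\|_{L^q})$ so that the $\|\nabla u\|_{L^2}$ dependence is additive, not logarithmic. I would handle this via a Gagliardo--Nirenberg interpolation of the form $\|\na u\|_{L^q}\le C\|\na u\|_{L^2}^{\theta}\|\na^2 u\|_{L^q}^{1-\theta}$, together with Young's inequality applied in the form $a\log(e+b)\le \epsilon b+C_\epsilon e^{a/\epsilon}$, which separates $\|\na u\|_{L^2}$ additively at the cost of harmless constants. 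Among these three steps, the boundary BMO estimate is the most delicate: the reflection argument must preserve both slip conditions simultaneously, and the commutator terms generated by the cutoff $\chi$ and by the boundary straightening must be shown to contribute only to the $\|\na u\|_{L^2}$ summand, not to the coefficient of the logarithm.
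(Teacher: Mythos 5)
The architecture you lay out (a Kozono--Taniuchi logarithmic interpolation, a div--curl BMO estimate, and a boundary reflection) is a legitimate way to attack BKM-type inequalities and is in the same family of ideas as the potential-theoretic argument in \cite[Lemma 2.7]{C-L} that the paper cites. However, your Step 3 contains a genuine gap, and it is not a cosmetic one.

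After Steps 1 and 2 you have $\|\na u\|_{L^\infty}\le C(\|\div u\|_{L^\infty}+\|\curl u\|_{L^\infty}+\|\na u\|_{L^2})\log(e+\|\na^2 u\|_{L^q})+C$, and the problem is to remove $\|\na u\|_{L^2}$ from the coefficient of the logarithm. The inequality $a\log(e+b)\le \ep b + C_\ep e^{a/\ep}$ that you invoke, while true, cannot accomplish this: with $a=\|\na u\|_{L^2}$ and $b$ a power of $\|\na^2 u\|_{L^q}$, the right-hand side contains both a positive power of $\|\na^2 u\|_{L^q}$ (not permitted on the right of the target inequality) and $e^{a/\ep}$, which grows superexponentially in $\|\na u\|_{L^2}$ and cannot be absorbed into $C\|\na u\|_{L^2}+C$. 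More fundamentally, the quantity $\|\na u\|_{L^2}\log(e+\|\na^2 u\|_{L^q})$ simply is not bounded by the right-hand side of the lemma when both norms are independently large, so no post-hoc manipulation of an estimate of this shape can yield the claimed conclusion. The correct structure must make the $\|\na u\|_{L^2}$ contribution enter additively from the outset: in the BKM/Kato approach, this happens because the Newtonian-potential representation of $\na u$ in terms of $\div u$ and $\curl u$ is split into near and far field, and the far field is estimated by $\|\na u\|_{L^2}$ with no logarithmic factor, while only the near field produces the $(\|\div u\|_{L^\infty}+\|\curl u\|_{L^\infty})\log(\cdots)$ term. Equivalently, in a BMO formulation one must arrange that only the $\mathrm{BMO}$ seminorm --- which should be controlled by $\|\div u\|_{L^\infty}+\|\curl u\|_{L^\infty}$ alone, with the cutoff and boundary-straightening commutators relegated to a separate additive $\|\na u\|_{L^2}$ term --- sits in front of the logarithm. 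Your own closing remark (that the commutators must contribute only to the additive summand) identifies the right goal, but the BMO estimate as you state it already forfeits this, and the proposed Young-inequality patch does not repair it. The curved-boundary reflection argument is also under-justified: the slip conditions are compatible with tangential/normal reflection for a flat wall, but the straightening diffeomorphism mixes $\div u$ and $\curl u$ with first-order perturbations, and you would need to show these are genuinely lower order in the required sense, not merely assert it.
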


\section{\la{se3} A priori estimates (I): lower-order estimates}

This section focuses on the a priori bounds for the local-in-time smooth solution to problem (\ref{a1})--(\ref{ch2}) obtained in Lemma \ref{th0}.
Let $(\n,u,\te)$ be a smooth solution to the  problem (\ref{a1})--(\ref{ch2})  on $\Omega\times (0,T]$ for some fixed time $T>0,$ with  initial data $(\n_0,u_0,\te_0)$ satisfying \eqref{2.1}.

For
$\si(t)\triangleq\min\{1,t\}, $  we  define
$A_i(T)(i= 1,2,3)$ as follows:
  \be\la{As1}
  A_1(T) \triangleq \sup_{t\in[0,T] }\|\nabla u \|_{L^2}^2
  + \int_0^{T}\int  \rho|\dot{u}|^2dxdt,
  \ee
  \be\label{AS1}
  A_2(T) \triangleq \frac{R}{2(\ga-1)}\sup_{t\in[0,T] }\int\n (\te-1)^2dx
  +\int_0^T\left( \|\na u\|_{L^2}^2+\|\na \te\|_{L^2}^2\right)dt,
  \ee
  \be\ba \label{AS2}
  A_3(T) \triangleq &\sup_{t\in(0,T]}\left(\si \|\na u\|_{L^2}^2+\si^2\int\n |\dot u|^2dx + \si^2\|\na\te\|_{L^2}^2 \right)\\
  & + \int_0^T\int\left(\si\n |\dot u|^2 +\sigma^2|\nabla\dot{u}|^2 +\sigma^2\n|\dot \te|^2 \right)dxdt.
  \ea\ee

We have the following key a priori estimates on $(\n,u,\te)$.
\begin{pro}\la{pr1}
For  given numbers $M>0$, $\on> 2,$  and $\bt> 1,$ assume further that $(\rho_0,u_0,\te_0) $  satisfies
\be \la{3.1}
0<\inf \rho_0 \le\sup \rho_0 <\on,\quad 0<\inf \te_0 \le\sup \te_0 \le \bt, \quad \|\na u_0\|_{L^2} \le M.
\ee
Then there exist  positive constants $K $ and $\ep_0$ both depending on $\mu,\,\lambda,\, \ka,\, R,\, \ga,\, \on,\,\bt,\,\O $, and $M$ such that if $(\rho,u,\te)$ is a smooth solution to the problem (\ref{a1})--(\ref{ch2}) on $\O\times (0,T]$ satisfying
\be \la{z1}
0<  \rho\le 2\on, \,\,\, A_1(\sigma(T))\le 3 K, \,\,\, A_i(T) \le 2C_0^{1/(2i)}, \,\,\,(i=2,3),
\ee
the following estimates hold:
\be \la{zs2}
0< \rho\le 3\on/2,\,\,\, A_1(\sigma(T))\le 2 K, \,\,\, A_i(T) \le C_0^{1/(2i)},  \,\,\,(i=2,3),
\ee
provided \be\la{z01}C_0\le \ve_0.\ee      \end{pro}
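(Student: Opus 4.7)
The proof is a standard continuity argument: assuming the a priori bounds \eqref{z1} hold on $[0,T]$, I need to improve them to \eqref{zs2} whenever $C_0 \le \ve_0$. The estimates must be closed in a carefully chosen order: first a ``weak'' basic energy estimate $E(t) \le C C_0^{1/4}$, then $A_2(T)$, then $A_3(T)$ together with $A_1(\sigma(T))$ via Hoff's material-derivative method, and finally the time-independent upper bound on $\rho$ via the Zlotnik-type inequality (Lemma \ref{le1}). Throughout, the exterior-domain geometry forces careful handling of boundary integrals using the identity $u = u^\perp \times n$ on $\p\O$ from \eqref{eee1} and a cutoff function supported in $B_{2d}$.

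\textbf{Weak energy estimate and $A_2(T)$.} Because of the slip boundary conditions, the classical dissipation identity for $\int \rho(\te - \log\te - 1)dx$ is unavailable: the boundary term arising from integrating by parts on $\int \mathbb{S}:\na u/\te$ cannot be controlled, so \eqref{11a} fails. Instead, I would first test $\eqref{a1}_2$ by $u$, using $u\cdot n = 0$ and $\curl u\times n = 0$ to kill the viscous boundary flux, and couple with the continuity equation to bound the kinetic/pressure part of $E(t)$. For the thermal part, I would exploit the $L^\infty_t L^2_x$ bound on $\sqrt\rho(\te - 1)$ encoded in $A_2(T)$ together with Lemma \ref{lll} (which compensates for possibly vanishing $\rho$) to arrive at $E(t) \le C C_0^{1/4}$. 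Then, multiplying $\eqref{a1}_3$ by $\te - 1$, and handling the pressure-dissipation term $\int P\div u\,dx$ through the effective viscous flux $G$ together with the $L^p$-bounds from Lemma \ref{le4}, yields a differential inequality of the form $\tfrac{d}{dt}\int \rho(\te-1)^2 dx + \|\na\te\|_{L^2}^2 \le C\|\na u\|_{L^2}^2 + (\mbox{small})$; integration in $t$ closes $A_2(T) \le C_0^{1/4}$.

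\textbf{Higher estimates $A_3(T)$ and $A_1(\sigma(T))$.} Following Hoff, I would test $\eqref{a1}_2$ by $\dot u$ and use the identity $\p_t\na u = \na\dot u - \na(u\cdot\na u)$ to produce $\tfrac{d}{dt}\int(\mu|\na u|^2 + (\mu+\lambda)(\div u)^2)/2\,dx + \int\rho|\dot u|^2 dx$ plus commutator and pressure-work terms. All boundary integrals such as $\int_{\p\O} G(u\cdot\na)u\cdot\na n\cdot u\,dS$ are converted to volume integrals over $B_{2d}$ by the cutoff/divergence-theorem trick from \cite{C-L} (see \eqref{bz3}); this is one of the main technical points of the exterior-domain case. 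Differentiating $\eqref{a1}_2$ in $t$ and testing against $\sigma^2\dot u$, together with Lemma \ref{uup1} to control $\|\na\dot u\|_{L^2}$ by $\|\div\dot u\|_{L^2} + \|\curl\dot u\|_{L^2}$ modulo $\|\na u\|_{L^4}^2$, closes the $\int_0^T\sigma^2\|\na\dot u\|_{L^2}^2 dt$ contribution. An analogous argument on $\eqref{a1}_3$ with $\dot\te$ in place of $\dot u$ closes the $\sigma^2\|\na\te\|_{L^2}^2$ part. Promoting $\|\rho\dot u\|_{L^2}$ to $L^p$ bounds on $\na u$ via \eqref{h19}--\eqref{h17} and using the $A_2$ bound just obtained, a Gr\"onwall argument closes $A_3(T) \le C_0^{1/6}$ and $A_1(\sigma(T)) \le 2K$, provided $C_0$ is small.

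\textbf{Density upper bound and main obstacle.} Finally, I would rewrite the continuity equation along particle paths as $(2\mu+\lambda)D_t(\log\rho) = -G - R(\rho\te - 1)$ and split $\rho\te - 1 = (\rho - 1) + \rho(\te - 1)$, interpreting $-R(\rho - 1)$ as a genuine damping term when $\rho \ge 1$. Applying Lemma \ref{le1} separately on $[0,\sigma(T)]$ and $[\sigma(T),T]$ is essential: on the short-time piece the source is controlled using $A_1(\sigma(T)) \le 3K$ and Lemma \ref{le4} (with constants possibly depending on $K$), while on the long-time piece the $L^q$-in-time bound on $G$ inherited from $A_3(T) \le C_0^{1/6}$ together with the control on $\rho(\te-1)$ from $A_2$ is dominated by the damping. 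Smallness of $C_0$ then forces the total oscillation of $\log\rho$ to stay below $\log(3/2)$, yielding the time-independent bound $\rho \le 3\hat\rho/2$. This delicate balance between the effective viscous flux $G$ and the damping $R(\rho - 1)$ on long time intervals, with both quantities controlled only through small powers of $C_0$, is the central obstacle of the whole scheme and the place where all previously obtained bounds must be sharp enough to fit together.
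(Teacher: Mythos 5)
Your overall plan matches the paper's continuity/bootstrap architecture, and your reading of the density step (split $[0,\sigma(T)]$ vs.\ $[\sigma(T),T]$, Zlotnik-type damping along particle paths balancing $G$ and $\rho(\te-1)$) is essentially right, as is the boundary-term reduction via $u=u^{\perp}\times n$ and the cutoff supported in $B_{2d}$. The core of the argument --- closing $A_2(T)$ --- is, however, treated too loosely, and as sketched it would not close.

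First, the closure of $A_2(T)$ (Lemmas \ref{a13} and \ref{le3}) hinges on a structural cancellation that your sketch omits. Adding the $L^2$-energy identity from $\eqref{a1}_2\cdot u$ to that from $\eqref{a1}_3\cdot(\te-1)$ makes the linear coupling $\int R\rho(\te-1)\div u\,dx$ cancel identically, see \eqref{a2.225}--\eqref{a2.23}; this term carries no smallness factor and $\int_0^T\|\te-1\|_{L^2}^2\,dt$ is unavailable for large $T$ (cf.\ \eqref{eee22}), so absent the cancellation there is nothing to absorb it. Your proposed inequality $\frac{d}{dt}\int\rho(\te-1)^2dx+\|\na\te\|_{L^2}^2\le C\|\na u\|_{L^2}^2+(\text{small})$ also has no mechanism to bound $\int_0^T\|\na u\|_{L^2}^2dt$ itself, which is part of $A_2(T)$; and since there is no small power of $C_0$ multiplying $\|\na u\|_{L^2}^2$, integration only returns $A_2(T)\le CC_0^{1/4}$ from the ansatz $A_2(T)\le 2C_0^{1/4}$, which is not an improvement. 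The paper's \eqref{jia9} carries the crucial factor $CC_0^{1/24}$ in front of $\int_0^T(\|\na u\|_{L^2}^2+\|\na\te\|_{L^2}^2)dt$.

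Second, the ordering ``weak energy $\to A_2\to A_3,A_1$ (Hoff)'' reverses the true dependency. Even within a bootstrap where all of \eqref{z1} is assumed, the $A_2$ closure needs the material-derivative machinery as \emph{input}: the quadratic term $\int(\te-1)|\na u|^2dx$ is controlled on $[0,\sigma(T)]$ only through the refined estimate $\|\te-1\|_{L^2}\le C\bigl(C_0^{1/2}+C_0^{1/3}\|\na\te\|_{L^2}\bigr)$ of Lemma \ref{a13}, which rests on the $L^1(0,\sigma(T);L^\infty)$ bound \eqref{k} for $\te$, itself derived from the $\sigma$-weighted $\dot u,\dot\te$ estimates of Lemma \ref{a113.4} via \eqref{ae26}, \eqref{k1}, \eqref{3.88}. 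Hence the Hoff-type estimates must precede, not follow, the $A_2$ closure. A minor point: with slip boundary conditions the viscous energy that appears when testing by $u_t$ or $\dot u$ is $\mu\|\curl u\|_{L^2}^2+(2\mu+\lambda)\|\div u\|_{L^2}^2$ as in \eqref{an2}, not $\mu\|\na u\|_{L^2}^2+(\mu+\lambda)\|\div u\|_{L^2}^2$, since $\Delta$ is first rewritten as $\na\div-\na\times\curl$.
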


\begin{proof}
Proposition \ref{pr1} is a straight consequence of
the following Lemmas \ref{le2}, \ref{le6}, \ref{le3}, and \ref{le7},  with $\ve_0$ as in (\ref{xjia11}).
\end{proof}

In this section, we always assume that $C_0\le 1$ and let $C$ denote some generic positive constant depending only on $\mu$,  $\lambda$, $\ka$,  $R$, $\ga$, $\on$, $\bt$, $\O$,  and $M,$ and we write $C(\al)$ to emphasize that $C$ may depend  on $\al.$

First, we have the following basic energy estimate, which plays an important role in the whole analysis.

\begin{lemma}\la{a13.1} Under the conditions of Proposition \ref{pr1}, there exists a positive constant $C$ depending on $\mu$, $R$,   and  $\on$  such that if $(\rho,u,\te)$ is a smooth solution to the problem (\ref{a1})--(\ref{ch2})  on $\Omega\times (0,T] $ satisfying
\be\la{3.q2}
0<\n\le 2\on ,\quad A_2(T)\le 2C_0^{1/4},
\ee
the following estimate holds:
\be \la{a2.112}
\sup_{0\le t\le T}\int\left( \n |u|^2+(\n-\tn)^2\right)dx \le C  C_0^{1/4}.
\ee
\end{lemma}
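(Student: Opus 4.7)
The plan is to prove the stronger bound $E(t)\le CC_0^{1/4}$ on the relative energy \eqref{enet}, which immediately yields \eqref{a2.112} via the elementary inequalities $\tfrac12\int\rho|u|^2dx\le E(t)$ and, using $\rho\in(0,2\hat\rho]$, $R(\rho\log\rho-\rho+1)\ge c(\hat\rho)(\rho-1)^2$. To control $E(t)$ I would combine three tested identities: (i) multiply $(\ref{a1})_1$ by $R\log\rho$ to get the evolution of $R\int(\rho\log\rho-\rho+1)dx$; (ii) multiply $(\ref{a1})_2$ by $u$, rewrite $\Delta u=\nabla\div u-\nabla\times\curl u$, and use the slip conditions $u\cdot n=0$, $\curl u\times n=0$ to obtain kinetic evolution with dissipation $(2\mu+\lambda)\int(\div u)^2dx+\mu\int|\curl u|^2dx$; (iii) multiply $(\ref{a1})_3$ by $(1-1/\te)$ and use $\nabla\te\cdot n=0$ to get the evolution of $\tfrac{R}{\ga-1}\int\rho(\te-\log\te-1)dx$ with dissipation $\int\kappa|\nabla\te|^2/\te^2dx+\int(\lambda(\div u)^2+2\mu|\mathfrak{D}(u)|^2)/\te\,dx$.

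Summing the three identities, the pressure-type source terms $R\int\rho\,\div u\,dx$, $R\int(\rho\te-1)\div u\,dx$ and $-R\int\rho(\te-1)\div u\,dx$ cancel exactly, leaving the residual $\int[\lambda(\div u)^2+2\mu|\mathfrak{D}(u)|^2]dx$ produced by the factor $(1-1/\te)$ in step (iii). In the exterior domain with slip boundary this residual cannot be absorbed into the kinetic dissipation: the standard bulk identity $2\int|\mathfrak{D}(u)|^2dx=\int|\curl u|^2dx+2\int(\div u)^2dx$ fails by a boundary contribution involving the second fundamental form of $\partial\Omega$, which I would handle using the vanishing of $n$ outside $B_{2d}$ from \eqref{ljq10} and the trace bound of Lemma \ref{crle5}. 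The crucial observation is that the residual, integrated in time, is bounded by $C\int_0^T\|\nabla u\|_{L^2}^2dt\le CA_2(T)\le 2CC_0^{1/4}$ by the a priori assumption \eqref{3.q2}. Combined with the initial bound $E(0)\le CC_0$ coming from the definition \eqref{e}, one concludes
\begin{equation*}
\sup_{0\le t\le T}E(t)\le CE(0)+CA_2(T)\le C(C_0+C_0^{1/4})\le CC_0^{1/4},
\end{equation*}
using $C_0\le 1$, and \eqref{a2.112} follows.

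The main obstacle is precisely the non-absorbable viscous residual on the right-hand side after summing the three identities: it has no favorable sign, and the exterior-domain boundary terms block the bulk integration-by-parts cancellation that would work in the whole space \cite{H-L}. The resolution—trading the residual against the time-integrated $H^1$-norm of $u$ already controlled by $A_2(T)$—is what forces the weaker $C_0^{1/4}$ scaling (as opposed to the classical $CC_0$ for the Cauchy problem), and is the source of the subsequent difficulties flagged in the introduction when closing the bootstrap on $A_2(T)$ itself in Lemma \ref{le3}.
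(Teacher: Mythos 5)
Your proposal is correct and follows essentially the same route as the paper: sum the kinetic, thermal, and renormalized-density identities, observe the exact cancellation of the pressure-source terms, and use the a priori bound $A_2(T)\le 2C_0^{1/4}$ to absorb the residual over $[0,T]$, yielding $E(t)\le CC_0^{1/4}$ and hence \eqref{a2.112} via \eqref{a2.9}. The one place you over-engineer is the residual $-\mu\int(|\curl u|^2+2(\div u)^2-2|\mathfrak{D}(u)|^2)\,dx$: you propose recasting it as a boundary integral in $u\cdot\nabla n\cdot u$ and invoking \eqref{ljq10} and the trace bound of Lemma~\ref{crle5}, which does work, but the paper simply drops the sign-favorable terms $|\curl u|^2+2(\div u)^2\ge 0$ and uses the pointwise inequality $|\mathfrak{D}(u)|^2\le|\nabla u|^2$ to conclude $E'(t)\le 2\mu\int|\nabla u|^2\,dx$ — no boundary estimate needed.
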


\begin{proof}
First, it follows from (\ref{3.1}) and  (\ref{mn2}) that, for all $ (x,t)\in \Omega\times(0,T),$
\be \la{3.2}
\te(x,t)>0 .
\ee

Note that
\be\la{1111}
\Delta u = \na \div u - \na \times \curl u,
\ee
one can rewrite $(\ref{a1})_2$  as
\be\la{a11}\ba
\n (u_t+  u\cdot \na u )&=(2\mu+\lambda)\na{\rm div}u- \mu \na \times \curl u-\na P.\ea\ee
Adding $(\ref{a11})$ multiplied by $u$ to $(\ref{a1})_3$ multiplied by $1-\te^{-1}$ and  integrating the resulting equality over $\Omega$ by parts,  we obtain
after   using $(\ref{a1})_1$, \eqref{h3}, \eqref{3.2}, \eqref{h1},  and  \eqref{ch2}   that
\be\la{la2.7}\ba
E'(t)&=-\int \left( \frac{\lambda(\div u)^2+2\mu |\mathfrak{D}(u)|^2}{{\te}}+\ka \frac{|\na \te|^2}{\te^2} \right)dx \\
&\quad - \mu \int \left(|\curl u|^2+2(\div u)^2 - 2 |\mathfrak{D}(u)|^2\right)dx\\
&\le 2\mu \int  |\na u|^2 dx,
\ea\ee
where  $E(t)$ is  the basic energy defined by \eqref{enet}.

Then, integrating \eqref{la2.7} with respect to $t$ over $(0,T)$ and using \eqref{3.q2}, one has
 \be\la{a2.8}\ba
&\sup_{0\le t\le T} E(t)
\le C_0+ 2\mu \int_{0}^{T} \int  |\na u|^2 dxdt\le C C_0^{1/4},\ea\ee
which together with
\be\la{a2.9}\ba
 (\n-1)^2\ge 1+\n\log\n-\n&=(\n-1)^2\int_0^1\frac{1-\al}{\al (\n-1)+1}d\al  \ge \frac{(\n-1)^2}{ 2(2\on+1)  }
 \ea\ee
gives (\ref{a2.112}).  The proof of Lemma \ref{a13.1} is  finished.
\end{proof}

The next lemma provides an estimate on  $A_1(\sigma(T))$.
\begin{lemma}\la{le2}
	Under the conditions of Proposition \ref{pr1}, there exist positive constants  $K $  and $\ep_1 $ both depending only  on $\mu,\,\lambda,\, \ka,\, R,\, \ga,\, \on,\,\bt, $ $\O$, and $M$ such that if  $(\rho,u,\te)$ is a smooth solution to the problem  (\ref{a1})--(\ref{ch2}) on $\Omega\times (0,T] $ satisfying
	\be\la{3.q1}  0<\n\le 2\on ,\quad A_2(\sigma(T))\le 2C_0^{1/4},\quad A_1(\sigma(T))\le 3K,\ee
	the following estimate holds:
	\be\la{h23} A_1(\sigma(T))\le 2K ,  \ee
	provided   $C_0\le \ep_1$ with $\ep_1$ given in (\ref{3.q0}).
\end{lemma}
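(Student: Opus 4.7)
The plan is to multiply the momentum equation \eqref{a11} (equivalently $\rho\dot u=\nabla G-\mu\nabla\times\curl u$ from \eqref{a111}) by $\dot u$ and integrate over $\Omega$. After writing $\dot u=u_t+u\cdot\nabla u$ and using the continuity equation, the diffusive and pressure contributions yield, modulo convective and boundary corrections, a perfect time derivative of a quantity
\begin{equation*}
B(t)\triangleq\frac{1}{2}\int\left[(2\mu+\lambda)(\div u)^2+\mu|\curl u|^2\right]dx-R\int (\rho\te-1)\div u\,dx,
\end{equation*}
producing an inequality of the form $\frac{d}{dt}B(t)+\int\rho|\dot u|^2\,dx\le(\text{error terms})$, along the lines of Huang-Li \cite{H-L} and Li-L\"u-Wang \cite{L-L-W}. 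The error terms include cubic/quartic velocity contributions such as $\int|\nabla u|^3\,dx$, pressure-time contributions coming from differentiating $\rho\te$ in time, and several boundary integrals on $\partial\Omega$.

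The main obstacle is the boundary treatment, because $u$ has nontrivial tangential components under the slip condition. Using $u\cdot n=0$ together with the identities \eqref{pzw1} and \eqref{eee1} and $\curl u\times n=0$ on $\partial\Omega$, the surface integrals reduce to expressions of the form $\int_{\partial\Omega}G(u\cdot\nabla n\cdot u)\,dS$. Following the trick used in Cai-Li-L\"u \cite{C-L-L}, these are converted into interior integrals over the bounded region $B_{2d}\cap\Omega$ via the divergence theorem, exploiting the fact that the extended $n$ has compact support in $\overline{\Omega}\cap B_{2d}$ by \eqref{ljq10}. The resulting interior integrals are bounded by $\|G\|_{L^6}\|u\|_{L^6}^2$ plus lower-order contributions, which via Lemma \ref{le4} is controlled by $\|\sqrt{\rho}\dot u\|_{L^2}\|\nabla u\|_{L^2}^2$ up to harmless terms; Cauchy's inequality then absorbs the $\|\sqrt{\rho}\dot u\|_{L^2}^2$ piece into the dissipation on the left.

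To close, the remaining error terms are dominated by $C\|\nabla u\|_{L^4}^4+C(\|\nabla u\|_{L^2}^2+1)(\|\nabla u\|_{L^2}^2+\|\nabla\te\|_{L^2}^2+\|\rho\te-1\|_{L^2}^2)$ using the Gagliardo-Nirenberg inequality \eqref{g1} together with Lemma \ref{le4} and the basic energy bound \eqref{a2.112} of Lemma \ref{a13.1}. With $A_1(\sigma(T))\le 3K$ and $A_2(\sigma(T))\le 2C_0^{1/4}$, integrating in time over $[0,\sigma(T)]\subset[0,1]$ and applying Gr\"onwall's inequality yields
\begin{equation*}
A_1(\sigma(T))\le C(M)+C(K)C_0^{1/4},
\end{equation*}
where $B(0)$ is bounded through $\|\nabla u_0\|_{L^2}\le M$ and the pointwise bounds on $\rho_0,\te_0$, while the initial value of $\int\rho|\dot u|^2\,dx$ is controlled by $\|g\|_{L^2}^2$ via the compatibility condition \eqref{co2}. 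Choosing $K$ sufficiently large depending on $M$ and then $\varepsilon_1$ small enough that $C(K)C_0^{1/4}\le K$ yields the desired bound $A_1(\sigma(T))\le 2K$. The principal technical difficulty lies in the careful bookkeeping of all boundary contributions so that each may be absorbed or converted to a harmless interior integral on $B_{2d}\cap\Omega$.
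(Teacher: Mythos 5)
Your plan — test the momentum equation against $\dot u$, decompose the resulting integrals into a time derivative of a quantity $B(t)$ plus convective and boundary corrections, then close by Gr\"onwall on $[0,\sigma(T)]$ — is genuinely different from what the paper does for Lemma~\ref{le2}, and as written it has a gap that is not cosmetic.

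The paper proves Lemma~\ref{le2} by multiplying \eqref{a11} by $2u_t$ (see \eqref{hh17}), not by $\dot u$. This choice is deliberate: since $u_t\cdot n=0$ and $\curl u_t\times n=0$ on $\partial\Omega$, the integrations by parts of the viscous and pressure terms produce \emph{no} boundary integrals at all. Testing with $\dot u$ instead creates the surface term $\int_{\partial\Omega}G(u\cdot\nabla n\cdot u)\,dS$ because $\dot u\cdot n\neq 0$; that term must then be controlled. This is exactly the structure the paper reserves for the $\sigma$-weighted estimates in Lemma~\ref{a113.4} (see \eqref{m0}, \eqref{bb2}, \eqref{bz3}). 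Your route is not impossible in principle, but the key difference is in what error terms it produces, and that is where the argument breaks.

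The concrete gap is your claimed bound $C\|\nabla u\|_{L^4}^4$ among the error terms. Under the hypotheses \eqref{3.q1} of Lemma~\ref{le2} — which bound only $A_1(\sigma(T))$ and $A_2(\sigma(T))$, and give \emph{no} control on $A_3$ — the integral $\int_0^{\sigma(T)}\|\nabla u\|_{L^4}^4\,dt$ is not bounded. Indeed, by Gagliardo--Nirenberg and Lemma~\ref{le4} (e.g.\ \eqref{3.30}), $\|\nabla u\|_{L^4}^4\lesssim\|\nabla u\|_{L^2}\|\nabla u\|_{L^6}^3\lesssim\|\nabla u\|_{L^2}\bigl(\|\rho\dot u\|_{L^2}^3+\cdots\bigr)$, and $\int_0^{\sigma(T)}\|\rho\dot u\|_{L^2}^3\,dt$ exceeds the dissipation budget $\int_0^{\sigma(T)}\|\rho^{1/2}\dot u\|_{L^2}^2\,dt\le A_1$; one would need $\sup_t\|\rho^{1/2}\dot u\|_{L^2}$, i.e.\ $A_3$. (This is precisely why the paper's $\|\nabla u\|_{L^4}^4$ estimates \eqref{ae9}, \eqref{m22} carry a $\sigma^2$ weight and are only invoked under the full hypothesis \eqref{z1}, which includes $A_3$.) The paper's $u_t$-test avoids $\|\nabla u\|_{L^4}^4$ entirely: after \eqref{a16} and \eqref{op1} the only superlinear term is $\|\nabla u\|_{L^2}^6$, and $\int_0^{\sigma(T)}\|\nabla u\|_{L^2}^6\,dt\le\bigl(\sup\|\nabla u\|_{L^2}^2\bigr)^2\int_0^{\sigma(T)}\|\nabla u\|_{L^2}^2\,dt\le 9K^2\cdot 2C_0^{1/4}$, which is made small by choosing $\ep_1$ after $K$. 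If you wish to stay with the $\dot u$ test, you must process the cubic convective integrals at the $\|\nabla u\|_{L^3}^3\lesssim\|\nabla u\|_{L^2}^{3/2}\|\nabla u\|_{L^6}^{3/2}$ level and apply Young to land on $\delta\|\rho^{1/2}\dot u\|_{L^2}^2+C\|\nabla u\|_{L^2}^6$, never materializing $\|\nabla u\|_{L^4}^4$.

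Two further points. First, you describe the $P_t$ contribution as coming from ``differentiating $\rho\theta$ in time''; if this means using \eqref{jia1} so that $R\rho\dot\theta$ appears (as in \eqref{pt1}--\eqref{m1}), the resulting $\int_0^{\sigma(T)}\|\rho^{1/2}\dot\theta\|_{L^2}^2\,dt$ is again an $A_3$-type quantity unavailable here; the paper instead substitutes the temperature equation \eqref{op3}, which trades $P_t$ for $\Delta\theta$ and yields $\nabla\theta$ terms controlled by $A_2$ (see \eqref{a16}). Second, you invoke the compatibility condition \eqref{co2} to bound the initial value of $\int\rho|\dot u|^2\,dx$; that is unnecessary for this lemma, since $\int_0^{\sigma(T)}\int\rho|\dot u|^2$ is a dissipation term that starts from zero and $B(0)$ is controlled by $\|\nabla u_0\|_{L^2}\le M$ and the sup bounds on $\rho_0,\theta_0$ alone. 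This suggests you are mixing Lemma~\ref{le2} with the later estimates \eqref{ae0}, \eqref{wq02} which genuinely need $\tilde g\in L^2$.
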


\begin{proof}
 First, integrating $(\ref{a11})$  multiplied by $2u_t $ over $\Omega $ by parts gives
 \be\ba \la{hh17}
 &\frac{d}{dt}\int \left(  {\mu} |\curl u|^2+ (2\mu+\lambda)(\div u)^2\right)dx+ \int\rho |\dot u|^2dx \\
&\le 2\int  P\div u_t dx+ \int \n|u\cdot \na u|^2dx\\
 &=  2R\frac{d}{dt}\int  (\rho\te-1) \div u  dx-2\int P_t \div u dx+\int \n|u\cdot \na u|^2dx\\
 &= 2R\frac{d}{dt}\int  (\rho\te-1) \div u  dx-\frac{R^2}{2\mu+\lambda}\frac{d}{dt}\int (\rho\te-1)^2 dx\\
 &\quad-\frac{2}{2\mu+\lambda}\int P_t G dx+ \int \n|u\cdot \na u|^2dx ,
 \ea\ee
where in the last equality  we have used (\ref{hj1}).

Next, it follows from Holder's inequality, \eqref{le2}, \eqref{g1}, and \eqref{a2.112} that for any $p\in [2,6],$
\be\la{p}\ba  \|\rho\te-1\|_{L^p}&= \| \rho(\te-1) + (\rho-1)\|_{L^p}\\
 &\le \|\n(\te-1)\|_{L^2}^{(6-p)/(2p)}
 \|\n(\te-1)\|_{L^6}^{ 3(p-2)/(2p)}+ \|\n-1\|_{L^p}
 \\&\le C(\on)C_0^{(6-p)/(16p)}
 \|\na\te \|_{L^2}^{ 3(p-2)/(2p)}+ C(\hat\n)C_0^{1/(4p)},
  \ea\ee
which together with (\ref{h17}) yields
\be \la{3.30}  \|\na
u\|_{L^6} \le C(\hat\n) \left(  \|\n \dot
u\|_{L^2}+\|\na u\|_{L^2}+ \|\na \te\|_{L^2}+C_0^{1/24}\right).
\ee
Noticing that (\ref{a1})$_3$ implies
\be \la{op3} \ba
P_t=&-\div (Pu) -(\gamma-1) P\div u+(\ga-1)\ka \Delta\te\\&+(\ga-1)\left(\lambda (\div u)^2+2\mu |\mathfrak{D}(u)|^2\right),
\ea\ee
we thus obtain after using  integration by parts,  (\ref{g1}), (\ref{h19}), (\ref{p}),  (\ref{3.30}), and (\ref{a2.112}) that
\be\la{a16}\ba
&\left|\int   P_t Gdx\right| \\
&\le C\int P(|G||\na u|+ |u||\na G|)dx+ C\int\left( |\na\te||\na G|+|\na u|^2|G|\right)dx \\
&\le C\int \n(|G||\na u|+|u||\na G|)+C\int\n|\te-1| (|G||\na u|+|u||\na G|)dxdx \\
&\quad + C \|\na G\|_{L^2} \|\na \te\|_{L^2} + C \| G\|_{L^6} \|\na u\|_{L^2}^{3/2}  \|\na u\|_{L^6}^{1/2} \\
&\le C(\hat \n)(\|\na u\|_{L^2}+\|\n\te-1\|_{L^2})\|\na u\|_{L^2}+ C\|\n u\|_{L^2} \|\na G\|_{L^2}  \\
& \quad+ C\|\n(\te-1)\|_{L^2}^{1/2}\|\na\te\|_{L^2}^{1/2}\|\na G\|_{L^2}\|\na u\|_{L^2}
+C \|\na G\|_{L^2} \|\na \te\|_{L^2}\\
& \quad+ C \|\na G\|_{L^2} \|\na u\|_{L^2}^{3/2} \left(\|\n\dot u\|_{L^2}+\|\na u\|_{L^2}+\|\na\te\|_{L^2}+1 \right)^{1/2} \\
&\le \de \|\na G\|_{L^2}^2 +\de \|\rho\dot u\|^2_{L^2}+C(\de,\on) \left( \|\na u\|_{L^2}^2+ \|\na \te\|_{L^2}^2+ \|\na u\|^6_{L^2}+1\right) \\
&\le C(\on)\de\|\rho^{1/2} \dot u\|^2_{L^2}     +C(\de,\on) \left( \|\na u\|_{L^2}^2+ \|\na \te\|_{L^2}^2 +\|\na  u\|^6_{L^2}+1\right),
\ea\ee

Then, it follows from (\ref{g1}), \eqref{3.q1}, and (\ref{3.30}) that
\be\la{op1}\ba
\int \n|u\cdot \na u|^2dx&\le C(\on)\|u\|_{L^6}^2 \|\na u\|_{L^2} \|\na u\|_{L^6}  \\
&\le \de\|\rho^{1/2} \dot u\|_{L^2}^2+ C(\de,\on)\left(\|\na u\|_{L^2}^2+\|\na\te\|_{L^2}^2+\|\na
u\|_{L^2}^6\right).
\ea\ee

Finally, substituting (\ref{a16})  and (\ref{op1}) into (\ref{hh17}) and choosing $\de$ suitably small,
one gets after integrating (\ref{hh17}) over $(0,\sigma(T))$ and using (\ref{3.q1}), \eqref{ljq01}, and \eqref{p} that
\bnn\la{h81} \ba
&\sup_{0\le t\le \sigma(T)}\|\na u\|_{L^2}^2+ \int_0^{\sigma(T)}\int\rho|\dot{u}|^2dxdt\\
&\le CM^2+C(\on)C_0^{1/4} + C(\on ) C_0^{1/4}\sup_{0\le t\le \sigma(T)}\|\na u\|_{L^2}^4\\
&\le K+9K^2C(\on)C_0^{1/4}\\
&\le 2K,
\ea \enn
with $K\triangleq CM^2+C(\on) +1$, provided
\be\la{3.q0}C_0\le \ep_1 \triangleq \min\left\{1,\xl(9C(\on)K\xr)^{-4}\right\}.\ee
The proof of Lemma \ref{le2} is completed.
\end{proof}

Next, we use the approach from Hoff \cite{Hof1} (see also Huang-Li \cite{H-L})  to establish the following elementary estimates on $\dot u$ and $\dot \te$, where the boundary terms are handled by the ideas in Cai-Li-L\"u \cite{C-L-L}.

\begin{lemma}\la{a113.4}
	Under the conditions of Proposition \ref{pr1}, let $(\rho,u,\te)$ be a smooth solution to the problem (\ref{a1})--(\ref{ch2}) on $\Omega\times (0,T] $ satisfying (\ref{z1}) with $K$ as in Lemma \ref{le2}.  Then  there exist positive constants $C$, $ C_1$, and $C_2$ depending only on $\mu,\,\lambda, \,k,\, R,\, \ga,\, \on,\,\bt, $ $\O$, and $ M$  such that, for any $\beta,\eta\in (0,1]$ and $m\geq0,$
the following estimates hold:
\be\ba  \la{an1}
(\sigma B_1)'(t) + \frac{3}{2}\int \sigma \rho |\dot u|^2dx
\le &  C C_0^{1/4} \sigma' +2\beta\si^2\|\n^{1/2}\dot\theta\|_{L^2}^2+C\si^2\|\na u\|_{L^4}^4\\ &+C\beta^{-1}\left(\|\na u\|_{L^2}^2+\|\na\te\|_{L^2}^2\right),
\ea\ee
\be\la{ae0}\ba
&\left(\sigma^{m}\|\rho^{1/2}\dot{u}\|_{L^2}^2\right)_t+C_1 \sigma^{m}\|\na\dot{u}\|_{L^2}^2\\
&\le - 2\left(\int_{\p \O}  \sigma^m (u \cdot \na n \cdot u) G dS\right)_t + C(\si^{m-1}\si'+\si^m)  \|\rho^{1/2} \dot u\|_{L^2}^2 \\&\quad+
C_2  \si^m \|\rho^{1/2} \dot \te\|_{L^2}^2+ C\|\na u\|^2_{L^2}+C \si^m \|\na u\|^4_{L^4} + C \si^m  \|\te \na u\|_{L^2}^2,\\\ea\ee
  and
 \be\la{nle7}\ba  &(\si^mB_2 )'(t)+\si^m \int\n|\dot \te|^2dx\\
&\le C \eta \si^m\|\na\dot u\|_{L^2}^2+C \|\na
\te \|_{L^2}^2+C\si^m \|\na u\|_{L^4}^4+C\eta^{-1} \si^m \|\te\na u\|_{L^2}^2,\ea\ee
where
\be\la{an2} B_1(t)\triangleq \mu\|\curl u\|_{L^2}^2+ (2\mu+\lambda) \|\div u\|_{L^2}^2-2 R\int \div u(\n\te-1) dx, \ee
and
 \be\la{e6}
B_2(t)\triangleq\frac{\ga-1}{R}\left(\ka \|\na
\te\|_{L^2}^2-2 \int (\lambda (\div u)^2+2\mu|\mathfrak{D}(u)|^2)\te dx\right).\ee
\end{lemma}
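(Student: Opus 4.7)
My plan is to derive all three estimates by testing the momentum and temperature equations against carefully chosen quantities, exploiting the identities for $G$, $\curl u$, and $\rho\dot u$ from Lemma \ref{le4}, and handling boundary integrals by means of the slip condition. The a priori assumption \eqref{z1} together with the basic energy bound \eqref{a2.112} (via Lemma \ref{a13.1}) will be used implicitly throughout to absorb low-order terms into $CC_0^{1/4}$ factors.

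For \eqref{an1}, I would pick up from identity \eqref{hh17}, which was already derived by testing \eqref{a11} against $u_t$. Observing that $B_1$ in \eqref{an2} is exactly the LHS of \eqref{hh17} with the pressure-divergence cross-term $-2R\int(\rho\theta-1)\div u\,dx$ absorbed, I can rearrange to obtain $B_1'(t)+\int\rho|\dot u|^2dx\le(\text{bulk})$. Multiplying by $\sigma$ produces $(\sigma B_1)'(t)$ with a harmless $\sigma'B_1\le CC_0^{1/4}\sigma'$ piece. The non-trivial bulk term $\int P_t G\,dx$ is handled via the evolution \eqref{op3}: the viscous-dissipation contribution $(\gamma-1)\int(\lambda(\div u)^2+2\mu|\mathfrak{D}(u)|^2)G\,dx$ is controlled by $\sigma^2\|\nabla u\|_{L^4}^4$, while the $\kappa\int\Delta\theta\,G\,dx$ piece is integrated by parts (no boundary term, since $\nabla\theta\cdot n=0$) and then the temperature equation $(a1)_3$ is invoked to replace $\kappa\Delta\theta$ with $\frac{R}{\gamma-1}\rho\dot\theta$ modulo lower-order terms, yielding the $\beta\sigma^2\|\rho^{1/2}\dot\theta\|_{L^2}^2$ contribution after Cauchy--Schwarz.

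For \eqref{ae0}, I would apply the material derivative $D_t=\partial_t+u\cdot\nabla$ to $(a1)_2$ and test against $\sigma^m\dot u$. Using $\rho_t+\div(\rho u)=0$ and the identity $\rho\ddot u = D_t(\rho\dot u)+\rho(\div u)\dot u$, the left side generates $\frac12(\sigma^m\|\rho^{1/2}\dot u\|_{L^2}^2)_t$ (up to $\sigma^{m-1}\sigma'$ corrections), and integration by parts of the viscous contribution yields the dissipation $C_1\sigma^m\|\nabla\dot u\|_{L^2}^2$ through Lemma \ref{uup1}. The crucial manoeuvre is the boundary integral from $G$ and the viscous stress: since $\dot u\cdot n=-u\cdot\nabla n\cdot u$ on $\partial\Omega$ by \eqref{pzw1}, the boundary term $\int_{\partial\Omega}G\,\dot u\cdot n\,dS$ equals $-\int_{\partial\Omega}G(u\cdot\nabla n\cdot u)dS$. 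Extracting a total time-derivative, following the trick of Cai--Li--L\"u \cite{C-L-L} via $u=u^\perp\times n$ on $\partial\Omega$ and reducing to a bulk integral on $B_{2d}$ using the cut-off structure of $n$ from \eqref{ljq10}, produces the term $-2(\int_{\partial\Omega}\sigma^m(u\cdot\nabla n\cdot u)G\,dS)_t$ on the right of \eqref{ae0}. The $\rho^{1/2}\dot\theta$ contribution comes from $D_tP=R\rho\dot\theta-R\rho\theta\div u$, so the pressure derivatives tested against $\dot u$ reduce (after integration by parts) to bounds involving $\rho^{1/2}\dot\theta$, $\theta\nabla u$, and $\nabla u$; the convective terms produce $\|\nabla u\|_{L^4}^4$ via H\"older.

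For \eqref{nle7}, I would test $(a1)_3$ with $\sigma^m\dot\theta$. The principal part splits as $\kappa\int\Delta\theta\,\dot\theta\,dx=-\frac{\kappa}{2}\frac{d}{dt}\|\nabla\theta\|_{L^2}^2+\kappa\int\Delta\theta(u\cdot\nabla\theta)dx$ (the Neumann BC kills boundary terms), and the pressure-work and viscous-dissipation terms, after substituting $\dot\theta=\theta_t+u\cdot\nabla\theta$, produce $\partial_t$-parts that combine with the $-2\int\theta(\lambda(\div u)^2+2\mu|\mathfrak{D}(u)|^2)dx$ contribution to form $(\sigma^m B_2)'(t)$, while the convective remainders are absorbed using \eqref{g1}, \eqref{h17}, Cauchy--Schwarz (yielding the $\eta\sigma^m\|\nabla\dot u\|^2+\eta^{-1}\sigma^m\|\theta\nabla u\|^2$ split), and the a priori assumption \eqref{z1}. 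The main obstacle throughout is the boundary analysis for \eqref{ae0}: $\dot u$ inherits no slip condition, so extracting a clean total time-derivative from the boundary integrals requires combining the geometric identity $u=u^\perp\times n$ with the extension/cut-off device from \eqref{ljq10}, and ensuring that the residual interior contributions do not dominate $C_1\|\nabla\dot u\|_{L^2}^2$ on the left-hand side.
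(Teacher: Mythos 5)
Your proposals for \eqref{ae0} and \eqref{nle7} follow the paper's route. One slip in \eqref{ae0} worth flagging: the boundary term that forces the total-derivative device is $\int_{\partial\Omega}\sigma^m\dot u\cdot n\,G_t\,dS$, not $\int_{\partial\Omega}\sigma^m\dot u\cdot n\,G\,dS$; it is precisely because $G_t$ contains second derivatives of $u$ that one cannot bound it directly by trace estimates and must instead peel off $-\big(\int_{\partial\Omega}\sigma^m(u\cdot\nabla n\cdot u)G\,dS\big)_t$ and estimate the lower-order remainders via $u=u^{\perp}\times n$ and the cut-off structure of $n$.

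For \eqref{an1}, however, there is a genuine gap. Your plan is to multiply \eqref{hh17} (tested against $u_t$) by $\sigma$ and handle $\int P_t G\,dx$ via \eqref{op3}, claiming this produces the $\beta\sigma^2\|\rho^{1/2}\dot\theta\|_{L^2}^2$ term. It does not. The estimate of $\int P_tG\,dx$ via \eqref{op3} is exactly \eqref{a16}, which yields $C\delta\|\rho^{1/2}\dot u\|_{L^2}^2+C(\delta)\big(\|\nabla u\|_{L^2}^2+\|\nabla\theta\|_{L^2}^2+\|\nabla u\|_{L^2}^6+1\big)$, i.e.\ an unweighted additive constant $1$ and no $\rho^{1/2}\dot\theta$ term at all; that constant is fatal here, since when Lemma~\ref{le6} integrates \eqref{an1} over $(0,T)$ it would produce a term growing linearly in $T$. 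And if, as you also suggest, one instead inserts $(\ref{a1})_3$ to replace $\kappa\Delta\theta$ by $\frac{R}{\gamma-1}\rho\dot\theta$ inside $\int P_tG\,dx$, the resulting $\rho\dot\theta$ couples to $G$, and Cauchy's inequality gives $\beta\sigma^2\|\rho^{1/2}\dot\theta\|_{L^2}^2+C\beta^{-1}\|G\|_{L^2}^2$; but $\|G\|_{L^2}^2\le C\|\nabla u\|_{L^2}^2+C\|\rho\theta-1\|_{L^2}^2$, and $\|\rho\theta-1\|_{L^2}^2\le CC_0^{1/4}$ is a time-independent constant by \eqref{p}, so the same obstruction reappears. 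The paper avoids all of this by multiplying $(\ref{a1})_2$ by $\sigma\dot u$ (not by $u_t$), so the pressure contribution arrives as $-\int\sigma\,\div\dot u\,G\,dx$, and then expanding $\div\dot u\,G$ through the pointwise identity \eqref{pt1}, which relies only on the transport identity $P_t=R\rho\dot\theta-\div(Pu)$ from \eqref{jia1} rather than on \eqref{op3}. After the cancellations built into \eqref{pt1} --- the total divergence $\div\big(R(\rho\theta-1)u\,\div u\big)$ integrates to zero, and $u\cdot\nabla(\div u)^2$ integrates by parts to a cubic in $\nabla u$ --- the surviving piece of $P_t$ is $R\rho\dot\theta\,\div u$, pairing $\rho\dot\theta$ with $\div u$ rather than with $G$; Cauchy then gives $\beta\sigma^2\|\rho^{1/2}\dot\theta\|_{L^2}^2+C\beta^{-1}\|\nabla u\|_{L^2}^2$, which does fit the stated right-hand side. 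That cancellation, forced by taking the test function $\sigma\dot u$ and exploiting the pointwise structure of $G$, is the ingredient your proposal is missing.
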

\begin{proof} The proof is divided into  the following three parts.

\noindent{}\textbf{Part I: The proof of (\ref{an1}).}

Multiplying
$(\ref{a1})_2 $ by $\sigma \dot{u}$ and integrating the resulting
equality over $\Omega $ by parts, one gets
\be\la{m0} \ba  \int \sigma
\rho|\dot{u}|^2dx & = \int (\sigma \dot{u}\cdot\nabla G - \si \mu \na \times \curl u\cdot\dot{u})dx  \\
&=\int_{\partial \O} \sigma (u\cdot\na u \cdot n) G dS - \int \sigma \div \dot{u} G dx - \mu \int  \si  \curl u \cdot \curl \dot{u} dx \\
& \triangleq \sum_{i=1}^{3}M_i. \ea \ee

For the term $M_1$, it can be deduced from \eqref{pzw1}, \eqref{eee0},  \eqref{h19},  and \eqref{z1} that
\be \ba \la{bb2}
M_1&=-\int_{\partial \O} \sigma  (u\cdot \na n\cdot u) G dS\\
 &\le C\sigma \|u\|_{L^4(\partial\Omega)}^2\|G\|_{L^2(\partial\Omega)}\\
 &\le C\sigma \|\na u\|_{L^2} ^2\|\na G\|_{L^2}\\
 &\le \delta \sigma\|\rho^{1/2} \dot u\|_{L^2}^2+C(\de,\on,M) \sigma\|\na u\|_{L^2}^2.
\ea \ee
where in the last inequality we have used the following simple fact:
\be \la{infty12}
\sup_{t\in[0,T]}\|\na u\|_{L^2}\leq A_1(\sigma(T))+A_3(T)\leq C(\on,M).
\ee

Notice that
\be \ba \label{jia1}
P_t=(R\n \te)_t=R\n \dot{\te}-\div (Pu),
\ea \ee
which along with some straight calculations gives
\be \ba\la{pt1}
\div \dot u G=& (\div u_t + \div (u \cdot \na u))((2\mu+\lambda)\div u - R(\rho\theta-1))\\
=& \frac{2\mu+\lambda}{2} (\div u)^2_t - (R(\rho\theta-1) \div u)_t +(R\rho\theta)_t \div u \\
&+ (2\mu+\lambda) \div (u \cdot \na u)\div u - R(\rho\theta-1) \div (u \cdot \na u)\\
=& \frac{2\mu+\lambda}{2} (\div u)^2_t - (R(\rho\theta-1)\div u)_t +R\rho \dot\te \div u - \div(Pu) \div u\\
& + (2\mu+\lambda) \div (u \cdot \na u)\div u -R(\rho\theta-1)\div (u \cdot \na u)\\
=& \frac{2\mu+\lambda}{2} (\div u)^2_t - (R(\rho\theta-1)\div u)_t +R\rho \dot\te \div u \\
& + (2\mu+\lambda) \na u : (\na u)^{\rm tr} \div u + \frac{2\mu+\lambda}{2} u \cdot \na(\div u)^2 \\
&- \div(R(\rho\theta-1)u \div u)- R(\rho\theta-1) \na u : (\na u)^{\rm tr}-R(\div u)^2.
\ea \ee
This together with integration by parts and \eqref{z1} implies that for any $\beta\in
(0,1],$
\be\la{m1} \ba
M_2    =&  -\frac{2\mu+\lambda}{2} \left(\int \sigma  (\div u)^2 dx \right)_t
       + \frac{2\mu+\lambda}{2} \si' \int  (\div u)^2 dx \\
&+\left(\int \si R(\rho\theta-1) \div u dx\right)_t    - R\si' \int (\rho\theta-1) \div u dx - R\si \int \rho \dot\te \div u dx\\
&-(2\mu+\lambda)  \si \int  \na u : (\na u)^{\rm tr} \div u dx + \frac{2\mu  +\lambda}{2} \si \int  (\div u)^3 dx\\
&+ R\si \int (\rho\theta-1) \na u : (\na u)^{\rm tr}dx +  R\si \int (\div u)^2 dx\\
\le &  - \frac{2\mu+\lambda}{2} \left(\int \sigma  (\div u)^2 dx \right)_t +\left(R\int \si (\rho\theta-1) \div u dx\right)_t \\
& + C \si' \|\rho\theta-1\|_{L^2}^2+ \beta \si^2 \|\rho^{1/2} \dot\te\|_{L^2}^2 \\
&+ C \si^2 \|\na u\|_{L^4}^4 +  C(\on) \beta^{-1}  \|\na u\|_{L^2}^2 + C(\on) \si \int
\te |\na u|^2dx\\
\le &  - \frac{2\mu+\lambda}{2} \left(\int \sigma  (\div u)^2 dx \right)_t  +\left(R\int \si (\rho\theta-1) \div u dx\right)_t \\
&+ C(\on)  C_0^{1/4}\si'+ \beta \si^2 \|\rho^{1/2} \dot\te\|_{L^2}^2  + C \si^2 \|\na u\|_{L^4}^4 \\
&+  C(\de,\on,M) \beta^{-1} ( \|\na u\|_{L^2}^2 + \|\na \te \|_{L^2}^2)+ C(\on) \de \si \|\rho^{1/2} \dot u \|_{L^2}^2,\\
\ea \ee where in the last inequality we have used   (\ref{p}) and
the following simple fact:
\be \la{2.48}\ba   \int
\te |\na u|^2dx   & \le
\int|\te-1||\na u|^2dx+  \int |\na u|^2dx\\ &\le C
\|\te-1 \|_{L^6}\|\na u\|_{L^2}^{3/2}
\|\na u\|_{L^6}^{1/2}+  \|\na u\|_{L^2}^2
\\ &\le C \|\na\te\|_{L^2}\|\na u\|_{L^2}^{3/2}
\left(   \|\n \dot u\|_{L^2}+\|\na u\|_{L^2}+ \|\na\te\|_{L^2}+1\right)^{1/2}\\
&\quad+ C\|\na  u\|_{L^2}^2\\ &\le \de
\left(  \|\na\te\|^2_{L^2}  + \|\n^{1/2}  \dot
u\|_{L^2}^2 \right) + C(\de,\hat\n, M)  \|\na
u\|_{L^2}^2 \ea\ee
due to (\ref{3.30}), (\ref{z1}) and (\ref{infty12}).

For the term $M_3$, it holds that
 \be\la{m2} \ba
M_3
& = -\frac{\mu }{2}\int\sigma |\curl u|^2_t dx
    -\mu \sigma  \int \curl u \cdot \curl (u\cdot \na u)dx \\
& = -\frac{\mu }{2}\left(\sigma \|\curl u\|_{L^2}^2\right)_t +
\frac{\mu }{2}\si'  \|\curl u\|_{L^2}^2
-\mu \sigma  \int \curl u \cdot (\na u^i \times \na_i  u)  dx \\&\quad+ \frac{\mu}{2}\si
\int |\curl u|^2\div udx \\
& \le  -\frac{\mu }{2}\left(\sigma \|\curl u\|_{L^2}^2\right)_t + C
\|\na u\|_{L^2}^2 +   C\sigma^2 \|\na u\|_{L^4}^4  . \ea \ee

Now, substituting \eqref{bb2}, (\ref{m1}),  and  (\ref{m2}) into (\ref{m0}), we obtain  (\ref{an1}) after choosing $\de$ suitably small.

\noindent{}\textbf{Part II:  The proof of  (\ref{ae0}).}

For $m\ge 0,$ operating $ \si^m\dot u^j[\pa/\pa t+\div (u\cdot)]$ to $ (\ref{a111})^j$ and integrating the resulting equality over $\Omega$ by parts lead to
\be\la{m4} \ba
& \left(\frac{\sigma^m}{2}\int\rho|\dot{u}|^2dx \right)_t -\frac{m}{2}\sigma^{m-1}\si'\int\rho|\dot{u}|^2dx\\
 &=  \int_{\p \O}  \sigma^m \dot{u} \cdot n G_t dS - \int  \sigma^m [\div \dot{u} G_t + u \cdot \na \dot u \cdot \na G]dx \\
&\quad- \mu \int\sigma^m\dot{u}^j\xl[(\na \times \curl u)_t^j + \div (u (\na \times \curl u)^j)\xr] dx  \triangleq\sum_{i=1}^{3}N_i.
\ea \ee

One can deduce from \eqref{h1}, \eqref{pzw1},  and \eqref{pzw1} that
\be \la{bz8}\ba
N_1&=- \int_{\p \O}  \sigma^m (u \cdot \na n \cdot u) G_t dS\\
&=- \left(\int_{\p \O} \sigma^m (u \cdot \na n \cdot u) G dS\right)_t + m \si^{m-1} \si' \int_{\p \O}( u \cdot \na n \cdot u) G dS \\
&\quad+  \int_{\p \O}  \sigma^m (\dot u \cdot \na n \cdot u) G dS+ \int_{\p \O}  \sigma^m ( u \cdot \na      n \cdot \dot u) G dS\\
&\quad-  \int_{\p \O}  \sigma^m G( u \cdot \na) u \cdot \na n \cdot u  dS -\int_{\p \O}  \sigma^m G u \cdot \na n \cdot (u\cdot \na )u  dS\\
&\le - \left(\int_{\p \O}  \sigma^m (u \cdot \na n \cdot u) G dS\right)_t + C\si^{m-1} \si' \| \na u\|_{L^2}^2\|\na G\|_{L^2} \\
&\quad+\de \si^m \|\na\dot u\|_{L^2}^2+ C(\de) \si^m \|\na u\|_{L^2}^2 \|\na G\|_{L^2}^2\\
& \quad- \int_{\p \O}  \sigma^m G( u \cdot \na) u \cdot \na n \cdot u  dS -\int_{\p \O}  \sigma^m G u \cdot \na n \cdot (u\cdot \na )u  dS,
\ea \ee
where one has used
\be \ba\notag
\int_{\partial \O}  (\dot u\cdot \na n\cdot u+ u\cdot \na n\cdot\dot u) G dS &\le C \|\dot u\|_{L^4(\partial\Omega)} \| u\|_{L^2(\partial\Omega)} \|G\|_{L^4(\partial\Omega)} \\&\le C \|\na\dot u\|_{L^2} \|\na u\|_{L^2} \|\na G\|_{L^2},
\ea \ee
and
\be\la{b2}
\int_{\partial \O}  ( u\cdot \na n\cdot u) G dS   \le C \|\na u\|_{L^2} ^2\|\na G\|_{L^2}
\ee
due to \eqref{eee0}.

Now, we will adopt the idea in \cite{C-L} to deal with the last two boundary terms in \eqref{bz8}. In fact, since $u^\bot\times n$ has compact support, (\ref{eee1}) along with \eqref{g1} and integration by parts yields
\be \la{bz3}\ba &- \int_{\partial\Omega} G (u\cdot \na) u\cdot\na n\cdot u dS \\&= -\int_{\partial\Omega}  G u^\bot\times n \cdot\na u^i \nabla_i n\cdot u  dS \\&= - \int_{\partial\Omega} G n\cdot ( \na u^i \times  u^\bot)    \nabla_i n\cdot u dS\\
&= - \int\div( G( \na u^i \times  u^\bot)   \nabla_i n\cdot u) dx \\
&= - \int \na (\nabla_i n\cdot u G) \cdot ( \na u^i \times  u^\bot)   dx  - \int \div( \na u^i \times  u^\bot)    \nabla_i n\cdot u   G  dx \\
&= - \int \na (\nabla_i n\cdot u G) \cdot ( \na u^i \times  u^\bot)   dx  + \int  G \na  u^i \cdot \na\times  u^\bot     \nabla_i n\cdot u     dx \\
& \le C \int |\na G||\na u||u|^2dx+C \int |G| (|\na u|^2|u|+|\na u||u|^2)dx
\\& \le C  \|\na G\|_{L^6}\|\na u\|_{L^2}\|u\|^2_{L^6}
+C  \| G\|_{L^6}\|\na u\|^2_{L^3}\|u\|_{L^6}+C\| G\|_{L^{6}} \|\na      u\|_{L^2}\|u\|_{L^6}^2
\\& \le \de \|\na G\|_{L^6}^2+C(\de) \|\na u\|^6_{L^2}+C\|\na u\|^4_{L^3}+ C  \|  \na G\|_{L^2}^2 (\|\na u\|^2_{L^2} +1 ).\ea\ee
Similarly, it holds that
\be \la{bz4}\ba &-  \int_{\partial\Omega}G  u\cdot\na n\cdot ({u}\cdot\na) u dS\\& \le \de \|\na G\|_{L^6}^2+C(\de) \|\na u\|^6_{L^2}+C\|\na u\|^4_{L^3}+ C \|  \na G\|_{L^2}^2(\|\na u\|^2_{L^2} +1 ).\ea\ee

Next, it follows from \eqref{hj1} and \eqref{jia1} that
\be \ba \la{pt11}
G_t =& (2\mu+\lambda)\div u_t -P_t\\
=& (2\mu+\lambda) \div \dot u - (2\mu+\lambda) \div (u\cdot \na u) - R\rho \dot\te + \div(Pu)\\
=& (2\mu+\lambda) \div \dot u - (2\mu+\lambda) \na u : (\na u)^{\rm tr} -  u \cdot  \na G + P \div u - R\rho \dot\te.
\ea \ee
Then, integration by parts combined with \eqref{pt11} gives
\be\la{m5} \ba
N_2 =&  - \int  \sigma^m [\div \dot{u} G_t + u \cdot \na \dot u \cdot \na G]dx \\
=& - (2\mu+\lambda) \int  \sigma^m (\div \dot{u} )^2 dx +  (2\mu+\lambda)  \int \sigma^m \div \dot{u} \na u : (\na u)^{\rm tr} dx \\
&+\int  \sigma^m \div \dot{u}  u \cdot  \na G  dx - \int \sigma^m \div \dot{u} P \div u dx  \\
&+ R \int \sigma^m \div \dot{u} \rho \dot\te dx - \int  \sigma^m u \cdot \na \dot u \cdot \na Gdx \\
\le &   - (2\mu+\lambda) \int  \sigma^m (\div \dot{u} )^2 dx\\
& + C \si^m \|\na \dot{u}\|_{L^2} \|\na u\|_{L^4}^2 + C \si^m \|\na \dot{u}\|_{L^2} \|\na G\|_{L^2}^{1/2} \|\na G\|_{L^6}^{1/2} \|u\|_{L^6} \\
&+ C(\on) \si^m \|\na \dot{u}\|_{L^2} \|\te \na u\|_{L^2}+ C(\on) \si^m \|\na \dot{u}\|_{L^2} \|\rho^{1/2} \dot \te\|_{L^2}.\ea \ee

Note that
$$\curl u_t=\curl \dot u-u\cdot \na \curl u-\na u^i\times \nabla_iu,$$
this together with some straight calculations yields
\be\la{ax3999}\ba
N_3 &=-\mu\int\sigma^{m}|\curl \dot{u}|^{2}dx-\mu\int\sigma^{m}\curl \dot{u}\cdot\curl(u\cdot\nabla u)dx \\
&\quad+\mu\int\sigma^{m}(\curl u\times\dot{u})\cdot\nabla\div udx -\mu\int\sigma^{m}\div u\,\curl u\cdot\curl \dot{u}dx\\
&\quad-\mu\int\sigma^{m} u^{i}\div(\nabla_i\curl u\times\dot{u})dx+\mu\int\sigma^{m} u^i\nabla_i\curl u\cdot\curl\dot{u}dx  \\
&=-\mu\int\sigma^{m}|\curl\dot{u}|^{2}dx-\mu\int\sigma^{m}\curl\dot{u}\cdot(\nabla u^i\times\nabla_i u) dx \\
&\quad+\mu\int\sigma^{m}(\curl u\times\dot{u})\cdot\nabla\div udx-\mu\int\sigma^{m}\div u\,\curl u\cdot\curl \dot{u}dx \\
&\quad-\mu\int\sigma^{m}  u\cdot\nabla\div(\curl u\times\dot{u})dx+\mu\int\sigma^{m} u^i\div(\curl u\times\nabla_i\dot{u})dx \\
&=-\mu\int\sigma^{m}|\curl \dot{u}|^{2}dx+\mu\int\sigma^{m}(\curl u\times\nabla u^i)\cdot\nabla_i\dot{u}dx \\
&\quad-\mu\int\sigma^{m}\curl\dot{u}\cdot(\nabla u^i\times\nabla_i u)dx-\mu\int\sigma^{m}\div u\,\curl u\cdot\curl \dot{u}dx\\
&\leq-\mu\int\sigma^{m}|\curl \dot{u}|^{2}dx+C\sigma^{m}\|\nabla\dot{u}\|_{L^2}\|\nabla u\|_{L^4}^2\\
&\leq-\mu\int\sigma^{m}|\curl \dot{u}|^{2}dx+\delta\sigma^{m}\|\nabla\dot{u}\|_{L^2}^2+ C(\delta)\sigma^{m}\|\nabla u\|_{L^4}^4.
\ea\ee

Finally, it is easy to deduce from  Lemma   \ref{le4}  that
\be \la{bz6}  \|\na G\|_{L^6}
\le C\|\n \dot u\|_{L^6}\le C(\on)  \|\na \dot u\|_{L^2}. \ee

Hence, submitting  \eqref{bz8},  \eqref{m5}, and \eqref{ax3999} into \eqref{m4}, one obtains after  using \eqref{bz3}, \eqref{bz4}, \eqref{z1},  \eqref{h19},  \eqref{bz6}, and \eqref{infty12}
that
\be\la{ax40}\ba
&\left(\frac{\sigma^{m}}{2}\|\rho^{1/2}\dot{u}\|_{L^2}^2\right)_t+(2\mu+\lambda)\sigma^{m}\|\div\dot{u}\|_{L^2}^2+\mu\sigma^{m}\|\curl\dot{u}\|_{L^2}^2\\
&\le -\left(\int_{\p \O}  \sigma^m (u \cdot \na n \cdot u) G dS\right)_t+ C(\on) \de \si^m \|\na \dot{u}\|_{L^2}^2 \\
&\quad+ C(\de,\on) \si^m \|\rho^{1/2} \dot \te\|_{L^2}^2 + C(\de, \on, M)(\si^{m-1}\si'+\si^m)  \|\rho^{1/2} \dot u\|_{L^2}^2\\
&\quad+C(\de, \on, M) \|\na u\|^2_{L^2} +C(\de)\si^m \|\na u\|^4_{L^4}+ C(\de,\on) \si^m  \|\te \na u\|_{L^2}^2.
\ea\ee
Applying  \eqref{tb11} to \eqref{ax40} and choosing $\de$ small enough yield \eqref{ae0} directly.

\noindent{}\textbf{Part III: The proof of (\ref{nle7}).}

For $m\ge 0,$
multiplying $(\ref{a1})_3 $ by $\sigma^m \dot\te$ and integrating
the resulting equality over $\Omega $ yield  that
 \be\la{e1} \ba &\frac{\ka
{\sigma^m}}{2}\left( \|\na\te\|_{L^2}^2\right)_t+\frac{R\sigma^m}{\ga-1} \int\rho|\dot{\te}|^2dx
\\&=-\ka\sigma^m\int\na\te\cdot\na(u\cdot\na\te)dx
+\lambda\sigma^m\int  (\div u)^2\dot\te dx\\&\quad
+2\mu\sigma^m\int |\mathfrak{D}(u)|^2\dot\te dx-R\si^m\int\n\te \div
u\dot\te dx \triangleq \sum_{i=1}^4I_i . \ea\ee

 First,  the combination of (\ref{g1}) and (\ref{z1}) gives that
\be\la{e2} \ba
I_1
&\le C\sigma^{m}   \|\na u\|_{L^2}\|\na\te\|^{1/2}_{L^2}
\|\na^2\te\|^{3/2}_{L^2}  \\
&\le  \de\sigma^{m} \|\n^{1/2}\dot\te\|^2_{L^2}+\si^m \left(\|\na u\|_{L^4}^4+\|\te\na u\|_{L^2}^2\right) +C(\de,\on,M)\sigma^{m}
\|\na\te\|^2_{L^2}   ,\ea\ee
where in the last inequality we have used the following estimate:
\be  \la{lop4}\ba
	\|\na^2\te\|_{L^2} &\le C (\on)\left(\|\n\dot \te\|_{L^2}+ \|\na u\|_{L^4}^2+\|\te\na u\|_{L^2}\right),
	\ea\ee
which is derived from Lemma \ref{zhle} to the following elliptic problem:
\be\la{3.29}
\begin{cases}
		\ka\Delta \te=\frac{R}{\ga-1}\n\dot\te +R\n\te\div
		u-\lambda (\div u)^2-2\mu |\mathfrak{D}(u)|^2,& \text{in}\,\O\times [0,T],\\
		\na \theta  \cdot n =0,& \text{on}\,\p\O \times [0,T],\\
		\nabla\te\rightarrow0,\,\,&\text{as}\,\,|x|\rightarrow\infty.
	\end{cases}\ee

Next, it holds  that  for any $\eta\in (0,1],$
\be\la{e3}\ba I_2 =&\lambda\si^m\int (\div u)^2 \te_t
dx+\lambda\si^m\int (\div u)^2u\cdot\na\te
dx\\=&\lambda\si^m\left(\int (\div u)^2 \te
dx\right)_t-2\lambda\si^m \int \te \div u \div (\dot u-u\cdot\na u)
dx\\&+\lambda\si^m\int (\div u)^2u\cdot\na\te
dx \\=&\lambda\si^m
\left(\int (\div u)^2 \te dx\right)_t-2\lambda\si^m\int \te \div u
\div \dot udx\\&+2\lambda\si^m\int \te \div u \pa_i u^j\pa_j  u^i dx
+ \lambda\si^m\int u \cdot\na\left(\te   (\div u)^2 \right)dx
 \\
\le &\lambda\left(\si^m\int (\div u)^2 \te dx\right)_t-\lambda
m\si^{m-1}\si'\int (\div u)^2 \te dx\\& +\eta\si^m\|\na \dot
u\|_{L^2}^2+C\eta^{-1}\si^m\|\te\na u\|_{L^2}^2+C\si^m\|\na u\|_{L^4}^4,\ea\ee
and
 \be \la{e5}\ba I_3&\le 2\mu\left(\si^m\int
|\mathfrak{D}(u)|^2 \te dx\right)_t-2\mu m\si^{m-1}\si'\int
|\mathfrak{D}(u)|^2 \te dx
 \\&\quad+ \eta\si^m\|\na \dot
u\|_{L^2}^2+C\eta^{-1}\si^m\|\te\na u\|_{L^2}^2+C\si^m\|\na u\|_{L^4}^4 .    \ea\ee

Finally, Cauchy's inequality gives
 \be\la{e39}\ba
 |I_4|    \le  \delta \si^m \int \n |\dot\te|^2dx+C( \delta,\on)\si^m \|\te\na u\|_{L^2}^2.  \ea\ee

Substituting  (\ref{e2}) and \eqref{e3}--(\ref{e39}) into (\ref{e1}), we obtain \eqref{nle7}
after using  \eqref{h3} and choosing $\de$ suitably
small.

The proof of Lemma \ref{a113.4} is completed.
\end{proof}

Next, with the help of the estimates \eqref{an1}--\eqref{nle7}, we now derive a priori estimate on $A_3(T)$.

\begin{lemma}\la{le6} Under the conditions of Proposition \ref{pr1},   there exists a positive constant $\ve_2$
depending only on   $\mu,\,\lambda,\, \ka,\, R,\, \ga,\, \on,\,\bt, $ $\O$, and $M$
such that if $(\rho,u,\te)$ is a smooth solution to the problem (\ref{a1})--(\ref{ch2})  on $\Omega\times (0,T] $ satisfying      (\ref{z1}) with $K$ as
in Lemma \ref{le2}, the following estimate holds: \be\la{b2.34}  A_3(T) \le C_0^{1/6},\ee provided $C_0\le
\ve_2$ with $\ve_2$ defined in (\ref{xjia4}).
\end{lemma}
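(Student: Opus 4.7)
The plan is to form a weighted linear combination of the three differential inequalities provided by Lemma \ref{a113.4}, namely \eqref{an1}, \eqref{ae0} with $m=2$, and \eqref{nle7} with $m=2$, then integrate in time and close by a Gronwall-type argument using the a priori smallness $A_2(T)\le 2C_0^{1/4}$ and $A_1(\sigma(T))\le 3K$ from \eqref{z1}. Specifically, I would fix large constants $K_1, K_2>0$ (depending on the $C_1, C_2$ produced in those estimates) and small parameters $\beta,\eta\in(0,1]$, and consider the sum of \eqref{an1}, $K_1$ times \eqref{ae0} at $m=2$, and $K_2$ times \eqref{nle7} at $m=2$. The weights are chosen so that the coercive pieces on the left absorb the cross-dissipation on the right: the term $K_2\sigma^2\int\rho|\dot\te|^2\, dx$ should swallow both $2\beta\sigma^2\|\rho^{1/2}\dot\te\|_{L^2}^2$ from \eqref{an1} and $C_2 K_1\sigma^2\|\rho^{1/2}\dot\te\|_{L^2}^2$ from \eqref{ae0}, while $C_1 K_1\sigma^2\|\nabla\dot u\|_{L^2}^2$ should swallow $CK_2\eta\sigma^2\|\nabla\dot u\|_{L^2}^2$ from \eqref{nle7}.

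Next I would bound each remaining source term in $L^1_t$. Linear pieces of the form $\|\nabla u\|_{L^2}^2+\|\nabla\te\|_{L^2}^2$ are immediately integrable with bound $\le CA_2(T)\le CC_0^{1/4}$. The boundary time-derivative $-2K_1\bigl(\int_{\partial\Omega}\sigma^2(u\cdot\nabla n\cdot u)G\, dS\bigr)_t$ arising in \eqref{ae0} is transferred to the left, and its value at $t=T$ is estimated by combining the trace bound \eqref{eee0} with \eqref{h19} to give $\le\delta\sigma^2\|\rho^{1/2}\dot u\|_{L^2}^2+C\sigma^2\|\nabla u\|_{L^2}^4$, whose first piece is absorbed on the left. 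The quartic $\sigma^2\|\nabla u\|_{L^4}^4$ is handled via \eqref{h17} with $p=4$ together with \eqref{p} at $p=6$, producing integrands of form $C\sigma^2(\|\rho^{1/2}\dot u\|_{L^2}^3+\|\nabla\te\|_{L^2}^3+C_0^{1/8})\|\nabla u\|_{L^2}$; extracting a single sup-factor bounded by $A_3(T)^{1/2}$ and leaving a quadratic factor integrable in $t$ by $A_2(T)$ yields a net contribution $\le CA_3(T)^{1/2}C_0^{1/4}$. The coupling $\sigma^2\|\te\nabla u\|_{L^2}^2$ is split via \eqref{2.48}: the $\|\nabla\te\|_{L^2}^2$ and $\|\rho^{1/2}\dot u\|_{L^2}^2$ pieces are absorbed into dissipation on the left, and the residual $\|\nabla u\|_{L^2}^2$ again reduces to $A_2$.

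After this bookkeeping the integrated inequality takes the schematic form $A_3(T)\le CC_0^{1/4}+CA_3(T)^{1/2}C_0^{1/4}$, from which Young's inequality yields $A_3(T)\le CC_0^{1/4}$. The exponent is then upgraded to $1/6$ by requiring $CC_0^{1/12}\le 1$, which fixes $\varepsilon_2$ as a suitable negative power of $C$ intersected with $\varepsilon_1$ from Lemma \ref{le2}. I expect the main obstacle to be the bookkeeping of powers in the treatment of $\sigma^2\|\nabla u\|_{L^4}^4$: at most one sup-factor of size $A_3^{1/2}$ can be tolerated so that Young's inequality closes, and the pressure-type remainder must decay at least like $C_0^{1/8}$, which is precisely what \eqref{p} at $p=6$ provides. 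A close second difficulty is the boundary integral $\int_{\partial\Omega}(u\cdot\nabla n\cdot u)G\, dS$ at $t=T$: being genuinely new in the exterior-domain setting, its control relies essentially on combining the trace inequality \eqref{eee0} with the elliptic estimate \eqref{h19} for $\nabla G$, following the cut-off strategy outlined in Lemma \ref{a113.4}.
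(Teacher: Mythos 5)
Your overall strategy --- forming a suitable weighted combination of \eqref{an1}, \eqref{ae0} at $m=2$, and \eqref{nle7} at $m=2$, absorbing the cross-dissipation terms $\sigma^2\|\rho^{1/2}\dot\theta\|_{L^2}^2$ and $\eta\sigma^2\|\nabla\dot u\|_{L^2}^2$ into the coercive pieces, and handling the boundary time-derivative via the trace bound \eqref{eee0} combined with \eqref{h19} --- is the same as the paper's. There is, however, a genuine gap in your treatment of $\sigma^2\|\nabla u\|_{L^4}^4$.

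Applying \eqref{h17} at $p=4$ and \eqref{p} at $p=6$ produces a remainder containing $\sigma^2\|\rho-1\|_{L^6}^3\|\nabla u\|_{L^2}$, because $\|\rho\theta-1\|_{L^6}\le C\|\nabla\theta\|_{L^2}+C\|\rho-1\|_{L^6}$ and the second piece is, by \eqref{a2.112}, only a \emph{time-independent} constant of size $CC_0^{1/24}$. Its time integral $\int_0^T\sigma^2\|\rho-1\|_{L^6}^3\|\nabla u\|_{L^2}\,dt$ therefore grows linearly in $T$: $\sigma\equiv1$ for $t\ge1$, and $A_2(T)$ controls only the $L^2_t$-norm of $\|\nabla u\|_{L^2}$, not its $L^1_t$-norm, so after Young's inequality you are left with a term of order $C_0^{1/4}\int_0^T\sigma^2\,dt\sim C_0^{1/4}T$. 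Smallness in $C_0$ does not help; what is needed is genuine $L^1_t$-integrability of the density contribution. The paper supplies this by keeping $\|\rho-1\|_{L^4}^4$ as a separate term in the estimate \eqref{ae9} for $\|\nabla u\|_{L^4}^4$, and then proving the new time-integrability bound $\int_0^T\sigma^2\|\rho-1\|_{L^4}^4\,dt\le CC_0^{1/4}$ (see \eqref{eee3}--\eqref{eee2}). That estimate exploits the damped transport structure of $(\ref{a1})_1$ rewritten via \eqref{hj1}, namely $(\rho-1)_t+\frac{R}{2\mu+\lambda}(\rho-1)=\cdots$, whose linear damping term converts the sup bound on $\|\rho-1\|_{L^4}^4$ into an $L^1_t$ bound. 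This density-decay lemma is indispensable and is entirely missing from your proposal. A secondary inaccuracy: \eqref{2.48} bounds $\int\theta|\nabla u|^2\,dx$, not $\|\theta\nabla u\|_{L^2}^2=\int\theta^2|\nabla u|^2\,dx$; the latter is handled by \eqref{m20}, which yields a product of an $L^1_t$ factor with a sup factor bounded by $A_3+1$, rather than an absorption into the dissipation on the left.
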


\begin{proof}
First, by virtue of (\ref{ljq01}), (\ref{h20}), (\ref{h21}), (\ref{z1}), \eqref{p}, and (\ref{infty12}), one gets
\be\la{ae9}\ba
&\|\na u\|_{L^4}^4\\
&\le C \|G\|_{L^4}^4+C \|\curl u\|_{L^4}^4 +C \|\n\te-1\|_{L^4}^4
+C\|\na u\|_{L^2}^4\\
&\le C(\on) \left(\|\na u\|_{L^2}+1\right)\|\n\dot u\|_{L^2}^3 +C(\on)\|\na\te\|_{L^2}^3+C\|\n-1\|_{L^4}^4 +C\|\na u\|_{L^2}^4\\	&\le C(\on,M)\left( \|\n \dot u\|_{L^2}^3 + \|\na \te\|_{L^2}^3\right)  +C\|\n-1\|_{L^4}^4+C(\on,M)\|\na u\|_{L^2}^2,
\ea\ee
which together with	(\ref{z1})  yields
\be \la{m22}\ba
\si\|\na u\|_{L^4}^4   &\le C(\on,M) \left(C_0^{1/12}\|\n \dot u\|_{L^2}^2  + \|\na \te\|_{L^2}^2+\|\na u\|_{L^2}^2\right)
+C\si\|\n-1\|_{L^4}^4.
\ea\ee
Combining this with \eqref{an1} yields
\be\ba  \la{ant}
&(\sigma B_1)'(t) + \int \sigma \rho |\dot u|^2dx\\
&\le   C(\on,M) C_0^{1/4} \sigma' +2\beta\si^2\|\n^{1/2}\dot\theta\|_{L^2}^2+C(\on,M)\si^2\|\n-1\|_{L^4}^4\\ &+C(\on,M)\beta^{-1}\left(\|\na u\|_{L^2}^2+\|\na\te\|_{L^2}^2\right),
\ea\ee
provided that $C_0\le \epsilon_{2,1}\triangleq \min\{1,(2C(\on,M))^{-12}\}$.
	
Next, we estimate the third term on the righthand side of \eqref{ant},  it follows from $(\ref{a1})_1$ and (\ref{hj1}) that $\n-1$ satisfies
\be \ba\label{eee3} &(\n-1)_t+\frac{R}{2\mu+\lambda}(\n-1)\\
=&-u\cdot\na(\n-1)-(\n-1)\div u-\frac{G}{2\mu+\lambda}-\frac{R\n(\te-1)}{2\mu+\lambda}.
\ea\ee
Multiplying \eqref{eee3} by $4(\n-1)^3$ and integrating the resulting equality over $\Omega$, we obtain after integrating by parts that
\be\ba\label{eee5} &\left(\|\n-1\|_{L^4}^4\right)_t+\frac{4R}{2\mu+\lambda}\|\n-1\|_{L^4}^4\\
=&-3\int(\n-1)^4\div u dx-\frac{4}{2\mu+\lambda}\int(\n-1)^3Gdx\\
&-\frac{4R}{2\mu+\lambda}\int(\n-1)^3\n(\te-1)dx\\
\leq &\frac{2R}{2\mu+\lambda}\|\n-1\|_{L^4}^4+C(\on)\|\na u\|_{L^2}^2+C\|\n-1\|_{L^4}^3\|G\|_{L^2}^{1/4}\|\na G\|_{L^2}^{3/4}\\
&+C(\on)\|\n-1\|_{L^4}^3\|\n(\te-1)\|_{L^2}^{1/4}\|\na\te\|_{L^2}^{3/4}\\
\leq &\frac{3R}{2\mu+\lambda}\|\n-1\|_{L^4}^4+C(\on)\|\na u\|_{L^2}^2+C(\on,M)(\|\n^{1/2}\dot u\|_{L^2}^3+\|\na\te\|_{L^2}^3),\ea\ee
where one has used \eqref{p}, \eqref{z1}, \eqref{infty12}, and \eqref{h19}.
It follows from \eqref{eee5} that
\be\ba\label{eee6} &\left(\|\n-1\|_{L^4}^4\right)_t+\frac{R}{2\mu+\lambda}\|\n-1\|_{L^4}^4\\
\leq &C(\on)\|\na u\|_{L^2}^2+C(\on,M)(\|\n^{1/2}\dot u\|_{L^2}^3+\|\na\te\|_{L^2}^3).\ea\ee
Multiplying \eqref{eee6} by $\si^n$ with $n\ge 1$, integrating the resulting inequality over $(0,T)$, one gets from \eqref{z1} and \eqref{a2.112} that
\be\ba\la{67}
&\int_0^T\si^n\|\n-1\|_{L^4}^4 dt\\
&\le C(\on,M)A_3^{1/2}(T)\int_0^T\si^{n-1}(\|\n^{1/2}\dot u\|_{L^2}^2+\|\na \te\|_{L^2}^2) dt\\
&\quad+C(\on)C_0^{1/4}+C\int_0^{\si(T)}\|\n-1\|_{L^4}^4dt\\
&\le C(\on,M)C_0^{1/4}+C(\on,M))C_0^{1/12}\int_0^T\si^{n-1}\|\n^{1/2}\dot u\|_{L^2}^2dt.
\ea\ee
Taking $n=2$, 	this together with \eqref{z1} directly gives
\be \ba\label{eee2} \int_0^T\si^2\|\n-1\|_{L^4}^4 dt\leq C(\on,M)C_0^{1/4}.\ea\ee

To estimate the second therm on the righthand side of \eqref{ant}, for $C_2$ as in  (\ref{ae0}), adding  (\ref{nle7}) multiplied by $C_2+1$ to (\ref{ae0}) and choosing $ \eta$ suitably small give
	\be\la{e8}\ba
	&\left(\sigma^m\varphi\right)'(t) + \sigma^m \int\left(\frac{C_1}{2} |\nabla\dot{u}|^2 +\n|\dot \te|^2\right)dx\\
	&\le - 2\left(\int_{\p \O}  \sigma^m( u \cdot \na n \cdot u )G dS\right)_t + C( \on, M) (\si^{m-1}\si'+\si^m ) \|\rho^{1/2} \dot u\|_{L^2}^2\\
	&\quad+ C(\on,  M) (\|\na u\|_{L^2}^2 + \|\na \te\|_{L^2}^2)+C(\on) \si^m \|\na u\|^4_{L^4} + C (\on)\si^m \|\te\na u\|_{L^2}^2,\\
	\ea\ee
	where  $\varphi(t)$ is defined by
	\be\la{wq3} \varphi(t) \triangleq   \|\n^{1/2}\dot u\|_{L^2}^2(t) +(C_2+1)  B_2(t).\ee
	Then it can be deduced from (\ref{e6}) and \eqref{2.48} that
	\be\la{wq2}  \varphi(t) \ge  \frac{1}{2}\|\n^{1/2}\dot u\|_{L^2}^2 +\frac{\ka(\ga-1)}{2R}\|\na\te\|_{L^2}^2-C_3(\on,  M)\|\na u\|_{L^2}^2. \ee
Next, it follows from (\ref{3.30}) that
\be \la{m20}\ba
\|\te\na u\|_{L^2}^2
&\le \|\te-1 \|_{L^6}^2 \|\na u\|_{L^2} \|\na u\|_{L^6}+\|\na u\|_{L^2}^2 \\
&\le C  \left( \|\na u\|_{L^2}^2+\|\na\te\|_{L^2}^2\right)\left( \|\n\dot u\|^2_{L^2} +\|\na\te\|^2_{L^2} +1\right).
\ea\ee
Thus, taking $m=2$ in \eqref{e8}, one obtains after  using  \eqref{z1}, \eqref{m20}, and \eqref{m22}  that
\be\la{e25}\ba	
& \left(\sigma^2\varphi\right)'(t) + \sigma^2 \int\left(\frac{C_1}{2} |\nabla\dot{u}|^2 +\n|\dot \te|^2\right)dx\\	
&\le  - 2\left(\int_{\p \O}  \sigma^2 (u \cdot \na n \cdot u) G dS\right)_t+ C_4(\on,  M)\si \int\n|\dot u|^2 dx  \\
&\quad +  C(\on,  M)\left(\|\na u\|_{L^2}^2+\|\na\te\|_{L^2}^2\right)
+C\si^2\|\n-1\|_{L^4}^4.
\ea\ee
		
Finally, due to \eqref{an2}, (\ref{ljq01}), and (\ref{p}), we have
\be\ba \label{antt} B_1(t)
&\ge  C   \|\na u\|_{L^2}^2- C \|\n\te-1\|^2_{L^2}  \ge C_5  \|\nabla u\|_{L^2}^2-C(\on) C_0^{1/4},\ea\ee
For $C_3$ in \eqref{wq2} and $C_4$ in \eqref{e25} , define
\be\notag B_3(t)\triangleq \si^2\varphi+C_5^{-1}(C_3+C_5(C_4+1)+1)\si B_1 \ee
satisfying
\be\la{e444}\ba	
& B_3(t)\geq \frac{1}{2}\si^2\int\n|\dot u|^2dx+\frac{\ka(\ga-1)}{2R}\si^2\|\na\te\|_{L^2}^2+\si\|\na u\|_{L^2}^2-C(\on,M)C_0^{1/4},
\ea\ee
which comes from \eqref{wq2} and \eqref{antt}. Adding \eqref{ant} multiplied by $C_5^{-1}(C_3+C_5(C_4+1)+1)$ to \eqref{e25} and choosing $\beta$ small enough, we obtain
\be\la{e26}\ba	
& B_3'(t) + \frac{1}{2} \int\left(\si\n|\dot u|^2+C_1\si^2|\nabla\dot{u}|^2 +\si^2\n|\dot \te|^2\right)dx\\	
&\le  - 2\left(\int_{\p \O}  \sigma^2 (u \cdot \na n \cdot u) G dS\right)_t +C(\on,  M) C_0^{1/4} \sigma'\\
&\quad +  C(\on,  M)\left(\|\na u\|_{L^2}^2+\|\na\te\|_{L^2}^2\right)
+C\si^2\|\n-1\|_{L^4}^4.
\ea\ee
It's easy to deduce from \eqref{h19}, \eqref{eee0}, \eqref{z1}, and \eqref{b2} that
\be \ba\label{jia4}
\sup_{ 0\le t\le T} \int_{\p \O}  \sigma^2 u \cdot \na n \cdot u G dS
\le & C(\on) \sup_{ 0\le t\le T} (\si \|\na u\|_{L^2}^2) \sup_{ 0\le t\le T} (\si \|\rho^{1/2} \dot u\|_{L^2})\\
\le &  C(\on) C_0^{1/4}.
\ea \ee
Combining this with \eqref{eee2}, \eqref{e444}--(\ref{jia4}), and (\ref{z1}) yields
\be\ba\notag
A_3(T)\le C(\on,M)C_0^{1/4}\le C_0^{1/6},
\ea\ee
which implies (\ref{le2}) provided
\be \ba\label{xjia4} C_0\le\ve_2\triangleq \min\{\epsilon_{2,1},C(\on,M)^{-12}\}.\ea\ee

The proof of Lemma \ref{le6} is completed.
\end{proof}

Now, in order to control $A_2(T)$, we first re-establish the basic energy estimate  for short time $[0, \si(T)]$, and then show that the spatial $L^2$-norm of $\te-1$ could be  bounded by the combination of the initial energy and the spatial $L^2$-norm of $\na \te$, which is indeed the key ingredient to obtain the estimate of $A_2(T)$.

\begin{lemma}\la{a13} Under the conditions of Proposition \ref{pr1},
	there exist  positive constants $C$ and $\varepsilon_{3,1}$
	depending only on
	$\mu,\,\lambda,\, \ka,\, R,\, \ga,\, \on,\,\bt, $ $\O$, and $M$ such
	that  if $(\rho,u,\te)$ is a smooth solution to the problem (\ref{a1})--(\ref{ch2})  on $\Omega\times (0,T] $ satisfying      (\ref{z1}) with $K$ as
	in Lemma \ref{le2},
	the following estimates  hold:
	\be \la{a2.121} \ba
	&\sup_{0\le t\le \si(T)}\int\left( \n |u|^2+(\n-1)^2 + \n(\te-\log \te-1) \right)dx\le C C_0,\ea\ee
	and
	\be  \la{a2.17}  \ba
	\|\te(\cdot,t)-1\|_{L^2} \le C \left(C_0^{1/2} +C_0^{1/3}\|\na\te(\cdot,t)\|_{L^2}\right),
	\ea\ee
	for al $t\in(0,\si(T)].$
\end{lemma}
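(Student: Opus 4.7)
The lemma has two parts; I would handle them in sequence since Part 2 relies on the improved energy estimate established in Part 1.

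For \eqref{a2.121}, the goal is to sharpen the basic energy identity \eqref{la2.7} used in Lemma \ref{a13.1}. The slip boundary conditions yield the Green-type identity
\bnn
\int 2|\mathfrak{D}(u)|^2\,dx=\int \bigl(|\curl u|^2+2(\div u)^2\bigr)\,dx-2\int_{\partial\Omega} u\cdot\nabla n\cdot u\,dS,
\enn
so that \eqref{la2.7} can be rewritten as $E'(t)+D(t)=-2\mu\int_{\partial\Omega}u\cdot\nabla n\cdot u\,dS$ with $D(t)\ge0$ the full dissipation. The crude trace bound $\le 2\mu\|\nabla u\|_{L^2}^2$ (used in Lemma \ref{a13.1}) is responsible for the weaker $CC_0^{1/4}$ and must be replaced. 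Following the cut-off/divergence trick emphasized in the introduction and used in \eqref{bz3}--\eqref{bz4}, I would exploit $u=u^\perp\times n$ on $\partial\Omega$ together with the compact support of the extended $n$ in $\bar B_{2d}$ (see \eqref{ljq10}) to convert this surface integral into an integral over $\Omega\cap B_{2d}$ whose integrand is pointwise $O(|u||\nabla u|+|u|^2)$. Then Lemma \ref{lll} with $s=2$, $g=\rho$, $\Sigma=\Omega\cap B_{2d}$ gives
\bnn
\int_{\Omega\cap B_{2d}}|u|^2\,dx\le C\int\rho|u|^2\,dx+C\|\rho-1\|_{L^2}^{4/3}\|\nabla u\|_{L^2}^2 .
\enn
Because Lemma \ref{a13.1} already supplies $\|\rho-1\|_{L^2}^2\le CC_0^{1/4}$, the coefficient in front of $\|\nabla u\|_{L^2}^2$ is $O(C_0^{1/6})$ and can be absorbed into $D(t)$ after extracting an $\|\nabla u\|_{L^2}^2$-portion from $D(t)$ via Cauchy--Schwarz on the $1/\theta$ weight. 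The resulting inequality has the Gronwall form $E'(t)\le CE(t)+\Phi(t)$, where $\Phi$ is controlled on $[0,\sigma(T)]$ through the bounds from Lemma \ref{le2} and $A_2(T)\le 2C_0^{1/4}$. Since $\sigma(T)\le 1$, integrating yields $\sup_{[0,\sigma(T)]}E(t)\le CC_0$, from which \eqref{a2.121} follows using \eqref{a2.9}.

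For \eqref{a2.17}, I would apply Lemma \ref{lll} once more with $s=2$, $g=\rho$, $f=\theta-1$, $\Sigma=\Omega$, giving
\bnn
\|\theta-1\|_{L^2}^2\le C\int\rho(\theta-1)^2\,dx+C\|\rho-1\|_{L^2}^{4/3}\|\nabla\theta\|_{L^2}^2 .
\enn
Part 1 delivers $\int\rho(\theta-\log\theta-1)\,dx\le CC_0$ and $\|\rho-1\|_{L^2}^2\le CC_0$. To pass from $\rho(\theta-\log\theta-1)$ to $\rho(\theta-1)^2$, I would use the pointwise upper bound on $\theta$ on $[0,\sigma(T)]$, which is available through the continuity in time of the smooth local solution of Lemma \ref{th0} starting from $\theta_0\le\bar\theta$; under such a bound one has $(\theta-1)^2\le C(\theta-\log\theta-1)$ and hence $\int\rho(\theta-1)^2\,dx\le CC_0$. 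Combined with $\|\rho-1\|_{L^2}^{4/3}\le CC_0^{2/3}$ and square-rooting, this yields $\|\theta-1\|_{L^2}\le C(C_0^{1/2}+C_0^{1/3}\|\nabla\theta\|_{L^2})$, proving \eqref{a2.17}.

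\textbf{Main obstacle.} The entire difficulty is concentrated in the boundary integral in Part 1. Any argument that bounds it through $\|u\|_{L^2(\partial\Omega)}^2\le C\|\nabla u\|_{L^2}^2$ loses a full derivative and only reproduces the weaker $CC_0^{1/4}$ bound of Lemma \ref{a13.1}. Upgrading to $CC_0$ requires absorbing the boundary integral back into $E(t)$ so that Gronwall applies; this absorption is precisely what the cut-off/divergence representation combined with Lemma \ref{lll} achieves, using in an essential way the smallness of $\|\rho-1\|_{L^2}$ already established at the cruder level.
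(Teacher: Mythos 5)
Your diagnosis of \eqref{la2.7} is right, but the repair does not work. For \eqref{a2.121}: after converting the boundary integral to a bulk one over $\Omega\cap B_{2d}$ (a legitimate step) and applying Lemma \ref{lll}, you are left with $C\int\rho|u|^2\,dx + O(\de+C_0^{1/6})\|\nabla u\|_{L^2}^2$, and you need to absorb the $\|\nabla u\|_{L^2}^2$ piece into the dissipation $D(t)$. But $D(t)$ carries the weight $1/\theta$, and there is no admissible upper bound on $\theta$ at this point in the argument; your Cauchy--Schwarz extraction only gives $\|\nabla u\|_{L^2}^2\le D(t)^{1/2}\left(\int\theta|\nabla u|^2\,dx\right)^{1/2}$, and the second factor is uncontrolled (moreover, with $\lambda$ possibly negative under \eqref{h3}, $\lambda(\div u)^2+2\mu|\mathfrak{D}(u)|^2$ only controls the deviatoric part of $\mathfrak{D}(u)$ pointwise). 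Leaving it as a forcing term fails too, since $\int_0^{\sigma(T)}\|\nabla u\|_{L^2}^2\,dt\lesssim C_0^{1/4}$, which is weaker than the target $CC_0$ for any fixed $\de>0$. The paper instead tests \eqref{a11} against $u$; the conditions $u\cdot n=0$ and $\curl u\times n=0$ kill all boundary terms in that identity, and the resulting \emph{unweighted} dissipation $\mu\|\curl u\|_{L^2}^2+(2\mu+\lambda)\|\div u\|_{L^2}^2\gtrsim\|\nabla u\|_{L^2}^2$ (via \eqref{ljq01}) is what absorbs the $2\mu\|\nabla u\|_{L^2}^2$ defect of \eqref{la2.7} after the two are combined in \eqref{a2.22}. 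The price is the factor $\|\theta\|_{L^\infty}+1$ on the right-hand side, and most of the paper's Step~I is then spent proving the claim \eqref{k} that $\int_0^{\sigma(T)}\|\theta\|_{L^\infty}\,dt\le C$, through \eqref{eee23}, \eqref{ae26}, \eqref{k1}, and \eqref{3.88} --- a chain your proposal never produces.

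For \eqref{a2.17}, you appeal to a pointwise upper bound on $\theta$ over $[0,\sigma(T)]$ coming ``through the continuity in time of the smooth local solution.'' Such a qualitative $L^\infty$ bound exists for each fixed smooth solution, but its size is not governed by the admissible constants (it depends on the particular solution and degenerates as $\inf\rho_0\to 0$), so it cannot enter a uniform estimate that must survive the vacuum approximation. The paper avoids any upper bound on $\theta$ by splitting $\Omega$ into $\{\theta<3\}$ and $\{\theta>2\}$: on the former $\theta-\log\theta-1\gtrsim(\theta-1)^2$, on the latter only $\theta-\log\theta-1\gtrsim|\theta-1|$ (this is \eqref{eee9}), so Lemma \ref{lll} is applied with $s=2$ on $\{\theta<3\}$ and with $s=1$ on $\{\theta>2\}$, followed by an interpolation $L^1(\theta>2)\to L^2(\theta>2)$ through $\|\theta-1\|_{L^6}\lesssim\|\nabla\theta\|_{L^2}$. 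The $s=1$ case of Lemma \ref{lll}, which your proposal does not invoke, is exactly what makes the high-temperature set tractable without a pointwise bound.
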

\begin{proof}
	The proof is divided into the following two steps.
	
	\noindent\textbf{Step I: The proof of \eqref{a2.121}.}
	
	First, multiplying (\ref{a11}) by $u$, one deduces from integration by parts and \eqref{a1}$_1$ that
	\be \la{a2.12} \ba
	&\frac{d}{dt}\int\left(\frac{1}{2}\n |u|^2+R(1+\n\log \n-\n)
	\right)dx+ \int(\mu|\curl u|^2+(2\mu+\lambda)(\div u)^2)dx \\
	&=  R \int \rho (\te -1) \div u dx\\
	&\le \de \|\na u\|_{L^2}^2 + C(\de, \on) \int \n(\te-1)^2dx\\
	&\le\de \|\na u\|_{L^2}^2 + C(\de, \on)(\|\te(\cdot,t)\|_{L^{\infty}} +1)  \int \rho (\te -\log \te -1)dx.
	\ea\ee
	Using \eqref{ljq01} and choosing $\de$ small enough in \eqref{a2.12}, it holds that
	\be \la{a2.222} \ba
	&\frac{d}{dt}\int\left(\frac{1}{2}\n |u|^2+R(1+\n\log \n-\n)
	\right)dx + C_3 \int|\na u|^2dx \\
	&\le C(\on) (\|\te(\cdot,t)\|_{L^{\infty}} +1)  \int \rho  (\te -\log \te -1)dx.
	\ea\ee
	Then, adding \eqref{a2.222} multiplied by $(2\mu+1) {C_3}^{-1}$  to \eqref{la2.7}, one has
	\be \la{a2.22} \ba
	&\xl((2\mu+1) {C_3}^{-1}+1\xr)\frac{d}{dt}\int\left(\frac{1}{2}\n |u|^2+R(1+\n\log \n-\n)\right)dx
	\\&+ \frac{R}{\ga-1} \frac{d}{dt} \int \n(\te-\log \te-1)dx+\int|\na u|^2dx\\
	&\le C(\on) (\|\te(\cdot,t)\|_{L^{\infty}} +1)  \int \rho (\te -\log \te -1)dx.
	\ea\ee
	
	Next, we claim that
	\be \ba\la{k}
	\int_0^{\si(T)}\|\te\|_{L^\infty}dt \le C(\on, M).
	\ea \ee
	Combining this with \eqref{a2.22}, \eqref{a2.9}, and Gr\"onwall inequality  implies \eqref{a2.121} directly.
	
	Finally, it remains to prove \eqref{k}. The combination of \eqref{67} with $n=1$  and \eqref{z1} yields	\be\ba\la{eee23}	\int_0^T\si\|\n-1\|_{L^4}^4 dt\leq C(\on,M).	\ea\ee
	Taking $m=1$ in \eqref{e8} and integrating the resulting inequality over $(0,T)$, one deduces from \eqref{wq2}, \eqref{m20}, \eqref{m22}, (\ref{z1}), \eqref{infty12}, \eqref{eee23}, and \eqref{b2} that
	\bnn  \ba
	& \sigma \varphi +  \int_0^t \sigma\int\left(\frac{C_1}{2} |\nabla\dot{u}|^2 +\n|\dot \te|^2\right)dxd\tau\\
	&\le  2\left|\int_{\p \O}  \sigma (u \cdot \na n \cdot u )G dS\right|(t)
	+ C(\on,M)\int_0^t (\|\n^{1/2}  \dot u\|_{L^2}^2 +\|\na u\|_{L^2}^2+\|\na \te\|_{L^2}^2)d\tau \\
	&\quad+  C(\on)\int_0^t \si \|\n-1\|_{L^4}^4d\tau+C(\on)\int_0^t \left(\|\na u\|_{L^2}^2+\|\na \te\|_{L^2}^2\right) \si\varphi d\tau\\
	& \le  C(\on) (\sigma \|\na u\|_{L^2}^2 \|\rho^{1/2} \dot u\|_{L^2})(t)+ C(\on,M)+C(\on)\int_0^t \left(\|\na u\|_{L^2}^2+\|\na \te\|_{L^2}^2\right) \si\varphi d\tau\\
	& \le \frac{1}{2} \sigma \varphi + C(\on,M)+C(\on)\int_0^t \left(\|\na u\|_{L^2}^2+\|\na \te\|_{L^2}^2\right) \si\varphi d\tau.
	\ea\enn
	Then Gr\"onwall inequality together with (\ref{z1}) and \eqref{wq2} yields
	\be\ba\label{ae26}
	\sup_{0\le t\le T}\si \left(\int\rho|\dot{u}|^2dx+\|\na\te\|_{L^2}^2\right)+\int_0^T\si \int\left(|\na\dot u|^2+\n|\dot\te|^2\right)dxdt
	\le  C(\on,M).\ea\ee
	
	Next, it follows from (\ref{z1}), (\ref{lop4}), \eqref{m20},  \eqref{m22}, \eqref{eee23},  and \eqref{ae26} that
	\be\ba  \la{k1}
	\int_0^{T }\si \|\na^2\te\|_{L^2}^2dt
	&\le   C(\on,M)\int_0^{T } \left(\si \|\n \dot\te\|_{L^2}^2+
	\|\n\dot u \|_{L^2}^2  \right) dt\\
	&\quad+ C(\on,M)\int_0^{T } \left( \| \na u\|_{L^2}^2+\| \na\te\|_{L^2}^2+\si\|\n-1\|_{L^4}^4\right) dt \\
	&\le C(\on,M).
	\ea\ee
	Furthermore, one deduces from \eqref{g2} and \eqref{g1} that
	\begin{equation}
		\label{6yue}\ba
		\|\te-1\|_{L^\infty} \le  C \|\te-1\|_{L^6}^{1/2} \|\na\te\|_{L^6}^{1/2}
		\le& C  \|\na \te\|_{L^2}^{1/2} \|\na^2 \te\|_{L^2}^{1/2},
		\ea
	\end{equation}
	which together with  (\ref{z1}) and \eqref{k1}  gives that
	\be\la{3.88}\ba
	  \int_0^{\si(T)}\|\te-1\|_{L^\infty}dt
	&\le C  \int_0^{\si(T)}\|\na\te\|_{L^2}^{1/2} \left(\si\|\na^2\te\|^2_{L^2}\right)^{1/4}\si^{-1/4}dt\\
	&\le C \left(\int_0^{\si(T)} \|\na \te\|_{L^2}^2dt \int_0^{\si(T)}\si\|\na^2\te\|_{L^2}^2dt\right)^{1/4}	\\
	&\le C(\on,M)C_0^{1/16}.
	\ea\ee
This implies	\eqref{k}   directly.

	\noindent \textbf{Step II: The proof of (\ref{a2.17}).}
	
	Denote
	\begin{equation*}
		(\te(\cdot,t)> 2)\triangleq \left.\left\{x\in
		\Omega\right|\te(x,t)> 2\right\},
	\end{equation*}
	\begin{equation*}
		(\te(\cdot,t)< 3)\triangleq
		\left.\left\{x\in \Omega\right|\te(x,t)<3\right\}.
	\end{equation*}
	Direct calculations lead to
	\be\la{eee9}\ba
	\te-\log\te-1&\ge(\te-1)^2\int_0^1\frac{1-\al}{\al (\te-1)+1}d\al\\
	&\ge \frac{1}{8} (\te-1)1_{(\te(\cdot,t)>2)
	}+\frac{1}{12}(\te-1)^21_{(\te(\cdot,t)<3)}.  \ea\ee
	Combining this with \eqref{a2.121} gives
	\be \la{a2.11}\ba
	\sup_{0\le t\le \si(T)}\int \left(\n(\te-1)1_{(\te(\cdot,t)>2)}+\n(\te-1)^21_{(\te(\cdot,t)<3)}\right)dx \le C(\hat\n,M) C_0.
	\ea\ee
	
	Next, for $t \in (0,\sigma(T)]$, taking $g(x)=\n(x,t)$, $f(x)=\te(x,t)-1$, $s=2$ and $\Sigma=(\te(\cdot,t)< 3)$ in \eqref{eee8}, we obtain after using \eqref{a2.11} and \eqref{a2.112} that
	\begin{equation}\la{eee11}
		\|\te -1\|_{L^2(\te(\cdot,t)<3)}
		\le C(\on)C_0^{1/2}+ C(\on) C_0^{1/3} \|\na \te\|_{L^2(\Omega)}.
	\end{equation}
	Similarly, taking $g(x)=\n(x,t)$, $f(x)=\te(x,t)-1$, $s=1$ and $\Sigma=(\te(\cdot,t)>2)$ in \eqref{eee8}, we conclude using \eqref{a2.11} that
	\begin{equation}\la{eee12}
		\|\te -1\|_{L^1(\te(\cdot,t)>2)}
		\le C(\on)C_0+ C(\on) C_0^{5/6} \|\na \te\|_{L^2(\Omega)},
	\end{equation}
	which together with H\"older's inequality and \eqref{g1} yeilds
	\be\la{eee13}\ba
	&\|\te-1\|_{L^2(\te(\cdot,t)> 2)}\\
	&\le \|\te-1\|^{2/5}_{L^1(\te(\cdot,t)> 2)} \| \te-1\|_{L^6}^{3/5}\\
	&\le C(\on) \left(C_0^{2/5}+C_0^{1/3} \|\te-1\|_{L^2}^{2/5}\right) \|\na \te\|_{L^2}^{3/5}\\
	&\le C(\on)C_0^{1/2}+ C(\on) C_0^{1/3} \|\na \te\|_{L^2(\Omega)}.
	\ea\ee
	This along with  \eqref{eee11}   yields \eqref{a2.17} directly. 	
	The proof of Lemma \ref{a13}  is completed.
\end{proof}

\begin{remark}	It is easy to deduce from \eqref{a2.8} and \eqref{eee9} that for all $t\in(0,\si(T)],$ 
	\be  \la{eee22}  \ba
	 \|\te(\cdot,t)-1\|_{L^2} \le C \left(C_0^{1/8} +C_0^{1/12}\|\na\te(\cdot,t)\|_{L^2}\right).
	\ea\ee
\end{remark}

With the help of \eqref{a2.17}, we are now in a position to establish the  estimate on $A_2(T)$.

\begin{lemma}\la{le3} Under the conditions of Proposition \ref{pr1},  there exists a positive constant $\ve_3$ depending only on $\mu,\,\lambda,\, \ka,\, R,\, \ga,\, \on,\,\bt $, $\O$, and $M$
	such that if $(\rho,u,\te)$ is a smooth solution to the problem (\ref{a1})--(\ref{ch2})  on $\Omega\times (0,T] $ satisfying  (\ref{z1})   with $K$
	as in Lemma \ref{le2}, the following estimate holds:
	\be\la{a2.34} A_2(T) \le C_0^{1/4},\ee
	provided $C_0\le \ve_3$ with  $\ve_3$ defined in (\ref{jia11}).
\end{lemma}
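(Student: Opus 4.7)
The plan is to derive a differential inequality for the Lyapunov functional
\begin{equation*}
\Psi(t)\triangleq\int\Bigl[\frac{1}{2}\n|u|^2+R(1+\n\log\n-\n)+\frac{R}{2(\ga-1)}\n(\te-1)^2\Bigr]dx,
\end{equation*}
in which both $\|\na u\|_{L^2}^2$ and $\|\na\te\|_{L^2}^2$ appear as coercive dissipation. I would first multiply $(\ref{a1})_3$ by $(\te-1)$, integrate by parts using the Neumann condition $\na\te\cdot n|_{\p\O}=0$, and combine with $(\ref{a1})_1$ to obtain
\begin{equation*}
\frac{R}{2(\ga-1)}\frac{d}{dt}\int\n(\te-1)^2 dx+\ka\|\na\te\|_{L^2}^2=\int(\te-1)\bigl[\lambda(\div u)^2+2\mu|\mathfrak{D}(u)|^2\bigr]dx-R\int\n\te(\te-1)\div u\,dx.
\end{equation*}
Adding this to the momentum energy identity \eqref{a2.222} (weighted by a large constant $B$ so that the source $R\int\n(\te-1)\div u$ of the latter is absorbed into a fraction of $C_3\|\na u\|_{L^2}^2$) produces, after using \eqref{a2.9},
\begin{equation*}
\Psi'(t)+c_0\bigl(\|\na u\|_{L^2}^2+\|\na\te\|_{L^2}^2\bigr)\le C\int|\te-1||\na u|^2 dx+C\int\n(\te-1)^2|\div u|\,dx+C(\|\te\|_{L^\infty}+1)\Psi(t).
\end{equation*}

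\textbf{Controlling the cubic coupling.} The critical term is $\int|\te-1||\na u|^2 dx$. Using H\"older's inequality, the Sobolev embedding $\|\te-1\|_{L^6}\le C\|\na\te\|_{L^2}$ from \eqref{g1}, and the $L^6$ gradient bound \eqref{3.30}, I would estimate
\begin{equation*}
\int|\te-1||\na u|^2 dx\le C\|\na\te\|_{L^2}\|\na u\|_{L^2}^{3/2}\bigl(\|\n^{1/2}\dot u\|_{L^2}+\|\na u\|_{L^2}+\|\na\te\|_{L^2}+1\bigr)^{1/2},
\end{equation*}
and absorb via Young's inequality so that, under \eqref{z1}, this is bounded by $\de(\|\na\te\|_{L^2}^2+\si\|\n^{1/2}\dot u\|_{L^2}^2)+C(\de)\|\na u\|_{L^2}^2$ plus terms of higher order in $C_0$. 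The quadratic-in-$(\te-1)$ source is handled analogously using \eqref{p} and \eqref{a2.112}.

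\textbf{Time splitting and Gronwall.} After integrating in time, the closure of the inequality requires a splitting at $t=\si(T)$. On $(0,\si(T)]$ I would invoke the refined pointwise bound \eqref{a2.17} (equivalently \eqref{eee22}) to replace $\|\te-1\|_{L^2}^2$ in the $\Psi$-term by $C(C_0+C_0^{2/3}\|\na\te\|_{L^2}^2)$; the second contribution is absorbed into $c_0\|\na\te\|_{L^2}^2$ for $C_0$ small, while the first is integrable against the short-time bound \eqref{3.88} on $\int_0^{\si(T)}\|\te-1\|_{L^\infty}\,dt\le CC_0^{1/16}$. On $[\si(T),T]$, the estimate $A_3(T)\le C_0^{1/6}$ from Lemma \ref{le6} furnishes the time-uniform bound $\|\na\te\|_{L^2}^2\le C_0^{1/6}$ (since $\si\equiv 1$ there) together with $\int\si\|\n^{1/2}\dot u\|_{L^2}^2 dt\le C_0^{1/6}$, so the remaining error terms integrate directly to quantities of order $C_0^{1/3}$. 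A Gronwall argument, combined with the a priori smallness $\Psi(0)\le CC_0$, then delivers $\sup_t\Psi(t)+c_0\int_0^T(\|\na u\|_{L^2}^2+\|\na\te\|_{L^2}^2)dt\le \frac12 C_0^{1/4}$, which gives \eqref{a2.34} after choosing $\ve_3$ small enough.

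\textbf{Main obstacle.} The principal difficulty is that the slip boundary condition spoils the classical basic entropy identity, so the starting estimate \eqref{a2.112} is only at scale $C_0^{1/4}$ rather than the $C_0$ scale available in the Cauchy setting. Consequently every cross term pairing $\te-1$ with $\na u$ risks accumulating an extra fractional power of $C_0^{1/4}$, and preventing this cascade requires the tight synergy between the short-time estimate \eqref{a2.17} (sharp for $\|\te-1\|_{L^2}$) and the long-time higher-order bound $A_3(T)\le C_0^{1/6}$ (sharp for $\|\na\te\|_{L^2}$); balancing these two within the Gronwall loop so that the closure lands exactly at the $C_0^{1/4}$ scale is the delicate point of the argument.
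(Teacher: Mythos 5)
Your plan has a genuine gap, and it is precisely the issue that the authors flag as the delicate point of this lemma. You propose to start from the \emph{post-absorption} inequality \eqref{a2.222}, in which the momentum coupling $R\int\n(\te-1)\div u\,dx$ has already been traded (via Cauchy--Schwarz) for a fraction of $\|\na u\|_{L^2}^2$ plus a residual $C(\on)(\|\te\|_{L^\infty}+1)\int\n(\te-\log\te-1)\,dx$. Feeding that residual into a Gronwall loop for $\Psi(t)$ cannot close on $[\si(T),T]$: since $\|\te\|_{L^\infty}+1\ge 1$, the integral $\int_0^T(\|\te\|_{L^\infty}+1)\,dt$ grows at least linearly in $T$, so the Gronwall factor $\exp\bigl(C\int_0^T(\|\te\|_{L^\infty}+1)\,dt\bigr)$ diverges as $T\to\infty$. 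The short-time bound \eqref{k}/\eqref{3.88} controls $\int_0^{\si(T)}\|\te\|_{L^\infty}\,dt$, which is exactly why \eqref{a2.222} is serviceable for Lemma \ref{a13} on $[0,\si(T)]$, but there is no corresponding time-integral control on the full interval; the estimate \eqref{3.89} is only $L^2_t$ after $\si(T)$, and the ``$+1$'' is unremovable. Your claim that Gronwall ``delivers $\sup_t\Psi(t)\le \tfrac12 C_0^{1/4}$'' therefore does not follow.

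The paper circumvents this entirely by adding the \emph{raw} momentum identity \eqref{a2.225}, whose exact source is $+R\int\n(\te-1)\div u\,dx$, to the thermal identity \eqref{a2.231}, whose source is $-R\int\n\te(\te-1)\div u\,dx$, \emph{before} absorbing anything. The linear-in-$(\te-1)$ parts cancel, leaving in \eqref{a2.23} only $-R\int\n(\te-1)^2\div u\,dx$, which is quadratic in $(\te-1)$. That surviving term is then bounded in \eqref{eee14} by $C(\on,M)C_0^{1/16}\bigl(\|\na\te\|_{L^2}^2+\|\na u\|_{L^2}^2\bigr)$ using the a priori assumption $A_2(T)\le 2C_0^{1/4}$, so it is a small multiple of the dissipation and can be absorbed directly. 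There is no $\Psi$-term on the right and hence no Gronwall loop at all: one simply integrates \eqref{a2.23} in time, using \eqref{eee15} on $[0,\si(T)]$ and \eqref{eee16} (via \eqref{aa0}) on $[\si(T),T]$ for the $\int(\te-1)|\na u|^2$ piece, to land at $A_2(T)\le C(\on,M)C_0^{7/24}\le C_0^{1/4}$. The cancellation is the missing idea in your proposal; without it the quadratic residual in $(\te-1)$ cannot be dominated uniformly in $T$.
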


\begin{proof}
	Multiplying (\ref{a11}) by $u$ and integrating by parts give that
	\be \la{a2.225}\ba
	&\frac{d}{dt}\int\left(\frac{1}{2}\n |u|^2+R(1+\n\log \n-\n)\right)dx\\
	&\quad+ \int\left(\mu|\curl u|^2+(2\mu+\lambda)(\div u)^2\right)dx \\
	&= \int R\rho ( \te -1) \div u dx.
	\ea\ee
	Multiplying $(\ref{a1})_3$ by $(\te-1)$, one obtains after  integrating the resulting
	equality over $\Omega $ by parts that
	\be\la{a2.231} \ba
	& \frac{R}{2(\ga-1)} \frac{d}{dt}\int
	\n {(\te-1)^2}dx+ {\ka} \|\na\te\|_{L^2}^2\\
	&\quad=-\int R \n \te{(\te-1)} \div u dx + \int  {(\te-1)} (\lambda (\div u)^2+2\mu |\mathfrak{D}(u)|^2) dx.
	\ea\ee
	Adding \eqref{a2.225} and \eqref{a2.231} together yields that
	\be\la{a2.23} \ba
	&  \frac{d}{dt}\int
	\left(\frac{1}{2}\n |u|^2+R(1+\n\log \n-\n)+\frac{R}{2(\ga-1)} {\n(\te-1)^2}\right)dx\\
	&\quad+ \mu \|\curl u\|_{L^2}^2 +(2\mu+\lambda)\|\div u\|_{L^2}^2+  \ka \|\na\te\|_{L^2}^2\\
	&=-\int R \n {(\te-1)^2} \div u dx + \int  {(\te-1)} (\lambda (\div u)^2+2\mu |\mathfrak{D}(u)|^2) dx,
	\ea\ee
where	
	\be\la{eee14} \ba
	\int \n|\te-1|^2 \div u dx
	\leq& C\|\n^{1/2}(\te-1)\|_{L^2}^{1/2}\|\n^{1/2}(\te-1)\|_{L^6}^{3/2}\|\na u\|_{L^2}\\
	\leq& C(\on)A_2^{1/4}\|\na\te\|_{L^2}^{3/2}\|\na u\|_{L^2}\\
	\leq& C(\on,M)C_0^{1/16}\left(\|\na\te\|_{L^2}^2+\|\na u\|_{L^2}^2\right)
	 \ea\ee
	owing to \eqref{g1}, \eqref{infty12}, and \eqref{z1}.

 For the second one on the righthand side of \eqref{a2.23}, we will handle it for the short time $[0,\si(T)]$ and the large time $[\si(T),T]$, respectively.
	
	For $t\in[0,\si(T)]$, in light of \eqref{a2.17}, \eqref{g1}, \eqref{3.30}, and \eqref{z1}, one has
	\be\la{eee15} \ba
	 \int|\te-1| |\na u|^2 dx
	&\leq C\|\te-1\|_{L^2}^{1/2}\|\te-1\|_{L^6}^{1/2}\|\na u\|_{L^2}\|\na u\|_{L^6}\\
	&\leq C \left(C_0^{1/4}\|\na\te\|_{L^2}^{1/2}+C_0^{1/6}\|\na\te\|_{L^2}\right)\|\na u\|_{L^2}\\
	&\quad\cdot\left(\|\n \dot
	u\|_{L^2}+\|\na u\|_{L^2}+ \|\na \te\|_{L^2}+C_0^{1/24}\right)\\
	&\leq  C(\on,M) C_0^{7/24}(\|\n^{1/2} \dot u\|_{L^2}^2+1)\\
	&\quad+C(\on,M)C_0^{1/24}\left(\|\na\te\|_{L^2}^2+\|\na u\|_{L^2}^2\right). \ea\ee
	
	For $t\in[\si(T),T]$, it follows from \eqref{g1} and \eqref{z1} that
	\be\la{eee16} \ba
	\int|\te-1| |\na u|^2 dx&\leq C\|\te-1\|_{L^6}\|\na u\|_{L^2}^{3/2}\|\na u\|_{L^6}^{1/2}\\
	&\leq C(\on,M)C_0^{1/16}\left(\|\na\te\|_{L^2}^2+\|\na u\|_{L^2}^2\right). \ea\ee
	where one has used the following fact:
	\be\la{aa0}\ba \sup_{0\le t\le T}\xl(\si\|\na
	u\|_{L^6}\xr)\le C(\on)C_0^{1/24} \ea\ee
	due to \eqref{z1} and \eqref{3.30}.
	
	Substituting \eqref{eee14}--\eqref{eee16}  into \eqref{a2.23}, one gets after using \eqref{z1} that
	\be \ba  \label{jia9}
	&  \sup_{ 0\le t\le T} \int \left(\frac{1}{2}\n |u|^2+ R(1+\n\log \n-\n)+\frac{R}{2(\ga-1)} \n {(\te-1)^2}
	\right)dx\\
	&\quad+ \int_{0}^{T} (\mu \|\curl u\|_{L^2}^2 +(2\mu+\lambda)\|\div u\|_{L^2}^2+ \ka \|\na\te\|_{L^2}^2)dt\\
	&\le  C(\on,M) C_0^{1/24} \int_{ 0}^{T}  ( \|\na u\|_{L^2}^2+\| \na \te \|_{L^2}^2 ) dt
	\\&\quad + C(\on,M) C_0^{7/24}\left(\int_{0}^{\si(T)} \|\rho \dot u\|_{L^2}^2 dt+1\right)\\
	&\le  C(\on, M) C_0^{7/24},
	\ea \ee
	Thus, one deduces from \eqref{jia9} and \eqref{ljq01} that
	\be\notag A_2(T)\le C(\on, M) C_0^{7/24} \le C_0^{1/4},\ee
	provided \be \ba \label{jia11} C_0\le \ve_3\triangleq\min
	\left\{1,  (C(\on,M))^{-24}\right\}.\ea\ee The proof
	of Lemma \ref{le3} is completed.
\end{proof}

We now in position to derive a uniform (in time) upper bound for the density, which turns out to be the key to obtaining all the higher order estimates and thus to extending the classical solution globally.

\begin{lemma}\la{le7}
Under the conditions of Proposition \ref{pr1}, there exists a positive constant $\ve_0$ depending only on   $\mu,\,\lambda,\, \ka,\, R,\, \ga,\, \on,\, \bt $, $\O$, and $M$ such that if $(\rho,u,\te)$ is a smooth solution to the problem (\ref{a1})--(\ref{ch2}) on $\Omega\times (0,T] $ satisfying  (\ref{z1}) with $K$ as
in Lemma \ref{le2}, the following estimate holds:
\be \la{a3.7}
\sup_{0\le t\le T}\|\n(\cdot,t)\|_{L^\infty}  \le
\frac{3\on }{2},
\ee
provided $C_0\le \ve_0$ with $\ve_0$ defined in (\ref{xjia11}).
\end{lemma}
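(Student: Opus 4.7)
The plan is to propagate the upper bound on $\rho$ along particle trajectories using the Zlotnik-type Gr\"onwall inequality (Lemma \ref{le1}), in the spirit of \cite{2dlx,hulx,H-L}. Given $x_0\in\Omega$, let $\eta(t;x_0)$ be the flow map of $u$, and set $y(t)\triangleq \rho(\eta(t;x_0),t)$. Using the continuity equation $(\ref{a1})_1$ together with the definition $G=(2\mu+\lambda)\div u-R(\rho\theta-1)$, I obtain
\be\ba\notag
D_t\rho=-\rho\div u=-\frac{\rho G}{2\mu+\lambda}-\frac{R\rho(\rho-1)}{2\mu+\lambda}-\frac{R\rho^2(\theta-1)}{2\mu+\lambda},
\ea\ee
so that along the trajectory
\be\ba\notag
y'(t)+\alpha(y(t))\,y(t)=g(t),\quad \alpha(y)\triangleq\frac{R(y-1)}{2\mu+\lambda},\quad g(t)\triangleq-\frac{y\,G\circ\eta}{2\mu+\lambda}-\frac{Ry^2(\theta-1)\circ\eta}{2\mu+\lambda}.
\ea\ee
Since $\on>2$, whenever $y(t)\ge\on$ the damping coefficient is bounded below by the positive constant $\alpha_0\triangleq R(\on-1)/(2\mu+\lambda)$; this puts the ODE into the exact form required by Lemma \ref{le1}.

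To apply Lemma \ref{le1} with threshold time $T_1=\sigma(T)$ and conclude $y(t)\le 3\on/2$, it remains to bound $\|g\|_{L^p(0,\sigma(T))}+\|g\|_{L^q(\sigma(T),T)}$ by a small multiple of $\on$, uniformly in $x_0$. Both contributions reduce to controlling $\|G\|_{L^\infty}$ and $\|\theta-1\|_{L^\infty}$ in the relevant time norms, since $y\le 2\on$ by the a priori assumption (\ref{z1}). For $G$, Gagliardo-Nirenberg (\ref{g2}) combined with (\ref{h19}), (\ref{h20}), and $\|\nabla G\|_{L^6}\le C\|\rho\dot u\|_{L^6}\le C\|\nabla\dot u\|_{L^2}$ yields
\be\ba\notag
\|G\|_{L^\infty}\le C\|G\|_{L^2}^{1/4}\|\nabla G\|_{L^6}^{3/4}\le C(\on,M)(\|\rho^{1/2}\dot u\|_{L^2}+\|\nabla u\|_{L^2}+\|\nabla\theta\|_{L^2})^{1/4}\|\nabla\dot u\|_{L^2}^{3/4},
\ea\ee
while (\ref{6yue}) and (\ref{lop4}) give an analogous bound for $\|\theta-1\|_{L^\infty}$ in terms of $\|\nabla\theta\|_{L^2}$, $\|\nabla^2\theta\|_{L^2}$, hence of $\|\rho^{1/2}\dot\theta\|_{L^2}$, $\|\nabla u\|_{L^4}^2$, $\|\theta\nabla u\|_{L^2}$.

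For the short interval $[0,\sigma(T)]$ I will take $p=4/3$ and combine these pointwise bounds with the estimates (\ref{ae26}) and (\ref{k1}) established in the proof of Lemma \ref{a13}, together with (\ref{a2.121}) and $A_3(T)\le 2C_0^{1/6}$ from Lemma \ref{le6}, to obtain $\|g\|_{L^{4/3}(0,\sigma(T))}\le C(\on,M)C_0^{\kappa_1}$ for some $\kappa_1>0$. For the long interval $[\sigma(T),T]$ I will take $q=2$ (so that the weight $\sigma^m$ is irrelevant) and use $A_2(T)\le 2C_0^{1/4}$, $A_3(T)\le 2C_0^{1/6}$ together with (\ref{h20}) and the $L^2_tL^6_x$-type estimate for $\nabla\dot u$ coming from the definition of $A_3(T)$ to get $\|g\|_{L^2(\sigma(T),T)}\le C(\on,M)C_0^{\kappa_2}$ for some $\kappa_2>0$. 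Since $\on>2$ was fixed so that $\sup\rho_0<\on$, Lemma \ref{le1} then gives $y(t)\le\on+(1+\alpha_0^{-1})C(\on,M)(C_0^{\kappa_1}+C_0^{\kappa_2})\le 3\on/2$, provided $C_0\le\ve_0$ with $\ve_0$ chosen so that the right-hand side is smaller than $\on/2$.

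The principal obstacle is the long-time estimate: (\ref{a2.112}) only furnishes the weakened bound $C_0^{1/4}$ on $\int\rho|u|^2+\int(\rho-1)^2$, so no strong smallness of $\theta-1$ is directly available on $[\sigma(T),T]$, and the forcing $\rho^2(\theta-1)$ in $g$ must instead be handled through $\|\nabla\theta\|_{L^2}$ via Sobolev and the $L^2_t$-bound on $\|\nabla\theta\|_{L^2}^2$ coming from $A_2(T)$; the interpolation exponents must be chosen so that both time-integrated norms genuinely carry a positive power of $C_0$, which constrains the choice of $p,q$ above and ultimately dictates the definition of $\ve_0$ in (\ref{xjia11}).
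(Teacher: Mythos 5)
Your overall strategy — propagating the upper bound along particle trajectories via the Zlotnik-type Gr\"onwall inequality (Lemma~\ref{le1}), bounding the forcing through $L^\infty$-estimates for $G$ and $\theta-1$ split between $[0,\sigma(T)]$ and $[\sigma(T),T]$ — is exactly the paper's. Your interpolation for $\|G\|_{L^\infty}$ and the proposed use of (\ref{ae26}), (\ref{k1}), (\ref{6yue}), (\ref{lop4}), and the $A_i$-bounds in (\ref{z1}) to close the time integrals are consistent with what the paper does in (\ref{3.88})--(\ref{3.91}), the only cosmetic difference being your choice $p=4/3$ versus the paper's $p=1$.

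However, the reduction of the trajectory ODE to Lemma~\ref{le1} has a genuine gap. You write $y'(t)+\alpha(y(t))\,y(t)=g(t)$ with $\alpha(y)=R(y-1)/(2\mu+\lambda)$ and claim that, since $\alpha(y)\ge\alpha_0\triangleq R(\on-1)/(2\mu+\lambda)>0$ whenever $y\ge\on$, "this puts the ODE into the exact form required by Lemma~\ref{le1}." It does not: Lemma~\ref{le1} requires a \emph{constant} coefficient $\alpha>0$ and the inequality $y'+\alpha y\le g$ on \emph{all} of $[0,T]$, not merely on the set where $y\ge\on$. Your $\alpha(y)$ vanishes at $y=1$ and is negative for $y<1$, so the hypotheses of the lemma are simply not met and its conclusion cannot be invoked. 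One needs a linearization first. The elementary observation the paper uses (and you omit) is that for $\rho\ge 0$,
\begin{equation*}
\rho(\rho-1)=(\rho-1)+(\rho-1)^2\ge(\rho-1),
\end{equation*}
so that your decomposition $D_t\rho=-\frac{\rho G}{2\mu+\lambda}-\frac{R\rho(\rho-1)}{2\mu+\lambda}-\frac{R\rho^2(\theta-1)}{2\mu+\lambda}$ upgrades to the constant-coefficient differential inequality
\begin{equation*}
D_t(\rho-1)+\frac{R}{2\mu+\lambda}(\rho-1)\le C(\on)\bigl(\|G\|_{L^\infty}+\|\theta-1\|_{L^\infty}\bigr),
\end{equation*}
and Lemma~\ref{le1} is then applied to $y=\rho-1$ with $\alpha=R/(2\mu+\lambda)$. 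Once this step is inserted, the rest of your plan goes through as you describe.
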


\begin{proof}
First, it follows from  \eqref{k1}, \eqref{6yue}, and \eqref{z1} that
\be\la{3.89}\ba
\int_{\si(T)}^T\|\te-1\|^2_{L^\infty}dt
\le & C(\on)\int_{\si(T)}^T\|\na\te\|_{L^2} \|\na^2\te\|_{L^2}dt\\
\le& C(\on) \left(\int_{\si(T)}^T\|\na\te\|^2_{L^2}dt\right)^{1/2}\left(\int_{\si(T)}^T \|\na^2\te\|^2_{L^2}dt\right)^{1/2} \\
\le& C(\on,M) C_0^{1/8}.
\ea\ee

Next, it can be deduced from \eqref{g1}, \eqref{g2}, (\ref{h19}), (\ref{ae26}), and (\ref{z1}) that
\be\la{3.90}\ba &\int_0^{\si(T)}\|G\|_{L^\infty}dt\\
&\le C\int_0^{\si(T)}\|\na G\|_{L^2}^{1/2} \|\na G\|_{L^6}^{1/2}dt\\
&\le C \int_0^{\si(T)}\|\n \dot u\|_{L^2}^{1/2}\|\na\dot u\|_{L^2}^{1/2}dt\\
&\le C \int_0^{\si(T)}\left(\si\|\n \dot u\|_{L^2}\right)^{1/4} \left(\si\|\n \dot u\|^{2}_{L^2}\right)^{1/8} \left(\si\|\na \dot u\|^2_{L^2}\right)^{1/4}\si^{-5/8}dt\\
&\le C(\on,M)C_0^{1/48}\left(\int_0^{\si(T)} \si\|\na \dot u\|^2_{L^2} dt\right)^{1/4}\left(\int_0^{\si(T)} \si^{-5/6}dt\right)^{3/4} \\
&\le C(\on,M)C_0^{1/48},
\ea\ee
and
 \be\la{3.91}\ba  \int_{\si(T)}^T\|G\|^2_{L^\infty}dt
  &\le C\int_{\si(T)}^T\|\na G\|_{L^2} \|\na G\|_{L^6} dt
   \\ &\le C(\on,M)\int_{\si(T)}^T\left(\|\n^{1/2} \dot u\|^2_{L^2}+
 \|\na\dot u\|_{L^2}^2 \right)dt\\ &\le C(\on,M)C_0^{1/6}.
    \ea\ee

Using (\ref{hj1}), one can rewrite   $(\ref{a1})_1$ in terms of the Lagrangian coordinates as follows
\bnn\ba
(2\mu+\lambda) D_t \n&=-R \n(\n-1)- \n^2(\te-1)-\n G\\
&\le -R(\n-1)+C(\on)\|\te-1\|_{L^\infty}+C(\on)\| G\|_{L^\infty},
\ea\enn
which gives
\be\la{3.92}\ba
D_t (\n-1)+\frac{R}{2\mu+\lambda} (\n-1)\le C(\on)\|\te-1\|_{L^\infty}+C(\on)\| G\|_{L^\infty}.
\ea\ee

Finally, applying Lemma \ref{le1} with
$$y=\n-1, \quad\al=\frac{R}{2\mu+\lambda} ,\quad g=C(\on)\|\te-1\|_{L^\infty}+C(\on)\| G\|_{L^\infty},\quad T_1=\si(T),$$
 we thus deduce from (\ref{3.92}), (\ref{3.88}), \eqref{3.89}--(\ref{3.91}), and (\ref{2.34}) that
 \bnn\ba\n
 & \le \on+1 +C\left(\|g\|_{L^1(0,\si(T))}+\|g\|_{L^2(\si(T),T)}\right) \le \on+1 +C(\on,M)C_0^{1/48} ,
 \ea\enn
 which gives \eqref{a3.7}
 provided
  \be \label{xjia11}C_0\le \ve_0\triangleq\min\left\{\ve_1,\cdots,\ve_4\right\},\ee
  with $\ve_4\triangleq\left(\frac{\hat \n-2 }{2C(\on,M) }\right)^{48}$.
We thus complete the proof of Lemma \ref{le7}.
\end{proof}

Finally, we end this section by summarizing some uniform estimates on $(\n,u,\te)$ which will be useful for higher-order ones in the next section.
\begin{lemma}\la{le8}
	In addition to the conditions of Proposition \ref{pr1}, assume that $(\rho_0,u_0,\te_0)$ satisfies     (\ref{z01}) with $\ve_0$ as in Proposition \ref{pr1}.
Then there exists a  positive constant    $C $     depending only   on  $\mu,\,\lambda,\, \ka,\, R,\, \ga,\, \on,\,\bt$, $\O$, and $M$  such that if $(\rho,u,\te)$  is a smooth solution to the problem (\ref{a1})--(\ref{ch2}) on $\Omega\times (0,T] $  satisfying (\ref{z1}) with $K$ as in Lemma \ref{le2}, it holds:
	\be \la{ae3.7}\sup_{0\le t\le T}\si^2\int \n|\dot\te|^2dx + \int_0^T\si^2 \|\na\dot\te\|_{L^2}^2dt\le C.\ee
Moreover, it holds that
\be\la{vu15}\ba
&\sup_{0\le t\le T}\left(  \si\|\na u \|^2_{L^6}+\si^2\| \na\te\|^2_{H^1}\right)\\
&+\int_0^T(\si \|\na u \|_{L^4}^4+\si^2\|\na\te_t\|^2_{L^2}+\si\|\n -1\|_{L^4}^4)dt\le C.
\ea\ee
\end{lemma}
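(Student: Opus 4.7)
The plan is to prove \eqref{ae3.7} first via a weighted energy estimate for the material derivative $\dot\te$, and then to deduce all bounds in \eqref{vu15} as consequences of \eqref{ae3.7}, the bounds already collected in Proposition \ref{pr1}, and the elliptic estimates of Section~\ref{se2}.

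For \eqref{ae3.7}, I would apply the material derivative $D_t=\pa_t+u\cdot\na$ to $(\ref{a1})_3$. Starting from
\[ \tfrac{R}{\ga-1}\n\dot\te = \ka\Delta\te - R\n\te\,\div u + \lambda(\div u)^2 + 2\mu|\mathfrak{D}(u)|^2, \]
direct computation using $\dot\n=-\n\div u$ and the commutator identity $D_t\Delta\te=\Delta\dot\te-2\na u\!:\!\na^2\te-\Delta u\cdot\na\te$ yields an equation whose principal part is $\frac{R}{\ga-1}\n D_t\dot\te-\ka\Delta\dot\te$ and whose right-hand side is a polynomial in $(\na u,\na^2\te,\na\te,\dot u,\te,\div u)$. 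Testing against $\si^2\dot\te$ and integrating by parts gives, schematically,
\[ \tfrac{R}{2(\ga-1)}\tfrac{d}{dt}\!\left(\si^2\!\int\n|\dot\te|^2dx\right)+\ka\si^2\|\na\dot\te\|^2_{L^2}+(\text{bdry})\le C(\si\si'+\si^2)\|\n^{1/2}\dot\te\|^2_{L^2}+\text{l.o.t.} \]
The lower-order terms would be dominated using Gagliardo-Nirenberg, the $\na u$-estimates \eqref{h17}--\eqref{3.30}, and especially \eqref{lop4} (which itself contains $\|\n\dot\te\|_{L^2}$, so absorption into $\ka\si^2\|\na\dot\te\|^2_{L^2}$ via \eqref{lop4} is the key mechanism). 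The time integrability provided by $A_2(T)+A_3(T)\le C_0^{1/4}+C_0^{1/6}$ and by \eqref{ae26} then drives a Gronwall closure. The weight $\si^2$ kills the initial-time singularity, so no compatibility condition on $\te_0$ is required.

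Granting \eqref{ae3.7}, the bounds in \eqref{vu15} follow more mechanically. The estimate $\sup_t\si\|\na u\|^2_{L^6}$ comes from \eqref{h17} with $p=6$, \eqref{p}, and $A_3(T)$; then $\sup_t\si^2\|\na\te\|^2_{H^1}$ follows from \eqref{lop4} combined with \eqref{ae3.7} and the just-obtained bound on $\|\na u\|_{L^6}$. For $\int_0^T\si\|\na u\|^4_{L^4}dt$, I would use $\|\na u\|^4_{L^4}\le C\|\na u\|_{L^2}\|\na u\|^3_{L^6}$ together with $A_3(T)$ and the $\si\|\na u\|^2_{L^6}$-bound. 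For $\int_0^T\si^2\|\na\te_t\|^2_{L^2}dt$, I would write $\te_t=\dot\te-u\cdot\na\te$, estimate $\|\na(u\cdot\na\te)\|_{L^2}\le C(\|\na u\|_{L^4}\|\na\te\|_{L^4}+\|u\|_{L^\infty}\|\na^2\te\|_{L^2})$, and integrate in time using the preceding controls plus \eqref{ae3.7}. Finally, $\int_0^T\si\|\n-1\|^4_{L^4}dt\le C$ is already furnished by \eqref{eee23} in the proof of Lemma \ref{a13}.

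The main obstacle I expect is the boundary term arising from integrating $\int\Delta\dot\te\cdot\dot\te\,dx$ by parts: although $\na\te\cdot n|_{\p\O}=0$, this Neumann condition is \emph{not} transported by $D_t$, since $\na\dot\te\cdot n=\na(u\cdot\na\te)\cdot n$ on $\p\O$ is generally nonzero. My plan to handle it is to imitate the device used in Lemma \ref{a113.4} for the velocity boundary integrals: exploit $u\cdot n=0$ together with \eqref{eee1}, and reduce the boundary integral to a bulk integral supported in $B_{2d}\cap\O$ via the cut-off \eqref{ljq10}, then control the resulting expression by trace and $L^q$ estimates such as \eqref{eee0} and \eqref{lop4}, and absorb it into $\ka\si^2\|\na\dot\te\|^2_{L^2}$ plus Gronwall-integrable quantities.
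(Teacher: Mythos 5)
Your overall plan has the right skeleton — derive a weighted energy inequality for $\dot\te$, multiply by $\si^2$, integrate in time, close by Gr\"onwall, then deduce \eqref{vu15} from the elliptic estimate \eqref{lop4} — but the specific route you chose for the energy inequality creates a boundary term that the paper avoids entirely, and your proposed repair is unlikely to work.

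The gap. You apply $D_t=\pa_t+u\cdot\na$ to the equation and then rewrite $D_t\Delta\te$ via the commutator identity as $\Delta\dot\te-2\na u\!:\!\na^2\te-\Delta u\cdot\na\te$. Testing with $\si^2\dot\te$, the term $\ka\int\Delta\dot\te\,\dot\te\,dx$ integrates by parts to $-\ka\|\na\dot\te\|_{L^2}^2+\ka\int_{\p\O}(\na\dot\te\cdot n)\dot\te\,dS$, and, as you correctly observe, $\na\dot\te\cdot n=\na(u\cdot\na\te)\cdot n$ does not vanish on $\p\O$. Your proposed fix is to mimic the device used for the velocity boundary integrals, but that device hinges on the vector identity $u=u^\perp\times n$ on $\p\O$, which converts tangential traces into rotations and then into volume integrals; there is no analogous algebraic structure available for the scalar $\dot\te$ (or for $\na\dot\te\cdot n$), and I do not see how the reduction to $B_{2d}\cap\O$ would let you absorb $\ka\int_{\p\O}(\na\dot\te\cdot n)\dot\te\,dS$ into $\ka\si^2\|\na\dot\te\|_{L^2}^2$ plus Gr\"onwall-integrable terms — the trace of $\dot\te$ itself, through \eqref{eee0}, already costs a full $\|\na\dot\te\|_{L^2}$.

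The fix is to not generate the boundary term in the first place. The paper applies the conservative operator $\pa_t+\div(u\,\cdot)$ to $(\ref{a1})_3$ and, crucially, keeps the diffusion in the form $\ka\Delta\te_t+\ka\,\div(\Delta\te\,u)$ instead of recombining into $\Delta\dot\te$. Then the integration by parts in \eqref{bea},
\[
\int\bigl(\Delta\te_t+\div(\Delta\te\,u)\bigr)\dot\te\,dx
=-\int\bigl(\na\te_t\cdot\na\dot\te+\Delta\te\,u\cdot\na\dot\te\bigr)dx,
\]
produces \emph{no} boundary contribution at all: the first piece uses $\na\te_t\cdot n=\pa_t(\na\te\cdot n)=0$ on $\p\O$, the second uses $u\cdot n=0$. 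Writing $\na\te_t=\na\dot\te-\na(u\cdot\na\te)$ then delivers the coercive term $-\|\na\dot\te\|_{L^2}^2$ plus the commutator-type remainders that you expected, and the rest of your plan (Gagliardo–Nirenberg, \eqref{3.30}, \eqref{lop4}, absorption, Gr\"onwall with $\si^2$) goes through exactly as you outline, yielding \eqref{ae3.7} without any compatibility condition on $\te_0$.

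A secondary issue, worth flagging: for $\int_0^T\si\|\na u\|^4_{L^4}\,dt$ your interpolation $\|\na u\|^4_{L^4}\le C\|\na u\|_{L^2}\|\na u\|^3_{L^6}$ is fine pointwise, but combining it with $\sup_t\si\|\na u\|^2_{L^6}\le C$ only bounds the integrand by $C\si^{-1/2}\|\na u\|_{L^2}$, and $\int_1^T\|\na u\|_{L^2}\,dt$ is not controlled uniformly in $T$ by $A_2(T)$. You should instead expand $\|\na u\|_{L^4}^4$ through $G,\curl u,\rho\te-1$ as in \eqref{ae9}, multiply by $\si$, and integrate using $A_2(T)$, $A_3(T)$, and \eqref{eee23} for the $\|\n-1\|_{L^4}^4$ contribution; this gives a bound independent of $T$. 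The remaining pieces of \eqref{vu15} — $\si\|\na u\|^2_{L^6}$ from \eqref{h17}/\eqref{3.30}, $\si^2\|\na\te\|^2_{H^1}$ from \eqref{lop4}+\eqref{ae3.7}, $\int\si^2\|\na\te_t\|^2_{L^2}$ via $\te_t=\dot\te-u\cdot\na\te$, and $\int\si\|\n-1\|^4_{L^4}$ from \eqref{eee23} — are handled in your proposal just as in the paper.
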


\begin{proof}
First, applying the operator $\pa_t+\div(u\cdot) $ to (\ref{a1})$_3 $ and using  (\ref{a1})$_1$, one   gets
\be\la{3.96}\ba
&\frac{R}{\ga-1} \n \left(\pa_t\dot \te+u\cdot\na\dot \te\right)\\
&=\ka \Delta  \te_t +\ka \div (\Delta \te u)+\left( \lambda (\div u)^2+2\mu |\mathfrak{D}(u)|^2\right)\div u +R\n \te  \pa_ku^l\pa_lu^k\\
&\quad -R\n \dot\te \div u-R\n \te\div \dot u +2\lambda \left( \div\dot u-\pa_ku^l\pa_lu^k\right)\div u\\
&\quad + \mu (\pa_iu^j+\pa_ju^i)\left( \pa_i\dot u^j+\pa_j\dot u^i-\pa_iu^k\pa_ku^j-\pa_ju^k\pa_ku^i\right).
\ea\ee
Direct calculations show that
\be \ba\la{bea}
 \int  (\Delta  \te_t + \div (\Delta \te u)) \dot \te dx &=  - \int  (\na  \te_t \cdot \na \dot\te + \Delta \te u \cdot \na \dot \te) dx\\
&= - \int  |\na \dot\te|^2 dx  + \int ( \na(u\cdot \na \te) \cdot \na \dot \te - \Delta \te u \cdot \na \dot \te) dx.
\ea \ee
Multiplying (\ref{3.96}) by $\dot \te$ and integrating the resulting equality over $\O$, it holds that
\be\la{3.99}\ba
& \frac{R}{2(\ga-1)}\left(\int \n |\dot\te|^2dx\right)_t + \ka   \|\na\dot\te\|_{L^2}^2 \\
&\le  C  \int|\na \dot \te|\left( |\na^2\te||u|+ |\na \te| |\na u|\right)dx+C\int  \n|\te-1| |\na\dot u| |\dot \te|dx\\
&\quad +C(\on)  \int|\na u|^2|\dot\te|\left(|\na u|+|\te-1| \right)dx+C   \int |\na\dot u|\n|\dot \te| dx \\
&\quad +C (\on)  \int\left( |\na u|^2|\dot \te|+\n  |\dot
\te|^2|\na u|+|\na u| |\na\dot u| |\dot \te|\right)dx \\
&\le C\|\na u\|^{1/2}_{L^2}\|\na u\|^{1/2}_{L^6}\|\na^2\te\|_{L^2}\|\na \dot \te\|_{L^2}\\
&\quad+C(\on)\|\n(\te-1)\|_{L^2}^{1/2}\|\na\te\|_{L^2}^{1/2}\|\na\dot u\|_{L^2} \|\dot\te\|_{L^6}\\
&\quad+C(\on)  \|\na u\|_{L^2}\|\na u\|_{L^6}\left(\|\na u\|_{L^6}+\|\na \te\|_{L^2}\right)
\|\dot\te\|_{L^6} +C  \|\na\dot u\|_{L^2} \|\n\dot\te\|_{L^2} \\
&\quad+C(\on)  \|\na u\|^{1/2}_{L^6}\|\na u\|^{1/2}_{L^2} \|\dot\te\|_{L^6}\left(\|\na u\|_{L^2}
+\|\n\dot\te\|_{L^2}+\|\na\dot u\|_{L^2}\right)  \\
&\le \frac{\ka}{2}  \|\na\dot\te\|_{L^2}^2 + C(\on,M) \|\na u\|_{L^6}^2\|\na \te \|_{L^2}^2+C(\on)\|\na u\|_{L^2}^2\|\na u\|_{L^6}^4\\
&\quad+C(\on,M) \left(1+\|\na u\|_{L^6}+\|\na\te\|_{L^2}^2\right) \left(\|\na^2\te\|_{L^2}^2+\|\na\dot u\|_{L^2}^2+\|\rho^{1/2} \dot \te\|_{L^2}^2\right)\\
&\quad+ C(\on,M) \|\na u\|_{L^6}\|\na u\|_{L^2}^2,
\ea\ee
where we have used \eqref{bea}, \eqref{g1}, \eqref{g2}, and \eqref{z1}.

Multiplying (\ref{3.99}) by $\si^2$ and integrating the resulting inequality over $(0,T),$
we obtain after integration by parts that
\bnn\ba
& \sup_{0\le t\le T}\si^2\int \n|\dot\te|^2dx + \int_0^T\si^2 \|\na\dot\te\|_{L^2}^2dt  \\
&\le C(\on,M) \sup_{0\le t\le T} \left(\si^2\|\na u\|_{L^6}^2 \right) \int_0^T\left( \|\na u\|_{L^2}^2 \|\na u\|_{L^6}^2+\|\na \te \|_{L^2}^2\right)dt \\
&\quad + C(\on,M) \sup_{0\le t\le T} \left(\si\left(1+\|\na u\|_{L^6}+\|\na\te\|_{L^2}^2\right)\right)\\
&\quad \quad \quad \quad \quad \quad \quad \cdot\int_0^T\si\left(\|\na^2\te\|_{L^2}^2+\|\na\dot u\|_{L^2}^2+\|\rho^{1/2} \dot \te\|_{L^2}^2\right)dt\\
&\quad +C(\on,M) \sup_{0\le t\le T} \left(\si\|\na u\|_{L^6} \right) \int_0^T\|\na u\|_{L^2}^2dt+C\int_0^T\si\|\rho^{1/2} \dot \te\|_{L^2}^2dt\\
&\le  C(\on,M), \ea\enn
where we have used   (\ref{z1}), \eqref{aa0}, (\ref{ae26}),  (\ref{k1}), and the following fact:
\be\ba\notag&\int_0^T\|\na u\|_{L^2}^2\|\na u\|_{L^6}^2dt\\
&\le C(\on)\int_0^T\|\na u\|_{L^2}^2\left(\|\n^{1/2} \dot u\|_{L^2}^2+\|\na u\|_{L^2}^2+ \|\na \te\|_{L^2}^2+C_0^{1/12}\right)dt \\
&\le C(\on,M)\int_0^T\left(\|\n^{1/2} \dot u\|_{L^2}^2+\|\na u\|_{L^2}^2+ \|\na \te\|_{L^2}^2\right)dt\\
&\le C(\on,M)\ea\ee
due to \eqref{3.30} and \eqref{infty12}.

Finally, it's easy to deduce from (\ref{z1}), (\ref{3.30}),  (\ref{lop4}), (\ref{m20}), (\ref{ae9}), (\ref{ae3.7}), and (\ref{eee23})--(\ref{k1}) that
\be \la{vu02}\ba
&\sup_{0\le t\le T}\left(  \si\|\na u \|^2_{L^6}+\si^2\| \na\te  \|^2_{H^1}\right)\\
 & +\int_0^T \left(\si\|\na u  \|_{L^4}^4+\si\|\na\te \|_{H^1}^2+\si\|\n-1\|_{L^4}^4\right)dt
 \le C(\on,M), \ea\ee
which along with  (\ref{z1}), (\ref{k1}), and \eqref{ae3.7} gives
\be\la{vu01}\ba
\int_0^T  \si^2 \|  \na\te _t\|_{L^2}^2dt
&\le C\int_0^T  \si^2\|\na \dot \te \|_{L^2}^2  dt+ C\int_0^T  \si^2\|\na(u \cdot\na  \te )\|_{L^2}^2dt\\
&\le C(\on,M) +C(\on,M)\int_0^T\si^2\left(\|\na u \|_{L^3}^2+\|u \|_{L^\infty}^2\right)\|\na^2 \te \|_{L^2}^2dt  \\ &\le C(\on,M).\ea\ee
Hence, (\ref{vu15}) is derived from (\ref{vu02}) and (\ref{vu01}) immediately.

The proof of Lemma \ref{le8} is finished.
\end{proof}

\section{\la{se4} A priori estimates (II): higher-order estimates}

In this section, we will derive the higher-order estimates of smooth solution $(\rho, u, \te)$ to problem (\ref{a1})--(\ref{ch2})  on $ \Omega\times (0,T]$ with initial data $(\n_0 ,u_0,\te_0)$ satisfying (\ref{co3}) and (\ref{3.1}).

We shall assume that both (\ref{z1}) and (\ref{z01}) hold as well. To proceed,
we define $\tilde g $ as
\be \la{co12}\tilde g\triangleq\n_0^{-1/2}\left(
-\mu \Delta u_0-(\mu+\lambda)\na\div u_0+R\na (\n_0\te_0)\right).\ee
Then it follows from (\ref{co3}) and (\ref{3.1}) that
\be\la{wq01}\tilde g\in L^2.\ee
From now on, the generic constant $C $ will depend only  on \bnn
T, \,\, \| \tilde g\|_{L^2},    \,\|\n_0-1\|_{H^2 \cap W^{2,q}}  ,   \,  \,\| u_0\|_{H^2},  \ \,
\| \te_0-1\|_{H^1} , \enn
besides  $\mu,\,\lambda,\, \ka,\, R,\, \ga,\, \on,\,\bt, $ $\O$, and $M.$

We begin with the following estimates on the spatial gradient of
the smooth solution $(\rho,u,\te).$

\begin{lemma}\la{le11}
	The following estimates hold:
	\be\label{lee2}\ba
	&\sup_{0\le t\le T} \left(\|\rho^{1/2}\dot u\|_{L^2}^2 + \sigma\|\rho^{1/2}\dot \te\|_{L^2}^2 +\|\te-1\|_{H^1}^2 + \sigma \|\na^2 \theta\|_{L^2}^2
 \right)  \\
	  &\quad+\ia\left(  \|\nabla\dot u\|_{L^2}^2  + \|\rho^{1/2}\dot \te\|_{L^2}^2+ \|\nabla^2 \theta\|_{L^2}^2 +\sigma \|\nabla\dot \te\|_{L^2}^2 \right) dt\le C,
	\ea\ee
	and
	\be\la{qq1}
	\sup_{0\le t\le T}\left(\|u\|_{H^2} +\|\n-1\|_{H^2}\right)
	+ \int_0^{T}\left( \|\nabla u\|_{L^{\infty}}^{3/2} + \si \| \na^3 \te\|_{L^2}^2+\|u\|_{H^3}^2 \right)dt\le C.
	\ee
\end{lemma}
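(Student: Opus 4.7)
The plan is first to strengthen the a priori estimates from Section 3 by removing the $\sigma$-weight from $\sup\|\rho^{1/2}\dot u\|_{L^2}^2$ and $\sup\|\te-1\|_{H^1}^2$, which is possible thanks to the compatibility condition \eqref{co12} and the $H^1$-regularity of $\te_0-1$, and then to bootstrap to the spatial higher-order regularity in \eqref{qq1}.

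For \eqref{lee2}, I take $m=0$ in the inequalities \eqref{ae0} and \eqref{nle7} from Lemma \ref{a113.4} and integrate in time. The initial datum satisfies $\|\rho_0^{1/2}\dot u|_{t=0}\|_{L^2}=\|\tilde g\|_{L^2}$ via the identity $\rho_0\dot u|_{t=0}=\mu\Delta u_0+(\mu+\lambda)\nabla\div u_0-R\nabla(\rho_0\te_0)=-\rho_0^{1/2}\tilde g$ from \eqref{co12}, and is bounded by \eqref{wq01}; similarly $B_2(0)$ is bounded by $\|\te_0-1\|_{H^1}^2+\|\te_0\|_{L^\infty}\|\nabla u_0\|_{L^2}^2\le C$. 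The boundary integral $\int_{\partial\Omega}(u\cdot\nabla n\cdot u)G\,dS$ is treated via trace estimates exactly as in Lemma \ref{le6}, and the RHS terms $\|\nabla u\|_{L^4}^4$, $\|\te\nabla u\|_{L^2}^2$, $\|\nabla u\|_{L^2}^2$, $\|\nabla\te\|_{L^2}^2$ are all $L^1_t$-integrable by \eqref{vu15}, \eqref{eee23}, and the bound on $A_2(T)$. A Gr\"onwall argument then yields $\sup(\|\rho^{1/2}\dot u\|_{L^2}^2+\|\nabla\te\|_{L^2}^2)+\int_0^T(\|\nabla\dot u\|_{L^2}^2+\|\rho^{1/2}\dot\te\|_{L^2}^2)\,dt\le C$. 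The bound on $\sup\|\te-1\|_{L^2}^2$ follows by applying Lemma \ref{lll} with $g=\rho$, $f=\te-1$, $s=2$, using $\sup\int\rho(\te-1)^2\,dx\le C$ from $A_2(T)$. The elliptic bound \eqref{lop4} then gives $\int_0^T\|\nabla^2\te\|_{L^2}^2\,dt\le C$, and the $\sigma$-weighted bounds on $\|\rho^{1/2}\dot\te\|_{L^2}^2$, $\|\nabla^2\te\|_{L^2}^2$ and $\int\sigma\|\nabla\dot\te\|_{L^2}^2\,dt$ come from rerunning the estimate leading to \eqref{3.99} in Lemma \ref{le8} with a $\sigma$-weight instead of $\sigma^2$; closure is enabled by $\sigma\|\nabla u\|_{L^6}^2\le C$ from \eqref{vu15} together with the integrability facts just established.

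For \eqref{qq1}, I invoke the $L^2$- and $L^q$-elliptic estimates \eqref{uwkq} applied to the Lam\'e system $\mu\Delta u+(\mu+\lambda)\nabla\div u=\rho\dot u+R\nabla(\rho\te)$ to bound $\|u\|_{H^2}$ and $\|u\|_{W^{2,q}}$ in terms of $\|\rho\dot u\|_{L^2\cap L^q}$ (controlled via Sobolev embedding from $\|\rho^{1/2}\dot u\|_{L^2}$ and $\|\nabla\dot u\|_{L^2}$), $\|\nabla\rho\|_{L^2\cap L^q}$, and $\|\nabla\te\|_{L^2\cap L^q}$. Applying $\nabla$ and $\nabla^2$ to the continuity equation, testing against $|\nabla\rho|^{r-2}\nabla\rho$ ($r\in\{2,q\}$) and $\nabla^2\rho$, and integrating by parts yields a transport-type estimate of the form $\frac{d}{dt}(\|\nabla\rho\|_{L^2\cap L^q}+\|\nabla^2\rho\|_{L^2})\le C(1+\|\nabla u\|_{L^\infty})(\|\nabla\rho\|_{L^2\cap L^q}+\|\nabla^2\rho\|_{L^2})+C(1+\|\nabla^2 u\|_{L^q})$. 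The Beale--Kato--Majda-type inequality of Lemma \ref{le9} bounds $\|\nabla u\|_{L^\infty}$ logarithmically in $\|\nabla^2 u\|_{L^q}$ with prefactor $\|\div u\|_{L^\infty}+\|\curl u\|_{L^\infty}$, which is time-integrable by \eqref{3.90}--\eqref{3.91} and the bounds inherited from \eqref{lee2}. A logarithmic Gr\"onwall closure as in \cite{H-L,L-L-W} then controls the coupled system, giving $\sup(\|u\|_{H^2}+\|\rho-1\|_{H^2})\le C$ and $\int_0^T\|\nabla u\|_{L^\infty}^{3/2}\,dt\le C$. The remaining integrals $\int_0^T\|u\|_{H^3}^2\,dt$ and $\int_0^T\sigma\|\nabla^3\te\|_{L^2}^2\,dt$ follow by $H^1$-elliptic regularity applied to the Lam\'e system and to \eqref{3.29} respectively, using $\int_0^T\|\nabla\dot u\|_{L^2}^2\,dt\le C$ and the $\sigma$-weighted bound on $\|\nabla\dot\te\|_{L^2}^2$.

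The main obstacle will be the logarithmic Gr\"onwall closure coupling $\|\nabla\rho\|_{L^2\cap L^q}$, $\|\nabla^2\rho\|_{L^2}$, and $\|\nabla u\|_{L^\infty}$: the logarithmic dependence of $\|\nabla u\|_{L^\infty}$ on $\|\nabla^2 u\|_{L^q}$ interacts nontrivially with the transport growth from the continuity equation, and the correct ordering relies crucially on $\|\div u\|_{L^\infty}+\|\curl u\|_{L^\infty}$ being \emph{integrable} in time (not merely finite). The exterior-domain trace and elliptic estimates from Section 2 absorb the geometric complications at essentially the same technical level as the bounded-domain analysis in \cite{L-L-W}, so the overall algebraic structure is classical once those are in hand.
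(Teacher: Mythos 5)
Your overall strategy is essentially the paper's: take $m=0$ in the $\dot u$- and $\dot\te$-estimates of Lemma~\ref{a113.4}, use the compatibility condition to bound $\varphi(0)=\|\rho^{1/2}\dot u(0)\|_{L^2}^2+(C_2+1)B_2(0)$, run Gr\"onwall, then get the time-independent $L^2$-bound on $\te-1$ from the weighted $L^2$-bound and $\|\nabla\te\|_{L^2}$, then pass to the elliptic estimate \eqref{lop4}; and for \eqref{qq1}, close a log--Gr\"onwall for the transport of $\nabla\rho$ coupled with the BKM inequality of Lemma~\ref{le9}. That is exactly the paper's route.

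However, there is a genuine gap in how you propose to close the Gr\"onwall for \eqref{lee2}. You assert that the right-hand side terms $\|\nabla u\|_{L^4}^4$ and $\|\te\nabla u\|_{L^2}^2$ in \eqref{ae0}--\eqref{nle7} (with $m=0$) are ``$L^1_t$-integrable by \eqref{vu15}, \eqref{eee23}, and the bound on $A_2(T)$,'' and therefore can be treated as known source terms. This is false: \eqref{vu15} only gives $\int_0^T\sigma\|\nabla u\|_{L^4}^4\,dt\le C$, and because $\sigma(t)=\min\{1,t\}$ vanishes at $t=0$ this does not control $\int_0^T\|\nabla u\|_{L^4}^4\,dt$. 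In fact the unweighted $L^1_t$-bound cannot hold a priori: by \eqref{ae9}, $\|\nabla u\|_{L^4}^4$ contains $\|\rho^{1/2}\dot u\|_{L^2}^3$, and from \eqref{z1} one only has $\sigma^{1/2}\|\rho^{1/2}\dot u\|_{L^2}\le C$ near $t=0$, so $\|\nabla u\|_{L^4}^4$ may behave like $\sigma^{-3/2}$, which is not integrable near $t=0$. The paper avoids this circularity precisely by \emph{not} treating these quantities as source terms: it substitutes \eqref{ae9} for $\|\nabla u\|_{L^4}^4$ and \eqref{m20} for $\|\te\nabla u\|_{L^2}^2$, so that those contributions are converted into products of the $L^1_t$ multiplier $\|\rho^{1/2}\dot u\|_{L^2}^2+\|\nabla\te\|_{L^2}^2$ (which \emph{is} integrable by $A_1,A_2,A_3$) with $\varphi(t)+1$ (the quantity being controlled), leading to the linear-in-$\varphi$ Gr\"onwall inequality \eqref{ae8}. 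Your write-up needs this substitution step explicitly; without it, the claimed Gr\"onwall does not close.

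A smaller imprecision along the same lines: for the $\sigma$-weighted $\dot\te$-estimate you say closure is ``enabled by $\sigma\|\nabla u\|_{L^6}^2\le C$ from \eqref{vu15}.'' That weighted bound produces $\|\nabla u\|_{L^6}\lesssim\sigma^{-1/2}$ and is too weak for several terms in \eqref{3.99} after multiplication by $\sigma$ (e.g.\ $\sigma\|\nabla u\|_{L^2}^2\|\nabla u\|_{L^6}^4$ would scale like $\sigma^{-1}\|\nabla u\|_{L^2}^2$). What is needed, and what the paper uses, is the time-uniform bound $\sup_{0\le t\le T}\|\nabla u\|_{L^6}\le C$ (\eqref{w1}), which follows from \eqref{3.30} once Step~I has delivered $\sup\|\rho^{1/2}\dot u\|_{L^2}+\sup\|\nabla\te\|_{L^2}\le C$. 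You should cite the just-proved $\sup\|\rho^{1/2}\dot u\|_{L^2}\le C$ rather than \eqref{vu15} for this step. With these two corrections, the argument aligns with the paper's proof; the Step~II log--Gr\"onwall for $\nabla\rho$ and the $H^1$-elliptic bootstrap for $\|u\|_{H^3}$ and $\sigma\|\nabla^3\te\|_{L^2}^2$ are described correctly.
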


\begin{proof}
	The proof is divided into the following two steps.
	
	\noindent \textbf{Step I: The proof of (\ref{lee2}).}

First, for $\varphi(t)$ as in \eqref{wq3}, taking $m=0$ in \eqref{e8}, one gets
\be\la{ae8}\ba
& \varphi'(t) +   \int\left( \frac{C_1}{2}|\nabla\dot{u}|^2   +\n|\dot \te|^2\right)dx \\
&\le -2 \left(\int_{\p\O} G\left( u\cdot \na n \cdot u \right)dS\right)_t+C\left(\|\n^{1/2} \dot u\|_{L^2}^2+\|\na u\|_{L^2}^2+\|\na    \te\|_{L^2}^2\right)\\
&\quad +C\left(\|\n^{1/2}\dot u\|_{L^2}^3+\|\na\te\|_{L^2}^3+\|\na u\|_{L^2}^2+\|\n-1\|_{L^2}^2\right) \\
& \quad +C \left(\|\na u\|_{L^2}^2+\|\na\te\|_{L^2}^2\right)\left( \|\n\dot u\|_{L^2}^2+ \|\na \te \|_{L^2}^2+1 \right) \\
&\le -2 \left(\int_{\p\O} G\left( u\cdot \na n \cdot u \right)dS\right)_t+C   \left( \|\n^{1/2}\dot u\|_{L^2}^2 + \|\na\te\|_{L^2}^2\right) (\varphi(t)+1)+C
\ea  \ee
due to   (\ref{z1}), \eqref{a2.112}, \eqref{wq2}, (\ref{ae9}),  and \eqref{m20}.
Taking into account of the compatibility condition \eqref{co2}, we can define
\bnn \ba
\sqrt{\n} \dot u(x,t=0) \triangleq
-\tilde g,
\ea\enn
which along with    (\ref{e6}), (\ref{2.48}), and \eqref{wq01} yields that
\be \la{wq02}
|\varphi(0)|\le C\| \tilde g\|_{L^2}^2+C\le C.
\ee
Then, integrating \eqref{ae8} over $(0,t)$, one obtains after using \eqref{z1}, \eqref{b2}, \eqref{h19}, \eqref{wq2}, and \eqref{wq02} that
\be\ba \la{q1}
& \varphi(t)+\int_0^t\int\left(\frac{C_1}{2} |\nabla\dot{u}|^2   +\n|\dot \te|^2\right)dxds\\
&\le  2\left| \int_{\p\O} G\left( u\cdot \na n \cdot u \right)dS\right|(t)+C\int_0^t \left(\|\n^{1/2}\dot u\|_{L^2}^2+ \|\na \te \|_{L^2}^2\right)\varphi ds+C\\
&\le  C (\|\na u\|_{L^2}^2\|\n^{1/2}\dot u\|_{L^2})(t)+C\int_0^t \left(\|\n^{1/2}\dot u\|_{L^2}^2+ \|\na \te \|_{L^2}^2\right)\varphi ds+C\\
&\le  \frac{1}{2} \varphi(t) + C \int_{0}^{t} \left(\|\n^{1/2}\dot u\|_{L^2}^2+ \|\na \te \|_{L^2}^2\right)\varphi ds+C.
\ea\ee
Applying Gr\"{o}nwall's inequality to \eqref{q1} and 
 using \eqref{z1} and (\ref{wq2}), it holds
\be\label{lee3}
\sup_{0\le t\le T} \left(\|\rho^{1/2}\dot u\|_{L^2}^2
+\|\na \te\|_{L^2}^2 \right) + \ia\int\left(|\nabla\dot
u|^2+\n|\dot\te|^2\right)dxdt\le C,
\ee
which together with \eqref{eee22} implies
\be\la{ff1} \ba
\sup_{0\le t\le T} \|\te-1\|_{L^2} \le C.
\ea \ee

Next,
multiplying (\ref{3.99}) by $\sigma$ and integrating over $(0,T)$  lead to
\be  \ba \la{a5}
&\sup\limits_{0\le t\le T} \si \int \n|\dot\te|^2dx+\int_0^T \si \|\na\dot\te\|_{L^2}^2dt\\
&\le
C\int_0^T\left(  \|\na^2\te\|_{L^2}^2+ \|\n^{1/2}\dot\te\|_{L^2}^2
+ \|\na \te\|_{L^2}^2+\|\na\dot u\|_{L^2}^2 \right)dt + C\\
&\le  C,
\ea\ee
where we have used (\ref{lee3}),  (\ref{z1}), (\ref{m20}), \eqref{ae9},   (\ref{lop4}), and the following fact:
\be\la{w1}\sup_{0\le t\le T}\|\na u\|_{L^6}\le C\ee
due to \eqref{3.30}, \eqref{z1}, and \eqref{lee3}.
Then, it can be deduced from \eqref{lee3}, \eqref{a5}, (\ref{lop4}), (\ref{ae9}), and (\ref{w1}) that
\be \la{va6} \sup\limits_{0\le t\le T} \si\|\na^2\te\|_{L^2}^2 + \int_{0}^{T}\|\na^2\te\|_{L^2}^2 dt \le C,\ee
which along with \eqref{lee3}, \eqref{ff1}, and \eqref{a5} gives \eqref{lee2}.

\noindent \textbf{Part II: The proof of (\ref{qq1}).}

First, standard calculations show  that for $ 2\le p\le 6$,
\be\la{L11}\ba
\partial_t\norm[L^p]{\nabla\rho}
&\le C\norm[L^{\infty}]{\nabla u} \norm[L^p]{\nabla\rho}+C\|\na^2u\|_{L^p}\\
&\le C\left(1+\|\na u\|_{L^{\infty}}+\|\na^2\te \|_{L^2}\right)
\norm[L^p]{\nabla\rho}\\
&\quad +C\left(1+\|\na\dot u\|_{L^2}+\|\na^2\te \|_{L^2}\right), \ea\ee
where we have used
\be\ba\la{ua1}
\|\na^2 u\|_{L^p} & \le C(\|\div u\|_{W^{1,p}} + \|\curl u\|_{W^{1,p}} + \|\na u\|_{L^2}  ) \\
& \le C\left(\|\n\dot u\|_{L^p}+\|\n\dot u\|_{L^2}+\|\n\te-1\|_{W^{1,p}}\right)+C\left(\|\na u\|_{L^2}+\|\n \te-1\|_{L^2}\right)\\
&\le  C\left(1+\|\na\dot u\|_{L^2}+\|\na^2\te\|_{L^2}+(\|\na^2\te \|_{L^2} + 1)\|\nabla\n\|_{L^p}\right)
\ea\ee
due to \eqref{g1}, \eqref{uwkq}, \eqref{z1},  and  \eqref{lee2}.
It follows from Lemma \ref{le9}, \eqref{z1}, and (\ref{ua1})  that
\be\la{u13}\ba
\|\na u\|_{L^\infty }
&\le C\left(\|{\rm div}u\|_{L^\infty}+\|\curl u\|_{L^\infty}\right)\log(e+\|\na^2 u\|_{L^6}) +C\|\na u\|_{L^2}+C \\
&\le C\left( \|{\rm div}u\|_{L^\infty } + \|\curl u\|_{L^\infty }
\right)\log(e+ \|\na\dot u\|_{L^2 } + \|\na^2\te \|_{L^2})\\
&\quad +C\left(\|{\rm div}u\|_{L^\infty }+ \|\curl u\|_{L^\infty } \right)
\log\left(e  + \|\na \n\|_{L^6}\right)+C.
\ea\ee
Denote
\bnn\la{gt}\begin{cases}
	f(t)\triangleq  e+\|\na	\n\|_{L^6},\\
	g(t)\triangleq 1+  \|{\rm div}u\|_{L^\infty }^2+ \|\curl u\|_{L^\infty }^2
	+ \|\na\dot u\|_{L^2 }^2 +\|\na^2\te \|_{L^2}^2,
\end{cases}\enn
then one obtains after submitting \eqref{u13} into (\ref{L11}) with  $p=6$ that
\bnn f'(t)\le   C g(t) f(t)\ln f(t) ,\enn
which implies \be\la{w2}  (\ln(\ln f(t)))'\le  C g(t).\ee
Note that it follows from
(\ref{hj1}),  (\ref{g1}), (\ref{lee2}),  (\ref{z1}), and (\ref{bz6})  that
\be \la{p2}\ba
& \int_0^T\left(\|\div u\|^2_{L^\infty}+\|\curl u\|^2_{L^\infty} \right)dt \\
& \le  C\int_0^T\left(\|G\|^2_{L^\infty}+ \|\curl u\|^2_{L^\infty}+\|\n\te-1\|^2_{L^\infty}\right)dt \\
&\le  C\ia\left(\| \na G\|^2_{L^2}+\| \na G\|^2_{L^6} + \| \curl u\|^2_{W^{1,6}} + \|\te-1\|_{L^\infty}^2\right)dt + C \\
& \le   C\ia\left(\|\na G\|^2_{L^2}+\|\n\dot u\|^2_{L^6}+\|\na\curl  u\|^2_{L^2}+\|\na^2\te \|_{L^2}^2\right)dt+C \\
&\le C\ia(\|\n\dot u\|_{L^2}^2+\|\na \dot u\|^2_{L^2}+\|\na^2\te \|_{L^2}^2)dt+C \\
&\le  C,
\ea\ee
which combined with  Gr\"{o}nwall's inequality, \eqref{p2}, and \eqref{lee2}  yields that
\be \la{u113} \sup\limits_{0\le t\le T}\|\nabla \rho\|_{L^6}\le C.\ee
This together with (\ref{u13}),  \eqref{p2}, and (\ref{lee2}) leads to
\be \la{v6}\ia\|\nabla u\|_{L^\infty}^{3/2}dt \le C.\ee
Moreover, taking $p=2$ in \eqref{ua1}, we get by using \eqref{v6}, (\ref{lee2}), and  Gr\"{o}nwall's inequality that
  \bnn\la{aa94}\ba
\sup\limits_{0\le t\le T}\|\nabla \n\|_{L^2}
\le C\ea\enn
which gives
  \bnn\la{aaa94}\ba
\sup\limits_{0\le t\le T}\|\nabla P\|_{L^2}
\le C\sup\limits_{0\le t\le T}\left(\|\na\te\|_{L^2}+\|\na\n\|_{L^2}+\|\te-1 \|_{L^6}\|\na\n\|_{L^3}\right)
\le C \ea\enn
due to (\ref{lee2}) and (\ref{u113}).
Combining this with  \eqref{lee2} and \eqref{ua1} leads to
\be\ba\la{aa95}
\sup\limits_{0\le t\le T} \|\na^2 u\|_{L^2}
\le &C \sup\limits_{0\le t\le T}\left(\|\n\dot u\|_{L^2}+\|\nabla P\|_{H^1}+\| \na u\|_{L^2}\right)\le C.
\ea\ee

Next, applying operator $\p_{ij}~(1\le i,j\le 3)$ to $(\ref{a1})_1$ gives
\be\la{4.52}  (\p_{ij} \n)_t+\div (\p_{ij} \n u)+\div (\n\p_{ij} u)+\div(\p_i\n\pa_j u+\p_j\n\p_i u)=0. \ee
Multiplying (\ref{4.52}) by $2\p_{ij} \n$ and  integrating the resulting equality over $\O,$ it holds
\be\la{ua2}\ba
\frac{d}{dt}\|\na^2\n\|^2_{L^2}
& \le C(1+\|\na u\|_{L^{\infty}})\|\na^2\n\|_{L^2}^2+C\|\na u\|^2_{H^2}\\
& \le C(1+\|\na u\|_{L^{\infty}} +\|\na^2\te \|_{L^2}^2)(1+\|\na^2\n\|_{L^2}^2) +C\|\na\dot u \|^2_{L^2}, \ea\ee
where one has used \eqref{z1}, \eqref{u113}, and the following estimate:
\be\ba\la{va2}
\|\nabla u\|_{H^2}&\le C\left(\|\div u\|_{H^2}+\|\curl u\|_{H^2}+\|\na u\|_{L^2}\right)\\
&\le C\left(\|G\|_{H^2}+\|\curl u\|_{H^2}+\|\n\te-1\|_{H^2}+\|\na u\|_{L^2}\right)\\
&\le C+C\|\na(\n\dot u)\|_{L^2}+C\|(\n-1)(\te-1)\|_{H^2}\\
&\quad+ C\|\n-1\|_{H^2}+C\|\te-1\|_{H^2}\\
&\le C\|\na\n\|_{L^3}\|\dot u\|_{L^6}+C\|\na \dot u\|_{L^2}+C\|\n-1\|_{H^2}\|\te-1\|_{H^2}
\\&\quad+C\|\na^2\n\|_{L^2}+C\|\na^2\te\|_{L^2}+C
\\ &\le  C\|\na\dot u \|_{L^2}+ C  (1+\|\na^2\te \|_{L^2})(1+\|\na^2\n\|_{L^2}) +C
\ea\ee
due to (\ref{hs}), (\ref{uwkq}), (\ref{lee2}), (\ref{u113}),  and (\ref{z1}).
Applying Gr\"{o}nwall's inequality to \eqref{ua2}, one gets after using  (\ref{lee2})  and (\ref{v6}) that
\be\la{ja3} \sup_{0\le t\le T} \|\na^2\n \|_{L^2}  \le C,\ee
which together with \eqref{eee8}, \eqref{va2}, and \eqref{lee2} gives
\be\la{ja4} \int_0^T\|u\|_{H^3}^2 dt\le C . \ee

Finally,
applying the standard $H^1$-estimate to   elliptic problem (\ref{3.29}), one derives from \eqref{z1},  \eqref{lee2}, \eqref{u113}, \eqref{eee8}, and \eqref{aa95} that
\be\la{ex4}\ba
\|\na^2\te\|_{H^1}
& \le C\left(\|\n \dot \te\|_{H^1}+\|\n\te\div u\|_{H^1}+\||\na u|^2\|_{H^1}+\|\na\te\|_{L^2}^2\right)\\
& \le C\left(1+ \|\na \dot \te\|_{L^2} +  \|\rho^{1/2} \dot \theta\|_{L^2}  + \|\na(\n\te\div u)\|_{L^2}+ \||\na u||\na^2u|\|_{L^2} \right) \\
& \le C\left(1+ \|\na \dot \te\|_{L^2} +  \|\rho^{1/2} \dot \theta\|_{L^2}  +\|\te-1\|_{H^1}\|\na u\|_{H^1}+C\|\na^3 u\|_{L^2} \right).
\ea\ee
This along with \eqref{z1}, \eqref{a2.112}, \eqref{ff1}, \eqref{eee8}, \eqref{ja3}, \eqref{ja4}, \eqref{u113}, \eqref{v6}, \eqref{lee2}, and \eqref{aa95} yields (\ref{qq1}).

 The proof of Lemma \ref{le11} is finished.
\end{proof}

\begin{lemma}\la{le9-1}
	The following estimates hold:
	\be\la{va5}\ba&
	\sup\limits_{0\le t\le T}
	\|\n_t\|_{H^1}
	+\int_0^T\left(\|  u_t\|_{H^1}^2+\si \| \te_t\|_{H^1}^2+\| \n u_t\|_{H^1}^2+\si \|\n \te_t\|_{H^1}^2
	\right)dt\le C,
	\ea  \ee
	and
	\be\la{vva5}\ba
	\int_0^T \sigma \left( \|(\n u_t)_t\|_{H^{-1}}^2+\|(\n \te_t)_t\|_{H^{-1}}^2
	\right)dt\le C.\ea\ee
\end{lemma}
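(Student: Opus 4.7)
The plan is to derive each of the six quantities in \eqref{va5} and \eqref{vva5} in turn by using the PDE system \eqref{a1} to trade time-derivatives for space-derivatives, combined with the bounds of Lemmas \ref{le8} and \ref{le11} and the density-weighted interpolation \eqref{eee8} of Lemma \ref{lll}, which allows passage from $\rho$-weighted to unweighted $L^2$-norms uniformly in the lower bound of $\rho$.

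For \eqref{va5} I begin with $\|\rho_t\|_{H^1}$: since $\rho_t=-u\cdot\nabla\rho-\rho\,\div u$, the bound follows at once from \eqref{qq1} by H\"older and the product estimate \eqref{hs}. To control $\int_0^T\|u_t\|_{H^1}^2 dt$, I decompose $u_t=\dot u-u\cdot\nabla u$. For the $L^2$-part I apply \eqref{eee8} with $g=\rho$, $s=2$, $f=u_t$, which yields $\|u_t\|_{L^2}^2\le C\|\rho^{1/2}u_t\|_{L^2}^2+C\|\rho-1\|_{L^2}^{4/3}\|\nabla u_t\|_{L^2}^2$; the weighted piece reduces via $\rho^{1/2}u_t=\rho^{1/2}\dot u-\rho^{1/2}u\cdot\nabla u$ to bounds already in \eqref{lee2} and \eqref{qq1}, and the gradient piece is absorbed into the $\nabla u_t$ bound below. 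For $\nabla u_t=\nabla\dot u-\nabla(u\cdot\nabla u)$ the first part integrates by \eqref{lee2}, and the second by $\|\nabla u\|_{L^4}^2+\|u\|_{L^\infty}\|\nabla^2 u\|_{L^2}$, each factor uniformly bounded in $t$ thanks to \eqref{qq1} and Gagliardo--Nirenberg. The same argument, weighted by $\sigma$, gives the estimate for $\theta_t$ via $\theta_t=\dot\theta-u\cdot\nabla\theta$ and the $\sigma$-weighted bounds on $\nabla\dot\theta$ and $\nabla^2\theta$ in \eqref{lee2}. Finally, $\|\rho u_t\|_{H^1}$ and $\sigma\|\rho\theta_t\|_{H^1}^2$ are easier because $\|\nabla(\rho u_t)\|_{L^2}\le\|\nabla\rho\|_{L^3}\|u_t\|_{L^6}+\|\rho\|_{L^\infty}\|\nabla u_t\|_{L^2}$ together with the Sobolev embedding $H^1\hookrightarrow L^6$ reduces them to quantities already controlled.

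The main obstacle is \eqref{vva5}. I differentiate $(\ref{a1})_2$ in time to obtain
\[
(\rho u_t)_t=\mu\Delta u_t+(\mu+\lambda)\nabla\div u_t-\nabla P_t-(\rho u\cdot\nabla u)_t,
\]
then pair against an arbitrary $\varphi\in H^1$ and integrate by parts. The viscous and pressure terms contribute at most $C(\|\nabla u_t\|_{L^2}+\|P_t\|_{L^2})\|\nabla\varphi\|_{L^2}$, where $P_t$ is expanded via \eqref{op3} so that $\|P_t\|_{L^2}$ is dominated by $\|\rho\dot\theta\|_{L^2}+\|\nabla(Pu)\|_{L^2}+\|\nabla^2\theta\|_{L^2}+\|\nabla u\|_{L^4}^2$, all in $L^2_t(\sigma\,dt)$ by \eqref{lee2} and \eqref{qq1}. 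The convective derivative $(\rho u\cdot\nabla u)_t=\rho_t u\cdot\nabla u+\rho u_t\cdot\nabla u+\rho u\cdot\nabla u_t$ is handled termwise by H\"older with $\varphi\in L^6$, giving a bound of the form $C\|\varphi\|_{H^1}$ times a function in $L^2_t(\sigma\,dt)$, via the $\|\rho_t\|_{H^1}$ and $\|u_t\|_{H^1}$ estimates just proved together with \eqref{qq1}. The estimate for $(\rho\theta_t)_t$ is obtained by the same duality argument applied to the time-differentiated equation $(\ref{a1})_3$; the only new ingredient is the time derivative of the viscous dissipation, bounded by $\|\nabla u\|_{L^\infty}\|\nabla u_t\|_{L^2}$ and integrable against $\sigma$ thanks to $\int_0^T\|\nabla u\|_{L^\infty}^{3/2}dt\le C$ from \eqref{qq1}.

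No new analytic idea is required beyond systematic bookkeeping. The one structural point worth emphasizing is that \eqref{eee8} must be invoked wherever an unweighted $L^2$-norm of $u_t$ or $\theta_t$ appears, so that every constant stays independent of $\inf\rho$; this is precisely what permits the vanishing-density limit taken in the final existence proof.
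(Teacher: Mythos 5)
Your treatment of \eqref{va5} follows the paper's route closely: decompose $u_t=\dot u-u\cdot\nabla u$ and $\theta_t=\dot\theta-u\cdot\nabla\theta$, use the bounds on $(\dot u,\dot\theta)$ from Lemma \ref{le11}, and convert weighted $L^2$-norms to unweighted ones via \eqref{eee8}. The $\rho_t$, $\rho u_t$, $\sigma\rho\theta_t$ estimates and the duality argument for $(\rho u_t)_t$ also parallel the paper (your substitution of $\|P_t\|_{L^2}$ for $\|\nabla P_t\|_{L^2}$ is a harmless simplification here, since only the $H^{-1}$-norm of $\nabla P_t$ is needed).

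There is, however, a genuine gap in your handling of the time-derivative of the viscous dissipation in the $(\rho\theta_t)_t$ estimate. You claim this term is bounded by $\|\nabla u\|_{L^\infty}\|\nabla u_t\|_{L^2}$ and "integrable against $\sigma$ thanks to $\int_0^T\|\nabla u\|_{L^\infty}^{3/2}dt\le C$", but you need
$\int_0^T\sigma\|\nabla u\|_{L^\infty}^2\|\nabla u_t\|_{L^2}^2\,dt\le C$, and with only $\|\nabla u\|_{L^\infty}\in L^{3/2}_t$ and $\|\nabla u_t\|_{L^2}^2\in L^1_t$ available (from the part of \eqref{va5} you just proved) no H\"older pairing closes this; you would need either $\sup_t\sigma\|\nabla u_t\|_{L^2}^2\le C$, which is only established in the subsequent Lemma \ref{pe1}, or $\sup_t\|\nabla u\|_{L^\infty}\le C$, which one never has. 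The cure is to exploit the $H^{-1}$-target norm rather than bounding in $L^2$: pairing against $\varphi\in H^1\hookrightarrow L^6$ gives
$\bigl\|\left(\lambda(\div u)^2+2\mu|\mathfrak{D}(u)|^2\right)_t\bigr\|_{H^{-1}}\le C\,\||\nabla u||\nabla u_t|\|_{L^{6/5}}\le C\,\|\nabla u\|_{L^3}\|\nabla u_t\|_{L^2}$,
and then $\sup_t\|\nabla u\|_{L^3}\le C\sup_t\|u\|_{H^2}\le C$ from \eqref{qq1} together with $\int_0^T\|\nabla u_t\|_{L^2}^2\,dt\le C$ closes the estimate. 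The same $L^{6/5}$ device should be applied to the other quadratic terms in the temperature equation for consistency.
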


\begin{proof}

First, it   follows from  (\ref{lee2}) and (\ref{qq1})   that
\be\label{va1}\ba
&\sup_{0\le t\le T}\int\left( \n|u_t|^2 + \si
\n\te_t^2\right)dx +\int_0^T \left(\|\na u_t\|_{L^2}^2+ \si \|\na\te_t\|_{L^2}^2\right)dt\\
&\le \sup\limits_{0\le t\le T}\int \left(\n|\dot u|^2+ \si\n|\dot\te|^2 \right)dx
+\int_0^T\left(\|\na\dot u\|_{L^2}^2+\si \|\na\dot\te\|_{L^2}^2 \right)dt\\
&\quad + \sup\limits_{0\le t\le T}\int \n\left(|u\cdot\na u|^2+\si |u\cdot\na\te|^2\right)dx\\
&\quad +\int_0^T\left(\|\na(u\cdot\na u)\|_{L^2}^2+\si \|\na(u\cdot\na \te)\|_{L^2}^2\right)dt\\
&\le C,\ea\ee
which together with (\ref{lee2}) and (\ref{qq1}) gives
\be\la{vva1} \ba
& \int_0^T\left(\|\na(\n u_t)\|_{L^2}^2+ \si \|\na(\n\te_t)\|_{L^2}^2\right)dt\\
& \le  C\int_0^T\left(\|  \na u_t \|_{L^2}^2+\|  \na \n\|_{L^3}^2\| u_t \|_{L^6}^2
+ \si \|  \na \te_t \|_{L^2}^2+ \si \|  \na \n\|_{L^3}^2\| \te_t \|_{L^6}^2 \right)dt\\
&\le C.\ea\ee

Next, one deduces from $(\ref{a1})_1$, \eqref{lee2}, (\ref{qq1}), and \eqref{hs} that
\bnn\ba\la{sp1}
\|\n_t\|_{H^1}\le& \|\div (\rho u)\|_{H^1}
\le  C \|u\|_{H^2}
(\|\n-1\|_{H^2}+1)
\le C.
\ea \enn
Combining this with (\ref{va1}) and  (\ref{vva1}) gives    (\ref{va5}).

Finally, differentiating $(\ref{a1})_2$ with respect to $t$ yields that
\be   \la{va7}\ba (\n u_t)_t
=-(\n u\cdot\na u)_t + \left(\mu\Delta u+(\mu+\lambda)\na\div u \right)_t -\na P_t.\ea\ee
It   deduces from (\ref{va5}), \eqref{qq1}, \eqref{lee2}, and \eqref{lop4} that
\be   \la{va9}\ba  \|(\n u\cdot\na u)_t  \|_{L^{2}}
&=\| \n_t u\cdot\na u+ \n u_t\cdot\na u + \n u\cdot\na u_t  \|_{L^{2}}\\
&\le C\|\n_t\|_{L^6} \|\na u\|_{L^3}+  C\|u_t\|_{L^6} \|\na u\|_{L^3}+  C\|u \|_{L^\infty} \|\na u_t\|_{L^2}\\
&\le C+   C\|\na u_t\|_{L^2},\ea\ee
and
\be   \la{va10}\ba  \|\na P_t  \|_{L^2}
&=R\|\n_t\na\te +\n \na\te_t +\na\n_t\te +\na\n\te_t\|_{L^2}\\
&\le C\left(\|\n_t\|_{L^6}\|\na\te\| _{L^3}+\|\na\te_t\|_{L^2}
+\|\te\|_{L^\infty}\|\na \n_t\|_{L^2}+\|\na\n\|_{L^3}\|\te_t\| _{L^6}\right)\\
&\le C+C\|\na\te_t\|_{L^2}+C\|\n^{1/2}\te_t\|_{L^2}.\ea\ee
Combining (\ref{va7})--(\ref{va10}) with (\ref{va5}) shows
\be\la{vva04}\ba\int_0^T \si \|(\n u_t)_t\|_{H^{-1}}^2dt\le C.\ea\ee
Similarly, we have \bnn\int_0^T \si  \|(\n \te_t)_t\|_{H^{-1}}^2dt\le C,  \enn
which combined with (\ref{vva04}) implies (\ref{vva5}). The proof of Lemma \ref{le9-1} is completed.
\end{proof}

\begin{lemma}\la{pe1}
	The following estimate holds:
	\be\la{nq1}
	\sup\limits_{0\le t\le T} \si\left(\|\nabla u_t\|^2_{L^2}+\|\n_{tt} \|^2_{L^2} + \|u\|_{H^3}^2\right)
	+ \int_0^T\si \left(\|\rho^{1/2} u_{tt}\|_{L^2}^2+ \|\nabla u_t\|_{H^1}^2\right)dt\le C.
	\ee
\end{lemma}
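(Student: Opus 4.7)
The strategy is to differentiate $(\ref{a1})_2$ with respect to $t$, multiply by $\si u_{tt}$, and integrate over $\O$. Since $u\cdot n=0$ and $\curl u\times n=0$ on $\p\O$ are preserved in time (as $n$ is time-independent), we have $u_t\cdot n=u_{tt}\cdot n=0$ and $\curl u_t\times n=0$ on $\p\O$; hence the boundary contributions from integrating $\mu\Delta u_t+(\mu+\lambda)\na\div u_t$ by parts against $u_{tt}$ all vanish. Using $\mu\Delta+(\mu+\lambda)\na\div=(2\mu+\lambda)\na\div-\mu\na\times\curl$, this yields the core identity
\begin{align*}
&\frac{1}{2}\frac{d}{dt}\!\left[\si\!\int\!\bigl((2\mu+\lambda)(\div u_t)^2+\mu|\curl u_t|^2\bigr)dx\right]+\int\si\n|u_{tt}|^2\,dx\\
&\quad=\frac{\si'}{2}(\cdots)+\mathcal{R},
\end{align*}
where $\mathcal{R}\triangleq-\int\si\n_t u_t\cdot u_{tt}\,dx-\int\si(\n u\cdot\na u)_t\cdot u_{tt}\,dx-\int\si\na P_t\cdot u_{tt}\,dx$.

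The key technical step is to bound $\mathcal{R}$ independently of $\inf\n$. The convective pieces $\n u_t\cdot\na u$ and $\n u\cdot\na u_t$ are dominated by $\de\int\si\n|u_{tt}|^2\,dx+C\si\|\na u_t\|_{L^2}^2$ via Cauchy--Schwarz with weight $\n$ and the uniform bounds $\|u\|_{L^\infty}+\|u\|_{H^2}\le C$ from Lemma \ref{le11}. The delicate term $\int\si\n_t u_t\cdot u_{tt}\,dx$ is handled by writing $\n_t=-\n\div u-u\cdot\na\n$; the $\n\div u$ part is treated as above, while $\int\si(u\cdot\na\n)u_t\cdot u_{tt}\,dx$ is controlled using $\|\na\n\|_{L^6}\le C$ from \eqref{u113}, $\|u\|_{L^\infty}\le C$, the decomposition $u_{tt}=\dot u_t-u_t\cdot\na u-u\cdot\na u_t$, and the material-derivative bounds on $\dot u$ from Lemma \ref{le11}, reducing to absorbable and time-integrable contributions. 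The pressure term $\int\si\na P_t\cdot u_{tt}\,dx=-\int\si P_t\div u_{tt}\,dx$ (boundary vanishing by $u_{tt}\cdot n=0$) is rewritten using $\div u_{tt}=\p_t\div u_t$ to extract $-\frac{d}{dt}\int\si P_t\div u_t\,dx$ plus a remainder estimated via $P_t=R\n\dot\te-\div(Pu)$ from \eqref{jia1} and the bounds on $\dot\te$ and $\na\te_t$ from Lemma \ref{le9-1}.

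Applying Gr\"onwall's inequality (noting $\si(0)=0$ and absorbing the $\si'$-contribution via $\int_0^1\|\na u_t\|_{L^2}^2\,dt\le C$ from \eqref{va5}) yields $\sup_{[0,T]}\si\|\na u_t\|_{L^2}^2+\int_0^T\si\|\n^{1/2}u_{tt}\|_{L^2}^2\,dt\le C$. The remaining parts of \eqref{nq1} then follow from elliptic regularity: $\si\|u\|_{H^3}^2\le C$ results from Lemma \ref{crle5} applied to $(\ref{a1})_2$ in the form $\mu\Delta u+(\mu+\lambda)\na\div u=\n\dot u+\na P$, combined with $\|\na\dot u\|_{L^2}\le\|\na u_t\|_{L^2}+C$ and the estimates of Lemma \ref{le11}; $\int_0^T\si\|\na u_t\|_{H^1}^2\,dt\le C$ comes from applying Lemma \ref{crle5} to the time-differentiated momentum equation as an elliptic system for $u_t$ with forcing $\n u_{tt}+\n_t u_t+(\n u\cdot\na u)_t+\na P_t$, using the just-obtained bound on $\int_0^T\si\|\n^{1/2}u_{tt}\|_{L^2}^2\,dt$ and $\n\in L^\infty$; finally, $\si\|\n_{tt}\|_{L^2}^2\le C$ follows from $\n_{tt}=-\div(\n_t u+\n u_t)$ and the bounds of Lemmas \ref{le11}--\ref{le9-1}. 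The principal obstacle is the $\inf\n$-independent control of $\int\si\n_t u_t\cdot u_{tt}\,dx$, resolved by the decomposition and boundary-condition-based manipulations described above.
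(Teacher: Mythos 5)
Your overall setup is right: differentiating the momentum equation in time, testing against $\sigma u_{tt}$, using $u_{tt}\cdot n=0$ and $\curl u_t\times n=0$ to kill the boundary terms, and then finishing with elliptic regularity for $\|u\|_{H^3}$, $\|\nabla u_t\|_{H^1}$, and the mass equation for $\rho_{tt}$. This is the same route as the paper through the final stages.

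The gap is in your treatment of the terms where $\rho_t$ multiplies $u_{tt}$ without a factor of $\rho$, namely $\int\sigma\rho_t u_t\cdot u_{tt}\,dx$ and (which you do not address at all) $\int\sigma\rho_t u\cdot\nabla u\cdot u_{tt}\,dx$. Since $\rho$ may vanish, $u_{tt}$ only comes with the weight $\rho^{1/2}$ in the dissipation $\int\sigma\rho|u_{tt}|^2dx$, so you cannot put $u_{tt}$ unweighted in $L^2$. Your proposed fix for the $u\cdot\nabla\rho$ piece — substituting $u_{tt}=\dot u_t-u_t\cdot\nabla u-u\cdot\nabla u_t$ and invoking the material-derivative bounds from Lemma \ref{le11} — is circular: the identity is simply $\dot u_t=u_{tt}+u_t\cdot\nabla u+u\cdot\nabla u_t$, so $\dot u_t$ contains exactly the $u_{tt}$ you are trying to control, and Lemma \ref{le11} bounds $\rho^{1/2}\dot u$ and $\nabla\dot u$, not $\dot u_t$. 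Integrating by parts in space to move $\nabla$ off $\rho$ also fails, since it produces a $\nabla u_{tt}$ term (not controlled here) or cycles back to the same integral.

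What the paper does instead (see \eqref{sp9}) is integrate by parts \emph{in time}: it writes $-\int\rho_t u_t\cdot u_{tt}dx=-\frac12\frac{d}{dt}\int\rho_t|u_t|^2dx+\frac12\int\rho_{tt}|u_t|^2dx$, and similarly for $-\int\rho_t u\cdot\nabla u\cdot u_{tt}dx$ and $-\int\nabla P_t\cdot u_{tt}dx$. The time-derivative parts are collected into a single quantity $\tilde I_0$, which \eqref{sp10} shows is small relative to $\|\nabla u_t\|_{L^2}^2$ (so it can be moved to the left-hand side), while the exchanged terms $\int\rho_{tt}|u_t|^2dx$, $\int(\rho_t u\cdot\nabla u)_t\cdot u_t dx$, $\int P_{tt}\div u_t dx$ are controlled via \eqref{s4}, the bounds of Lemmas \ref{le11}--\ref{le9-1}, and the temperature equation. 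This time-integration-by-parts trick is essential for a vacuum-permitting estimate, and it is missing from your argument. Everything after this point in your proposal (the elliptic steps) is fine and matches the paper.
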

\begin{proof}
Differentiating  $(\ref{a11})$  with respect to $t$ leads to \be\la{nt0}\ba \begin{cases}
(2\mu+\lambda)\na\div u_t-\mu \na\times \curl u_t\\
= \n u_{tt} +\n_tu_t+\n_tu\cdot\na u+\n u_t\cdot\na u +\n u\cdot\na u_t+\na P_t\triangleq \tilde f
,&\, \text{in}\,\O\times[0,T],\\
u_{t}\cdot n=0,\  \curl u_{t}\times n=0,  &\,\text{on}\,\p\O\times[0,T],\\
u_t\rightarrow0,\,\,&\,as\,\,|x|\rightarrow\infty.
 \end{cases}\ea\ee
Multiplying (\ref{nt0})$_1$ by $u_{tt}$   and integrating  the resulting equality  by parts, one gets  \be\la{sp9} \ba&
\frac{1}{2}\frac{d}{dt}\int \left(\mu|\curl u_t|^2 + (2\mu +\lambda)({\rm div}u_t)^2\right)dx
+\int \rho| u_{tt}|^2dx\\
&=\frac{d}{dt}\left(-\frac{1}{2}\int_{ }\rho_t |u_t|^2 dx- \int_{}\rho_t u\cdot\nabla u\cdot u_tdx
+ \int_{ }P_t {\rm div}u_tdx\right)\\
& \quad + \frac{1}{2}\int_{ }\rho_{tt} |u_t|^2 dx+\int_{ }(\rho_{t} u\cdot\nabla u )_t\cdot u_tdx
-\int_{ }\rho u_t\cdot\nabla u\cdot u_{tt}dx\\
&\quad - \int_{ }\rho u\cdot\nabla u_t\cdot u_{tt}dx - \int_{ }\left(P_{tt}-
\ka(\ga-1)\Delta\te_t\right){\rm div}u_tdx\\
&\quad +\ka(\ga-1)\int_{ } \na\te_t\cdot\na {\rm div}u_tdx
\triangleq\frac{d}{dt}\tilde{I}_0+ \sum\limits_{i=1}^6 \tilde{I}_i. \ea \ee

 Each term $\tilde{I}_i(i=0,\cdots,6)$ can be estimated as follows:

First, it follows from simple calculations, $(\ref{a1})_1,$   (\ref{va5}), \eqref{qq1}, \eqref{lee2}, \eqref{eee8}, and (\ref{va1}) that
\be \ba \la{sp10}
|\tilde{I}_0|&=\left|-\frac{1}{2}\int\rho_t |u_t|^2 dx- \int \rho_t u\cdot\nabla
u\cdot u_tdx+ \int P_t {\rm div}u_tdx\right|\\
&\le C\int  \n |u||u_t||\nabla u_t| dx+C\norm[L^3]{\rho_t}\norm[L^2]
{\nabla u}\norm[L^6]{u_t}+C\|(\n\te)_t\|_{L^2}\|\nabla u_t\|_{L^2}\\
&\le C \|\n^{1/2}u_t\|_{L^2}  \|\nabla u_t\|_{L^2} +C(1+\|\n^{1/2} \te_t\|_{L^2}+\|\n_t\|_{L^2}\\
&\quad+\|\n_t\|_{L^3}\|\te-1\|_{L^6})\|\nabla u_t\|_{L^2}\\
&\le C (1+\|\n^{1/2}\te_t\|_{L^2}) \|\nabla u_t\|_{L^2} ,\ea\ee
\be \la{sp11}\ba
2|\tilde{I}_1|&=\left|\int \rho_{tt} |u_t|^2 dx\right|\\
& \le  C\|\n_{tt}\|_{L^2}\|u_t\|_{L^2}^{1/2}\|u_t\|_{L^6}^{3/2}\\
& \le  C\|\n_{tt}\|_{L^2}(1+\|\na u_t\|_{L^2})^{1/2}\|\na u_t\|_{L^2}^{3/2}\\
& \le  C\|\n_{tt}\|_{L^2}^2+C \|\na u_t\|_{L^2}^4+C,
\ea \ee
\be \la{sp12}\ba
|\tilde{I}_2|&=\left|\int \left(\rho_t u\cdot\nabla u \right)_t\cdot u_{t}dx\right|\\
& = \left|  \int\left(\rho_{tt} u\cdot\nabla u\cdot u_t +\rho_t
u_t\cdot\nabla u\cdot u_t+\rho_t u\cdot\nabla u_t\cdot u_t\right)dx\right|\\
&\le   C\norm[L^2]{\rho_{tt}}\norm[L^6]{\na u}\norm[L^6]{u}\norm[L^6]{u_t}
+C\norm[L^2]{\rho_t}\norm[L^6]{u_t}^2\norm[L^6]{\nabla u} \\
&\quad+C\norm[L^3]{\rho_t}\norm[L^{\infty}]{u}\norm[L^2]{\nabla u_t}\norm[L^6]{u_t}\\
& \le C\norm[L^2]{\rho_{tt}}^2 + C\norm[L^2]{\nabla u_t}^2, \ea \ee
and
\be\ba\la{sp13}
|\tilde{I}_3|+|\tilde{I}_4|&= \left| \int \rho u_t\cdot\nabla u\cdot u_{tt} dx\right|
+\left| \int \rho u\cdot\nabla u_t\cdot u_{tt}dx\right|\\
& \le   C\|\n^{1/2}u_{tt}\|_{L^2}\left(\|u_t\|_{L^6}\|\na u\|_{L^3}
+\|u\|_{L^\infty}\|\na u_t\|_{L^2}\right) \\
& \le  \frac{1}{4}\norm[L^2]{\rho^{{1/2}}u_{tt}}^2 + C \norm[L^2]{\nabla u_t}^2.\ea\ee
Then, by virtue of
(\ref{op3}), (\ref{va5}), \eqref{va10}, and Lemma \ref{le11}, it holds
\bnn\ba & \|P_{tt}-\ka(\ga-1)\Delta \te_t\|_{L^2}\\
&\le  C\|(u\cdot\na P)_t\|_{L^2}+C\|(P\div u)_t\|_{L^2}+C\||\na u||\na u_t|\|_{L^2}\\
&\le  C\|u_t\|_{L^6}\|\na P\|_{L^3}+C\|u\|_{L^\infty}\|\na P_t\|_{L^2}
+C\|P_t\|_{L^6}\|\na u\|_{L^3}\\
&\quad  +C\|P\|_{L^\infty}\|\na u_t\|_{L^2}+C\|\na u\|_{L^\infty}\|\na u_t\|_{L^2}\\
&\le C\left(1+\|\na u\|_{L^\infty}+\|\na^2\te \|_{L^2}\right)\|\na u_t\|_{L^2} +C\left(1+\|\na\te_t\|_{L^2}+\|\n^{1/2}\te_t\|_{L^2}\right),
\ea\enn
which yields
\be\ba\la{sp15}
|\tilde{I}_5|&=\left|\int\left(P_{tt}-\ka(\ga-1)\Delta \te_t\right){\rm div}u_tdx\right|\\
&\le\norm[L^2]{P_{tt}-\ka(\ga-1)\Delta \te_t}\norm[L^2]{\na u_t}\\
&\le C\left(1+\|\na u\|_{L^\infty}+C\|\na^2\te \|_{L^2}\right)\|\na u_t\|_{L^2}^2\\
&\quad+C\left(1+\|\na\te_t\|^2_{L^2}+\|\n^{1/2}\te_t\|_{L^2}^2\right).
\ea\ee
Next, combining a priori estimate on Lam\'{e}'s system \eqref{nt0} similar to \eqref{ua1} with Lemmas \ref{zhle},   \ref{le11}, \eqref{va5}, and (\ref{va1}) gives that
\be\la{nt4}\ba
\|\na^2u_t\|_{L^2}&\le C\|\tilde f\|_{L^2}+C\|u_t\|_{L^2}\\
&\le C\|\n u_{tt}\|_{L^2}
+C\|\n_t\|_{L^3}\|u_t\|_{L^6}+C\|\n_t\|_{L^3}\|\na u\|_{L^6}\|u\|_{L^\infty}\\
&\quad  +C\|u_t\|_{L^6}\|\na u\|_{L^3}+C\|\na u_t\|_{L^2}\|u\|_{L^\infty}+C\|\na P_t\|_{L^2}\\
&\le C\left(\|\n u_{tt}\|_{L^2}+\|\na u_t\|_{L^2}+\|\n^{1/2} \te_{t}\|_{L^2}+\|\na \te_t\|_{L^2}+1\right),\ea\ee
which immediately leads  to\be \la{asp16}\ba
|\tilde{I}_6| &= \left| \ka(\ga-1)\int_{ } \na\te_t\cdot\na{\rm div}u_tdx \right|   \\
& \le  C \|\na^2u_t\|_{L^2}\|\na\te_t\|_{L^2}\\
& \le \frac{1}{4} \|\n^{1/2} u_{tt}\|^2_{L^2}+ C\left(1+\|\na u_t\|^2_{L^2}+\|\n^{1/2} \te_t\|^2_{L^2}+\|\na\te_t\|^2_{L^2}\right).
\ea\ee

Putting   (\ref{sp11})--(\ref{sp15})  and (\ref{asp16}) into
(\ref{sp9}) yields
\be\la{4.052} \ba
& \frac{d}{dt}\int \left(\mu|\curl u_t|^2 + (2\mu +\lambda)({\rm div}u_t)^2-2\tilde{I}_0\right)dx
+\int \rho| u_{tt}|^2dx\\
&\le  C\left(1+\|\na u\|_{L^\infty}+\|\na u_t\|_{L^2}^2+\|\na^2\te \|_{L^2}^2 \right)\|\na u_t\|_{L^2}^2 \\
&\quad +C\left(1+\|\n_{tt}\|_{L^2}^2+\|\n^{1/2} \te_t\|^2_{L^2}+\|\na \te_t\|_{L^2}^2\right).\ea\ee
Furthermore, it follows from $(\ref{a1})_1$, \eqref{qq1}, and (\ref{va5}) that
\be \la{s4} \ba
\|\n_{tt}\|_{L^2} &= \|\div(\rho u)_t\|_{L^2} \\
& \le C\left(\|\n_t\|_{L^6}\|\nabla u\|_{L^3}+ \|\nabla u_t\|_{L^2}
+\|u_t\|_{L^6}\|\nabla \n\|_{L^3}+\|\nabla \n_t\|_{L^2}\right) \\ &\le C+C\|\na u_t\|_{L^2}.\ea\ee
Multiplying \eqref{4.052} by $\sigma$ and integrating the resulting inequality over $(0,T)$, one thus  deduces from \eqref{ljq01}, \eqref{lee2}, (\ref{qq1}),  (\ref{va1}), (\ref{sp10}),  (\ref{va5}),     (\ref{s4}),
and Gr\"{o}nwall's inequality that
\be\la{nq11}
\sup\limits_{0\le t\le T} \si \|\nabla u_t\|^2_{L^2}
+ \int_0^T\si\int\rho |u_{tt}|^2dxdt
\le C.
\ee

Finally, it follows from Lemma \ref{le11}, \eqref{s4}, (\ref{va2}), \eqref{nt4}, \eqref{va5}, and  (\ref{nq11}) that
\be\la{sp21}
\sup\limits_{0\le t\le T}\si\left(\|\n_{tt}\|_{L^2}^2+\|u\|^2_{H^3}\right) + \int_0^T \si\|\nabla u_t\|_{H^1}^2 dt\le C,
\ee
which along with \eqref{nq11} gives (\ref{nq1}).
We  complete  the proof of Lemma \ref{pe1}.
\end{proof}

\begin{lemma}\la{pr3} For $q\in (3,6)$ as in Theorem \ref{th1}, it holds that
	\be\la{y2}\ba
	&\sup_{0\le t\le T} \|\n-1\|_{W^{2,q}} +\int_0^T
	 \|\na^2u\|_{W^{1,q}}^{p_0}  dt\le C,
	\ea \ee
	where  \be 1< \la{pppppp} p_0<\frac{4q }{5q-6} \in (1,4/3).\ee
\end{lemma}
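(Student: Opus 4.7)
The plan is to derive a differential inequality for $\|\na^2\rho\|_{L^q}$ by applying $\pa_{ij}$ to the continuity equation $\rho_t+\div(\rho u)=0$, testing against $|\pa_{ij}\rho|^{q-2}\pa_{ij}\rho$, and integrating by parts. In the spirit of \eqref{ua2} but with exponent $q$ in place of $2$, this yields
\begin{equation*}
\frac{d}{dt}\|\na^2\rho\|_{L^q}\le C(1+\|\na u\|_{L^\infty})\|\na^2\rho\|_{L^q}+C\|\na u\|_{L^\infty}\|\na\rho\|_{L^{2q}}^2\|\na^2\rho\|_{L^q}^{-1}+C\|\na^3 u\|_{L^q}+\text{l.o.t.}
\end{equation*}
The terms $\|\na\rho\|_{L^{2q}}$ are handled by interpolation between $\|\na\rho\|_{L^6}$ (from \eqref{u113}) and $\|\na^2\rho\|_{L^q}$ via Gagliardo--Nirenberg, the factor $\|\na u\|_{L^\infty}$ is already in $L^{3/2}_t$ by \eqref{qq1}, and the lower-order contributions are absorbed using the bounds from Lemmas \ref{le11}--\ref{pe1}.

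The next step is to bound $\|\na^3 u\|_{L^q}$ via the Lam\'e estimate. Applying Lemmas \ref{crle5} and \ref{zhle} to $(\ref{a1})_2$ rewritten as $\mu\Delta u+(\mu+\lambda)\na\div u=\rho\dot u+\na P$ gives
\begin{equation*}
\|\na^3 u\|_{L^q}\le C\bigl(\|\na(\rho\dot u)\|_{L^q}+\|\na^2 P\|_{L^q}+\|\na u\|_{L^2}\bigr).
\end{equation*}
I would control $\|\na^2 P\|_{L^q}$ by $\|\na^2\rho\|_{L^q}$, $\|\na^2\te\|_{L^q}$, and interpolation of lower-order terms, where $\|\na^2\te\|_{L^q}$ is extracted from the elliptic problem \eqref{3.29} for $\te$ together with Lemma \ref{zhle}, expressing it in terms of $\|\rho\dot\te\|_{L^q}$, $\|\te\na u\|_{L^q}$, and $\||\na u|^2\|_{L^q}$. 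For $\|\na(\rho\dot u)\|_{L^q}$, Leibniz gives $\le C\|\na\rho\|_{L^{2q}}\|\dot u\|_{L^{2q}}+C\|\na\dot u\|_{L^q}$, and the first factor is already controlled.

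The hard part will be establishing $\int_0^T\|\na\dot u\|_{L^q}^{p_0}\,dt\le C$, and this is what forces the precise exponent $p_0<4q/(5q-6)$. Writing $\dot u=u_t+u\cdot\na u$, the $u\cdot\na u$ piece is harmless by \eqref{qq1} and \eqref{nq1}, while for the principal term I would use Gagliardo--Nirenberg
\begin{equation*}
\|\na u_t\|_{L^q}\le C\|\na u_t\|_{L^2}^{(6-q)/(2q)}\|\na u_t\|_{H^1}^{(3q-6)/(2q)},
\end{equation*}
and then time-integrate via H\"older, splitting the singular factor $\sigma^{-\alpha}$ against the weighted bounds $\sup_t\sigma\|\na u_t\|_{L^2}^2\le C$ and $\int_0^T\sigma\|\na u_t\|_{H^1}^2\,dt\le C$ from \eqref{nq1}, combined with the unweighted $\int_0^T\|\na u_t\|_{L^2}^2\,dt\le C$ from \eqref{va1}. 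Balancing the three factors so that the residual power of $\sigma^{-1}$ remains integrable over $(0,T)$ yields precisely the threshold $p_0<4q/(5q-6)$.

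Finally, inserting these bounds into the ODE inequality for $\|\na^2\rho\|_{L^q}$, together with $\int_0^T(\|\na u\|_{L^\infty}+\|\na^3 u\|_{L^q}^{p_0})\,dt\le C$, Gr\"onwall's inequality gives $\sup_{0\le t\le T}\|\na^2\rho\|_{L^q}\le C$; combined with $\|\rho-1\|_{H^2}\le C$ from \eqref{qq1} and interpolation, this produces $\|\rho-1\|_{W^{2,q}}\le C$, and re-inserting into the Lam\'e estimate yields the claimed $L^{p_0}_t$-bound for $\|\na^2 u\|_{W^{1,q}}$. The most delicate point throughout is the careful choice of time exponents so that all weighted interpolations remain integrable up to $t=0$, which is the sole mechanism pinning down the sharp ceiling $4q/(5q-6)$ in \eqref{pppppp}.
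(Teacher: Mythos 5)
Your proposal follows essentially the same route as the paper's proof: differentiate the continuity equation twice and test against $q|\na^2\rho|^{q-2}\pa_{ij}\rho$ to obtain a Gr\"onwall inequality; bound $\|\na^2 u\|_{W^{1,q}}$ via a $\div$/$\curl$ (Lam\'e) estimate in terms of $\|\na(\rho\dot u)\|_{L^q}$, $\|\na^2\te\|_{L^q}$ and $\|\na^2\rho\|_{L^q}$; and derive the threshold $p_0<4q/(5q-6)$ from the Gagliardo--Nirenberg interpolation of $\na u_t$ between $L^2$ and $H^1$ combined with the $\sigma$-weighted bounds of \eqref{nq1}, balanced against the singularity $\sigma^{-1/2}$. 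The only cosmetic difference worth noting is that the paper's inequality \eqref{sp28} is written for $\|\na^2\rho\|_{L^q}^q$ rather than $\|\na^2\rho\|_{L^q}$, which avoids the awkward $\|\na^2\rho\|_{L^q}^{-1}$ factor appearing in your displayed ODE (it would vanish after a correct Young-type absorption anyway).
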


\begin{proof}
First, it follows from  \eqref{x2666}, \eqref{x268}, and Lemma \ref{le11} that
\be\la{a4.74}\ba
\|\na^2u\|_{W^{1,q}}
\le & C\left(\|\div u\|_{W^{2,q}}+\|\curl u\|_{W^{2,q}}+\|\na u\|_{L^2}\right)\\
\le & C\left(\|\na G\|_{W^{1,q}}+\|\na\curl u\|_{W^{1,q}}+\|\na P\|_{W^{1,q}}+\|\na u\|_{L^2}+\|\na u\|_{L^q}\right)\\
\le & C\left(\|\n \dot u\|_{W^{1,q}}+\|\na P\|_{W^{1,q}}+\|\n\dot u\|_{L^2}+\|\na u\|_{L^2}+\|\na u\|_{L^q}\right)\\
\le & C
(\|\na\dot u\|_{L^2}+\|\na(\n\dot u)\|_{L^q}+  \|\na^2\te\|_{L^2}+ \|\te\na^2\n\|_{L^q}\\
& +\| |\na\n||\na\te|\|_{L^q}+  \|\na^2\te\|_{L^q}+1)\\
\le & C\left(\|\na\dot u\|_{L^2}+\|\na(\n\dot u)\|_{L^q}  + \| \na^2 \theta\|_{L^q} \right)\\
&+ C(1+ \|\na^2 \theta\|_{L^2})(\| \na^2 \n\|_{L^q} +1).
\ea\ee
Multiplying (\ref{4.52}) by $q|\na^2 \n|^{q-2}\p_{ij} \n$ and  integrating the resulting equality over $\O,$ we obtain after using  (\ref{qq1}) and (\ref{a4.74}) that
\be\la{sp28}\ba
&\frac{d}{dt}\|\na^2\n\|_{L^q}^q\\
&\le C\|\na u\|_{L^\infty}\|\na^2\n\|_{L^q}^q
+C\|\na^{2} u\|_{W^{1,q}}\|\na^2\n\|_{L^q}^{q-1}(\|\na\n\|_{L^q}+1)\\
&\le C\| u\|_{H^3}\|\na^2\n\|_{L^q}^q+C\|\na^2 u\|_{W^{1,q}}\|\na^2\n\|_{L^q}^{q-1}\\
&\le C\left(\| u\|_{H^3}   + \|\na\dot u\|_{L^2} +\|\na(\n\dot u)\|_{L^q}+\|\na^2 \theta\|_{L^q}+1\right)\left(\|\na^2 \n\|_{L^q}^q+1\right).
\ea\ee

Next, it can be deduced from Lemma \ref{le11}, (\ref{g1}),  and \eqref{nq1} that
\be\la{4.49}\ba     \|\na(\n\dot u)\|_{L^q}
&\le C\|\na \n\|_{L^6}\|\dot u \|_{L^{6q/(6-q)}}+C\|\na\dot u \|_{L^q}\\
&\le C\|\na\dot u \|_{L^{6q/(6+q)}}+C\|\na\dot u \|_{L^q}\\
&\le C\|\na\dot u \|_{L^2}+ C\|\na u_t \|_{L^q}+C\|\na(u\cdot \na u ) \|_{L^q}\\
&\le C\|\na\dot u \|_{L^2}+ C\|\na u_t \|_{L^2}^{(6-q)/2q}\|\na u_t \|_{L^6}^{3(q-2)/{2q}}\\
& \quad+C\|\na u \|_{L^6}^{6/q}\| \na u \|_{L^\infty}^{2(q-3)/{q}}
+C\| u \|_{L^\infty}\|\na^2 u \|_{L^q}\\
&\le C\si^{-1/2} \left(\si\|\na u_t \|^2_{H^1}\right)^{3(q-2)/{4q}}
+C\|u\|_{H^3}+C,\ea \ee
and
\begin{equation}\label{n2}
    \begin{aligned}
    \| \na^2 \theta\|_{L^q} \le& C \|\na^2 \theta\|_{L^2}^{(6-q)/2q} \|\na^3 \theta\|_{L^2}^{3(q-2)/2q}\\
    \le & C \si^{-1/2} \left(\si\|\na^3 \theta \|^2_{L^2} \right)^{3(q-2)/{4q}},
\end{aligned}
\end{equation}
which combined with Lemma \ref{le11} and \eqref{nq1} shows that, for $p_0$ as in (\ref{pppppp}),
\be \la{4.53}\int_0^T \left(\|\na(\n \dot u)\|^{p_0}_{L^q} + \|\na^2 \theta\|_{L^q}^{p_0} \right) dt\le C. \ee

Applying  Gr\"{o}nwall's
inequality to (\ref{sp28}), we obtain after using \eqref{lee2},  (\ref{qq1}), and (\ref{4.53}) that
\bnn  \sup\limits_{0\le t\le T}\|\na^2 \n\|_{L^q}\le C,\enn
which combined with   Lemma \ref{le11},    (\ref{4.53}),
and (\ref{a4.74}) gives (\ref{y2}).  We finish  the proof of Lemma \ref{pr3}.
\end{proof}

\begin{lemma}\la{sq90} For $q\in (3,6)$ as in Theorem \ref{th1}, the following estimate holds:
	\be \ba\la{eg17}
	&\sup_{ 0\le t\le T}\si \left(\|\te_t\|_{H^1}+\|\na^2\te\|_{H^1}+\| u_t\|_{H^2}
	+\| u\|_{W^{3,q}}\right)   +\int_0^T   \si^2\|\na u_{tt}\|_{L^2}^2 dt\le C.\ea  \ee
	
\end{lemma}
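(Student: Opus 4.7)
\emph{Proof plan for Lemma \ref{sq90}.} The strategy is to carry out one further layer of time-differentiated energy estimates, coupled in $u_{tt}$ and $\theta_t$, and then deduce all the pointwise-in-time bounds by elliptic regularity. The central object is an estimate of the form
\begin{equation*}
\sup_{0\le t\le T}\sigma^{2}\bigl(\|\rho^{1/2}u_{tt}\|_{L^2}^{2}+\|\rho^{1/2}\theta_{t}\|_{L^2}^{2}\bigr)+\int_{0}^{T}\sigma^{2}\bigl(\|\nabla u_{tt}\|_{L^2}^{2}+\|\nabla\theta_{t}\|_{L^2}^{2}\bigr)dt\le C.
\end{equation*}
The weight $\sigma^{2}$ is essential because no compatibility condition is imposed on $u_{tt}(\cdot,0)$ or $\theta_{t}(\cdot,0)$, so these quantities may blow up as $t\to 0^{+}$.

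To obtain the combined $\sigma^2$-weighted bound, I would first differentiate the equation (\ref{nt0}) for $u_{t}$ once more in $t$, multiply by $\sigma^{2}u_{tt}$, and integrate by parts over $\Omega$. The slip boundary conditions $u_{tt}\cdot n=0$ and $\curl u_{tt}\times n=0$ (inherited from $u$ via the time-independence of $n$) guarantee favorable signs for the Lam\'e operator. The principal difficulty is the pressure term $\int\sigma^{2}P_{tt}\div u_{tt}\,dx$: following the device already used for $\tilde I_{5}$ in Lemma \ref{pe1} (based on (\ref{op3})), I rewrite $P_{tt}-\kappa(\gamma-1)\Delta\theta_{t}$ in terms of quantities involving only $\nabla u$, $\nabla u_{t}$, $\theta_{t}$, $\rho_{t}$ and $\rho_{tt}$, thereby bypassing any direct control of $\theta_{tt}$. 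Simultaneously I would differentiate $(\ref{a1})_{3}$ in $t$, multiply by $\sigma^{2}\theta_{t}$, and integrate. The resulting two differential inequalities couple only through terms of the form $\sigma^{2}\|\nabla u_{tt}\|_{L^2}\|\theta\,\nabla u\|_{L^2}$ and $\sigma^{2}\|\nabla\dot u\|_{L^2}\|\nabla\theta_{t}\|_{L^2}$, which after Cauchy--Schwarz can be absorbed into the coercive diffusive terms. All remaining forcing terms are controlled using Lemmas \ref{le11}--\ref{pe1}; in particular the bounds $\sup_{t}\sigma\|\rho_{tt}\|_{L^2}^{2}\le C$ and $\int_{0}^{T}\sigma\|\nabla u_{t}\|_{H^{1}}^{2}dt\le C$ are crucial. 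Adding a sufficiently large multiple of the $\theta_{t}$ inequality to the $u_{tt}$ inequality and applying Gr\"onwall then yields the displayed bound.

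Granted this, the remaining assertions all reduce to elliptic regularity. The Lam\'e estimate (\ref{nt4}) applied to (\ref{nt0}) gives $\sigma\|u_{t}\|_{H^{2}}\le C$ directly from $\sigma\|\rho u_{tt}\|_{L^2}$, $\sigma\|\nabla u_{t}\|_{L^2}$, $\sigma\|\nabla\theta_{t}\|_{L^2}$, and $\sigma\|\rho^{1/2}\theta_{t}\|_{L^2}$. Next, $\sigma\|\theta_{t}\|_{H^{1}}\le C$ follows from $\sigma\|\rho^{1/2}\theta_{t}\|_{L^2}$, $\sigma\|\nabla\theta_{t}\|_{L^2}$, together with Lemma \ref{lll} applied to $\theta_{t}$. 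The $H^{1}$ estimate (\ref{ex4}) for the Neumann problem (\ref{3.29}) combined with the decomposition $\dot\theta=\theta_{t}+u\cdot\nabla\theta$ and the $H^{2}$ bound on $u$ from Lemma \ref{le11} then produces $\sigma\|\nabla^{2}\theta\|_{H^{1}}\le C$. Finally, for $\sigma\|u\|_{W^{3,q}}$ I would apply (\ref{a4.74}); the required bound on $\sigma\|\nabla(\rho\dot u)\|_{L^q}$ follows from (\ref{4.49}) via $\sigma\|u_{t}\|_{H^{2}}\le C$ and interpolation, and the bound on $\sigma\|\nabla^{2}\theta\|_{L^q}$ from (\ref{n2}) via $\sigma\|\nabla^{2}\theta\|_{H^{1}}\le C$. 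The main obstacle throughout is the delicate coupling between the $u_{tt}$- and $\theta_{t}$-energy inequalities and, within the $u_{tt}$-estimate, the careful elimination of $\theta_{tt}$ through the temperature equation without losing the $\sigma^{2}$ weight.
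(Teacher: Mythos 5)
The central energy estimate you propose is one derivative level short of what the lemma requires, and your plan to close it does not match what the structure of the problem actually forces. You propose to differentiate $(\ref{a1})_3$ in $t$, multiply by $\sigma^2\theta_t$, and integrate, producing a bound of the form
\begin{equation*}
\sup_{0\le t\le T}\sigma^2\|\rho^{1/2}\theta_t\|_{L^2}^2+\int_0^T\sigma^2\|\nabla\theta_t\|_{L^2}^2\,dt\le C.
\end{equation*}
Both of these quantities are already known before Lemma \ref{sq90}: the first from (\ref{va1}) (with the better weight $\sigma$), the second from (\ref{vu15}). What the lemma genuinely needs, but your displayed combined bound does not contain, is $\sup_t\sigma^2\|\nabla\theta_t\|_{L^2}^2\le C$ (you cite $\sigma\|\nabla\theta_t\|_{L^2}$ in your elliptic-regularity step for $\sigma\|\theta_t\|_{H^1}$ and again, implicitly via $\nabla\dot\theta$, for $\sigma\|\nabla^2\theta\|_{H^1}$, but your proposed energy inequality gives it only in $L^2_t$, not $L^\infty_t$). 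To obtain the $L^\infty_t$ bound on $\nabla\theta_t$ one must multiply the time-differentiated temperature equation by $\theta_{tt}$, not $\theta_t$, so that the Laplacian contributes $\frac{d}{dt}\|\nabla\theta_t\|_{L^2}^2$ rather than $\|\nabla\theta_t\|_{L^2}^2$; this is precisely what the paper does in (\ref{ex5})--(\ref{ex11}), and the corresponding dissipation $\int\rho|\theta_{tt}|^2\,dx$ then becomes the coercive term that absorbs the $\theta_{tt}$-dependence of $P_{tt}$ in the $u_{tt}$ equation, yielding the paper's combined bound (\ref{eg10}), which has $\sup_t\sigma^2\|\nabla\theta_t\|_{L^2}^2$ and $\int_0^T\sigma^2\|\rho^{1/2}\theta_{tt}\|_{L^2}^2\,dt$ in place of your two $\theta_t$-terms.

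Your claim that one can ``bypass any direct control of $\theta_{tt}$'' by reusing the $\tilde I_5$-device from Lemma \ref{pe1} also does not survive a closer look in the $u_{tt}$ setting. Writing $\int P_{tt}\,\div u_{tt}\,dx=\int\bigl(P_{tt}-\kappa(\gamma-1)\Delta\theta_t\bigr)\div u_{tt}\,dx+\kappa(\gamma-1)\int\Delta\theta_t\,\div u_{tt}\,dx$ handles the first piece, but the second is the analogue of $\tilde I_6$ shifted one $t$-derivative up: after integration by parts it becomes $-\kappa(\gamma-1)\int\nabla\theta_t\cdot\nabla\div u_{tt}\,dx$, which requires either $\|\nabla^2 u_{tt}\|_{L^2}$ (entailing $\rho u_{ttt}$, unavailable) or $\|\nabla^2\theta_t\|_{L^2}$ (itself controlled only through $\rho\theta_{tt}$ via the elliptic estimate (\ref{eg14}), so you are back to needing $\theta_{tt}$). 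In Lemma \ref{pe1} the trick works because the companion term $\tilde I_6=\kappa(\gamma-1)\int\nabla\theta_t\cdot\nabla\div u_t\,dx$ sits one derivative lower and can be paid for by (\ref{nt4}); that balance breaks at the $u_{tt}$ level. The paper therefore keeps the raw $\rho^{1/2}\theta_{tt}$ contribution in $\tilde J_5$ and cancels it against the dissipation from the $\theta_{tt}$-tested temperature equation by the weighting $2(C_5+1)$ in (\ref{ex13}). Your elliptic-regularity endgame (via (\ref{nt4}), Lemma \ref{lll}, (\ref{ex4}), and (\ref{a4.74})--(\ref{n2})) is essentially the same as the paper's (\ref{sp20})--(\ref{eg17}), and would work once the correct combined estimate (\ref{eg10}) is in hand, but as stated your plan does not produce that estimate.
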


\begin{proof}
First,  differentiating $(\ref{nt0})$ with
respect to $t$ gives
\be\la{sp30}\ba \begin{cases}
\n u_{ttt}+\n u\cdot\na u_{tt}-(2\mu+\lambda)\nabla{\rm div}u_{tt} +\mu\na \times \curl u_{tt}\\
= 2{\rm div}(\n u)u_{tt} +{\rm div}(\n u)_{t}u_t-2(\n u)_t\cdot\na u_t\\\quad -(\n_{tt} u+2\n_t u_t) \cdot\na u
- \n u_{tt}\cdot\na u-\na P_{tt}, & \text{in}\,\O\times[0,T],\\
u_{tt} \cdot n=0,\quad \curl u_{tt} \times n=0, &  \text{on}\,\p\O\times[0,T],\\
u_{tt}\rightarrow0,\,\,&as\,\,|x|\rightarrow\infty.
\end{cases}\ea \ee
Multiplying (\ref{sp30})$_1$ by $u_{tt}$ and integrating the resulting equality over ${\Omega}$ by parts implies that
\be \la{sp31}\ba
&\frac{1}{2}\frac{d}{dt}\int \n |u_{tt}|^2dx
+\int \left((2\mu+\lambda)({\rm div}u_{tt})^2+\mu|\curl u_{tt}|^2\right)dx\\
&=-4\int  u^i_{tt}\n u\cdot\na u^i_{tt} dx
-\int (\n u)_t\cdot \left(\na (u_t\cdot u_{tt})+2\na u_t\cdot u_{tt}\right)dx\\
&\quad -\int (\n_{tt}u+2\n_tu_t)\cdot\na u\cdot u_{tt}dx
-\int   \n u_{tt}\cdot\na u\cdot  u_{tt} dx\\
& \quad+\int  P_{tt}{\rm div}u_{tt}dx\triangleq\sum_{i=1}^5\tilde{J}_i.\ea\ee
It follows from Lemmas \ref{le11}--\ref{pe1}, (\ref{va1}),  and \eqref{s4} that, for $\eta\in(0,1],$
\be \la{sp32} \ba
|\tilde{J}_1| &\le C\|\n^{1/2}u_{tt}\|_{L^2}\|\na u_{tt}\|_{L^2}\| u \|_{L^\infty} \le \eta \|\na u_{tt}\|_{L^2}^2+C(\eta) \|\n^{1/2}u_{tt}\|^2_{L^2},\ea\ee
\be \la{sp33}\ba
|\tilde{J}_2| &\le C\left(\|\n u_t\|_{L^3}+\|\n_t u\|_{L^3}\right)
\left(\| \na u_{tt}\|_{L^2}\| u_t\|_{L^6}+\| u_{tt}\|_{L^6}\| \na u_t\|_{L^2}\right)\\
&\le C\left(\|\n^{1/2} u_t\|^{1/2}_{L^2}\|u_t\|^{1/2}_{L^6}+\|\n_t\|_{L^6}\| u\|_{L^6}\right)\| \na u_{tt}\|_{L^2}\| \na u_t\|_{L^2}\\
&\le \eta \|\na u_{tt}\|_{L^2}^2+C(\eta)\| \na u_t\|_{L^2}^{3}+C(\eta)\\
&\le \eta \|\na u_{tt}\|_{L^2}^2+C(\eta)\si^{-3/2}  ,\ea\ee
\be  \la{sp34}\ba
|\tilde{J}_3| &\le C\left(\|\n_{tt}\|_{L^2}\|u\|_{L^6}+
\|\n_{t}\|_{L^2}\|u_{t}\|_{L^6} \right)\|\na u\|_{L^6}\|u_{tt}\|_{L^6} \\
&\le \eta \|\na u_{tt}\|_{L^2}^2+C(\eta)\si^{-1}  ,\ea\ee
and\be  \la{sp36}\ba &
|\tilde{J}_4|+|\tilde{J}_5|\\
&\le  C\|\n u_{tt}\|_{L^2} \|\na u\|_{L^3}\|u_{tt}\|_{L^6}
+C \|(\n_t\te+\n\te_t)_t\|_{L^2}\|\na u_{tt}\|_{L^2}\\
&\le  \eta \|\na u_{tt}\|_{L^2}^2+C(\eta) \left(\|\n^{1/2}u_{tt}\|^2_{L^2}
+\|\n_{tt}\te\|_{L^2}^2+\|\n_{t}\te_t\|_{L^2}^2 +\|\n^{1/2}\te_{tt}\|_{L^2}^2\right) \\
&\le  \eta \|\na u_{tt}\|_{L^2}^2+C(\eta)\left( \|\n^{1/2}u_{tt}\|^2_{L^2}
+\|\na\te_t\|_{L^2}^2 +\|\n^{1/2}\te_{tt}\|_{L^2}^2+\sigma^{-2}\right). \ea\ee
Substituting (\ref{sp32})--(\ref{sp36}) into (\ref{sp31}), we obtain after using \eqref{ljq01} and choosing $\eta$ suitably small  that
\be \la{ex12}\ba
& \frac{d}{dt}\int \n |u_{tt}|^2dx+C_4 \int|\na u_{tt}|^2dx \\
& \le  C\si^{-2} +C\|\n^{1/2}u_{tt}\|^2_{L^2}
+C\|\na\te_t\|_{L^2}^2+C_5\|\n^{1/2}\te_{tt}\|_{L^2}^2.\ea\ee

Then, differentiating \eqref{3.29} with respect to $t$ infers
\be\la{eg1}\ba \begin{cases}
-\frac{\ka(\ga-1)}{R}\Delta \te_t+\n\te_{tt}\\
=-\n_t\te_{t}- \n_t\left(u\cdot\na \te+(\ga-1)\te\div u\right)-\n\left( u\cdot\na
\te+(\ga-1)\te\div u\right)_t\\
\quad+\frac{\ga-1}{R}\left(\lambda (\div u)^2+2\mu |\mathfrak{D}(u)|^2\right)_t,  \qquad \qquad \qquad \qquad \qquad \quad \text{in}\,\O\times[0,T],\\
\na \te_t\cdot n=0,  \qquad \qquad \qquad \qquad\qquad\qquad\qquad \qquad\qquad\ \ \ \ \ \ \  \text{on}\,\p\O\times[0,T],\\
\nabla\te_t\rightarrow0,\qquad \qquad \qquad \qquad\qquad\qquad\qquad \qquad\qquad\qquad\ \ \ \ \   \text{as}\,|x|\rightarrow\infty.
\end{cases}\ea\ee
Multiplying  (\ref{eg1})$_1$ by $\te_{tt}$ and integrating the resulting
equality over $\Omega$ lead to
\be\la{ex5}\ba
& \left(\frac{\ka(\ga-1)}{2R}\|\na \te_t\|_{L^2}^2+H_0\right)_t+ \int\n\te_{tt}^2dx \\
&=\frac{1}{2}\int\n_{tt}\left( \te_t^2
+2\left(u\cdot\na \te+(\ga-1)\te\div u\right)\te_t\right)dx\\
&\quad + \int\n_t\left(u\cdot\na\te+(\ga-1)\te\div u \right)_t\te_{t}dx\\
& \quad-\int\n\left(u\cdot\na\te+(\ga-1)\te\div u\right)_t\te_{tt}dx\\
& \quad -\frac{\ga-1}{R}\int \left(\lambda (\div u)^2+2\mu |\mathfrak{D}(u)|^2\right)_{tt}\te_t dx
\triangleq\sum_{i=1}^4H_i,\ea\ee
where
\bnn\ba H_0\triangleq & \frac{1}{2}\int \n_t\te_{t}^2dx
+\int\n_t\left(u\cdot\na\te+(\ga-1)\te\div u\right) \te_tdx\\
&- \frac{\ga-1}{R}\int\left(\lambda (\div u)^2+2\mu |\mathfrak{D}(u)|^2 \right)_t\te_t dx. \ea\enn
It's easy to deduce from  $(\ref{a1})_1,$  (\ref{va5}), (\ref{va1}),  (\ref{nq1}),  \eqref{s4}, and Lemma \ref{le11} that
\be\la{ex6}\ba
|H_0|\le & C\int \n|u||\te_{t}||\na\te_{t}|dx+ C\|\na u\|_{L^3}\|\na u_t\|_{L^2} \|\te_t\|_{L^6} \\
&+C\|\n_t\|_{L^3}\|\te_t\|_{L^6}\left( \|\na\te\|_{L^2} \|u\|_{L^\infty}+ \|\na u\|_{L^2}+ \|\theta-1\|_{L^6}\|\na u\|_{L^3}\right)\\
\le &C  (\|\rho^{1/2}\theta_t \|_{L^2}+\|\na\te_t\|_{L^2})\left(\|\n\te_t\|_{L^2}+\|\na u_t\|_{L^2}+1\right)\\
\le &\frac{\ka(\ga-1)}{4R} \|\na\te_t\|_{L^2}^2+C\si^{-1},\ea\ee
and
\be\la{ex7}\ba
|H_1|& \le C\|\n_{tt}\|_{L^2}\left(\|\te_t\|_{L^4}^{2}
+\|\te_t\|_{L^6}\left(\|u\cdot\na \te\|_{L^3}+\|\na u\|_{L^3}+\|\te-1\|_{L^6}\|\na u\|_{L^6} \right)\right)\\
&\le C\|\n_{tt}\|_{L^2}\left(\|\rho^{1/2} \te_t\|_{L^2}^{2} + \|\na \te_t\|_{L^2}^{2}
+\si^{-1/2}  \right) \\
& \le  C(1+\|\na u_{t}\|_{L^2} )\|\na \te_t\|^2_{L^2}+C\si^{-3/2}.\ea\ee
We deduce from Lemma \ref{le11} that
\be\la{eg12}\ba
 &\|\left(u\cdot\na\te+(\ga-1)\te\div u \right)_t\|_{L^2}\\
& \le  C\left(\|u_t\|_{L^6}\|\na\te\|_{L^3}+\|\na\te_t\|_{L^2}+\|\te_t\|_{L^6}\|\na u\|_{L^3}
+\|\te\|_{L^\infty}\|\na u_t\|_{L^2}\right)\\
& \le  C\|\na u_t\|_{L^2}(\|\na^2 \te\|_{L^2}+1)+ C\|\na \te_t\|_{L^2}+C\|\rho^{1/2} \theta_t \|_{L^2},\ea\ee
which together with \eqref{lee2}, \eqref{va5},  and (\ref{va1}) shows
\be\la{ex9}\ba
|H_2|+|H_3|&\le C\left(\si^{-1/2}(\|\na u_t\|_{L^2}+1)+\|\na\te_t\|_{L^2}\right)
\left(\|\n_t\|_{L^3} \|\te_t\|_{L^6}+\|\n \te_{tt}\|_{L^2}\right)\\
&\le \frac{1}{2}\int\n\te_{tt}^2dx+C\|\na\te_t\|_{L^2}^2+C\si^{-1 } \|\na u_t\|^2_{L^2}+C\si^{-1 } . \ea\ee
One deduces from \eqref{qq1}, (\ref{va1}), and (\ref{nq1}) that
\be\la{ex10}\ba
|H_4|&\le C\int \left(|\na u_t|^2+|\na u||\na u_{tt}|\right)|\te_t|dx\\
&\le C\left(\|\na u_t\|_{L^2}^{3/2}\|\na u_t\|_{L^6}^{1/2}
+ \|\na u\|_{L^3} \|\na u_{tt}\|_{L^2}\right)\|\te_t\|_{L^6}\\
&\le \de\|\na u_{tt}\|^2_{L^2}+C\|\na^2 u_t\|^2_{L^2}
+C(\de)(\|\na\te_t\|_{L^2}^2+\si^{-1 })+C\si^{-2}\|\na u_t\|_{L^2}^2.\ea\ee
Substituting (\ref{ex7}), (\ref{ex9}), and (\ref{ex10}) into (\ref{ex5}) gives
\be\la{ex11}\ba
& \left(\frac{\ka(\ga-1)}{2R}\|\na \te_t\|_{L^2}^2+H_0\right)_t
+\frac{1}{2}\int\n\te_{tt}^2dx \\
&\le  \de\|\na u_{tt}\|^2_{L^2}+C(\de)((1+\|\na u_{t}\|_{L^2}) \|\na \te_t\|^2_{L^2}+\si^{-3/2})\\
&\quad +C(\|\na^2 u_t\|^2_{L^2} +\si^{-2}\|\na u_t\|_{L^2}^2).\ea\ee

Finally, for $C_5$ as in (\ref{ex12}), adding (\ref{ex11}) multiplied by
$2 (C_5+1) $ to  (\ref{ex12}),
we obtain after choosing $\de$ suitably small that
\be\la{ex13}\ba
& \left[ 2 (C_5+1)\left(\frac{\ka(\ga-1)}{2R}\|\na \te_t\|_{L^2}^2+H_0\right)
+\int \n |u_{tt}|^2dx\right]_t\\
&\quad + \int\n\te_{tt}^2dx+\frac{C_4}{2}\int |\na u_{tt}|^2dx\\
&\le C (1+\|\na u_{t}\|_{L^2}^2) (\si^{-2} +\|\na \te_t\|^2_{L^2})
+C\|\n^{1/2}u_{tt}\|^2_{L^2} + C\|\na^2 u_t\|^2_{L^2}.\ea\ee
Multiplying (\ref{ex13}) by $\si^2$ and integrating the resulting inequality over $(0,T),$
we  obtain after using (\ref{ex6}),  (\ref{nq1}),  (\ref{va5}), and Gr\"{o}nwall's inequality that
\be \la{eg10}
\sup_{ 0\le t\le T}\si^2\int \left(|\na\te_t|^2+\n |u_{tt}|^2\right)dx
+\int_{0}^T\si^2\int \left(\n\te_{tt}^2+|\nabla u_{tt}|^2\right)dxdt\le C,\ee
which together with Lemmas \ref{le11}, \ref{pe1}, \ref{pr3}, (\ref{nt4}), (\ref{ex4}),  \eqref{va1}, (\ref{a4.74}),
and (\ref{4.49}) gives
\be\la{sp20} \sup_{ 0\le t\le T}\si \left(\|\na u_t\|_{H^1}
+ \|\na^2\te\|_{H^1}+\|\na^2u\|_{W^{1,q}} \right)\le C.\ee

We thus derive (\ref{eg17}) from  (\ref{eg10}),   (\ref{sp20}), \eqref{va1}, \eqref{eee8},
and (\ref{qq1}). The proof of Lemma \ref{sq90} is completed.
\end{proof}

\begin{lemma}\la{sq91} The following estimate holds:
	\be \la{egg17}\sup_{ 0\le t\le T}\si^2  \left(\|\na^2\te\|_{H^2}+\| \te_t\|_{H^2}+\|\n^{1/2}\te_{tt}\|_{L^2}  \right)
	+\int_0^T\si^4\|\na \te_{tt}\|_{L^2}^2 dt\le C.\ee
\end{lemma}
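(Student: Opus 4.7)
The strategy mirrors Lemmas \ref{pe1} and \ref{sq90}: first produce an energy identity at the $\te_{tt}$ level to extract the top-order time regularity, then recover the spatial regularity of $\te_t$ and $\te$ by elliptic bootstrap from \eqref{eg1} and the temperature equation \eqref{3.29}.

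\textit{Stage I: the $\te_{tt}$-estimate (the main work).} Differentiating \eqref{eg1} once more in $t$ produces an equation of the schematic form
$$\n\te_{ttt}-\frac{\ka(\ga-1)}{R}\Delta\te_{tt}=\mathcal R$$
on $\O\times(0,T]$, together with the Neumann condition $\na\te_{tt}\cdot n=0$ on $\p\O$ (inherited from differentiating $\na\te\cdot n=0$ twice in $t$). The source $\mathcal R$ collects time derivatives up to second order of $\n$, $u$, and $\te$, including $\n_{tt}\te_t$, $(\n u\cdot\na\te)_{tt}$, $(\n\te\div u)_{tt}$, and $(|\na u|^2)_{tt}$. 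Multiplying by $\si^4\te_{tt}$, integrating over $\O$ (the Neumann condition kills the boundary integral), and placing the most singular pieces under a time derivative in a quantity $\tilde H_0$ analogous to $H_0$ in \eqref{ex5}--\eqref{ex6}, I expect an inequality of the shape
$$\left(\si^4\|\n^{1/2}\te_{tt}\|_{L^2}^2+\si^4\tilde H_0\right)_t+\frac{\ka(\ga-1)}{R}\si^4\|\na\te_{tt}\|_{L^2}^2\le C\si^3\si'\Psi_1(t)+C\si^4\Psi_2(t),$$
where $\Psi_1,\Psi_2$ are dominated by products of norms already controlled. The key inputs are the $L^2_t$-bound $\int_0^T\si^2\|\na u_{tt}\|_{L^2}^2dt\le C$ from Lemma \ref{sq90} together with $\sup_t(\si\|\te_t\|_{H^1}+\si\|u_t\|_{H^2}+\si\|u\|_{W^{3,q}}+\si\|\na^2\te\|_{H^1})\le C$. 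Gr\"onwall's inequality then yields
$$\sup_{0\le t\le T}\si^4\|\n^{1/2}\te_{tt}\|_{L^2}^2+\int_0^T\si^4\|\na\te_{tt}\|_{L^2}^2 dt\le C,$$
which is exactly the first and fourth items in \eqref{egg17}.

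\textit{Stage II: elliptic bootstrap for $\te_t$ and $\te$.} With the $\te_{tt}$-bound from Stage I, view \eqref{eg1} as a Neumann elliptic problem for $\te_t$; each term on its right-hand side is now bounded in $L^2$ with norm $O(\si^{-1})$, using Lemmas \ref{le11}--\ref{sq90}, Stage I, and the fact that $\n$ is bounded above so $\|\n\te_{tt}\|_{L^2}\le C\|\n^{1/2}\te_{tt}\|_{L^2}$. Standard $H^2$-regularity for the Neumann Laplacian on the exterior domain then delivers $\sup_t\si^2\|\te_t\|_{H^2}\le C$. Next, regard \eqref{3.29} as a Neumann elliptic problem for $\te$; by the algebra property \eqref{hs} together with $\n-1\in W^{2,q}$ from Lemma \ref{pr3}, $u\in W^{3,q}$ from Lemma \ref{sq90}, and the $H^2$-bound on $\te_t$ just obtained, its right-hand side belongs to $H^2$ with norm $O(\si^{-2})$. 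Applying $H^4$-elliptic regularity then furnishes $\sup_t\si^2\|\na^2\te\|_{H^2}\le C$, which combined with the previous bounds completes \eqref{egg17}.

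The main obstacle is Stage I, and specifically the distribution of the triple nonlinear products $\n_{tt}\te_t$, $u_{tt}\cdot\na\te$, $\te_t(\div u)_t$, and $\te(\div u_t)^2$ appearing in $\mathcal R$: each must be parsed among (i) the $\tilde H_0$-term placed under the time derivative (the analog of \eqref{ex6}), (ii) pieces absorbable into the good term $\si^4\|\na\te_{tt}\|_{L^2}^2$ via Cauchy--Schwarz, and (iii) coefficients of $\|\n^{1/2}\te_{tt}\|_{L^2}^2$ that are integrable in time for Gr\"onwall. The structural difficulty is identical to \eqref{ex11}--\eqref{ex13}, but shifted one time-derivative higher, and the new integrability bottleneck is precisely the bound $\int_0^T\si^2\|\na u_{tt}\|_{L^2}^2dt\le C$ supplied by Lemma \ref{sq90}.
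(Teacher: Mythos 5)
Your proposal matches the paper's proof: differentiate \eqref{eg1} once more in $t$ to obtain \eqref{eg2}, test against $\si^4\te_{tt}$ using the Neumann condition, rely on $\int_0^T\si^2\|\na u_{tt}\|_{L^2}^2\,dt\le C$ from Lemma \ref{sq90} together with the $\sup$-bounds of \eqref{eg17}, and finish by elliptic bootstrap on \eqref{eg1} and then \eqref{3.29}, exactly as you outline. The one superfluous element is the anticipated functional $\tilde H_0$: none is needed here, and the paper introduces none, because at this level the $\te_{tt}$-equation is tested against $\te_{tt}$ itself rather than against its time derivative (which is what forced $H_0$ in \eqref{ex5}--\eqref{ex6}), so the energy identity \eqref{eg3} comes out directly and each source $K_1,\dots,K_5$ can be absorbed pointwise via Cauchy--Schwarz against $\si^4\|\na\te_{tt}\|_{L^2}^2$, $\si^4\|\n^{1/2}\te_{tt}\|_{L^2}^2$, and time-integrable residues, with nothing that has to be hidden under a $d/dt$.
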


\begin{proof}
First, differentiating $(\ref{eg1})$ with respect to $t$ yields
\be\la{eg2}\ba \begin{cases}
\n\te_{ttt}-\frac{\ka(\ga-1)}{R}\Delta \te_{tt}\\
=-\n u
\cdot\na\te_{tt}+ 2\div(\n u)\te_{tt} - \n_{tt}\left(\te_t+ u\cdot\na \te+(\ga-1)\te\div u\right)\\
\quad - 2\n_t\left(u\cdot\na \te+(\ga-1)\te\div u\right)_t\\
\quad - \n\left(u_{tt}\cdot\na \te+2u_t\cdot\na\te_t+(\ga-1)(\te\div u)_{tt}\right)\\
\quad +\frac{\ga-1}{R}\left(\lambda (\div u)^2+2\mu |\mathfrak{D}(u)|^2 \right)_{tt},\qquad\qquad\qquad \qquad\qquad\quad   &\text{in}\,\O\times[0,T],\\
\na\te_{tt}\cdot n=0,  \qquad \qquad \qquad \qquad\qquad\qquad\qquad \qquad &\text{on}\,\p\O\times[0,T],\\
\na\te_{tt}\rightarrow 0\qquad\qquad\qquad\qquad\qquad\qquad\qquad\qquad\qquad\qquad\qquad\ \ \ \ \   &\text{as}\,|x|\rightarrow\infty.
\end{cases}\ea\ee
Multiplying (\ref{eg2})$_1$ by $\te_{tt}$ and integrating the resulting equality over $\Omega$ yield that
\be\la{eg3}\ba
&\frac{1}{2}\frac{d}{dt}\int\n|\te_{tt}|^2dx +\frac{\ka(\ga-1)}{R}\int|\na \te_{tt}|^2dx\\
&=-4\int \te_{tt}\n u\cdot\na\te_{tt}dx  -\int \n_{tt}\left(\te_t
+ u\cdot\na \te+(\ga-1)\te\div u\right)\te_{tt}dx\\
&\quad - 2\int\n_t\left(u\cdot\na \te+(\ga-1)\te\div u\right)_t\te_{tt}dx\\
& \quad - \int\n\left(u_{tt}\cdot\na \te+2u_t\cdot\na\te_t
+(\ga-1)(\te\div u)_{tt}\right)\te_{tt}dx\\
& \quad +\frac{\ga-1}{R}\int  \left(\lambda (\div u)^2+2\mu |\mathfrak{D}(u)|^2\right)_{tt}\te_{tt}dx
\triangleq \sum_{i=1}^5K_i.\ea\ee

It follows from
Lemmas \ref{le11}--\ref{pe1}, \ref{sq90},  \eqref{eee8}, (\ref{eg10}), and (\ref{eg17})  that
 \be \la{eg4} \ba
\si^4|K_1|&\le C\si^4\|\n^{1/2}\te_{tt}\|_{L^2}\|\na \te_{tt}\|_{L^2}\|u\|_{L^\infty}\\
&\le \de \si^4\|\na \te_{tt}\|_{L^2}^2+C(\de) \si^4\|\n^{1/2}\te_{tt}\|^2_{L^2} ,\ea\ee
\be \la{eg16}\ba
\si^4|K_2|&\le C \si^4\|\n_{tt}\|_{L^2}\|\te_{tt}\|_{L^6} \left( \|\te_t\|_{H^1}+\|\na\te\|_{L^3}
+\|\na u\|_{L^3}\right) \\
&\le C\si^2 (\|\na \te_{tt}\|_{L^2}+\|\n^{1/2} \te_{tt}\|_{L^2})\\
&\le \de\si^4\|\na \te_{tt}\|_{L^2}^2+C(\de)(\si^4\|\n^{1/2} \te_{tt}\|_{L^2}^2+1),\ea\ee
\be \la{eg7}\ba
\si^4|K_4|&\le C\si^4\|\te_{tt}\|_{L^6}
\left( \|\na\te\|_{L^3}\|\n u_{tt}\|_{L^2}
+\|\na\te_t\|_{L^2}\|\na u_t\|_{L^2}\right)\\
&\quad+ C\si^4\|\te_{tt}\|_{L^6} \left( \|\na u\|_{L^3}\|\n \te_{tt}\|_{L^2}
+ \|\na u_t\|_{L^2}\| \te_t\|_{L^3}\right)\\
&\quad+C\si^4\|\te\|_{L^\infty}\|\n\te_{tt}\|_{L^2} \|\na u_{tt}\|_{L^2} \\
&\le \de\si^4\|\na \te_{tt}\|_{L^2}^2
+C(\de)\left(\si^4 \|\n^{1/2} \te_{tt}\|_{L^2}^2+\si^3\|\na u_{tt}\|_{L^2}^2  \right)+C(\de),\ea\ee
\be \la{eg8}\ba
\si^4|K_5|&\le C\si^4\|\te_{tt}\|_{L^6}
\left( \|\na u_t\|_{L^2}^{3/2}\|\na u_{t}\|_{L^6}^{1/2}+\|\na u\|_{L^3}\|\na u_{tt}\|_{L^2}\right)  \\
&\le \de\si^4\|\na \te_{tt}\|_{L^2}^2
+C(\de)\si^4\left(\|\n^{1/2} \te_{tt}\|_{L^2}^2+\|\na u_{tt}\|_{L^2}^2  \right) +C(\de),\ea\ee
and
\be \la{eg6}\ba
\si^4|K_3|&\le C \si^4\|\n_t\|_{L^3} \|\te_{tt}\|_{L^6}
\left( \si^{-1/2}\|\na u_t\|_{L^2} +\|\rho^{1/2} \theta_t\|_{L^2} +\|\na\te_{t}\|_{L^2} \right) \\
&\le \de\si^4\|\na \te_{tt}\|_{L^2}^2+ C(\de) \si^4 \|\n^{1/2} \te_{tt}\|_{L^2}^2 +C(\de),\ea\ee
where in the last inequality we have used (\ref{eg12}).

Then, multiplying (\ref{eg3})  by $\si^4,$ substituting (\ref{eg4})--(\ref{eg6}) into the resulting equality and choosing $\de$ suitably small, one obtains
\bnn \ba
& \frac{d}{dt}\int\si^4\n|\te_{tt}|^2dx +\frac{\ka(\ga-1)}{R}\int\si^4|\na \te_{tt}|^2dx\\
& \le  C\si^2\left(\|\n^{1/2} \te_{tt}\|_{L^2}^2
+\|\na u_{tt}\|_{L^2}^2  \right)+C,\ea\enn
which together with (\ref{eg10})   gives
\be\la{eg13} \sup_{ 0\le t\le T}\si^4\int  \n |\te_{tt}|^2dx
+\int_{0}^T\si^4\int_{ } |\nabla \te_{tt}|^2 dxdt\le C.\ee

Finally, applying the standard $L^2$-estimate  to (\ref{eg1}), one obtains after using Lemmas \ref{le11}--\ref{pe1}, (\ref{va1}), (\ref{va5}), (\ref{eg13}),
and (\ref{eg17}) that
\be\la{eg14}\ba
&\sup_{0\le t\le T}\si^2\|\na^2\te_t\|_{L^2}\\
&\le  C\sup_{0\le t\le T}\si^2\left(\|\n\te_{tt}\|_{L^2}
+  \|\n_t\|_{L^3}\|\te_t\|_{L^6}+\|\n_t\|_{L^6} \left(\|\na\te\|_{L^3}+\|\na u\|_{L^3}\right)\right)\\
& \quad +C\sup_{0\le t\le T}\si^2\left(\|\n^{1/2} \te_t\|_{L^2}+\|\na\te_t\|_{L^2}+ (1+\|\na^2\te\|_{L^2}) \|\na u_t\|_{L^2}+
 \|\na u_t\|_{L^6}\right)\\
& \le  C.\ea\ee
Moreover, it follows from  the standard $H^2$-estimate  of
$(\ref{3.29})$, (\ref{hs}), \eqref{nq1}, and Lemma \ref{le11}   that
\bnn\ba \|\na^2\te\|_{H^2}
&\le C\left(\|\n\te_t\|_{H^2}+\|\n u\cdot\na\te\|_{H^2}
+\|\n\te\div u\|_{H^2}+\||\na u|^2\|_{H^2} \right)\\
&\le C\left((1+\|\n-1\|_{H^2} )\|\te_t\|_{H^2}
+(\|\n-1\|_{H^2}+1) \| u\|_{H^2}\|\na\te\|_{H^2}\right)\\
&\quad+C(1+\|\n-1\|_{H^2})(1+\|\te-1\|_{H^2})  \| \div u\|_{H^2}+C\|\na u\|^2_{H^2}\\
&\le C\si^{-1}+ C\| \na^3\te \|_{L^2}+C\| \te_t\|_{H^2}.\ea\enn
Combining this with
  (\ref{eg17}),  (\ref{eg14}), and  (\ref{eg13}) shows (\ref{egg17}).
The proof of Lemma \ref{sq91} is completed.
\end{proof}

\section{\la{se5}Proof of  Theorem  \ref{th1}}

With all the a priori estimates in Sections \ref{se3} and \ref{se4}
at hand, we are ready to prove the main result  of this paper in
this section.

\begin{pro} \la{pro2}

 For  given numbers $M>0$ (not necessarily small),
  $\on> 2,$ and $\bt>1,$   assume that  $(\rho_0,u_0,\te_0)$ satisfies (\ref{2.1}),  (\ref{3.1}),
and   (\ref{z01}). Then    there exists a unique classical solution  $(\rho,u,\te) $      of problem (\ref{a1})--(\ref{ch2})
 in $\Omega\times (0,\infty)$ satisfying (\ref{mn5})--(\ref{mn2}) with $T_0$ replaced by any $T\in (0,\infty).$
  Moreover,  (\ref{zs2}), (\ref{a2.112}), (\ref{ae3.7}), and  (\ref{vu15})  hold for any $T\in (0,\infty).$ 
 \end{pro}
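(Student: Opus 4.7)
\medskip

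\noindent\textbf{The approach.} I would carry out a standard continuation/open-closed argument, bootstrapping the local solution of Lemma~\ref{th0} with the a priori estimates of Sections~\ref{se3} and~\ref{se4}. Starting from the local classical solution on $[0,T_0]$ produced by Lemma~\ref{th0}, define
\[
T^{*}\triangleq\sup\bigl\{\,T\ge T_0\ :\ \text{the classical solution exists on }[0,T]\ \text{and }(\ref{z1})\ \text{holds on }[0,T]\,\bigr\}.
\]
The plan is to show $T^{*}=\infty$. Once this is established, Lemmas~\ref{a13.1} and~\ref{le8} (applied for every finite $T$) yield (\ref{a2.112}), (\ref{ae3.7}), (\ref{vu15}), and the improved bound (\ref{zs2}) comes directly from Proposition~\ref{pr1}.

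\medskip

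\noindent\textbf{Key steps.} First I would verify $T^{*}>0$. At $t=0$ one has $A_{1}(0)=\|\nabla u_{0}\|_{L^{2}}^{2}\le M^{2}$, while $A_{2}(0)=A_{3}(0)=0$ and $0<\rho_{0}<\hat{\rho}$; combining the definition of $K$ in the proof of Lemma~\ref{le2} with continuity of the local solution (guaranteed by (\ref{mn5})), the strict inequalities in (\ref{z1}) persist on a short interval. Next, on $[0,T^{*})$ all hypotheses of Proposition~\ref{pr1} hold, so by the hypothesis $C_{0}\le\varepsilon_{0}$ we obtain the strictly improved bounds (\ref{zs2}). Hence (\ref{z1}) cannot fail at $t=T^{*}$ through saturation, and the only obstruction to $T^{*}=\infty$ is a breakdown in the regularity needed to re-invoke Lemma~\ref{th0} at $t=T^{*}$.

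\medskip

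\noindent\textbf{Extension past $T^{*}$.} Assuming $T^{*}<\infty$ for contradiction, I would invoke the higher-order estimates of Section~\ref{se4}, which are all uniform on $[0,T^{*}-\tau]$ for every $\tau>0$ with constants depending only on $T^{*}$ and the initial data. Specifically, Lemma~\ref{le11} gives $(\rho-1,u)\in L^{\infty}(0,T^{*};H^{2})$ and $\theta-1\in L^{\infty}(\tau,T^{*};H^{2})$, Lemma~\ref{pr3} upgrades to $\rho-1\in L^{\infty}(0,T^{*};W^{2,q})$, and Lemmas~\ref{sq90}--\ref{sq91} plus standard elliptic regularity applied to (\ref{nt0}) and (\ref{3.29}) together yield $u(\cdot,T^{*})\in H^{3}$ and $\theta(\cdot,T^{*})-1\in H^{3}$. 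Moreover, the lower bound $\inf\rho(\cdot,T^{*})>0$ follows from integrating $D_{t}\log\rho=-\div u$ along characteristics and using $\int_{0}^{T^{*}}\|\div u\|_{L^{\infty}}dt<\infty$ (a consequence of (\ref{qq1})), while $\inf\theta(\cdot,T^{*})>0$ follows from (\ref{mn2}) with $T_{0}$ replaced by $T^{*}$. The slip and Neumann boundary conditions are preserved under time evolution. Thus the hypotheses (\ref{2.1}) of Lemma~\ref{th0} are met at $t=T^{*}$, and a second application of the local theorem extends the solution to $[0,T^{*}+\tau_{0}]$ for some $\tau_{0}>0$; by continuity in $t$, the strict inequalities (\ref{z1}) (now known from (\ref{zs2})) persist on a slightly larger interval, contradicting the maximality of $T^{*}$.

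\medskip

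\noindent\textbf{Main obstacle.} The delicate point is ensuring the higher-order estimates of Section~\ref{se4} really extend up to $t=T^{*}$ rather than merely on open subintervals, together with producing the $H^{3}$-regularity of $u$ and $\theta-1$ at the terminal time needed to re-feed Lemma~\ref{th0}. The $H^{3}$-bounds are not stated uniformly in Section~\ref{se4} (only $H^{2}$, $W^{2,q}$, and pointwise-in-time $L^{\infty}(\tau,T;H^{3})$-type bounds away from $t=0$), but since $T^{*}>0$, we may choose $\tau\in(0,T^{*}/2)$ and work on $[\tau,T^{*}]$, where all relevant norms are bounded uniformly by the a priori estimates; the compatibility condition for the restart is automatically satisfied because $\sqrt{\rho(\cdot,T^{*})}\,\dot{u}(\cdot,T^{*})\in L^{2}$ thanks to (\ref{lee2}). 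With these observations in place, the continuation goes through and gives $T^{*}=\infty$, completing the proposition.
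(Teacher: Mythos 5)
Your continuation strategy is essentially the paper's own (a local solution from Lemma \ref{th0}, Proposition \ref{pr1} to keep (\ref{z1}) away from saturation, then higher-order estimates to restart the local theory), but the proof as written has a genuine gap at the restart step: Lemma \ref{th0} requires $(\rho(\cdot,T^*)-1,u(\cdot,T^*),\theta(\cdot,T^*)-1)\in H^3$, and you establish this for $u$ (via Lemma \ref{pe1}) and for $\theta-1$ (via Lemma \ref{sq91}), but \emph{not} for $\rho-1$. Section 4 only delivers $\rho-1\in L^\infty(0,T;H^2\cap W^{2,q})$ (Lemmas \ref{le11} and \ref{pr3}); neither $H^2$ nor $W^{2,q}$ with $q\in(3,6)$ gives $\nabla^3\rho\in L^2$, and on the unbounded exterior domain $W^{2,q}$ for $q>2$ does not embed into $H^2$ anyway, so there is no Sobolev shortcut. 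Your device of restricting attention to $[\tau,T^*]$ removes the $\sigma$-weights from the $u$ and $\theta$ bounds, but that is irrelevant for $\rho$: the problem is the absence of any $H^3$ estimate on $\rho$ at any positive time, not a degenerate weight at $t=0$.

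This missing piece is precisely where most of the paper's proof of Proposition \ref{pro2} lives, namely the claim (\ref{y12}) $\sup_{0\le t\le T}\|\rho-1\|_{H^3}\le\tilde C$. Its derivation is nontrivial: one uses $\inf\rho_0>0$ to define $u_t(\cdot,0)$ and $\theta_t(\cdot,0)$ in $L^2$ directly from the equations, then upgrades the $\sigma$-weighted bounds of Lemmas \ref{pe1}--\ref{sq90} to unweighted ones (yielding $\sup_t\|\nabla u_t\|_{L^2}$, $\sup_t\|u\|_{H^3}$, $\int_0^T\|\nabla u_t\|_{H^1}^2\,dt$, $\int_0^T\|\nabla^3\theta\|_{L^2}^2\,dt$ all $\le\tilde C$), and finally feeds these into the transport estimate for $\nabla^3\rho$ and closes with Gr\"onwall. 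Without these steps, the constant $\tilde C$ in the restart is not available and the continuation argument does not close. Your remaining observations (the lower bounds on $\rho$ and $\theta$ at $T^*$, the preservation of boundary conditions, the logic by which (\ref{zs2}) rules out saturation of (\ref{z1})) are all sound and match the paper's reasoning.
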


\begin{proof}
First, by the standard local existence result (Lemma \ref{th0}), there exists a $T_0>0$ which may depend on
$\inf\limits_{x\in \Omega}\n_0(x), $  such that the  problem
 (\ref{a1})--(\ref{ch2})  with   initial data $(\n_0 ,u_0,\te_0 )$
 has   a unique classical solution $(\n,u,\te)$ on $\O\times(0,T_0],$  which satisfies (\ref{mn6})--(\ref{mn2}).
It follows from (\ref{As1})--(\ref{3.1}) that
\bnn A_1(0)\le M^2,\quad  A_2(0)\le  C_0^{1/4},\quad A_3(0)=0, \quad  \n_0<
 \hat{\rho},\quad \te_0\le \bt.\enn  Then  there exists a
$T_1\in(0,T_0]$ such that (\ref{z1}) holds for $T=T_1.$
 We set \bnn \notag T^* =\sup\left\{T\,\left|\, \sup_{t\in [0,T]}\|(\n-1,u,\te-1)\|_{H^3}<\infty\right\},\right.\enn  and \be \la{s1}T_*=\sup\{T\le T^* \,|\,{\rm (\ref{z1}) \
holds}\}.\ee Then $ T^*\ge T_* \geq T_1>0.$
 Next, we claim that
 \be \la{s2}  T_*=\infty.\ee  Otherwise,    $T_*<\infty.$
  Therefore, by Proposition \ref{pr1},  (\ref{zs2})
  holds for all $0<T<T_*,$  which combines with (\ref{z01}) yields
    Lemmas \ref{le11}--\ref{sq91} still hold for all  $0< T< T_* .$
Note here that all  constants $C$  in  Lemmas \ref{le11}--\ref{sq91}
depend  on $T_*  $ and $\inf\limits_{x\in \Omega}\n_0(x)$, are in fact independent  of  $T$.
Then,  we claim that  there
exists a positive constant $\tilde{C}$ which may  depend  on $T_* $
and $\inf\limits_{x\in \Omega}\n_0(x)$   such that, for all  $0< T<
 T_*,$  \be\la{y12}\ba \sup_{0\le t\le T}
\| \n-1\|_{H^3}   \le \tilde{C},\ea \ee which together with Lemmas \ref{le11}, \ref{pe1},  \ref{sq90},  \eqref{mn2}, and (\ref{3.1}) gives
 \be\notag
 \|(\n(x,T_*)-1,u(x,T_*),\te(x,T_*)-1)\|_{H^3}
 \le \tilde{C},\quad\inf_{x\in \Omega}\n(x,T_*)>0,\quad\inf_{x\in \Omega}\te(x,T_*)>0.\ee
  Thus, Lemma \ref{th0} implies that there exists some $T^{**}>T_*,$  such that
(\ref{z1}) holds for $T=T^{**},$   which contradicts (\ref{s1}).
Hence, (\ref{s2}) holds. This along with Lemmas \ref{th0}, \ref{a13.1}, \ref{le8}, and Proposition \ref{pr1}, thus
finishes the proof of   Proposition \ref{pro2}.

 Finally, it remains to prove (\ref{y12}). Using  $(\ref{a1})_3$ and (\ref{mn6}), we can define
 \bnn
 \theta_t(\cdot,0)\triangleq - u_0 \cdot\na \te_0 + \frac{\ga-1}{R} \rho_0^{-1}
\left(\ka\Delta\te_0-R\rho_0 \theta_0 \div u_0+\lambda (\div u_0)^2+2\mu |\mathfrak{D}(u_0)|^2\right),
 \enn
 which along with  (\ref{2.1}) gives
 \be \la{ssp91}\|\theta_t(\cdot,0)\|_{L^2}\le \tilde{C}.\ee
 Thus, one deduces from  (\ref{3.99}) and Lemma \ref{le11} that
\be  \ba \la{a51}
\sup\limits_{0\le t\le T} \int \n|\dot\te|^2dx+\int_0^T \|\na\dot\te\|_{L^2}^2dt \le \tilde{C},
\ea\ee
which together with  (\ref{lop4})  yields
\be\la{sp211}
\sup\limits_{0\le t\le T}\|\na^2 \theta\|_{L^2} \le \tilde{C}.\ee
Using  $(\ref{a1})_2$ and  (\ref{mn6}), we can define
\bnn
u_t(\cdot,0) \triangleq -u_0\cdot\na u_0+\n_0^{-1}\left( \mu \Delta u_0 + (\mu+\lambda) \na \div u_0 - R\na (\n_0\te_0)\right),
\enn
which along with  (\ref{2.1}) gives
\be \la{ssp9}\|\na u_t(\cdot,0)\|_{L^2}\le \tilde{C}.\ee
Thus, one deduces from Lemmas \ref{le11}, \ref{le9-1},  (\ref{4.052}),  (\ref{s4}), (\ref{a51})--(\ref{ssp9}), and Gr\"{o}nwall's inequality that
\be \la{ssp1}
\sup_{0\le t\le T}\|\na u_t\|_{L^2}+\int_0^T\int \n |u_{tt}|^2dxdt\le \tilde{C},
\ee
which combined with  (\ref{va2}), \eqref{sp211}, and (\ref{qq1}) yields
\be\la{sp221} \sup\limits_{0\le t\le T}\|u\|_{H^3} \le \tilde{C}.\ee
Combining this with Lemma \ref{le11}, (\ref{a51}), (\ref{sp211}), (\ref{ssp1}),   and  (\ref{sp221}) gives
\be\la{ssp24} \ia\left(\|\na^3\te\|_{L^2}^2+ \|\nabla u_t\|_{H^1}^2\right)dt\le \tilde{C} . \ee
Then, it can be deduced  from (\ref{uwkq}),\eqref{a4.74}, \eqref{hs}, \eqref{sp211}, \eqref{ssp1}, \eqref{sp221}, and Lemma \ref{le11}  that
\be \notag\ba \|\na^2 u\|_{H^2}
&\le\tilde{C}\left(\| \div u \|_{H^3}+\|\curl u\|_{H^3} + \|\na u\|_{L^2}\right)\\
&\le\tilde{C}\left(\| \n\dot u \|_{H^2}+\|\na P\|_{H^2} + 1\right)\\
&\le \tilde{C}(1+\| \n-1\|_{H^2})(\|  u_t \|_{H^2}+\|  u \|_{H^2}\|  \na u \|_{H^2})+\tilde{C}\\
&\quad+\tilde C(1+\|\n-1\|_{H^2}+\|\te-1 \|_{H^2})(\|\na\n\|_{H^2}+\|\na\te\|_{H^2})\\
& \le \tilde{C} (1+ \|\na^2  u_t\|_{L^2}+\|\na^3 \n \|_{L^2}+\|\na^3 \te \|_{L^2}),\ea\ee
which along with some standard calculations leads to
 \bnn\la{sp134}\ba  & \left(\|\na^3 \n\|_{L^2} \right)_t \\
&\le \tilde{C}\left(\| |\na^3u| |\na \n| \|_{L^2}+ \||\na^2u||\na^2
      \n|\|_{L^2}+ \||\na u||\na^3 \n|\|_{L^2} +\| \na^4u \|_{L^2} \right)\\
&\le \tilde{C}\left(\| \na^3 u\|_{L^2}\|\na \n \|_{H^2}+ \| \na^2u\|_{L^3}\|\na^2 \n \|_{L^6}
       +\|\na u\|_{L^\infty}\|\na^3 \n\|_{L^2} +\| \na^4u \|_{L^2}\right) \\
&\le \tilde{C}(1+ \| \na^3\n \|_{L^2}+ \| \na^2 u_t\|^2_{L^2}+ \|\na^3\te\|^2_{L^2}), \ea\enn where we have used (\ref{sp221}) and Lemma \ref{le11}.
 Combining this with (\ref{ssp24})  and
Gr\"{o}nwall's inequality  yields   \bnn\la{sp26} \sup\limits_{0\le t\le
T}\|\nabla^3  \n\|_{L^2} \le \tilde{C},\enn which together with
(\ref{qq1}) gives (\ref{y12}).
The proof of Proposition \ref{pro2} is completed.
\end{proof}

With  Proposition \ref{pro2} at hand, we are now in a position to prove  Theorem \ref{th1}.

\begin{proof}[Proof of  Theorem   \ref{th1}]
 Let $(\n_0,u_0,\te_0)$  satisfying (\ref{co3})--(\ref{co2}) be the initial data in Theorem \ref{th1}.  Assume that  $C_0$  satisfies (\ref{co14}) with \be\la{xia}\ve\triangleq \ve_0/2,\ee
  where  $\ve_0$  is given in Proposition \ref{pr1}.

First, we construct the approximate initial data $(\n_0^{m,\eta},u_0^{m,\eta}, \te_0^{m,\eta})$ as follows. For constants
\be\la{5d0}
m \in \mathbb{Z}^+,\ \  \eta \in \left(0, \eta_0 \right),\ \  \eta_0\triangleq \min\xl\{1,\frac{1}{2}(\on-\sup\limits_{x\in \O}\n_0(x)) \xr\},
\ee
we define
\begin{align*}
\n_0^{m,\eta} = \n_0^{m}+ \eta,\ \  u_0^{m,\eta}=\frac{u_0^m }{1+\eta},\ \  \te_0^{m,\eta}= \frac{\te_0^{m} + \eta}{1+2\eta},
\end{align*}
where $\n_0^{m}$ satisfies
\begin{align*}
0 \le \n_0^{m} \in C^{\infty},\ \  \lim_{m \to \infty} \|\n_0^{m} -\rho_0\|_{H^2 \cap W^{2,q}}=0,
\end{align*}
and $u_0^m$ is the unique smooth solution to the following elliptic equation:
\be\notag\begin{cases}
	\Delta u_0^m=\Delta \tilde{u}_0^m,&\text{in}\,\, \O,\\
	u_0^m\cdot n=0 ,\,\,\curl u_0^m\times n=0,&\text{on}\,\,\p\O,\\
	u_0^m\rightarrow0,\,\,&\text{as}\,\,|x|\rightarrow\infty,
\end{cases}\ee
with $\tilde{u}_0^m \in C^{\infty}$ satisfying $\lim_{m \to \infty}\| \tilde{u}_0^m -{u}_0\|_{H^2}=0$, and $\te_0^m$ is the unique smooth solution to the following Poisson equation:
\be\notag\begin{cases}
	\Delta \te_0^m=\Delta \tilde{\te}_0^m,&\text{in}\,\,\O,\\
	\na \te_0^m\cdot n=0 ,&\text{on}\,\,\p\O,\\
	 \te_0^m\rightarrow1,\,\,&\text{as}\,\,|x|\rightarrow\infty,
\end{cases}\ee
with $\tilde{\te}_0^m=\tilde\te_0\ast j_{m^{-1}}$, $\tilde\te_0$ is the nonnegative $H^1$-extension of $\te_0$,  and  $j_{m^{-1}}(x)$ is the standard mollifying kernel of width $m^{-1}$.

Then for any $\eta\in (0, \eta_0)$, there exists $m_1(\eta)\ge 0$ such that for $m \ge m_1(\eta)$, the approximate initial data
$(\n_0^{m,\eta},u_0^{m,\eta}, \te_0^{m,\eta})$ satisfies
\be \la{de3}\begin{cases}(\n_0^{m,\eta}-1,u_0^{m,\eta}, \te_0^{m,\eta}-1)\in C^\infty ,\\
	\dis \eta\le  \n_0^{m,\eta}  <\hat\n,~~\, \frac{\eta}{4}\le \te_0^{m,\eta} \le \hat \te,~~\,\|\na u_0^{m,\eta}\|_{L^2} \le M, \\
	\dis u_0^{m,\eta}\cdot n=0,~~\,\curl u_0^{m,\eta}\times n=0,~~\,\na \te_0^{m,\eta}\cdot n=0,\,\, \text{on}\,\p\O,\end{cases}
\ee
and
 \be \la{de03}
\lim\limits_{\eta\rightarrow 0} \lim\limits_{m\rightarrow \infty}
\left(\| \n_0^{m,\eta} - \n_0 \|_ {H^2 \cap W^{2,q}}+\| u_0^{m,\eta}-u_0\|_{H^2}+\| \te_0^{m,\eta}- \te_0  \|_{H^1}\right)=0.
\ee
Moreover,  the initial norm $C_0^{m,\eta}$
for $(\n_0^{m,\eta},u_0^{m,\eta}, \te_0^{m,\eta}),$ which is defined by  the right-hand side of (\ref{e})
with $(\n_0,u_0,\te_0)$   replaced by
$(\n_0^{m,\eta},u_0^{m,\eta}, \te_0^{m,\eta}),$
satisfies \bnn \lim\limits_{\eta\rightarrow 0} \lim\limits_{m\rightarrow \infty} C_0^{m,\eta}=C_0.\enn
Therefore, there exists  an  $\eta_1\in(0, \eta_0) $
such that, for any $\eta\in(0,\eta_1),$ we can find some $m_2(\eta)\geq m_1(\eta)$  such that   \be \la{de1} C_0^{m,\eta}\le C_0+\ve_0/2\le  \ve_0 , \ee
provided that\be  \la{de7}0<\eta<\eta_1 ,\,\, m\geq m_2(\eta).\ee

  We assume that $m,\eta$ satisfy (\ref{de7}).
  Proposition \ref{pro2} together with (\ref{de1}) and (\ref{de3}) thus yields that
   there exists a smooth solution  $(\n^{m,\eta},u^{m,\eta}, \te^{m,\eta}) $
   of problem (\ref{a1})--(\ref{ch2}) with  initial data $(\n_0^{m,\eta},u_0^{m,\eta}, \te_0^{m,\eta})$
   on $\Omega\times (0,T] $ for all $T>0. $
    Moreover, one has (\ref{h8}), \eqref{zs2}, (\ref{a2.112}), \eqref{ae3.7}, and (\ref{vu15})   with $(\n,u,\te)$  being replaced by $(\n^{m,\eta},u^{m,\eta}, \te^{m,\eta}).$

 Next, for the initial data $(\n_0^{m,\eta},u_0^{m,\eta}, \te_0^{m,\eta})$, the function $\tilde g$ in (\ref{co12})  is
 \be \la{co5}\ba \tilde g & \triangleq(\n_0^{m,\eta})^{-1/2}\left(-\mu \Delta u_0^{m,\eta}-(\mu+\lambda)\na\div
 u_0^{m,\eta}+R\na (\n_0^{m,\eta}\te^{m,\eta}_0)\right)\\
& = (\n_0^{m,\eta})^{-1/2}\sqrt{\n_0}g+\mu(\n_0^{m,\eta})^{-1/2}\Delta(u_0-u_0^{m,\eta})\\
&\quad+(\mu+\lambda) (\n_0^{m,\eta})^{-1/2} \na \div(u_0-u_0^{m,\eta})+ R(\n_0^{m,\eta})^{-1/2} \na(\n_0^{m,\eta}\te_0^{m,\eta}-\n_0\te_0),\ea\ee
where in the second equality we have used (\ref{co2}).
Since $g \in L^2,$ one deduces from (\ref{co5}),  (\ref{de3}), (\ref{de03}), and  (\ref{co3})  that for any $\eta\in(0,\eta_1),$ there exist some $m_3(\eta)\geq m_2(\eta)$ and a positive constant $C$ independent of $m$ and $\eta$ such that
 \be\la{de4}
 	\|\tilde g\|_{L^2}\le \|g\|_{L^2}+C\eta^{-1/2}\de(m) + C\eta^{1/2},
 	\ee
with   $0\le\de(m) \rightarrow 0$ as
$m \rightarrow \infty.$ Hence,  for any  $\eta\in(0,\eta_1),$ there exists some $m_4(\eta)\geq m_3(\eta)$ such that for any $ m\geq m_4(\eta)$,
 \be \la{de9}\de(m) <\eta.\ee  We thus obtain from (\ref{de4}) and (\ref{de9}) that
there exists some positive constant $C$ independent of $m$ and $\eta$ such that  \be\la{de14}  \|\tilde g \|_{L^2}\le \|g \|_{L^2}+C,\ee provided that\be \la{de10} 0<\eta<\eta_1,\,\,  m\geq m_4(\eta).\ee

 Now, we   assume that $m,$  $\eta$ satisfy (\ref{de10}).
 It thus follows from (\ref{de03}), (\ref{de1}),  (\ref{de14}), Proposition \ref{pr1}, 
 and Lemmas \ref{le8}, \ref{le11}--\ref{sq91} that for any $T>0,$
 there exists some positive constant $C$ independent of $m$ and $\eta$ such that
 (\ref{h8}), (\ref{zs2}),   (\ref{a2.112}),  \eqref{ae3.7}, (\ref{vu15}), \eqref{lee2}, \eqref{qq1},  (\ref{va5}),  (\ref{vva5}), (\ref{y2}),  (\ref{eg17}),
  and  (\ref{egg17})  hold for  $(\n^{m,\eta},u^{m,\eta}, \te^{m,\eta}) .$
   Then passing  to the limit first $m\rightarrow \infty,$ then $\eta\rightarrow 0,$
   together with standard arguments yields that there exists a solution $(\n,u,\te)$ of the problem (\ref{a1})--(\ref{ch2})
   on $\Omega\times (0,T]$ for all $T>0$, such that the solution  $(\n,u,\te)$
   satisfies  (\ref{h8}), (\ref{a2.112}),    \eqref{ae3.7},  (\ref{vu15}),  \eqref{lee2}, \eqref{qq1}, (\ref{va5}),  (\ref{vva5}), (\ref{y2}),  (\ref{eg17}), (\ref{egg17}),
   and  the estimates of $A_i(T)\,(i=1,2,3)$ in
   (\ref{zs2}). Hence,    $(\n,u,\te)$ satisfies  (\ref{h8}) and \eqref{h9}.

Finally,  the proof of the uniqueness of $(\n,u,\te)$ is similar to that  of \cite[Theorem 1]{choe1} and will be omitted here for simplicity. To finish the proof of Theorem \ref{th1}, it remains to prove (\ref{h11}).
It follows from $\eqref{a1}_1$ that
\be\la{vvv}(\n-1)_t+\div((\n-1)u)+\div u=0.\ee
Multiplying (\ref{vvv}) by $4(\n-1)^4,$
we obtain after integration by parts that, for $t\ge 1,$
\be  \notag(\|\n-1\|_{L^4}^4)'(t) =-3\int(\n-1)^4\div u dx-4\int(\n-1)^3\div u dx,\ee
which implies that
\be \notag
\int_{1}^{\infty}|(\|\n-1\|_{L^4}^4)'(t)| dt\le C\int_{1}^{\infty}\|\n-1\|_{L^4}^4dt +C\int_{1}^{\infty} \|\na u\|_{L^4}^4 dt \le C\ee
due to (\ref{vu15}). Then it follows from \eqref{vu15}, \eqref{z1}, \eqref{a2.112} and H\"oulder's inequality that for $p\in(2,\infty)$
\be\la{hq1115} \lim_{t\rightarrow\infty}\|\rho-1\|_{L^p} = 0. \ee
Next, we will prove \be \la{vu36} \lim_{t\rightarrow \infty}\left(\|\na u\|_{L^2}+\|\na\te\|_{L^2}\right)=0,\ee
which combined with (\ref{vu15}) and (\ref{hq1115})   gives (\ref{h11}).
In fact, one deduces from $A_2(T)$, $A_3(T)$ in  (\ref{zs2}) and (\ref{vu15}) that
\be\la{mmq}\ba
\int_1^\infty |(\|\na u\|_{L^2}^2)'(t)|dt &=2\int_1^\infty \left|\int \pa_j u^i\pa_j u^i_t dx\right|dt\\
&=2\int_1^\infty \left|\int \pa_j u^i\pa_j(\dot u^i-u^k\pa_k u^i ) dx\right|dt \\&=\int_1^\infty \left|\int (2\pa_j u^i\pa_j \dot u^i-2\pa_j u^i\pa_j u^k\pa_k u^i+|\na u|^2\div u ) dx\right|dt\\&\le C\int_1^\infty \left(\|\na u\|_{L^2}\|\na \dot u\|_{L^2}+\|\na u\|_{L^3}^3\right)dt\\&\le C\int_1^\infty \left(\|\na  \dot u\|_{L^2}^2+\|\na u\|^2_{L^2}+\|\na u\|_{L^4}^4\right)dt \le C,\\
\ea\ee
and
\be \la{vu34}\ba \int_1^\infty|\left(\|\na\te\|_{L^2}^2\right)'(t)
|dt&= 2\int_1^\infty\left| \int \na\te\cdot \na\te_tdx\right|dt\\
&\le C\int_1^\infty\left(\|\na  \te \|_{L^2}^2+\|\na \te_t\|^2_{L^2}\right)dt \le C.\ea\ee
Thus, we derive directly  (\ref{vu36})    from $A_2(T)$ in (\ref{zs2}), (\ref{mmq}), and (\ref{vu34}). The proof of Theorem \ref{th1} is completed.
\end{proof}

\section*{Acknowledgements}   The research  is
partially supported by the National Center for Mathematics and Interdisciplinary Sciences, CAS,
NSFC Grant (Nos.  11688101,  12071200, 11971217),   Double-Thousand Plan of Jiangxi Province (No. jxsq2019101008),  Academic and Technical Leaders Training Plan of Jiangxi Province (No. 20212BCJ23027), and Natural Science
Foundation of Jiangxi Province (No.  20202ACBL211002).

\begin {thebibliography} {99} 
\bibitem{bkm} J.T. Beale,  T. Kato, A. Majda,
Remarks on the breakdown of smooth solutions for the 3-D Euler
equations. {\it Commun. Math. Phys.} {\bf 94}(1984), 61--66.

\bibitem{bd} D. Bresch,  B. Desjardins, On the existence of global weak solutions to the Navier-Stokes equations for viscous compressible and heat conducting fluids. {\it J. Math. Pures Appl.} {\bf 87}(9)(2007), 57--90.

\bibitem{C-L} G.C. Cai, J. Li, Existence and exponential growth of global classical solutions to the compressible Navier-Stokes equations with slip boundary conditions in 3D bounded domains. arXiv: 2102.06348.

\bibitem{C-L-L} G.C. Cai, J. Li, B.Q. L\"u, Global Classical Solutions to the Compressible Navier-Stokes Equations with Slip Boundary Conditions in 3D Exterior Domains. 	arXiv:2112.05586.

\bibitem{cho1} Y. Cho, H.J. Choe, H. Kim, Unique solvability of the initial boundary value problems for compressible viscous fluids. {\it J. Math. Pures Appl.} (9) {\bf 83}(2004), 243--275.

\bibitem{choe1}Y. Cho, H. Kim, Existence results for viscous polytropic fluids with vacuum. {\it J. Differ. Eqs.} {\bf 228}(2006), 377--411.

\bibitem{K2} H.J. Choe,  H. Kim, Strong solutions of the Navier-Stokes equations for isentropic   compressible fluids. {\it J. Differ. Eqs.}  \textbf{190}(2003), 504--523.

\bibitem{fcpm}
F. Crispo,  P.  Maremonti, An interpolation inequality in exterior domains.  {\it Rend. Sem. Mat.Univ. Padova}  112, 2004.

\bibitem{feireisl1}E. Feireisl, Dynamics of Viscous Compressible Fluids, Oxford Science Publication, Oxford, 2004.

\bibitem{feireisl} E. Feireisl, On the motion of a viscous, compressible, and heat conducting fluid. {\it Indiana Univ. Math. J.} {\bf 53}(2004), 1707--1740.

\bibitem{Hof2} D. Hoff,  Global solutions of the Navier-Stokes equations for multidimensional compressible flow with discontinuous initial data. {\it J. Differ. Eqs.} \textbf{120}(1995), no. 1, 215--254.

\bibitem{Hof1} D. Hoff,  Discontinuous solutions of the Navier-Stokes equations
for multidimensional flows of heat-conducting fluids. {\it Arch.
Rational Mech. Anal.}  {\bf 139}(1997), 303--354.

\bibitem{h101}
 X.D. Huang,  J.  Li, Serrin-type blowup criterion for viscous,
compressible, and heat conducting Navier-Stokes
and magnetohydrodynamic flows. {\it Commun. Math. Phys.} \textbf{324}(2013), 147--171.

\bibitem{H-L}
X.D. Huang,  J. Li, Global classical and weak solutions to the three-dimensional full compressible Navier-Stokes system with vacuum and large oscillations.  {\it Arch. Rational Mech. Anal.} \textbf{227}(2018), 995--1059.

\bibitem{h1x} X.D. Huang, J. Li, Z.P. Xin,
Serrin type criterion for the three-dimensional compressible flows. {\it  Siam J. Math. Anal.} {\bf 43}(4)(2011), 1872--1886.

\bibitem{hulx} X.D. Huang, J. Li, Z.P. Xin,  Global well-posedness of classical solutions with large oscillations and vacuum to the three-dimensional isentropic compressible Navier-Stokes equations. {\it Comm. Pure Appl. Math.} {\bf 65}(4)(2012), 549--585.

 \bibitem{kato}
T. Kato, Remarks on the Euler and Navier-Stokes equations in $\r^2$. {\it Proc. Symp. Pure Math. Amer. Math. Soc. Providence.} \textbf{45}(1986), 1--7.

\bibitem{liliang} J. Li, Z.L. Liang, On classical solutions to the Cauchy problem of the two dimensional barotropic compressible Navier-Stokes equations with vacuum. {\it J. Math. Pures Appl.} \textbf{102}(2014), 641--671.

\bibitem{L-L-W} J. Li, B.Q. L\"u, X. Wang, Global Existence of Classical Solutions to Full Compressible Navier-Stokes System with Large Oscillations and Vacuum in 3D Bounded Domains. arXiv:2207.00441.

\bibitem{2dlx} J. Li, Z.P. Xin,  Global well-posedness and large time asymptotic behavior of classical solutions to the compressible Navier-Stokes equations with vacuum. {\it Annals of PDE.}  \textbf{5}(2019), 7.

\bibitem{L1} P. L. Lions,  \emph{Mathematical topics in fluid
mechanics. Vol. {\bf 2}. Compressible models,}  Oxford
University Press, New York,   1998.

\bibitem{lhm}
 H. Louati, M. Meslameni, U. Razafison,  Weighted $L^p$ theory for vector potential operators in three-dimensional exterior domains. {\it  Math.  Meth. Appl. Sci.} 2014, 39(8):1990-2010.

\bibitem{M1} A. Matsumura, T.   Nishida,   The initial value problem for the equations of motion of viscous and heat-conductive gases. {\it J. Math. Kyoto Univ.} {\bf 20}(1980), 67--104.

\bibitem{Na}
J. Nash, Le probl\`{e}me de Cauchy pour les \'{e}quations diff\'{e}rentielles d'un fluide g\'{e}n\'{e}ral. {\it Bull. Soc. Math. France.} \textbf{90} (1962), 487--497.

\bibitem{ANIS}
A. Novotny, I.  Straskraba, Introduction to the Mathematical Theory of Compressible Flow, Oxford Lecture Ser. Math. Appl., Oxford Univ. Press, Oxford, 2004. 

\bibitem{sal}R. Salvi, I. Stra\v{s}kraba,
Global existence for viscous compressible fluids and their behavior as $t\rightarrow \infty.$ {\it J. Fac. Sci. Univ. Tokyo Sect. IA Math.} \textbf{40}(1993), no. 1, 17--51.

\bibitem{se1} J. Serrin, On the uniqueness of compressible fluid motion. {\it Arch. Rational  Mech. Anal.} {\bf 3 }(1959), 271--288.

\bibitem{Tani}
A. Tani, On the first initial-boundary value problem of compressible viscous fluid motion. {\it Publ. Res. Inst. Math. Sci. Kyoto Univ.} \textbf{13}(1977), 193--253.

\bibitem{vww}
W. von Wahl, Estimating $\nabla u$ by $\div u$ and $\curl u$. {\it Math. Meth. Appl. Sci.} \textbf{15}(1992), 123--143.

\bibitem{W-C}
H.Y. Wen, C.J. Zhu,
Global solutions to the three-dimensional full compressible Navier-Stokes equations with vacuum at infinity in some classes of large data. {\it SIAM J. Math. Anal.} \textbf{49}(2017),  162--221.

\bibitem{xin13} Z.P. Xin, W. Yan, On blowup of classical solutions to the compressible Navier-Stokes equations, {\it Commun. Math. Phys.}  321 (2013) 529-541.

\bibitem{xin98} Z.P. Xin, Blowup of smooth solutions to the compressible Navier-Stokes equation with compact density,  {\it Commun. Pure Appl. Math.} 51 (1998) 229-240.

\end {thebibliography}

\end{document}